\newcommand{\dbv}[1]{\left\| #1 \right\|}
\newcommand{\ben}{\begin{equation*}}
\newcommand{\een}{\end{equation*}}
\newcommand{\be}{\begin{equation}}
\newcommand{\ee}{\end{equation}}
\newcommand{\rd}[1]{\left(#1\right)}
\newcommand{\sq}[1]{\left[#1\right]}
\newcommand{\cl}[1]{\left\{#1\right\}}
\newcommand{\fl}[1]{\lfloor #1 \rfloor}
\newcommand{\ag}[1]{\left\langle#1\right\rangle}
\newcommand{\vb}[1]{\left| #1 \right|}
\newcommand{\ov}[1]{\frac{1}{#1}}
\newcommand{\pd}{\partial}
\newcommand{\Endo}{\text{End}}
\newcommand{\atp}[2]{\left. #1 \right|_{#2}}
\newcommand{\mbc}{\mathbb{C}}
\newcommand{\mbd}{\mathbb{D}}
\newcommand{\mbf}{\mathbb{F}}
\newcommand{\mbh}{\mathbb{H}}
\newcommand{\mbn}{\mathbb{N}}
\newcommand{\mbp}{\mathbb{P}}
\newcommand{\mbr}{\mathbb{R}}
\newcommand{\mbz}{\mathbb{Z}}
\newcommand{\mca}{\mathcal{A}}
\newcommand{\mcb}{\mathcal{B}}
\newcommand{\mce}{\mathcal{E}}
\newcommand{\mcf}{\mathcal{F}}
\newcommand{\mcg}{\mathcal{G}}
\newcommand{\mch}{\mathcal{H}}
\newcommand{\mci}{\mathcal{I}}
\newcommand{\mcl}{\mathcal{L}}
\newcommand{\mcm}{\mathcal{M}}
\newcommand{\mco}{\mathcal{O}}
\newcommand{\mcp}{\mathcal{P}}
\newcommand{\mcs}{\mathcal{S}}
\newcommand{\mct}{\mathcal{T}}
\newcommand{\msa}{\mathscr{A}}
\newcommand{\msm}{\mathscr{M}}
\newcommand{\msp}{\mathscr{P}}
\newcommand{\mfg}{\mathfrak{g}}
\newcommand{\mfh}{\mathfrak{h}}
\newcommand{\mfm}{\mathfrak{m}}
\newcommand{\mfs}{\mathfrak{s}}
\newcommand{\mfu}{\mathfrak{u}}
\newcommand{\alp}{\alpha}
\newcommand{\sig}{\sigma}
\newcommand{\eps}{\epsilon}
\newcommand{\usig}{\bm{\sigma}}
\newcommand{\ulam}{\bm{\lambda}}
\newcommand{\ulamz}{\bm{\lambda}_0}
\newcommand{\ulami}{\bm{\lambda_\infty}}
\newcommand{\umu}{\bm{\mu}}
\newcommand{\uls}{\bm{s}}
\newcommand{\ulu}{\bm{u}}
\newcommand{\ula}{\bm{a}}
\newcommand{\ulb}{\bm{b}}
\newcommand{\ulc}{\bm{c}}
\newcommand{\ule}{\bm{e}}
\newcommand{\ulm}{\bm{m}}
\newcommand{\ulD}{\underline{D}}
\newcommand{\ulzero}{\bm{0}}
\newcommand{\tr}{\text{tr}}
\newcommand{\Hom}{\text{Hom}}
\newcommand{\re}{\text{Re}}
\newcommand{\ord}{\text{ord}}
\newcommand{\rk}{\text{rank}}
\newcommand{\tx}[1]{\text{#1}}
\newcommand{\pmt}[1]{\begin{pmatrix} #1 \end{pmatrix}}
\newcommand\reallywidehat[1]{%
\savestack{\tmpbox}{\stretchto{%
  \scaleto{%
    \scalerel*[\widthof{\ensuremath{#1}}]{\kern-.6pt\bigwedge\kern-.6pt}%
    {\rule[-\textheight/2]{1ex}{\textheight}}
  }{\textheight}%
}{0.5ex}}%
\stackon[1pt]{#1}{\tmpbox}%
}
\newtheorem{theorem}{Theorem}[section]
\newtheorem{thm}{Theorem}[section]
\newtheorem{lem}[theorem]{Lemma}
\newtheorem{prop}[theorem]{Proposition}
\newtheorem{cor}[theorem]{Corollary}
\newtheorem{defn}[theorem]{Definition}
\newcommand{\lam}{\lambda}
\newcommand{\bet}{\beta}
\newcommand{\gam}{\gamma}
\newcommand{\lto}{\longrightarrow}
\newcommand{\lmapsto}{\longmapsto}
\newcommand{\lxrightarrow}[1]{\xrightarrow[\hspace*{.75cm}]{#1}}
\begin{document}

\title{Limiting configurations for the SU(1,2) Hitchin equation}
\author{Xuesen Na}
\date{\today}

\maketitle

\begin{abstract}
We study the limiting behavior of the solutions $h_t$ of the Hitchin's equation associated with a family of stable SU(1,2) Higgs bundles $(L,F,t\beta,t\gamma)$ on a compact connected Riemann surface $X$ as $t\to\infty$ under the assumption that the quadratic differential $q=\beta\cdot\gamma$ have simple zeros at $D$. The spectral data of the SU(1,2) Higgs bundle $(L,F,\beta,\gamma)$ can be represented by a Hecke modification of $V=L^{-2}K_X\oplus LK_X$. We show by a gluing construction that after appropriate rescaling, the limit is given by a metric on $V$ singular at $D$, induced by harmonic metrics adapted to parabolic structures on $L$ and on $K_X$ at $D$. We give rules to determine the parabolic weights of the limit.
\end{abstract}

\section{Introduction}
\label{sec:intro}

Let $X$ be a compact connected Riemann surface of genus $g \ge 2$ with a K\"ahler form $\omega$ with $\int_X \omega=2\pi$. Let $(E,h_0)$ be a rank $r$ holomorphic Hermitian vector bundle and $\Phi\in \Omega^{1,0}(\tx{End}(E))$. The Hitchin's self-duality equation is an equation of the pair $(A,\Phi)$:
\ben
i\Lambda_\omega(F_A+[\Phi\wedge \Phi^\ast])=\mu,\,\, \bar\pd_A\Phi=0
\een
with $\mu=\deg(E)/r$, $A$ an $h_0$-unitary connection, $\Phi^\ast$ the $h_0$-adjoint of $\Phi$. This is equivalent to the Hermitian-Yang-Mills equation
\ben
i\Lambda_\omega(F_{\nabla_h}+[\Phi\wedge\Phi^{\ast_h}])=\mu
\een
with $\nabla_h$ Chern connection of $h$. It was studied by Hitchin \cite{Hit87} as dimension reduction of the self-dual Yang-Mills equation. Together with the work Simpson \cite{Sim88}, the gauge-theoretic moduli space of its irreducible solutions $\mcm$, called Hitchin moduli space, is identified with the moduli space of stable Higgs bundles $(E,\Phi)$. The rich structures on $\mcm$ build bridges between the fields of algebraic geometry, symplectic geometry and topology.

An important feature of $\mcm$ is the Hitchin map $\tx{Hit}: \mcm\to\mcb$ given by taking the characteristic polynomial of $\Phi$. The space of coefficients $\mcb=\bigoplus_{j}H^0(X,K_X^j)$ is a finite dimensional vector space. Equivalently $\mcb$ parametrizes the space of spectral curves: for $b\in \mcb$, the curve $\Sigma_b\subset |K_X|$ may be viewed as marking eigenvalues of $\Phi$. It is given by divisor $\det(\lambda-p^\ast \Phi)$ of $p^\ast K_X^r$ where $p:|K_X|\to X$ is the projection and $\lambda\in p^\ast K_X$ is the tautological section. For generic choice of $b\in\mcb$, $\Sigma_b$ is smooth.

The Hitchin moduli space $\mcm$ is not compact. As a consequence of Uhlenbeck's weak compactness, both $\|\Phi\|_{L^2}$ and the Hitchin map are proper (see proofs in \cite{Wen16}). A sequence $(E_j,\Phi_j)$ on $\mcm$ escapes to infinity only if $\|\Phi_j\|_{L^2}\to\infty$ and $\tx{Hit}(\Phi_j)\to\infty$ on $\mcb$. It is an interesting problem to study the limiting behavior of the family of stable Higgs bundles $(E,t\Phi)$ as $t\to\infty$. Gaiotto-Moore-Netizkhe \cite{GMN10} conjectured that as radial variable $t\to\infty$ on the Hitchin base, the natural hyperk\"ahler metric -- the Hitchin metric on $\mcm$ is asymptotic to a semi-flat metric, constructed from the special K\"ahler structure on $\mcb$. Inspired by this work, Mazzeo-Swoboda-Wei\ss-Witt \cite{MSWW16} studied the above family for rank-two Higgs bundles where the quadratic differential $\det \Phi$ has simple zeros. This condition is equivalent to the spectral curve being smooth. In particular they constructed solutions $h_\infty$ called limiting configurations which solve the decoupled version of the Hitchin's equation
\ben
F_{\nabla_{h_\infty}}^\perp=0,\,\, [\Phi\wedge \Phi^{\ast_{h_\infty}}]=0
\een
where $\perp$ it the trace-free part. They then showed that for each $h_\infty$ there is a family $(E,t\Phi)$ whose Hermitian-Yang-Mills metric $h_t$ converge in $C^\infty$ sense to it, away from the zeros of $\det \Phi$. Fredrickson \cite{ Fre18} extended the convergence result to arbitrary rank with smooth spectral curve. Mochizuki \cite{Moc16} obtained similar convergence (after appropriate rescaling) more generally for any rank-two Higgs bundle with generically semisimple Higgs field. 

The situation of limiting behaviors in higher rank with a singular spectral curve is more complicated and largely unexplored. The choice of the real form SU(1,2) of the complex Lie group SL$(3,\mathbb{C})$ thus provides a natural next step in this direction. In generic case, its spectral curve is reduced with two irreducible components meeting at nodal singularities and the form of the Higgs field is more restrained than the general SL$(3,\mathbb{C})$-Higgs bundle with same spectral curve. 

An SU(1,2) Higgs bundle is given by a triple $(F,\beta,\gamma)$, or equivalently an SL$(3,\mathbb{C})$-Higgs bundle of the form
\begin{equation}
\left(E=L\oplus F,\,\, \Phi=\begin{pmatrix}
0 & \beta\\
\gamma & 0
\end{pmatrix}\right)
\end{equation}
where $L=\det F^\ast$, $q=\gamma\circ \beta\in H^0(X,K_X^2)$ is a quadratic differential and the spectral curve $\Sigma$ is given by the divisor $Z(\lambda(\lambda^2-q))$ on $|K_X|$. We assume all zeros of $q$ are simple and let $D=p_1+\ldots+p_{4g-4}$ be the zero divisor. Therefore $\Sigma$ has $4g-4$ simple nodes. The goal of this article is to study the limiting behavior of the Hermitian-Yang-Mills metric $h_t$ for such a family $(F,t\beta,t\gamma)$ of stable SU(1,2) Higgs bundle as $t\to\infty$.

In order to describe the SU(1,2) limiting configuration, we need a good description of the Hitchin fiber of the Hitchin map $\mcm_{\tx{SU(1,2)}}\to \mcb_{\tx{SU(1,2)}}=H^0(X,K_X^2)$. This is provided by the author in \cite{Na21}: with $L$ and $q$ fixed, the data in $(F,\beta,\gamma)$ is equivalent to a holomorphic Hecke modification of $L^{-2}K_X\oplus LK_X$ at $D$: an injective map of $\mathcal{O}_X$-modules $\iota: F\to L^{-2}K_X\oplus K_X$ isomorphic over $X-D$. Solutions $h_\infty$ of the decoupled equation has the form
\ben
h_\infty=\iota^\ast\rd{h_L^{-2}h_K\oplus h_Lh_K}
\een
with $h_K$ resp. $h_L$ metrics on $K_X$ resp. $L$ such that induced metric on $K_X^2$ satsifies $\vb{q}\equiv 1$ and $h_L$ is a harmonic metric adapted to filtered line bundle $(L,\ulam)$ for some weights with $\sum_j \lambda_j=-\deg L$. The difference between two Hermitian metrics is an automorphism. We write $h'=h\cdot g$ if $h'(\sigma,\mu)=h(g\sigma, \mu)$ and we say that $h_t\to h_\infty$ in $C^\infty$ if $h_t=h_\infty\cdot g_t$ and $g_t\to 1$ in $C^\infty$ under some fixed background metric. The main result of this article is the following

\begin{thm} \label{thm:main}
Fix $x_0\in X-D$, $v_0\in \atp{L}{x_0}$ and $X_0\subset X-D$ a compact set. Let $h_t$ be Hermitian-Yang-Mills metric of $(F,t\beta,t\gamma)$ and $\iota: F\to L^{-2}K_X\oplus LK_X$ the Hecke modification corresponding to $(F,\beta,\gamma)$. Identify the two bundles by $\iota$ over $X-D$ and let $\widetilde h_t$ the normalization of $h_t$ (by a diagonal automorphism) such that
\ben
\vb{(v_0^{-2}\sqrt{q(x_0)},0)}=\vb{(0,v_0\sqrt{q(x_0)})}=1\,.
\een
Let $h_\infty=h_{L,\infty}^{-2}h_K\oplus h_{L,\infty}h_K$ where $h_{L,\infty}$ is adapted to the filtered bundle $\rd{L,\ulami}$ over $(X,D)$ such that 
\begin{itemize}
\item $\lambda_{\infty,j}=1/4$ (resp. $-1/4$) for $\beta(p_j)=0$ (resp. $\gamma(p_j)=0$), and
\item $\lambda_{\infty,j}$ is a constant independent of $j$ for $\beta(p_j)$, $\gamma(p_j)\neq 0$
\item $\sum_j \lam_{\infty,j}=-\deg L$
\end{itemize}
Then we have
\ben
\widetilde h_t\xrightarrow{\tx{in }C^\infty} h_\infty\,\,\tx{ as }\,\,t\to\infty
\een
\end{thm}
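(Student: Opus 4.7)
The plan is to follow the gluing strategy of Mazzeo--Swoboda--Wei\ss--Witt, adapted to the SU(1,2) case where the spectral curve is nodal. I would construct local model solutions near each $p_j \in D$, graft them onto $h_\infty$ to form an approximate solution $h_t^{\mathrm{app}}$, control the error, and then deform to the exact HYM metric $\widetilde h_t$ via a contraction mapping in weighted Sobolev spaces.

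\textbf{Step 1: Local fiducial solutions.} Near each $p_j$ choose a local coordinate $z$ with $q = z\, dz^{\otimes 2}$. The behavior depends on the type of the zero. At a point where $\beta(p_j) = 0$ (so $\beta$ vanishes to order one while $\gamma$ is nonzero), the relevant local Higgs field is essentially the rank-two MSWW fiducial field acting between the $L$- and $F$-summands, supplemented by a decoupled direction. Using a self-similar ansatz $h_L^{-2}h_K \oplus h_L h_K$ with $h_L = |z|^{2\lambda}(\text{smooth})$, the decoupled limit produces parabolic weight $+1/4$ in $L$, matching the weight that arises in the Painlev\'e III fiducial solution of MSWW. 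Symmetrically, $\gamma(p_j)=0$ gives weight $-1/4$. At a point where both $\beta(p_j),\gamma(p_j)$ are nonzero but $q(p_j)=0$ (an ``apparent'' zero coming from the Hecke modification rather than from vanishing of the Higgs components), the local model is essentially the regular nilpotent orbit perturbed by the Hecke datum; the choice of the common weight $\lambda_\infty$ is dictated by the condition that this fiducial solution be compatible with the modification and contribute no net parabolic degree, so that summing gives $\sum_j \lambda_{\infty,j} = -\deg L$.

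\textbf{Step 2: Approximate solution and error estimate.} Fix small discs $D_j \subset X$ around each $p_j$. On $X \setminus \bigcup_j D_j$ take $h_t^{\mathrm{app}} = h_\infty$ (which solves the decoupled equation identically). Inside $D_j$ replace $h_\infty$ by the rescaled fiducial solution adapted to the given type of zero, matched to $h_\infty$ in a thin annulus via a cutoff on $\log(g)$ where $g$ is the diagonal comparison automorphism between $h_\infty$ and the fiducial. The choice of the weights $\lambda_{\infty,j}$ above is exactly what makes the leading terms match on the boundary of $D_j$, so the interpolation error is $O(e^{-ct})$ in any $C^k$ norm, concentrated in the gluing annulus. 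The Hitchin equation residual $i\Lambda_\omega(F + [\Phi\wedge \Phi^{\ast_{h_t^{\mathrm{app}}}}]) - \mu$ decays like $e^{-ct}$ after gauging away the diagonal part.

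\textbf{Step 3: Perturbation to an exact solution.} Write the sought HYM metric as $h = h_t^{\mathrm{app}} \cdot \exp(\eta)$ with $\eta \in \Omega^0(i\mathfrak{su}(E))$ and expand the nonlinear HYM operator. The linearization is a Laplace-type operator whose invertibility and uniform mapping properties on weighted spaces can be obtained from the uniform spectral gap of the fiducial linearization (Fredrickson's analysis) combined with the fact that $h_\infty$ is a genuine harmonic metric on the complement of $D$. A standard Banach fixed-point argument yields a unique small solution $\eta_t$ with $\|\eta_t\|_{C^k} = O(e^{-ct})$ on every compact $X_0 \Subset X\setminus D$.

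\textbf{Step 4: Identification and convergence.} Uniqueness of the HYM metric on a stable Higgs bundle (up to overall scalar) together with the normalization at $x_0$ forces $h_t^{\mathrm{app}}\exp(\eta_t) = \widetilde h_t$. The estimate on $\eta_t$ and the fact that $h_t^{\mathrm{app}} = h_\infty$ on $X_0$ for $t$ large then give $\widetilde h_t \to h_\infty$ in $C^\infty(X_0)$.

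\textbf{Main obstacle.} The delicate point is \emph{choosing and justifying the parabolic weights} in the nodal/Hecke directions. At zeros of $q$ where $\beta,\gamma$ are both nonvanishing, the spectral curve is locally two smooth branches meeting at a node, and the usual Painlev\'e III fiducial solution is not the right model; the correct model is a Hecke-modified decoupled solution whose weight is determined only globally via $\sum_j \lambda_{\infty,j} = -\deg L$. Showing that such a model exists, is asymptotic on its annular end to $h_\infty$ at the chosen uniform weight, and has a uniformly invertible linearization is the heart of the argument and what distinguishes the SU(1,2) case from the previously treated smooth-spectral-curve settings.
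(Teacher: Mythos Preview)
Your outline captures the overall gluing strategy, but there is a genuine gap at the heart of Step~2 that propagates through Steps~3 and~4. At a point $p_j\in D_r$ (where $\beta,\gamma$ are both nonvanishing), the local model is not a ``Hecke-modified decoupled solution'' but an actual harmonic metric on a good filtered Higgs bundle over $\mathbb{P}^1$ (via Biquard--Boalch), depending on a parameter $\lambda\in(-1/4,1/4)$. The crucial point you miss is that this model carries a \emph{next-to-leading-order} constant $c_\lambda$ in its asymptotics, and for the grafting to have exponentially small error one must match not only the leading weight but also this constant against the decoupled solution on the annulus. Matching forces a \emph{$t$-dependent} tuple $\bm{\lambda}(t)$, obtained by a Brouwer fixed-point argument, with $|\bm{\lambda}(t)-\bm{\lambda}_\infty|\le C/\log t$. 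In particular the exterior piece of $h_t^{\mathrm{app}}$ is $h_{\infty,t}$ built from $h_{L,\bm{\lambda}(t),t}$, not the fixed $h_\infty$; your claim that $h_t^{\mathrm{app}}=h_\infty$ on $X_0$ for large $t$ is false, and this is exactly what limits the final convergence rate to $C/\log t$ rather than the exponential rate you assert.

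Relatedly, your Step~3 overstates the linear analysis. The linearization $L_t$ does \emph{not} have a $t$-independent spectral gap: restricted to $X\setminus D$ the decoupled equation admits a one-dimensional kernel (the diagonal direction $\mathrm{diag}(2,-1)$ under $\iota$), and globally one only obtains $\|L_t u\|_{L^2}\ge (C/\log t)\|u\|_{L^2}$ via a delicate Schr\"odinger-type estimate on the unit disk with shrinking potential. The $t$-dependence of the weights also complicates all the comparison estimates between $h_t^{\mathrm{app}}$ and a fixed reference metric. So while the contraction-mapping step still closes, the constants degrade polynomially in $t$, and the exponential smallness of the residual is what saves the argument---but the final distance $\widetilde h_t\to h_\infty$ is governed by $|\bm{\lambda}(t)-\bm{\lambda}_\infty|$, hence only $O(1/\log t)$.
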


The method of proof is a combination of that in \cite{MSWW16} and \cite{Moc16}. In \cite{MSWW16}, the authors constructed an approximate solution $h_t^{\tx{app}}$ by gluing local model solutions on small disks around points in $D$ to a limiting configuration. The local model solution has an explicit form in terms of solutions of an ODE of Painlev\'e III type, asymptotic to the limiting configuration outside the disk when $t\gg 1$. They then showed by contraction mapping principle that for $t\gg 1$ there is a Hermitian-Yang-Mills metric $h_t$ close to $h_t^{\tx{app}}$. Let $\mch_t(u)=F_{\nabla_{h'}}+[\Phi\wedge \Phi^{\ast_{h'}}]$ where $h'=h_t^{\tx{app}}\cdot e^u$. The key estimates are certain lower bounds of the operator $L_t$ given by linearizing $\mch_t$ at $u=0$. In \cite{Moc16} an explicit local model is not readily available, the author instead relied on existence theorem of Hermitian-Yang-Mills metric for wild harmonic bundle by Biquard-Boalch \cite{BB04} and convergence is not proven through contraction mapping principle. As a consequence, the rate of convergence to limiting configuration is not given in \cite{Moc16}.

The local model solution needed to prove Theorem \ref{thm:main} is constructed as follows. Note first there are three types of simple zeros of $q=\gam\circ\bet$. It can either be a zero of $\bet$, a zero of $\gam$ or neither. The Hecke modification map $\iota: F\to L^{-2}K_X\oplus LK_X$ provides a choice of local frames in which the Higgs field has a certain standard local form near each type of zero. For the first two types, we use an explicit local model solution in terms of Painlev\'e transcendentals. Near the boundary of the disk for $t \gg 1$, these local models are asymptotic to the decoupled solution with parabolic weight on $L$ given by $\lambda=\pm 1/4$. For the last type, we apply Biquard-Boalch theorem and construct it from a wild harmonic bundle over $\mbp^1$ with SU(1,2) symmetry. These local models are asymptotic to decoupled solution with parabolic weight $-1/4<\lambda<1/4$. The stability condition restricts possible numbers of zeros of each type. As a result, the space of admissible tuple of weights $\ulam$ is contained in a convex polytope inside a hyperplane in $\mbr^{4g-4}$. The set of partitions of zeros in $D$ satisfying stability condition is in one-to-one correspondence with the faces of the polytope.

For a fixed $t\ge 1$, the parabolic weight $\lambda$ determines leading order terms $\sim\lambda\log|z|$ for the harmonic metrics on $L$ as a part of local model solution. However by the uniqueness in Biquard-Boalch theorem, the next-to-leading order is also determined by the weight. In order to construct a good approximate solution $h_t^{\tx{app}}$ by gluing the local model to decoupled solution given by a tuple of weights $\ulam=(\lambda_1,\ldots,\lambda_{4g-4})$, it will be necessary to match the next-to-leading order as well. In fact, this issue is also discussed in \cite{Moc16} and we adapt the proof to show that the next-to-leading order constant $c_{\lam}$ depends continuously on $\lambda$ and use it to show that for a partition of zeros corresponding to a certain face of the polytope of admissible weights, for $t\gg 1$ there exists a family $\ulam(t)$ matching the next-to-leading order. Furthermore we show that as $t\to\infty$, the matching weight $\ulam(t)\to \ulami$ given in Theorem \ref{thm:main}, which is also characterized by being the centers of the corresponding face in the polyhedron of admissible weights.

The approximate solution $h_t^{\tx{app}}$ is then constructed by gluing local model solutions to decoupled solution outside of the disks, for the carefuly chosen tuple of weights $\ulam(t)$. Due to this $t$-dependency of weight, the ensuing analysis including the key estimates of $L_t$ are significantly more complicated than that of \cite{MSWW16}. This and other complications (e.g. a $t$-independent $L^2$-lower bound of operator $L_t$ does not exist, see \S \ref{sec:L2lowerbound}) partily explains the length of the article.

There are a multitude of potential avenues to continue and extend the present work. Limiting configurations serves as natural class of objects to compactify the Hitchin moduli space. A clear picture of the interplay between spectral data for more general singular spectral curves and the limiting behavior of solutions is an important topic for this program. Another potential direction is the asymptotic geometry on the moduli space of $G$-Higgs bundles. Furthermore, there has recently been some important progress (see \cite{OSWW20}) connecting the SL$(2,\mathbb{C})$-limiting configurations via non-abelian Hodge correspondence to equivariant pleated surfaces in $\mathbb{H}^3$. It will be interesting to apply and extend the present work to explore limiting objects on the other side of non-abelian Hodge correspondence for both SU(1,2) and perhaps other rank-one Lie groups such as $\tx{SU}(1,n)$ for $n\ge 3$ and $\tx{SO}_0(1,n)$.

Lastly we comment on a recent work of Mochizuki \cite{MS23}. It contains two results related to the limiting behavior of $h_t$ for families of SL$(r,\mathbb{C})$-Higgs bundles. The first result is the independence of limit $h_\infty$ on choice of subsequences when the spectral curve is irreducible. The second is the exponential convergence of $h_t$ under the condition that the corresponding spectral data is given by a line bundle $L$ on the normalized spectral curve. Let $n:\widetilde\Sigma\to \Sigma$ be the normalization $\lambda'$ the pullback of $\lambda\in p^\ast K_X$ on $\widetilde \Sigma$. This condition applies when the Higgs bundle is given by $(E,\Phi)=((p\circ n)_\ast L,(p\circ n)_\ast \lambda')$ for a line bundle $L$ on $\widetilde\Sigma$. Both results go beyond the restriction of smoothness of the spectral curve. However, neither conditions apply to the present work. For the first condition, the spectral curve $\Sigma$ of an SU(1,2)-Higgs bundles discussed in this work is always reducible; for the second result with spectral curve $\Sigma$, the only SU(1,2)-Higgs bundles induced by a line bundle $L$ on the normalization $\widetilde \Sigma$ are the strictly polystable ones. In that case, the question of limiting behavior is reduced to that of SU(1,1) $\cong$ SL$(2,\mathbb{R})$.

The rest of the article is organized as follows. In \S \ref{sec:setup} we introduce some notations and conventions, and review the notion of filtered bundle as well as the description of spectral data of SU(1,2) Higgs bundles using Hecke modification. In \S \ref{sec:constapproxsoln}, we construct local model and approximate solution. In \S \ref{sec:proof}, we study linearization of the equation and prove the main theorem.

\section*{Acknowledgements}
This article is a part of the Ph.D. thesis of the author. The author wishes to thank his advisor Prof. Richard Wentworth, for suggesting this problem and for his patient guidance and encouragement during my doctoral studies. The author is also thankful to Brian Collier, Johannes Horn, Qiongling Li and Andrew Neitzke for helpful discussions and comments. The author is supported by 2020-21 Patrick and Marguerite Sung Fellowship in Mathematics at the University of Maryland.

\section{Preliminaries}
\label{sec:setup}

\subsection{SU(1,2) Higgs bundle and stability conditions}
\label{sec:su12stab}

We review the definitions of a ($G$-)Higgs bundle, in particular an SU(1,2) Higgs bundle, the stability conditions, as well as the notion of a filtered bundle following notations in \S 3 of \cite{Moc16}. We will also review the description of spectral data for SU(1,2) Higgs bundles from \cite{Na21}. Throughout this work $X$ is a fixed closed Riemann surface of genus $g(X)\ge 2$. All vector bundles are holomorphic.

Let $G^c$ be a complex reductive Lie group. A $G^c$-Higgs bundle is a pair $(P,\Phi)$ with $P$ a holomorphic principal $G^c$-bundle over $X$ and $\Phi\in H^0(X,P\times_{\tx{Ad}}\mfg^c\otimes K_X)$. For $G^c\subset \tx{GL}(r,\mbc)$, a $G^c$-Higgs bundle can also be viewed as a rank-$r$ Higgs bundle. The celebrated non-abelian Hodge correspondence \cite{Don87, Cor88, Hit87, Sim88} established a homeomorphism between the moduli of polystable $G^c$-Higgs bundles and the moduli space of reductive representations $\tx{Hom}^+(\pi_1(X),G^c)\sslash G^c$. Hitchin \cite{Hit92} exploited this correspondence to study the topology of character variety of real representations $\tx{Hom}^+(\pi_1(X),\tx{PSL}(n,\mbr))\sslash \tx{PSL}(n,\mbr)$ and constructed the Hitchin component. Along this line, the notion of $G$-Higgs bundle is developed in \cite{BGG06}. Let $G$ be a connected reductive real Lie group, $H\subset G$ a maximal compact subgroup, and $\mfg=\mfh\oplus \mfm$ a Cartan decomposition. Let $H^c$ and $\mfm^c$ be the respective complexifications and $\iota: H^c\to GL(\mfm^c)$ the isotropy representation.

\begin{defn}
A $G$-Higgs bundle over $X$ is a pair $(P,\varphi)$ where $P$ is a holomorphic principal $H^c$-bundle over $X$, $\varphi\in H^0(X,P\times_\iota \mfm^c\otimes K_X)$.
\end{defn}

The Hitchin map for $G$-Higgs bundle is given by (see, e.g. \cite{GPR18})
\ben
\tx{Hit}: \mcm_G\to \mcb_G=\bigoplus_{i=1}^a H^0(X,K_X^{m_i})
\een
where $m_i$'s are the exponents of $G$ and $a$ is the real rank of $G$.

For $G^c=\tx{SL}(3,\mbc)$, there are up to conjugation three real forms: SU(3), SU(1,2) and SL$(3,\mbr)$. Higgs bundle corresponding to compact real form SU(3) has vanishing Higgs field. On the other hand, the real form SL$(3,\mbr)$ gives no restriction to the spectral curve. We will therefore focus on the real form $G=$SU(1,2). The Lie algebra
\ben
\mfg=\mfs\mfu(1,2)=\cl{X\in\tx{Mat}_3(\mbc)| X^\ast J+JX=0,\,\, \tr X=0}
\een
where $J=\tx{diag}(-1,1,1)$ and $X^\ast$ is the conjugate transpose of $X$. $\theta: X\mapsto -X^\ast=JXJ$ is the Cartan involution on $\mfg$. The Cartan decomposition $\mfg=\mfh\oplus \mfm$ is the eigendecomposition for $\theta$ with eigenvalues 1 and -1. We have
\ben
\mfh=\left\{\pmt{-\tr Y & 0 \\ 0 & Y}\,\middle|\, Y\in \mfu(2)\right\}, \,\, \mfm=\cl{\pmt{0 & Z \\ Z^\ast & 0}}
\een

\begin{defn} \label{def:su12}
An SU(1,2) Higgs bundle is a triple $(F,\bet,\gam)$: $F$ is a rank-two bundle, $\gam\in H^0(X,\Hom(L,F)\otimes K_X)$, $\bet\in H^0(X,\Hom(F,L)\otimes K_X)$ where $L=\det F^\ast$.

Equivalently $\gam: LK_X^{-1}\to F$, $\bet: F\to LK_X$ are holomorphic bundle maps. The composition $q=\bet\circ\gam: LK_X^{-1}\to LK_X$ is a holomorphic quadratic differential. We say that $(F,\bet,\gam)$ is an SU(1,2) Higgs bundle associated to $q$ and $L$.
\end{defn}

Via $G\to G^c=\tx{SL}(3,\mbc)$, an SU(1,2) Higgs bundle is also an SL$(3,\mbc)$ Higgs bundle
\ben
E=L\oplus F, \,\, \Phi=\pmt{0 & \bet \\ \gam & 0}
\een
The corresponding notions of (poly)stability are equivalent (see \cite{BGG03} for a more general statement).

\begin{prop}[Lemma 2.2, \cite{Got01}]
Let ($E$, $\Phi$) as above, $E'\subset E$ a $\Phi$-invariant subbundle. There are subbundles $L'\subset L$ and $F'\subset F$ such that $\mu(E')\le \mu(L'\oplus F')$.
\end{prop}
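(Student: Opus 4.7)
The plan is a standard slope comparison via the decomposition $E = L \oplus F$, using the two coordinate projections. The Higgs invariance of $E'$ will play no essential role: it is only part of the context because we are ultimately interested in testing Higgs stability.

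First I would set
\ben
L' := \tx{saturation of } (E' \cap L) \subset L, \qquad F' := \tx{saturation of } \pi_F(E') \subset F,
\een
where $\pi_F: E \to F$ is the projection onto the second summand. Both saturations are genuine holomorphic subbundles of $L$ and $F$ respectively. Since $L$ has rank one, $L'$ is either $0$ or all of $L$; in either case $\deg L' \ge \deg(E' \cap L)$, and likewise $\deg F' \ge \deg \pi_F(E')$ because saturating a torsion-free sheaf on a curve can only increase degree.

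Next I would use the short exact sequence of coherent sheaves
\ben
0 \to E' \cap L \to E' \to \pi_F(E') \to 0
\een
obtained by restricting the split exact sequence $0 \to L \to E \to F \to 0$ to $E'$. Taking ranks gives $\rk(E') = \rk(E' \cap L) + \rk \pi_F(E') = \rk L' + \rk F' = \rk(L' \oplus F')$, and taking degrees gives $\deg E' = \deg(E' \cap L) + \deg \pi_F(E') \le \deg L' + \deg F' = \deg(L' \oplus F')$. Dividing by the common rank yields $\mu(E') \le \mu(L' \oplus F')$, as required.

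The only potential subtlety is the degree comparison under saturation, for which I would just cite the standard fact that for a torsion-free subsheaf $S \subset T$ of a locally free sheaf $T$ on a curve, the saturation $S^{\tx{sat}}$ satisfies $S^{\tx{sat}}/S$ is a torsion sheaf, hence $\deg S^{\tx{sat}} \ge \deg S$. There is no real obstacle here; the proposition is essentially a bookkeeping lemma that reduces testing polystability of Higgs subbundles to a comparison against subbundles respecting the splitting $E = L \oplus F$.
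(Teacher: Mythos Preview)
The paper does not prove this proposition itself; it is quoted from \cite{Got01} without argument. Your proof is correct and is the standard one.

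One addendum worth making: although you say the $\Phi$-invariance of $E'$ plays no essential role, your specific choices $L'=(E'\cap L)^{\mathrm{sat}}$ and $F'=\pi_F(E')^{\mathrm{sat}}$ do in fact make $L'\oplus F'$ itself $\Phi$-invariant. Indeed, from $\Phi(E')\subset E'\otimes K_X$ one gets $\gamma(E'\cap L)\subset (E'\otimes K_X)\cap(F\otimes K_X)=(E'\cap F)\otimes K_X\subset F'\otimes K_X$, and for $f\in\pi_F(E')$ any lift $(\ell,f)\in E'$ gives $\beta(f)\in(E'\otimes K_X)\cap(L\otimes K_X)=(E'\cap L)\otimes K_X\subset L'\otimes K_X$; both inclusions extend to the saturations since the targets are already saturated. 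This extra property is precisely what the paper needs immediately afterward to reduce the stability test to $\Phi$-invariant subbundles of the split form $L'\oplus F'$, so it is worth recording rather than dismissing.
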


As a result, we need only test slope stability on nonzero proper subbundles of the form $L'\oplus F'$. Since $\deg E=0$, and the only subbundles of a line bundle $L$ are $0$ and $L$ itself, we have

\begin{prop} \label{prop:su12stab1}
$(F,\bet,\gam)$ is stable iff
\begin{itemize}
\item for a subbundle $0\neq F'\subsetneq F$, $\bet|_{F'}=0$ we have deg$(F')<0$, and
\item for a subbundle $F''\subsetneq F$ with $\gam(L)\subset F''\otimes K_X$ has deg$(L\oplus F'')<0$
\end{itemize}
\end{prop}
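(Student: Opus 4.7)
The plan is to apply the preceding proposition (which reduces slope stability testing to subbundles of the form $L'\oplus F'$) and then enumerate the $\Phi$-invariant such subbundles. Since $L$ is a line bundle and a subbundle is, by convention, a saturated subsheaf, the only possibilities for $L'$ are $0$ and $L$, so the analysis splits into two short cases.

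First I would spell out $\Phi$-invariance explicitly. With $\Phi=\pmt{0 & \beta\\ \gamma & 0}$ the induced action on $(l,f)\in E=L\oplus F$ is $(\beta(f),\gamma(l))\in E\otimes K_X$, so $V=L'\oplus F'$ is $\Phi$-invariant iff $\beta(F')\subset L'\otimes K_X$ and $\gamma(L')\subset F'\otimes K_X$. In the case $L'=0$ this collapses to $\beta|_{F'}=0$ (with no condition on $\gamma$); in the case $L'=L$ the first inclusion is automatic and the second becomes $\gamma(L)\subset F'\otimes K_X$. These two families exhaust the $\Phi$-invariant subbundles to be tested.

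Next, slope stability asks $\mu(V)<\mu(E)=0$ for every nonzero proper such $V$. In the $L'=0$ case $\mu(V)=\deg(F')/\rk(F')$, so $\mu(V)<0$ is exactly $\deg F'<0$; in the $L'=L$ case $\mu(L\oplus F'')$ has the same sign as $\deg(L\oplus F'')$, yielding the second bullet. Properness in the second case corresponds exactly to $F''\subsetneq F$, matching the statement. Both directions of the equivalence follow by combining this enumeration with the Gothen reduction.

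The argument is essentially bookkeeping once the previous proposition is in hand, and no serious obstacle arises. The only place to be mildly careful is in the $L'=L$ case, where one must distinguish the proper subbundle condition $F''\subsetneq F$ from $F''=F$ (which would give $V=E$ and is excluded), and verify that the invariance inclusion $\gamma(L)\subset F''\otimes K_X$ is recorded with the correct twist by $K_X$.
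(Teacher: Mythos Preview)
Your proposal is correct and matches the paper's approach exactly: the paper presents this proposition as an immediate consequence of the Gothen reduction together with the observation that the only subbundles of the line bundle $L$ are $0$ and $L$, which is precisely the case split you carry out.
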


We make the following assumption throughout this work: $q=\bet\circ\gam$ has simple zeros. Let $D=p_1+\ldots+p_{4g(X)-4}$ be the zero divisor.

\begin{defn} \label{def:qD}
A zero of $q$ has three types: $\beta(p)=0$, $\gam(p)=0$, or neither vanishes. This gives a partition $D=D_\bet+D_\gam+D_r$. Let $d_\bet=\deg D_\bet$, $d_\gam=\deg D_\gam$, $d_r=\deg D_r=4g(X)-4-d_\bet-d_\gam$. Denote $\ulD=(D_\bet,D_\gam,D_r)$ to denote a partition of $D$ into three effective divisors.
\end{defn}

The Toledo invariant $\tau=2\deg L$ labels the connected components of the moduli space. We have the Milnor-Wood type inequality $|\tau|<2(g-1)$. With $d_\bet$, $d_\gam$ we have the following refinement.

\begin{prop} \label{prop:su12stab2}
$(F,\bet,\gam)$ is stable iff 
\be \label{eq:stabdef1}
-(g-1)+\ov{2}d_\bet<d<(g-1)-\ov{2}d_\gam
\ee
equivalently
\begin{align}
& d_\bet<2(g-1+d)\,\tx{ and,} \label{eq:stabdef3} \\
& d_\gam<2(g-1-d) \label{eq:stabdef2}\,.
\end{align}
In particular if $(F,\bet,\gam)$ stable then $d_r>0$. Furthermore, $(F,\bet,\gam)$ is strictly polystable iff $<$ in above inequalities are replaced with equality, in which case $F\cong L^{-2}K_X^{-1}(D_\bet)\oplus LK_X^{-1}(D_\gam)$.
\end{prop}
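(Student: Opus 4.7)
My plan is to apply Proposition~\ref{prop:su12stab1} directly: stability reduces to testing degrees of two specific line subbundles $F_\bet, F_\gam \subset F$, whose structure we read off from the maps $\bet$ and $\gam$. Because $\bet:F\to LK_X$ has rank-1 target and $\gam:LK_X^{-1}\to F$ has rank-1 source, the kernel (resp.\ image) has a unique saturation to a line subbundle, which I denote $F_\bet$ (resp.\ $F_\gam$). A short sheaf-theoretic argument shows that any nonzero line subbundle $F'\subsetneq F$ with $\bet|_{F'}=0$ must equal $F_\bet$, and any line subbundle $F''\subset F$ with $\gam(L)\subset F''\otimes K_X$ must equal $F_\gam$ --- otherwise one would have a nonzero torsion subsheaf inside a locally free sheaf.

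Next I would compute the two degrees. Writing $\bet$ as a nowhere-zero map $F/F_\bet \to LK_X(-D_\bet)$ gives $F/F_\bet \cong LK_X(-D_\bet)$, and then from $\det F = L^{-1}$ we get $F_\bet \cong L^{-2}K_X^{-1}(D_\bet)$, with $\deg F_\bet = -2d - 2(g-1) + d_\bet$. Similarly $\gam$ factors through $F_\gam$ with zero divisor $D_\gam$, so $F_\gam \cong LK_X^{-1}(D_\gam)$ and $\deg F_\gam = d - 2(g-1) + d_\gam$. Condition (a) of Proposition~\ref{prop:su12stab1} becomes $d_\bet < 2(g-1+d)$, i.e.\ (\ref{eq:stabdef3}), and condition (b) becomes $2d+d_\gam<2(g-1)$, i.e.\ (\ref{eq:stabdef2}); rewriting the two jointly gives (\ref{eq:stabdef1}). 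Summing (\ref{eq:stabdef3}) and (\ref{eq:stabdef2}) yields $d_\bet + d_\gam < 4(g-1) = \deg D$, so $d_r > 0$.

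For the strict polystability characterization, equality in (\ref{eq:stabdef3}) makes $0 \oplus F_\bet \subset E$ a $\Phi$-invariant slope-$0$ subbundle, since $\bet|_{F_\bet}=0$; equality in (\ref{eq:stabdef2}) makes $L \oplus F_\gam \subset E$ a $\Phi$-invariant slope-$0$ subbundle, since $\gam(L)\subset F_\gam\otimes K_X$ and $\bet(F_\gam)\subset LK_X$. Their intersection inside $F$ is zero: the alternative $F_\bet=F_\gam$ would force $q=\bet\circ\gam=0$, contradicting the standing assumption that $q$ has simple zeros. A degree count $\deg(F_\bet\oplus F_\gam)=0+(-d)=\deg F$ then upgrades the inclusion $F_\bet\oplus F_\gam \hookrightarrow F$ to an isomorphism, giving $F\cong L^{-2}K_X^{-1}(D_\bet)\oplus LK_X^{-1}(D_\gam)$ and the polystable decomposition $E = F_\bet \oplus (L\oplus F_\gam)$ with both summands stable of slope $0$. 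The converse is immediate from this decomposition.

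I expect the only mildly technical point to be the sheaf-theoretic uniqueness argument pinning down $F_\bet$ and $F_\gam$ as the only subbundles that need testing, together with the verification that the rank-$2$ summand $L\oplus F_\gam$ is itself a stable Higgs bundle in the equality case; every other step is routine bookkeeping of degrees and of the partition $\ulD$.
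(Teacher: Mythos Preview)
Your argument is correct and follows essentially the same route as the paper: identify the unique line subbundles $F_\bet=\ker\bet$ and $F_\gam=\mathrm{sat}(\mathrm{im}\,\gam)$, compute their degrees, and feed them into Prop~\ref{prop:su12stab1}. The one mild difference is in the polystable characterization: the paper invokes the two irreducible components of the spectral curve to pin down the $\Phi$-invariant subbundles and then reads off both equalities from the slope-$0$ condition on each summand, whereas you argue more directly via the degree count $\deg(F_\bet\oplus F_\gam)=\deg F$. Your ``the converse is immediate'' is a little thin as written --- you should say explicitly that strict polystability forces a splitting of $E$ into $\Phi$-invariant slope-$0$ pieces, and that by your uniqueness argument these pieces must be $0\oplus F_\bet$ and $L\oplus F_\gam$, whence both equalities --- but this is a one-line fix, not a gap in the approach.
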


In the following we will say $\ulD$ is stable (resp. strictly polystable) if (\ref{eq:stabdef3}), (\ref{eq:stabdef2}) (resp. their equality versions) hold.

\begin{proof}
The saturation of img$(\gam)$ gives a subbundle $F''\cong LK_X^{-1}(D_\gam)$. This is the unique proper subbundle of $F$ such that $\gam(L K_X^{-1})\subset F''$. We have $\deg F''=-d-2(g-1)+d_\gam$.

On the other hand, $\bet: F\to LK_X$ factors through the surjective sheaf map $\widetilde \bet: F\to LK_X(-D_\bet)$. We have that $\tx{ker}(\bet)=\tx{ker}(\widetilde \bet)$ giving a line subbundle $F'\subset F$. This is the unique nonzero proper subbundle of $F$ such that $\left.\bet\right|_{F'}=0$. We have $\deg F'=\deg F-\deg LK_X(-D_\bet)=-2d-2(g-1)+d_\bet$. The statements about stability now follow from Prop \ref{prop:su12stab1}.

The spectral curve of $(E,\Phi)$ (corresponding to $(F,\bet,\gam)$ as above) is given by zero divisor of the section $\lam(\lam^2-q)\in \pi^\ast K_X^3$ where $\pi: |K_X|\to X$ is the projection. This has two irreducible components (one of which is the zero section in $|K_X$) corresponding to the two $\Phi$-invariant subbundles (one of which is ker$(\bet)\cong L^{-2}K_X^{-1}(D_\bet)$). $(F,\bet,\gam)$ is polystable iff $E$ is a direct sum of these two subbundles, i.e. $F\cong L'\oplus L^{-2}K_X^{-1}(D_\bet)$, each of which a stable Higgs bundle of slope 0. Under this identification $E\cong \rd{L\oplus L'}\oplus L^{-2}K_X^{-1}(D_\bet)$, $\Phi=\Phi'\oplus 0$ where
\ben
\Phi=\pmt{0 & \bet' \\ \gam' & 0}
\een
We have $D_\bet=Z(\bet')$, $D_\gam=Z(\gam')$, $D=D_\bet+D_\gam$ and $\deg L'=d-2(g-1)+d_\gam=-d$ and $\deg L^{-2}K_X^{-1}(D_\bet)=-2d-2(g-1)+d_\bet=0$. These implies the equalities instead of $<$ in (\ref{eq:stabdef3}), (\ref{eq:stabdef2}). Conversely if equalities instead of $<$ holds in (\ref{eq:stabdef3}), (\ref{eq:stabdef2}), $D=D_\bet+D_\gam$. We have that $\gam$ factors through $s_\gam: LK_X^{-1}\to F''$ with simple zeros at $D_\gam$, $\bet=s_\bet\circ\widetilde\bet$ where $s_\bet: LK_X(-D_\bet)\to LK_X$ with simple zeros at $D_\bet$. Since $\bet\circ\gam=q: LK_X^{-1}\to LK_X$ with simple zeros at $D=D_\bet+D_\gam$, the composition $F''\to F\xrightarrow{\widetilde\bet}LK_X(-D_\bet)$ is an isomorphism. Since $\mco_X(D)\cong K_X^2$ and $D=D_\bet+D_\gam$, we have $F''\cong LK_X^{-1}(D_\gam)\cong LK_X(-D_\bet)$. Therefore $F\cong F''\oplus L^{-1}K_X(D_\bet)$, where the latter summand is ker$(\bet)$. It follows that $(E,\Phi)$ is a direct sum of $\Phi$-invariant stable Higgs bundles of slope 0.
\end{proof}

\subsection{SU(1,2) Hitchin equation}
In this part we write down the Hitchin's equation for an SU(1,2) Higgs bundle in global and local form. We also fix some convention and notations about local form of a Hermitian metric and some other related objects.

\begin{thm}
For $(F,\bet,\gam)$ stable, there is a unique metric $h$ on $F$ such that
\be \label{eq:hitchin}
F_{\nabla_h}+\gam^{\ast_h}\wedge\gam+\bet\wedge\bet^{\ast_h}=0
\ee
\end{thm}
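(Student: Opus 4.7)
The plan is to reduce this to the Hitchin--Simpson existence theorem applied to the associated $\tx{SL}(3,\mbc)$-Higgs bundle, and then use a $\mbz/2$-symmetry argument to extract the equation on $F$.

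First, consider the $\tx{SL}(3,\mbc)$-Higgs bundle $(E=L\oplus F,\Phi)$ from Definition \ref{def:su12}. Since $\det E=L\ot\det F=\mco_X$, its slope is $\mu(E)=0$, and by the $G$-vs-$G^c$ stability equivalence noted immediately after that definition, combined with Proposition \ref{prop:su12stab1}, the stability of $(F,\bet,\gam)$ passes to polystability of $(E,\Phi)$. The classical Hitchin--Simpson theorem \cite{Hit87,Sim88} then supplies a unique Hermitian metric $h_E$ on $E$ with $\det h_E\equiv 1$ satisfying
\ben
i\Lambda_\omega\rd{F_{\nabla_{h_E}}+[\Phi\wedge\Phi^{\ast_{h_E}}]}=0\,.
\een

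Next I would show that $h_E$ respects the decomposition $E=L\oplus F$. The holomorphic automorphism $\sig=1_L\oplus(-1_F)$ has $\det\sig=1$ (since $\dim F=2$), and because $\Phi$ is off-diagonal, $\sig\Phi\sig^{-1}=-\Phi$. Since the HYM equation is invariant under $\Phi\mapsto-\Phi$, the pullback $\sig^\ast h_E$ is another solution with the same determinant normalization; uniqueness therefore forces $\sig^\ast h_E=h_E$, so $\sig$ is an $h_E$-isometry. Hence $h_E=h_L\oplus h_F$ is block-diagonal, and $\det h_E\equiv 1$ pins down $h_L=(\det h_F)^{-1}$, which is precisely the metric on $L=\det F^\ast$ induced by $h:=h_F$.

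Finally, a direct matrix computation with $\Phi=\pmt{0 & \bet \\ \gam & 0}$ shows that, when $h_E$ is block-diagonal, the $F$-diagonal block of $[\Phi\wedge\Phi^{\ast_{h_E}}]$ reproduces $\gam^{\ast_h}\wedge\gam+\bet\wedge\bet^{\ast_h}$, while the off-diagonal blocks vanish automatically. On a Riemann surface the $\tx{End}(F)$-valued 2-form equation is equivalent to its $i\Lambda_\omega$-contraction, and the scalar (trace) part on the $F$-block is fixed by the $L$-block through the determinant constraint; the $F$-block of the HYM equation on $E$ is therefore exactly the equation in the statement, and uniqueness of $h$ descends from that of $h_E$. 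The only subtle point I anticipate is the careful bookkeeping between trace and traceless components of the matrix equation, which is routine on a complex curve.
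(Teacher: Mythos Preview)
Your proposal is correct and follows essentially the same argument as the paper: pass to the associated $\tx{SL}(3,\mbc)$-Higgs bundle $(E,\Phi)$, apply Hitchin--Simpson, and use the involution $1_L\oplus(-1_F)$ together with uniqueness to force the metric into block-diagonal form, from which the $F$-block equation drops out by direct computation. The only cosmetic difference is that you say ``polystability'' where the paper (via the cited equivalence) actually gets stability of $(E,\Phi)$; this matters for the uniqueness step you invoke, but since stability is what actually holds there is no gap.
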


In more generality the above is a consequence of Theorem 3.21 of \cite{GGM12}. By viewing SU(1,2) Higgs bundle as SL$(3,\mbc)$-Higgs bundle, there is also a more straightforward proof:

\begin{proof}
By the existence and uniqueness theorem of Hitchin and Simpson \cite{Hit87, Sim88}, there is a unique Hermitian metric $H$ on $E=L\oplus F$ solving Hitchin equation $F_{\nabla_H}+[\Phi\wedge \Phi^{\ast_H}]=0$ inducing the standard metric on $\det E=\mco_X$. Let $\alpha=1_L\oplus(-1_F)$, we have $\alpha^{-1}\Phi\alpha=-\Phi$. By uniqueness, $\alpha^\ast H=H$. Thus $H=\tx{diag}(\det h^{-1},h)$ with $h$ a Hermitian metric on $F$. The form (\ref{eq:hitchin}) follows by a direct calculation.
\end{proof}

For $\uls=(s_1,\ldots,s_r)$ be a local frame, the matrix $h_{\uls}$ is defined by $\rd{h_{\uls}}_{ij}=h(s_j,s_i)$. For $\psi\in \Hom(E,E')$ with local frames $\uls$, $\ulu$, we define the matrix $\psi_{\uls,\ulu}$ by $\psi s_j=\sum_j\rd{\psi_{\uls,\ulu}}_{ij}u_j$. We write $\psi_{\uls}$ if $E'=E$, $\ulu=\uls$. Let $(E_i,h_i)$, $i=1,2$ be Hermitian vector bundles. Let $\bm{s_i}$ be local frames of $E_i$ and $H_i=(h_i)_{\bm{s_i}}$. Let $\phi\in\Hom(E_1,E_2)$ with $M=\phi_{\bm{s_1},\bm{s_2}}$. Then under induced metric we have $|\phi|_{h_1,h_2}^2=\tr(\phi\phi^\ast)=\tr(MH_1^{-1}M^\ast H_2)$. Let $A, H\in \tx{Mat}_n(\mbc)$ where $H\ge 0$ is Hermitian, we denote
\be \label{eq:Hnorm}
\vb{A}_H^2=\tx{tr}\rd{AH^{-1}A^\ast H}
\ee
Let $(F,\bet,\gam)$ be an SU(1,2) Higgs bundle and $\cl{s_1,s_2}$ local frame of $F$ over $(U;z)$, $\cl{(s_1\wedge s_2)^{-1}}$ the induced frame on $L=\det F^\ast$. Let $\bet_{\uls}=\bet_0 dz$, $\gam_{\uls}=\gam_0 dz$, $H=h_{\uls}$, then (\ref{eq:hitchin}) is given by the following equation
\be \label{eq:localform}
\pd_{\bar z}\rd{H^{-1}\pd_z H}=\gam_0\gam_0^\ast H \rd{\det H}-\rd{\det H}^{-1} H^{-1}\bet_0^\ast\bet_0
\ee

\subsection{Filtered bundle}
\label{sec:filt}

In the following, we review the notion of filtered bundle following \S 3 of \cite{Moc16}. Let $D\subset X$ be a finite set of $N$ points and $\mco=\mco_X$, $\mco'=\mco_X(\ast D)$ sheaf of meromorphic functions with poles in $D$. Let $\mce$ be a locally free $\mco'$-module. A metric on $\mce$ will mean a metric on $\atp{\mce}{X-D}$.

\begin{defn} \label{def:fil}
A filtered bundle structure on $\mce$ over $(X,D)$ is a family of coherent $\mco$-submodules $\mcp_\ast \mce$ labeled by a tuple $\ula=\rd{a_P}_{P\in D}$, $a_P\in \mbr$ such that
\begin{itemize}
\item $\mcp_{\ula}\mce\otimes_\mco \mco'=\mce$,
\item the stalk of $\mcp_{\ula}\mce$ at $p\in D$ depends only on $a_P\in \mbr$, denoted by $\mcp_{a_P}\mce_P$,
\item $\mcp_{a}\mce_P\subset \mcp_{b}\mce_P$ for $a\le b$ and there is $\epsilon>0$ such that $\mcp_{a}\mce_P=\mcp_{a+\epsilon}\mce_P$,
\item $\mcp_{a}\mce_P\otimes_{\mco_{X,P}}\mco(nP)_P=\mcp_{a+n}\mce_P$ for $n\in \mbz$.
\end{itemize}
A filtered Higgs bundle is a pair $(\mcp_\ast \mce, \theta)$ where $\theta\in H^0(X,\Endo(\mce)\otimes \Omega_X^1)$. $(\mcp_\ast \mce, \theta)$ is an unramifiedly good filtered Higgs bundle if at each $P$ there is a finite subset of germs of meromorphic functions, $\mci(P)\subset \mco'_P$ with $\mci(P)\to \mco_X(\ast D)_P/\mco_{X,P}$ injective, and a decomposition $\mcp_a\mce_P = \bigoplus_{f\in \mci(P)} \mcp_a\mce_{P,f}$ such that
\ben
\rd{\theta-\rd{df}\tx{Id}} \mcp_a\mce_{P,f}\subset \mcp_a\mce_{P,f}\otimes \Omega_X^1(\log D)_P
\een
For each $a\in \mbr$, let $\delta\mcp_a\mce_P = \mcp_a\mce_P/\mcp_{<a}\mce_P$.
\end{defn}

Note that by the last property in the above definition, $\mcp_{a_P}^P \mce_P/\mcp_{a_P-1}^P\mce_P$ is the fiber of the vector bundle $\mcp_{\ula}\mce$ at $P$. For $-1<\alpha\le 0$, we have filtration $F_\alpha^P=\mcp_{a_P-1+\alpha}^P \mce_P/\mcp_{a_P-1}^P\mce_P$ of the fiber. In this way for any $\ula$, a filtered bundle structure $\mcp_\ast \mce$ is equivalent to a parabolic structure on holomorphic vector bundle $\mcp_{\ula}\mce$.

\begin{defn} \label{def:adapted}
A metric $h$ on $E$ is called adapted to $\mcp_\ast\mce$ on $(X,D)$ if
\begin{itemize}
\item $\mcp_{\ula}\mce \cong \mce$ as $\mco$-module on $X-D$ for all $\ula$, and
\item $s\in \mcp_{\ula}\mce$ iff
\be \label{eq:defadapted}
|s|_h=\mco\rd{|z_P|^{-a_P-\epsilon}},\,\,\, \forall \epsilon>0
\ee
where $z_P$ is a holomorphic coordinate centered at $P$. Given a metric $h$ on $\mce$, the second condition defines a filtered bundle structure $\mcp_\ast^h \mce$.
\end{itemize}
\end{defn}

For a tuple $\ulb$, define
\ben
\deg \mcp_\ast \mce=\deg \mcp_{\ulb} \mce-\sum_{P\in D}\sum_{b_P-1<a\le b_P} a\cdot \dim_\mbc \rd{\mcp_a\mce_P/\mcp_{<a}\mce_P}
\een
where $\mcp_{<a}\mce_P=\bigcup_{c<a}\mcp_c\mce_P$. This is independent of the choice of $\ulb$. For $\mcg\subset \mce$ locally free $\mco'$-submodules, let $\mcp_{\ula}\mcg=\mcp_{\ula}\mce\cap \mcg$. This gives a filtered bundle structure on $\mcg$. Let $\mu(\mcp_\ast\mce):=\deg\mcp_\ast\mce/\rk\, \mce$. A filtered Higgs bundle $(\mcp_\ast\mce,\theta)$ is stable if any nonzero proper $\theta$-invariant locally free $\mco'$-submodule $\mcg$ satisfies $\mu(\mcp_\ast\mcg)<\mu(\mcp_\ast\mce)$. The following theorem from \cite{BB04} is a generalization of \cite{Hit87, Sim88} on closed Riemann surfaces and of \cite{Sim90} on punctured Riemann surface for `tame' case and of \cite{Biq91} for parabolic vector bundles.

\begin{thm} \label{thm:biquard}
Let $(\mcp_\ast\mce,\theta)$ be a stable unramifiedly good filtered Higgs bundle with $\deg \mcp_\ast \mce=0$. Then there exists $h$ (unique up to $\mbr_+$) adapted to $(\mcp_\ast\mce,\theta)$ with $F_{\nabla_h}+[\theta\wedge\theta^{\ast h}]=0$ on $X-D$.
\end{thm}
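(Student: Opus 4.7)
The plan is to follow the Donaldson--Uhlenbeck--Yau strategy in its Simpson parabolic formulation, with the tame local models replaced by wild ones adapted to the unramifiedly good structure. The first step is to construct a smooth reference metric $h_0$ on $\mce|_{X-D}$ that is adapted to $\mcp_\ast\mce$. Near each $P\in D$, I would use the given decomposition $\mcp_a\mce_P=\bigoplus_{f\in\mci(P)}\mcp_a\mce_{P,f}$: on each summand $\theta-df\cdot\tx{Id}$ is logarithmic and preserves the induced parabolic filtration, so a local model is provided by a block-diagonal metric whose block labelled by $f$ is $|z_P|^{-2a}$ times a flat harmonic metric on the associated graded. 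Patching these by a partition of unity gives $h_0$; because $\tx{Re}(f-f')$ has a pole along $P$ for $f\ne f'$, cross terms in $[\theta\wedge\theta^{\ast_{h_0}}]$ between distinct summands are exponentially suppressed after the wild gauge change, which is precisely the content of the unramifiedly good hypothesis and ensures $\Lambda_\omega(F_{\nabla_{h_0}}+[\theta\wedge\theta^{\ast_{h_0}}])$ has good decay near $D$.

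Next, I would look for the true solution in the form $h=h_0 e^{s}$ with $s$ self-adjoint with respect to $h_0$ (trace-free after normalizing $\det h$, which is allowed because $\deg\mcp_\ast\mce=0$). The Hitchin equation becomes a quasilinear elliptic PDE whose linearization at $s=0$ is the Laplace-type operator $L(s)=\Delta_{h_0}s+[\theta,[\theta^{\ast_{h_0}},s]]$. The correct analytic framework uses weighted Sobolev spaces on $X-D$ in which endomorphisms decay polynomially (with weight tuned to the tuple $\ula$ and small $\delta>0$) within blocks preserving a given $\mcp_a\mce_{P,f}$, and exponentially in off-diagonal blocks between distinct summands. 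Combining the local model estimates near $D$ with standard elliptic theory on compact subsets of $X-D$ shows that $L$ is self-adjoint and Fredholm of index zero in these spaces.

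Existence is then obtained by considering the parabolic Donaldson functional $M(h_0,h)$ whose Euler--Lagrange equation is the Hitchin equation and whose second variation along $h_t=h_0 e^{ts}$ equals $\int_{X-D}(|\bar\pd_{h_0}s|^2+|[\theta,s]|^2)\omega\ge 0$, so $M$ is convex. The crucial step, as in \cite{Sim88, BB04}, is to show $M$ is proper under the stability hypothesis: if $M(h_0,h_j)$ stays bounded while $\sup|\log g_j|\to\infty$ with $g_j=h_0^{-1}h_j$, rescaling and passing to a weak limit of $s_j=\log g_j/\|\log g_j\|$ yields a nonzero proper $\theta$-invariant coherent $\mco'$-subsheaf $\mcg\subsetneq\mce$, and a Chern--Weil calculation for $h_0$ whose boundary contributions around each $P$ recover the filtered degree formula $\sum_{b_P-1<a\le b_P}a\cdot\dim\delta\mcp_a\mce_P$ shows that $\mu(\mcp_\ast\mcg)\ge 0=\mu(\mcp_\ast\mce)$, contradicting stability. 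A minimizer of $M$ therefore exists, and elliptic regularity in the weighted spaces produces a smooth solution that remains adapted to $\mcp_\ast\mce$.

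Finally, uniqueness up to $\mbr_+$ is a standard Bochner argument: for two adapted solutions $h_1,h_2$, the positive self-adjoint endomorphism $g=h_1^{-1}h_2$ satisfies $\Delta_\omega\log\tx{tr}(g)\le 0$ on $X-D$, while the adapted condition (\ref{eq:defadapted}) for both metrics bounds $\log\tx{tr}(g)$ near $D$, so the maximum principle forces $g$ parallel with $[\theta,g]=0$; stability of $(\mcp_\ast\mce,\theta)$ then forces $g\in \mbr_+\cdot\tx{Id}$. The principal difficulty lies in the properness step: the weights defining both $h_0$ and the function spaces must be matched precisely to the interplay between the polynomial asymptotics coming from $\ula$ and the exponential asymptotics forced by the irregular parts $df\cdot\tx{Id}$, so that the analytic degree appearing in the Chern--Weil computation coincides with $\deg\mcp_\ast\mcg$ and any weak destabilizing limit is a genuine unramifiedly good filtered $\theta$-invariant subobject rather than a wilder degeneration.
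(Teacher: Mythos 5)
The paper does not prove Theorem \ref{thm:biquard}; it states the theorem and cites it directly from Biquard--Boalch \cite{BB04}, noting that it generalizes \cite{Hit87,Sim88,Sim90,Biq91}. What the paper does afterwards is specialize it: to line bundles in Theorem \ref{thm:paralb} (for which two short proofs are given), and to the explicit filtered Higgs bundles on $\mbp^1$ in Lemma \ref{lem:locmod} that supply the local model solutions. So there is no proof in the paper against which to compare yours.

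Your outline is, as a sketch, a fair summary of the Biquard--Boalch strategy: an adapted block-diagonal reference metric $h_0$, weighted Sobolev spaces tuned to the polynomial weights $\ula$ and the irregular parts $df$, a Donaldson functional whose properness is derived from stability by extracting a $\theta$-invariant destabilizing filtered subsheaf from a degenerating sequence, and a Weitzenb\"ock/maximum-principle argument for uniqueness up to $\mbr_+$ (compare (\ref{eq:weitzenbock}), which this paper later uses for exactly such a purpose). Two points deserve correction, though neither changes the overall architecture. First, by Definition \ref{def:fil} the operator $\theta-\rd{df}\tx{Id}$ maps $\mcp_a\mce_{P,f}$ into $\mcp_a\mce_{P,f}\otimes\Omega_X^1(\log D)_P$, so $\theta$ is \emph{exactly} block diagonal with respect to the $f$-decomposition near $P$, not merely block diagonal up to exponentially small corrections. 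If $h_0$ is chosen block diagonal near $P$, the cross terms of $[\theta\wedge\theta^{\ast_{h_0}}]$ between distinct $f$-summands vanish identically there, and no wild gauge change is needed in constructing the reference metric. The exponential decay $e^{-c|f-f'|}$ enters later, when comparing the \emph{harmonic} metric to the block-diagonal model --- that is precisely the content of Theorem \ref{thm:mochizuki} and the estimates of \S\ref{sec:asymp} in this paper. Second, the properness step as you describe it glosses over the hardest part of \cite{BB04}: one must show the weak limit of the rescaled $\log g_j$ is a saturated $\mco'$-submodule with a well-defined filtered structure, and that the analytic Chern--Weil boundary terms near each $P$ compute its filtered degree; making that precise in the wild setting is the bulk of their paper. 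As a basis for citing and applying Theorem \ref{thm:biquard}, as this paper does, your understanding is sound; as a self-contained proof it remains a table of contents rather than an argument.
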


\begin{thm} \label{thm:paralb}
Let $\mcl$ be a line bundle and $\ulam=(\lam_1,\ldots,\lam_N)$ a tuple of real numbers satisfying
\be \label{eq:pardegzero}
\deg\mcl +\sum_{j=1}^N \lam_j=0
\ee
Let $(U_j;z_j)$ be disjoint local charts centered at $p_j\in D$ and $s_j$ a nowhere vanishing section over $U_j$. Then there is a metric $h$ on $\mcl$, unique up to positive constant, such that $F_{\nabla_h}=0$ on $X-D$ and 
\be \label{eq:adapted}
\log |s_j|_h=\lam_j \log |z_j|+O(1) \,\, \tx{ as }\,\, \vb{z_j}\to 0
\ee
We call $h$ a harmonic metric adapted to filtered line bundle $(\mcl,\ulam)$.
\end{thm}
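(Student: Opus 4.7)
The plan is to find $h$ in the form $h=h_0\,e^{-2v}$, where $h_0$ is a singular reference metric realising the prescribed leading logarithmic behaviour and $v\in C^\infty(X)$ is a bounded correction; flatness $F_{\nabla_h}=0$ on $X-D$ will then reduce to a scalar Poisson equation on the compact Riemann surface $X$, whose solvability is controlled by the degree identity (\ref{eq:pardegzero}).

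First, I would fix an auxiliary smooth Hermitian metric $h_1$ on $\mcl$ and cutoff functions $\chi_j\in C_c^\infty(U_j)$ with $\chi_j\equiv 1$ near $p_j$, and set $h_0=h_1\cdot\exp\rd{2\sum_j\lam_j\chi_j\log\vb{z_j}}$. Near $p_j$ this gives $\log|s_j|_{h_0}=\log|s_j|_{h_1}+\lam_j\log|z_j|$, so (\ref{eq:adapted}) is already enforced up to an $O(1)$ term. Crucially, although $h_0$ is singular, its curvature $F_{\nabla_{h_0}}$ extends from $X-D$ to a smooth real $(1,1)$-form $\Omega$ on all of $X$: on $U_j-\cl{p_j}$ the identity $\bar\pd\pd\log|z_j|=0$ shows that the would-be singular contribution vanishes, and the terms involving $d\chi_j$ are supported away from $D$.

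Next, writing $h=h_0\,e^{-2v}$ with $v\in C^\infty(X)$ turns $F_{\nabla_h}=0$ on $X-D$ into the equation $2\bar\pd\pd v=-\Omega$, posed on the compact surface $X$. By Hodge theory for the $\bar\pd\pd$-Laplacian this has a smooth solution if and only if $\int_X\Omega=0$. Combining Chern-Weil for $h_1$ with the Poincar\'e-Lelong formula $\frac{i}{\pi}\pd\bar\pd\log|z_j|=\delta_{p_j}$, applied to the current identity
\ben
F_{\nabla_{h_0}}-F_{\nabla_{h_1}}=-2\bar\pd\pd\sum_j\lam_j\chi_j\log\vb{z_j},
\een
one obtains $\int_X\frac{i}{2\pi}\Omega=\deg\mcl+\sum_{j=1}^N\lam_j$, which vanishes by (\ref{eq:pardegzero}). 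Hence $v$ exists and $h=h_0\,e^{-2v}$ is the desired flat metric with asymptotics (\ref{eq:adapted}).

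For uniqueness, two such metrics $h,h'$ differ by a positive function $h'/h=e^{2w}$ with $w\in C^\infty(X-D)$; the common asymptotic (\ref{eq:adapted}) forces $w$ to be bounded near each $p_j$, while $F_{\nabla_h}=F_{\nabla_{h'}}=0$ gives $\bar\pd\pd w=0$ on $X-D$. A bounded harmonic function on $X-D$ extends by removable singularities to a harmonic function on the compact surface $X$, hence is constant, yielding uniqueness up to $\mbr_+$. The only genuinely delicate step is the sign/normalisation bookkeeping in the residue computation that identifies the integrability condition with $\deg\mcl+\sum_j\lam_j=0$; once this is pinned down, every remaining step is a direct application of Hodge theory or the maximum principle.
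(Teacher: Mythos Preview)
Your argument is correct. The paper actually gives two proofs of this statement. The first invokes the Biquard--Boalch existence theorem for harmonic metrics on good filtered Higgs bundles (Theorem~\ref{thm:biquard}) and then upgrades the adaptedness condition to the sharp asymptotic (\ref{eq:adapted}) via B\^ocher's theorem on positive harmonic functions on a punctured disk. The second proof, given after Lemma~\ref{lem:laplace}, is the one closest to yours: it takes a Hermitian--Einstein metric on $\mcl$ and multiplies by $e^{\varphi}$, where $\varphi$ solves a scalar Poisson equation with prescribed logarithmic poles; solvability is tied to the same integrability condition $\deg\mcl+\sum_j\lam_j=0$.

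The difference between your route and the paper's second proof is purely in how the singular reference object is constructed. You localise the logarithmic singularities with cutoffs $\chi_j\log|z_j|$ and appeal directly to Poincar\'e--Lelong and Hodge theory for the $\bar\pd\pd$-equation on $X$. The paper instead assembles $\varphi$ from explicit Green-type building blocks $G_j$ (Lemma~\ref{lem:laplace}), each obtained from the Hermitian--Einstein metric on an auxiliary line bundle such as $\mco_X(p-q)$. Your approach is slightly more self-contained; the paper's has the advantage that the resulting family $h_{L,\ulam}^0=h_{L,\tx{HE}}e^{\varphi_{\ulam}}$ depends affinely on $\ulam$ through the fixed functions $G_j$, a feature exploited later in the gluing construction (Definition~\ref{def:someharmfcns}). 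Your uniqueness argument via removable singularities for bounded harmonic functions is the same in spirit as the paper's.
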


\begin{proof}
Let $\widetilde\mcl=\mcl\otimes_{\mco}\mco'$ and $\mcl_{\ulm}$ the $\mco$-submodule generated by $z_j^{-m_j}s_j$ over $U_j$. We have $\mcl_{\ulm}=\mcl\rd{\sum_j m_j[p_j]}$. Let $\mcp_{\ulb}\widetilde \mcl=\mcl_{\fl{b_1+\lam_1},\ldots,\fl{b_N+\lam_N}}$. We have $\deg \mcp_\ast \widetilde\mcl=\deg \mcl+\sum_j \lam_j=0$. By Theorem \ref{thm:biquard}, there is a harmonic metric $h$ on $\widetilde\mcl$ adapted to $\mcp_\ast \widetilde\mcl$ unique up to positive constant.

Note that $u_j:=\log |s_j|_h$ is a harmonic function on $U_j-\{p_j\}$. Take $\xi < \lam_j$. Since $s_j\in \mcp_{-\ulam}$ we have by (\ref{eq:defadapted}) that there is some constant $C$ such that $-u_j+\xi\log|z_j|+C\ge 0$ for $|z_j|$ small enough. By B\^ocher's Theorem (see, e.g. \cite{Axl01} Theorem 3.9) characterizing positive harmonic functions on a punctured disk, we have $u_j=\mu\log|z_j|+\tx{Re}(f)$ with some $\mu\in\mbr$ and $f\in \mco(U_j)$. By (\ref{eq:defadapted}) $\mu\ge \lam_j$. If $\mu>\lam_j$ we have some $\ulb$ with $b_j=-\mu<-\lam_j$ such that $s_j\in \mcp_{\ulb}$, a contradiction. Therefore $\mu=\lam_j$.
\end{proof}

In the following we give a second proof in terms of solution to Poisson equations with prescribed logarithmic poles.

\begin{lem} \label{lem:laplace}
Let $\ula=(a_1,\ldots,a_N)$, $n=-\sum_j a_j$ and $D=\{p_1,\ldots,p_N\}$ finite set on $X$. Then there is $\varphi: X-D\to \mbr$, unique up to a constant on $X$, satisfying $\Delta_{\pd}\varphi=n$ where $\Delta_\pd=\pd^\ast\pd+\pd\pd^\ast=i\Lambda\bar\pd \pd$ on functions and $\varphi+2 a_j \log|z_j|$ are bounded near $p_j$.
\end{lem}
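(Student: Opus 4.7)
The plan is to build $\varphi$ as a known singular piece plus a smooth correction.

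First I would fix disjoint coordinate charts $(U_j; z_j)$ around each $p_j$, and smooth cutoffs $\chi_j$ equal to $1$ near $p_j$ with support in $U_j$. Define
\ben
u_0 = -2\sum_{j=1}^N a_j\, \chi_j \log|z_j|,
\een
extended by zero away from the $U_j$. Then $u_0$ is smooth on $X-D$, and near $p_j$ one has $u_0 + 2 a_j \log|z_j| = 0$, so the boundedness condition is manifest for $u_0$. Since $\log|z_j|$ is a real part of a holomorphic function near $p_j$, it is $\Delta_\pd$-harmonic on $U_j\setminus\{p_j\}$, so $\Delta_\pd u_0$ vanishes in a neighborhood of each $p_j$ and is smooth elsewhere. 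Extending by zero at the $p_j$, the function $f_0 := \Delta_\pd u_0$ is smooth on all of $X$.

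Next I would compute $\int_X f_0\,\omega$ to check the solvability condition for the Poisson equation on $X$. Using the identity $\Delta_\pd f\cdot \omega = i \bar\pd\pd f$ on a Riemann surface and Stokes's theorem on $X \setminus \bigcup_j B_\epsilon(p_j)$, the interior integral becomes a sum of boundary integrals
\ben
\int_X i\bar\pd\pd u_0 = -\sum_j \int_{\pd B_\epsilon(p_j)} i\,\pd u_0 = -\sum_j \int_{|z_j|=\epsilon}\!\! i\cdot\left(-a_j \frac{dz_j}{z_j}\right) = -2\pi\sum_j a_j = 2\pi n,
\een
which equals $\int_X n\,\omega$ since $\int_X\omega = 2\pi$. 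Hence $n - f_0$ has zero mean on $X$, and by standard Hodge theory for the Laplacian on the closed Kähler surface $X$ there exists a smooth solution $v:X\to\mbr$, unique up to a constant, of $\Delta_\pd v = n - f_0$. Setting $\varphi := u_0 + v$ gives a function smooth on $X-D$ satisfying $\Delta_\pd \varphi = n$ and with $\varphi + 2a_j\log|z_j|$ bounded near $p_j$.

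For uniqueness, suppose $\varphi_1, \varphi_2$ both satisfy the conditions. Then $w := \varphi_1 - \varphi_2$ is bounded near each $p_j$ (the log singularities cancel) and is $\Delta_\pd$-harmonic on $X-D$. By the standard removable singularity theorem for bounded harmonic functions on a punctured disk, $w$ extends to a harmonic function on all of $X$; by the maximum principle on the compact surface $X$, $w$ is constant.

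The only genuinely computational step is the boundary-integral calculation matching $\int_X \Delta_\pd u_0\,\omega$ to $\int_X n\,\omega$; I do not expect substantial obstacles since the residue contribution at each $p_j$ is $2\pi a_j$ and the hypothesis $n = -\sum_j a_j$ is precisely what makes the solvability condition hold. The one point to be careful about is the sign convention in $\Delta_\pd = i\Lambda_\omega\bar\pd\pd$ and the orientation of $\pd B_\epsilon$, but these will resolve consistently.
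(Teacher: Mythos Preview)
Your proof is correct and complete; the cutoff-plus-Poisson-correction argument is the standard direct analytic route, and your verification of the mean-zero condition via the residue computation is right.

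The paper takes a genuinely different, more geometric path. Rather than building a singular ansatz and correcting it, the paper constructs $\varphi$ in two atomic cases from Hermitian--Einstein metrics on line bundles: for $\ula=(1/2,-1/2)$, $n=0$, it sets $\varphi=\log|s|_h$ where $s$ is the defining meromorphic section of $\mco_X(p-q)$ and $h$ is its flat metric; for $\ula=(1)$, $n=1$, it writes $h=h_0 e^{\varphi}$ comparing Hermitian--Einstein metrics on a degree-one bundle $\mcl$ and on $\mcl(-p)$. The general $\varphi$ is then a linear combination of these building blocks, and uniqueness is handled as you do. Your approach is more elementary and self-contained (it only needs Hodge theory for the scalar Laplacian and removable singularities), while the paper's approach has the advantage of tying the lemma directly to harmonic metrics on filtered line bundles, which is exactly how the lemma is used immediately afterward to reprove Theorem~\ref{thm:paralb}.
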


\begin{proof}
Suppose $n=0$, $N=2$ and $\ula=(1/2,-1/2)$. Let $D=\{p,q\}$ with $p\neq q$. Consider the divisor $p-q$ and denote by $\mcl$ the associated line bundle. Let $s\in \mcl\otimes_{\mco}\msm^\ast$ be the defining meromorphic section with a zero at $p$, a pole at $q$ and holomorphic on $X-D$. Let $h$ be a Hermitian-Einstein metric on $\mcl$, i.e. $i\Lambda F_{\nabla_h}=0$ on $X-D$. Set $\varphi=\log |s|_h$. For a chart $(U;z)$ (resp. $(V;w)$) centered at $p$ (resp. $q$), $\varphi-\log|z|$ (resp. $\varphi+\log|w|$) is bounded and we have $\Delta_\pd\varphi=0$ on $X-D$.

For $n=1$, $N=1$ and $\ula=(1)$. Let $D=\cl{p}$, $\mcl$ a line bundle of degree 1, $h$ (resp. $h_0$) a Hermitian-Einstein metric on $\mcl$ (resp. $\mcl(-p)$) and $s: \mcl(-p)\to \mcl$ the inclusion induced by $\mco(-p)\subset \mco$. As bundle map $s$ has a simple zero at $p$ and is isomorphic away from $p$. By this inclusion we view $h$ as a metric on $\left.\mcl(-p)\right|_{X-D}$ and write $h=h_0e^{\varphi}$. We have $F_{\nabla_{h_0}}=0$ away from $p$ and $\Delta_\pd\varphi=i\Lambda\bar\pd \pd \varphi=i\Lambda F_{\nabla_h}=1$ on $X-D$.

Let $\sig_0$ (resp. $\sig$) be a trivializing section for $\mcl(-p)$ (resp. $\mcl$) on a chart $(U;z)$ centered at $p$, we have $
e^{\varphi/2}\vb{\sig_0}_{h_0}=\vb{\sig_0}_h=\vb{f}\,\,\vb{\sig}_h$ with $f$ holomorphic with simple zero at $p$. Therefore $\varphi=2\log\vb{z}+O(1)$ as $z\to 0$

In general the solution is a linear combination of the above cases. Uniqueness follows from the fact that a harmonic funtion on $X$ is a constant.
\end{proof}

We now give a second proof of Theorem \ref{thm:paralb}:

\begin{proof}
Let $h$ be a solution of the Hermitian-Einstein equation for $\mcl$, i.e. $i\Lambda F_{\nabla_h}=\deg \mcl \tx{id}_{\mcl}$. By the above lemma, there is $\varphi$ on $X-D$ such that $\Delta_\pd\varphi=-\deg\mcl$ and $\varphi=2\lam_j\log|z_j|+O(1)$ at $p_j$. It is now easy to verify that $h e^{\varphi}$ is a harmonic metric adapted to filtered line bundle $(\mcl,\ulam)$.
\end{proof}

We first fix some local charts around zeros of $q$.

\begin{defn} \label{def:disc}
Let $(F,\bet,\gam)$ be a stable SU(1,2) Higgs bundle and $q=\bet\circ\gam$, $D=Z(q)=D_\bet+D_\gam+D_r$ as in Def \ref{def:qD}. Let $(\mbd_j;\zeta_j)$ be disjoint charts centered at $p_j\in D$ such that $q=\zeta_j(d\zeta_j)^2$ and $\mbd_j=\left\{\vb{\zeta_j}<R\right\}$ for some $R>0$. Denote by $\mbd_j^\times =\mbd_j-\cl{p_j}$. Let
\be \label{eq:defDjprimes}
\mbd_j'=\cl{|\zeta_j|<2R/3},\,\, \mbd_j''=\cl{|\zeta_j|<R/3}\,.
\ee
\end{defn}

The construction in the above proof allows us to get rid of the ambiguity of positive constant and give a $\ulam$-family of harmonic metric adapted to $(L,\ulam)$. This will be useful in constructing the approximate solution later.

\begin{defn}\label{def:someharmfcns}
Fix $h_{L,\tx{HE}}$ a solution to the Hermitian-Einstein equation for the line bundle $L$, i.e. $i\Lambda F_{\nabla_{h_{L,\tx{HE}}}}=\tx{deg}L\tx{id}_L$. Fix $G_j$ for $1\le j\le N$ (existence follows by Lemma \ref{lem:laplace}), such that
\ben
\Delta_\pd G_j=\begin{cases}
0 & 1\le j\le N-1, \tx{ on }X-\cl{p_j,p_{j+1}}\\
1 & j=N, \tx{ on }X-\cl{p_N}
\end{cases}
\een
and 
\ben
\begin{cases}
G_j-2\log\vb{\zeta_j}\tx{ bounded on }\mbd_j^\times & 1\le j\le N\\
G_j+2\log\vb{\zeta_{j+1}}\tx{ bounded on }\mbd_{j+1}^\times & 1\le j\le N-1
\end{cases}
\een
For $\ulam\in \mbr^N$, let
\be \label{eq:defvarphiulam}
\varphi_{\ulam}=\sum_{j=1}^N \rd{\sum_{\ell=1}^j\lam_\ell}G_j
\ee
Define
\be \label{eq:defh0Lulam}
h_{L,\ulam}^0=h_{L,\tx{HE}}e^{\varphi_{\ulam}}\,.
\ee 
This is a harmonic metric adapted to $(L,\ulam)$ with 
\ben
\begin{cases}
\Delta_\pd \varphi_{\ulam}=-d\\
\varphi_{\ulam}=2\lam_j \log|\zeta_j|+O(1)\tx{ as }\zeta_j\to 0
\end{cases}
\een
Define on $\mbd_\ell$ bounded harmonic functions
\ben
g_{j\ell}=\begin{cases}
G_j-2\log|\zeta_j| & \ell=j, 1\le j\le N\\
G_{j+1}+2\log|\zeta_{j+1}| & \ell=j+1, 1\le j\le N-1\\
G_j & \tx{otherwise}
\end{cases}
\een
\end{defn}

Note that the for filtered line bundles, the estimate in (\ref{eq:defadapted}) sharpens to (\label{eq:adapted}) thanks to B\^ocher's Theorem. We will need a similar sharpening for higher rank case. Let $\mce$ be a locally free sheaf of $\mco'$-modules and $h$ a metric on $\mce$ as a vector bundle on $X-D$. Fix $P\in D$ and $(U;z)$ a chart centered at $P$.
\ben
\mcp_a^{\vee}\mce^\ast_P=\left\{
s\in\mce^\ast_P \middle| 
\begin{array}{l}
\tx{for all }c\in \mbr, \,\, e\in \mcp_c^h\mce,\\
s(e)=O(|z|^{-a-c-\eps})\,\, \forall \,\, \eps>0
\end{array}
\right\}\,.
\een
Let $h^\ast$ be the induced metric on $\mce^\ast$. By Prop 3.1 of \cite{Sim90}, $\mcp_a^\vee\mce^\ast_P=\mcp_a^{h^\ast}\mce^\ast_P$ for all $a\in\mbr$. It is also straightforward to see that for nonzero representative $e\in \delta\mcp_a^h\mce_P$, there is $s\in \mcp_{-a}^\vee \mce^\ast_P$ such taht $s(e)=1$. The following result is a consequence of the proof of Lemma 6.2 in \cite{Sim90}.

\begin{prop} \label{prop:sharpenadaptedness}
Let $\mce$, $h$ and $(U;z)$ as above and suppose that near $P\in D$ we have
\ben
\vb{F_{\nabla_h}}_h\le f\in L^p
\een
Then
\ben
\log\vb{e}_h=-a\log\vb{z}+O(1)\,\,\tx{ as }\vb{z}\to 0
\een
where $e\neq 0 \in\delta\mcp_a^h\mce_P$.
\end{prop}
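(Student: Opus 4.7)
The plan is to promote the soft adaptedness bound $|e|_h = O(|z|^{-a-\eps})$ to the sharp $|e|_h = \Theta(|z|^{-a})$ by combining filtered duality with a Moser-type elliptic regularity argument in the style of Simpson's Lemma 6.2 in \cite{Sim90}; the $L^p$ curvature hypothesis is exactly what is needed to rule out slowly growing anomalies.

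First, choose a holomorphic lift $\tilde e$ of $e$ on a punctured neighborhood of $P$. By the observation just preceding the proposition, there exists $s\in\mcp^\vee_{-a}\mce^\ast_P$ with $s(\tilde e)=1$. Using Simpson's identification $\mcp^\vee_{-a}=\mcp_{-a}^{h^\ast}$ and adaptedness of $h$ and of $h^\ast$, for every $\eps>0$
\ben
|\tilde e|_h \le C_\eps|z|^{-a-\eps},\qquad |s|_{h^\ast}\le C_\eps|z|^{a-\eps},
\een
and the Cauchy--Schwarz pairing $1=|s(\tilde e)|\le|s|_{h^\ast}|\tilde e|_h$ converts these into the matching lower bounds $|\tilde e|_h\ge c_\eps|z|^{-a+\eps}$ and $|s|_{h^\ast}\ge c_\eps|z|^{a+\eps}$. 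Setting $u=\log|\tilde e|_h^2+2a\log|z|$, we obtain $|u|=o(\log(1/|z|))$ as $z\to 0$.

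Next, the Poincar\'e--Lelong formula for a holomorphic section of a Hermitian bundle gives on $\mbd^\times$
\ben
i\pd\bar\pd u = -\frac{i\langle F_{\nabla_h}\tilde e,\tilde e\rangle_h}{|\tilde e|_h^2}+\theta,\qquad \theta\ge 0,
\een
so $\Delta u\ge -C|F_{\nabla_h}|_h\ge -Cf$ pointwise almost everywhere, with $f\in L^p$. Solving $\Delta\psi=Cf$ on a small disk $\mbd$ around $P$ with $\psi|_{\pd\mbd}=0$ gives $\psi\in W^{2,p}(\mbd)\subset C^0(\mbd)$ by 2D elliptic regularity, and $u+\psi$ is subharmonic on $\mbd^\times$ while still satisfying $u+\psi=o(\log(1/|z|))$. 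The decisive step, adapted from Simpson's Lemma 6.2, is that such a subharmonic function with two-sided sub-logarithmic growth at the puncture must be locally bounded: the $L^p$ (not merely $L^1$) regularity of the Poisson datum controls the log-potential of the Riesz measure and excludes $\log\log$-type blow-ups, while the two-sided sub-logarithmic control rules out any logarithmic or polar term in the harmonic part of the Laurent-type decomposition of $u+\psi$ on $\mbd^\times$. Therefore $u=O(1)$ near $P$, giving $|\tilde e|_h\le C|z|^{-a}$. Applying the same argument to $(s,h^\ast)$, whose curvature has the same pointwise norm as $F_{\nabla_h}$, yields $|s|_{h^\ast}\le C|z|^a$ and hence $|\tilde e|_h\ge c|s|_{h^\ast}^{-1}\ge c'|z|^{-a}$. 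Independence of the representative is immediate, since two lifts of $e$ differ by an element of $\mcp_{<a}^h$ whose norm is of strictly smaller order $O(|z|^{-a+\delta})$.

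The main obstacle is the removable-singularity step: sub-logarithmic growth by itself does not forbid $\log\log$-type blow-ups of subharmonic functions, so one must genuinely use the $L^p$ regularity of the curvature together with the symmetric two-sided bounds from duality to exclude subtle concentration of the Riesz measure near $P$. This potential-theoretic analysis is the technical heart of the argument and mirrors Simpson's proof in the line-bundle setting, applied here in a local frame adapted to the filtration of $\mce$ at $P$.
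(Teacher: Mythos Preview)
Your proposal follows essentially the same route as the paper: use the dual section $s$ and Cauchy--Schwarz to get two-sided $o(\log|z|)$ control on $g=\log(|z|^{-a}|e|_h)$, use the curvature-decreasing property of holomorphic subbundles to make $g$ minus an $L^p$-potential subharmonic on the punctured disk, remove the singularity, and then repeat on the dual side.

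Two comments on the execution. First, your statement that ``such a subharmonic function with two-sided sub-logarithmic growth at the puncture must be locally bounded'' is false as written: $-\log\log(1/|z|)$ is subharmonic on $\mbd^\times$ with sub-logarithmic growth but is unbounded below. What you actually get, and all you use, is the \emph{upper} bound; the lower bound only comes from the second pass on $(s,h^\ast)$. Second, your discussion of Riesz measures and ``$L^p$ excluding $\log\log$ blow-ups'' overcomplicates the removable-singularity step. The paper dispatches it cleanly via \cite[Lemma~2.2]{Sim90}: once $\Delta\psi=Cf$ with $\psi\in W^{2,p}\subset C^0$ is subtracted, the remainder is subharmonic on $\mbd^\times$ with sub-logarithmic growth, hence weakly subharmonic on all of $\mbd$, and the maximum principle (comparing with $\psi$ arranged to dominate on $\pd\mbd$) gives the upper bound directly. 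The $L^p$ hypothesis is used only to make $\psi$ continuous, not in the removable-singularity step itself.
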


\begin{proof}
We view $e\in \delta\mcp_a^h\mce_P$ as a nonzero section defined in some open subset of $U$ containing $P$. For all $\eps>0$ there is $C_\eps$ with $\log \rd{|z|^{-a}\vb{e}_h}\le \eps\vb{\log |z|}+C_\eps$. Let $s\in \mcp_{-a}^{\vee}\mce^\ast_P=\mcp_{-a}^{h^\ast}\mce^\ast_P$ be such that $s(e)=1$. By Cauchy-Schwarz we have $1=\vb{s(e)}\le \vb{s}_{h^\ast}\vb{e}_h$. For all $\eps>0$ there is $C_\eps'$ such that
\ben
\log\rd{|z|^{-a}\vb{e}_h} \ge -\log\rd{|z|^a\vb{s}_{h^\ast}}\ge -\eps\vb{\log |z|}-C_\eps'\,.
\een
Let $g=\log\rd{|z|^{-a}\vb{e}_h}$, it follows that
\ben
\vb{\frac{g}{\log |z|}}\lto 0\,\,\tx{ as }\,|z|\lto 0\,.
\een
Let $\mbd$ be a sufficiently small disk centered at $P$. The fact that curvature decreases in holomorphic subbundle (spanned by $e\in \mce$) implies that we have $-\Delta g\le h$ away from $x$ for some $h\in L^p$. Let $u\in L_2^p(\mbd)\subset C^0(\mbd)$ be some bounded function such that $\Delta u=h$ and $g<u$ on $\pd \mbd$. We have $-\Delta(g-u)\le 0$ on $\mbd-\cl{P}$. We have that $\vb{(g-u)/\log |z|}\to 0$ as $|z|\to 0$. By \cite[Lemma 2.2]{Sim90}, $-\Delta(g-u)\le 0$ weakly on all of $\mbd$. By the maximum principle, $g\le u$, i.e. $g$ is bounded from above. Replace $g$ with $-g$, consider holomorphic subbundle spanned by $s\in \mce^\ast$, note that the induced metric has $F_{\nabla_{h^\ast}}=-F_{\nabla_h}^t$ (see e.g. Prop 4.3.7 (iii) \cite{Huy05}). Repeating the above argument shows that $g$ is bounded from below as well. The conclusion then follows.
\end{proof}

\subsection{SU(1,2) Higgs bundle as Hecke modification}
\label{sec:hecke}

We next review from \cite{Na21} the construction an stable SU(1,2) Higgs bundle $(F,\bet,\gam)$ associated to a $q\in H^0(X,K_X^2)$ and a line bundle $L$ via holomorphic Hecke modifications of 
\ben
V=L^{-2}K_X\oplus LK_X\,.
\een

A Hecke modification of a holomorphic vector bundle $E$ at $D=\cl{x_1,\ldots,x_N}$ is a pair $(\hat E,s)$ where $\hat E$ is a holomorphic vector bundle and $s:\atp{\hat E}{X-D}\to \atp{E}{X-D}$ is an isomorphism which induces isomorphism $\hat\mce\xrightarrow{\sim} \mce$ where $\hat\mce=\hat E\otimes_{\mco}\mco(\ast D)$, $\mce=E\otimes_{\mco}\mco(\ast D)$. This is related to the Hecke operator and the Hecke eigensheaves, which play central roles in the geometric Langlands program (see, e.g. \cite{Fre07}). \cite{Hor22} used the Hecke modification to parametrize the non-abelian part of the singular Hitchin fiber for SL$(2,\mbc)$-Higgs bundles. $(\hat E,s)$ is called a holomorphic Hecke modification if $s$ is a holomorphic bundle map $\hat E\to E$. The data in a holomorphic Hecke modification of $E$ at $D$ is equivalent to specifying a locally free subsheaf of $\mco_X$-modules of the same rank that is isomorphic on $X-D$ to $E$. We introduce three sets equipped with some equivalence relations.

\begin{defn} \label{def:threesets}
Let $\ulD=(D_\bet,D_\gam,D_r)$ be a partition of the finite set $D$ and set
\ben
\mcm_{L,q,\ulD}=\left\{
\begin{array}{l}
\rd{F,\bet,\gam}\\
\tx{SU(1,2) Higgs bundle}
\end{array} \,\middle|\, 
\begin{array}{l}
\det F^\ast=L, \,\, \bet\circ\gam=q \\
Z(\bet)=D_\bet,\, Z(\gam)=D_\gam\,.
\end{array}
\right\}\,,
\een
with $(F,\bet_1,\gam_1)\sim (F',\bet_2,\gam_2)$ if there is an isomorphism $\psi: F\to F'$, $\Lambda^2 \psi=\tx{id}_{L^\ast}$ with $\bet_1=\bet_2\circ\psi$, and $\gam_2=\psi\circ\gam_1$. Let
\ben
\mch_{L,q,\ulD}=\left\{
\rd{F,\iota}
\,\middle|\,
\begin{array}{l}
\iota: F\to V\,\,\tx{ injective map of $\mco$-modules, isomorphic over $X-D$} \\
\Lambda^2 \iota=q:\, L^{-1}\lto \det V \\
\iota\rd{\atp{F}{p_j}}\subseteq \atp{L^{-2}K}{p_j}\subset \atp{V}{p_j}\,\tx{ for }p_j\in D_\bet\\
\iota\rd{\atp{F}{p_j}}\subseteq \atp{LK}{p_j}\subset \atp{V}{p_j}\,\tx{ for }p_j\in D_\gam\\
\iota\rd{\atp{F}{p_j}}\nsubseteq \tx{ either summand in }\atp{V}{p_j}\,\tx{ for }p_j\in D_r
\end{array}
\right\}
\een
with $(F,\iota)\sim (F',\iota')$ if there is an isomorphism $\psi: F\xrightarrow{\sim}F'$, $\Lambda^2\psi=\tx{id}_{L^\ast}$ and $\lam\in\mbc^\times$ such that 
\be \label{eq:eqvrel01}
\tx{diag}\rd{\lam^2, \lam^{-1}}\circ \iota=\iota'\circ \psi
\ee
For a finite set $\delta\subset X$ and a line bundle $L$ on $X$, set
\ben
\mcf_{L,\delta}=\prod_{p\in \delta}\rd{\atp{L^3}{p}}^\times =\cl{\ulb=\rd{b_p}_{p\in \delta}}
\een
with $\ulb\sim \bm{b'}$ if there is $\tau\in \mbc^\times$ such that $b_p'=\tau b_p$ for all $p\in \delta$. For $\delta=D_r$, $\ulb\in \mcf_{L,D_r}$ is called an admissible Hecke parameter with respect to $\ulD$.
\end{defn}

Note that the equivalence classes in $\mcm_{L,q,\ulD}$ are the isomorphism classes of stable SU(1,2) Higgs bundles associated to $q$ and $L$. The following gives a correspondence between the three sets, see \cite{Na21} for more details of the corresponding moduli problem.

\begin{thm} \label{thm:hecke}
There are bijections
\begin{center}
\begin{tikzcd}
\mcm_{L,q,\ulD} \arrow[r,"\sim"] & \mch_{L,q,\ulD} \arrow[r,"\sim"] & \mcf_{L,D_r} \\
\rd{F,\bet,\gam} \arrow[r,mapsto] & \rd{F,\iota} \arrow[r,mapsto] & \ulb
\end{tikzcd}
\end{center}
respecting the equivalence relations. For each $\ulb\in \mcf_{L,D_r}$ and $p_j\in D_r$, there are frame $s_{0,j}$ of $L$, $\cl{s_{1,j},s_{2,j}}$ of $F$, and $\sig_{1,j}=\rd{\rd{s_{0,j}}^{-2}d\zeta_j,0}$, $\sig_{2,j}=\rd{0,s_{0,j}d\zeta_j}$ of $V$ over $\mbd_j$ with
\begin{itemize}
\item $s_{0,j}^{\otimes 3}=b_{p_j}$ for each $p_j\in D_r$, 
\item local form of $\iota: F\to V$ is given by $\ov{\sqrt{2}}\pmt{\zeta_j & -1 \\ \zeta_j & 1}$,
\item $s_{1,j}\wedge s_{2,j}=s_{0,j}^{-1}$, and
\item $\bet$, $\gam$ have local forms over $\mbd_j$ with $p_j\in D_r$ given by
\ben
\bet=\rd{1/\sqrt{2}}\pmt{\zeta_j & 1}d\zeta_j,\,\,\gam=\rd{1/\sqrt{2}}\pmt{1 & \zeta_j}^Td\zeta_j \,.
\een
\end{itemize}
We say $\usig_j=\rd{\sig_{1,j},\sig_{2,j}}$, $\uls_j=\rd{s_{1,j},s_{2,j}}$ are induced by $\cl{s_{0,j}}$ of $\atp{L}{\mbd_j}$.
\end{thm}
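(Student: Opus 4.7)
The plan is to follow the construction of \S 4 of \cite{Na21}, exhibiting maps in both directions between the three sets and verifying they respect the equivalence relations. First, construct $\mcm_{L,q,\ulD}\to\mch_{L,q,\ulD}$: given $(F,\beta,\gamma)$, the wedge pairing $F\otimes F\to\Lambda^2F=L^{-1}$ provides the rank-two duality $F\cong F^\ast\otimes L^{-1}$, and applied to $\gamma\in H^0(F\otimes L^{-1}K_X)$ it yields a sheaf map $\gamma^\sharp:F\to L^{-2}K_X$. Set $\iota=(\gamma^\sharp,\beta):F\to V$. A short local matrix calculation shows $\Lambda^2\iota=\pm\beta\circ\gamma^\sharp$ equals $q$ up to sign, so $\iota$ is injective with cokernel supported on $D$. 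The three types of zero of $q$ at $p_j$ translate to the three image conditions in the definition of $\mch$: if $\beta(p_j)=0$ the $LK_X$-component of $\iota$ vanishes at $p_j$ and $\iota(F|_{p_j})\subset L^{-2}K_X|_{p_j}$, dually if $\gamma(p_j)=0$, and the image lies in neither summand for $p_j\in D_r$. An isomorphism $\psi:F\to F'$ with $\Lambda^2\psi=\tx{id}_{L^\ast}$ intertwines the constructions, giving well-definedness on equivalence classes.

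Next, the map $\mch_{L,q,\ulD}\to\mcf_{L,D_r}$ extracts a Hecke parameter at each $p_j\in D_r$: since $\det\iota=q$ vanishes to first order there, the fiber image of $\iota$ is a one-dimensional subspace $\mbc\cdot(v_1+v_2)\subset V|_{p_j}$ with $v_1\in L^{-2}K_X|_{p_j}$ and $v_2\in LK_X|_{p_j}$ both nonzero, and the canonical fiberwise identification $LK_X\otimes(L^{-2}K_X)^{-1}\cong L^3$ produces $b_{p_j}=v_2\otimes v_1^{-1}\in L^3|_{p_j}^\times$. Rescaling $\iota$ by $\tx{diag}(\lam^2,\lam^{-1})$ sends $b_{p_j}$ to $\lam^{-3}b_{p_j}$, matching the $\tau$-rescaling in $\mcf_{L,D_r}$ under $\tau=\lam^{-3}$. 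To invert, given $\ulb$, at each $p_j\in D_r$ pick a local frame $s_{0,j}$ of $L$ on $\mbd_j$ with $s_{0,j}^{\otimes 3}=b_{p_j}$ at $p_j$ (unique up to a cube root of unity, which is absorbed by the equivalence); let $\sigma_{1,j},\sigma_{2,j}$ be the induced $V$-frames, and define $F|_{\mbd_j}\subset V|_{\mbd_j}$ as the $\mco$-submodule generated by $s_{1,j},s_{2,j}$ chosen so that $\iota$ takes the stated normal form. One then checks $\det(\iota)=\zeta_j$ and $s_{1,j}\wedge s_{2,j}=s_{0,j}^{-1}$, consistent with $\Lambda^2\iota=q$. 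Analogous local constructions at $p_j\in D_\beta\cup D_\gamma$ require no Hecke parameter, since the normal form of $\iota$ is fixed by the vanishing of one component of $\Phi$. Gluing these local pieces with $F\cong V$ on $X-D$ yields $(F,\iota)$, and the local forms of $\beta,\gamma$ follow from $\iota$ via projection and the duality relation.

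The main subtlety is the analysis at points of $D_r$: the parameter space $L^3|_{p_j}^\times$ captures precisely the freedom in simultaneously normalizing $\beta$ and $\gamma$ at such a point. A choice of cube root $s_{0,j}$ of $b_{p_j}$ rigidifies compatible frames of $L$, $V$, and $F$ in which all three of $\iota,\beta,\gamma$ assume their canonical forms, and this local datum at each $p_j\in D_r$ (together with the overall $\mbc^\times$ scaling reflecting a diagonal automorphism of $F$) accounts for the entire moduli of stable SU(1,2) Higgs bundles with fixed $L$, $q$, and $\ulD$. The remaining verifications -- well-definedness on equivalence classes, bijectivity of the compositions, and agreement of the local forms with the prescribed expressions for $\beta$ and $\gamma$ -- reduce to direct local calculations in these frames.
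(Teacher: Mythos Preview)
Your proposal is correct and follows essentially the same approach as the paper: the map $\iota=(\gamma^\sharp,\beta)$ via the rank-two wedge duality is exactly the paper's construction $\iota_1\oplus\iota_2$ through the isomorphism $\phi_F:F^\ast\otimes\det F\xrightarrow{\sim}F$, and your extraction of $b_{p_j}=v_2\otimes v_1^{-1}$ from the image line is the same ``slope'' parameter the paper uses. The one place where the paper is more explicit than your sketch is the frame normalization at $p_j\in D_r$: rather than asserting that $s_{1,j},s_{2,j}$ can be chosen so that $\iota$ takes the stated form, the paper starts from an arbitrary frame $s_1',s_2'$ with $s_1'$ spanning $\ker\beta$, writes $\beta=(0,\beta_2)$, $\gamma=(\gamma_1,\gamma_2)^T$, takes $c$ a cube root of $\beta_2^2\gamma_1(0)$, and gives an explicit linear change of frame producing the standard forms; this also makes transparent why the cube-root ambiguity is exactly the $L^3|_{p_j}$ parameter. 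Your ``up to sign'' for $\Lambda^2\iota$ should be resolved to an exact equality $\Lambda^2\iota=q$ by fixing the sign convention in $\phi_F$, but this is cosmetic.
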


\begin{proof}
For a rank-two vector bundle $\mce$, denote $\phi_\mce: \mce^\ast\otimes\det \mce\xrightarrow{\sim}\mce$ with $\ell\otimes s_1\wedge s_2\mapsto \ell(s_2)s_1-\ell(s_1)s_2$. This is an isomorphism that induces identity on $\det\mce$.

Given $(F,\bet,\gam)\in \mcm_{L,q,\ulD}$, set $\iota_2=\bet$, and let $\iota_1$ be the composition
\ben
F\lxrightarrow{\phi_F} F^\ast \otimes L^{-1} \lxrightarrow{\gam^T\otimes 1}L^{-1}K\otimes L^{-1}=L^{-2}K\,.
\een
Let $\iota=\iota_1\oplus \iota_2: F\to V=L^{-2}K\oplus LK$. Let $\cl{s_1,s_2}$ be a local holomorphic frame of $F$ over $(U,\zeta)$. The induced local frame of $L$ consists of a nowhere vanishing holomorphic section $(s_1\wedge s_2)^{-1}$. Under these frames there are $\bet_1$, $\bet_2$, $\gam_1$, $\gam_2\in \mco(U)$ such that
\be \label{eq:betagammaform01}
\bet=\pmt{\gam_1 & \gam_2}d\zeta, \,\, \gam=\pmt{\bet_1 & \bet_2}^T d\zeta
\ee
and
\be \label{eq:iotaform01}
\iota=\pmt{\gam_2 & -\gam_1\\ \bet_1 & \bet_2}\,.
\ee
It follows that $\iota\rd{\atp{F}{p_j}}$ lies in fiber of first (resp. second) summand iff $\bet(p_j)=0$ (resp. $\gam(p_j)=0$), therefore we get a map $\mcm_{L,q,\ulD}\to \mch_{L,q,\ulD}$. From the above local form, it is also clear that the map is a bijection preserving respective equivalent relations.

Given $(F,\iota)\in \mch_{L,q,\ulD}$, we have an exact sequence of $\mco$-modules
\begin{center}
\begin{tikzcd}
0\arrow[r] & F\arrow[r,"\iota"] & V=L^{-2}K\oplus LK\arrow[r,"\pi"] & \mco_D \arrow[r] & 0
\end{tikzcd}
\end{center}
where $\mco_D$ is a direct sum of skyscraper sheaves with stalks $\mbc$ at points in $D$, and $\pi$ is unique up to $\tx{Aut}_{\mco}\mco_D\cong (\mbc^\times)^{4g-4}$. For each $p\in D_r$, $\iota\rd{\atp{F}{p}} = \ker\,\pi$ for $\pi\neq 0$ lying in a line $\ell\in \mbp\rd{\rd{\atp{L^{-2}K_X}{p}\oplus \atp{LK_X}{p}}^\ast}$, corresponding to $b_p\in \atp{L^{\otimes 3}}{p}$ which gives the slope. This gives the map (dependent on $\ulD$) from $\mch_{L,q,\ulD}$ to $\mcf_{L,D_r}$. Again it is easy to verify that this is a bijection preserving respective equivalences.

Let $s_1'$, $s_2'$ be a local frame of $F$ over $(U;\zeta)$ around $p\in D_r$. Without loss of generality, suppose $s_1'$ generates the subbundle ker$\,\bet$ over $U$. Under local frames $(s_1'\wedge s_2')^{-1}$ of $L$ and $d\zeta$ of $K_X$, we have
\ben
\bet=\pmt{0 & \bet_2},\,\, \gam=\pmt{\gam_1 & \gam_2}^T
\een
where $\bet_2\gam_2=\zeta$. By definition of $D_r$, $\bet_2(\zeta=0)\neq 0$, therefore $\gam_2(\zeta=0)=0$ and $\gam_1(\zeta=0)\neq 0$. We may assume $\bet_2$ is nowhere vanishing on $U$. Therefore $\bet_2^{-1}\in \mco(U)$ and $\gam_2=\zeta \bet_2^{-1}$. Let $c$ be a cube root of the nonzero complex number $\bet_2^2\gam_1(\zeta=0)$ and set
\begin{align*}
&s_1=\rd{\sqrt{2}c^2\bet_2^{-1}+\ov{\sqrt{2} c\bet_2}(\bet_2^2\gam_1-c^3)}s_1'+\frac{\zeta}{\sqrt{2}c}s_2',\\
&s_2=\ov{\sqrt{2}c\zeta \bet_2\rd{\bet_2^2\gam_1-c^3}}s_1'+\ov{\sqrt{2}c}s_2'
\end{align*}
Under this local frame for $F$, $(s_1\wedge s_2)^{-1}$ for $L$ and $d\zeta$ for $K_X$, we have
\ben
\bet=(1/\sqrt{2})\pmt{\zeta & 1},\,\, \gam=(1/\sqrt{2})\pmt{1 & \zeta}^T\,.
\een
Under $s_1$, $s_2$ for $F$, $s_0=(s_1\wedge s_2)^{-1}$ for $L$ and $\sig_1=(s_0^{-2}d\zeta,0)$, $\sig_2=(0,s_0d\zeta)$ for $V$, by (\ref{eq:iotaform01}) $\iota$ takes the form in the statement. $\iota\rd{\atp{F}{p}}$ is the line spanned by $(-1,1)^T$ in $\atp{V}{p}=\atp{L^{-2}K_X}{p}\oplus \atp{LK_X}{p}$ under $\sig_1$, $\sig_2$. It follows straightforwardly that $b_p=1$ under the frame $s_0^3$, therefore the rest of the statement follows.
\end{proof}

It is worth mentioning that there is an equivalent interpretation of the Hecke parameters which is in a sense dual to the above construction. Given $(F,\bet,\gam)\in \mcm_{L,q,\ulD}$, $F$ may be viewed as extension of line bundles in two different ways
\begin{center}
\begin{tikzcd}
0 \arrow[r] & LK^{-1}\rd{D_\gam}\arrow[r,"(1)"] \arrow[d,"s"] & F \arrow[d,equal] \arrow[r] & L^{-2}K\rd{-D_\gam} \arrow[r] & 0 \\
0 & LK\rd{-D_\bet} \arrow[l] & F \arrow[l,"(2)"] & L^{-2}K^{-1}\rd{D_\bet} \arrow[l] & 0 \arrow[l]
\end{tikzcd}
\end{center}
where (1) (resp. (2)) are induced by $\gam$ (resp. $\bet$) and $s$ have simple zeros at $D_r$. The line bundles $LK^{-1}\rd{D_\gam}$ (resp. ker$\,\bet\cong L^{-2}K^{-1}\rd{D_\bet}$) are realized as subbundles img$\,\gam$ (resp. ker$\,\bet$) inside $F$. Their fibers meet precisely at $D_r$. Up to a global constant, the tuple $\ulb$ provides a recipe to match their fibers at $D_r$ inside of $F$. This picture of extensions is similar to the reconstruction of an SO$(2m+1)$ bundle from an Sp$(2m)$ bundle in \S 4.2 of \cite{Hit07} which inspired our description of SU(1,2) spectral data. 

We consider an example of SU(1,2) Higgs bundle with $q\in H^0(X,K_X^2)$ having simple zeros:
\be \label{eq:specialSU12}
\rd{F=K^{-1}\oplus K,\,\, \bet=\rd{1/\sqrt{2}}\pmt{q & 1},\,\, \gam=\rd{1/\sqrt{2}}\pmt{1 & q}^T}\,.
\ee
It is stable by Prop \ref{prop:su12stab2}. (\ref{eq:iotaform01}) holds globally and the corresponding Hecke modification $\iota:F \to V$ fits into a short exact sequence
\begin{center}
\begin{tikzcd}
0 \arrow[r] & F=K^{-1}\oplus K \arrow[r, "\iota"] & V=K\oplus K \arrow[r,"\pi"] & \mco_D \arrow[r] & 0
\end{tikzcd}
\end{center}
where 
\ben
\iota=\ov{\sqrt{2}}\pmt{q & -1 \\ q & 1}
\een
and $\pi_p(f_1d\zeta,f_2d\zeta)^T=f_1(p)+f_2(p)$ where $(U;\zeta)$ is a chart centered at $p$. The corresponding tuple $\ulb=(b_p)_{p\in D_r}$ is given by $b_p=1$.

For an SU(1,2) Higgs bundle $(F,\bet,\gam)$ and $p\in D_r$, Theorem \ref{thm:hecke} provides local frames under which Higgs field takes a standard form. We give below similar frames at points in $D_\bet$ and $D_\gam$. We view the sections of $F$ as sections of $V$ via $\iota$ from the correspondence in Theorem \ref{thm:hecke}.

\begin{defn} \label{def:heckeinducedframe2}
For $p_j\in D_\bet$, given frame $\cl{s_{0,j}}$ of $\atp{L}{\mbd_j}$ define a local frame of $V$ by $\sig_{1,j}=\rd{s_{0,j}^{-2}d\zeta_j,0}$, $\sig_{2,j}=\rd{0,s_{0,j}d\zeta_j}$ and set $s_{1,j}=\zeta_j\sig_{2,j}$, $s_{2,j}=-\sig_{1,j}$. These form a frame of $F$ over $\mbd_j$. Under these frames,
\ben
\bet=\pmt{\zeta_j & 0}d\zeta_j,\,\, \gam=\pmt{1 & 0}^Td\zeta_j\,.
\een
For $p_j\in D_\gam$, given frame $\cl{s_{0,j}}$ of $\atp{L}{\mbd_j}$ define a local frame $\sig_{1,j}$, $\sig_{2,j}$ of $V$ as above and set $s_{1,j}=\sig_{2,j}$, $s_{2,j}=-\zeta_j\sig_{1,j}$. These form a frame of $F$ over $\mbd_j$. Under these frames,
\ben
\bet=\pmt{1 & 0}d\zeta_j,\,\, \gam=\pmt{\zeta_j & 0}^Td\zeta_j\,.
\een
We will say that frames $\usig_j=\cl{\sig_{1,j},\sig_{2,j}}$ and $\uls_j=\cl{s_{1,j},s_{2,j}}$ are induced by $\cl{s_{0,j}}$.
\end{defn}

It will be useful to note that if $s_{0,j}$ induces frame $s_{1,j}$, $s_{2,j}$, then for $f$ a nowhere vanishing holomorphic function on $\mbd_j$, $fs_{0,j}$ induces frame $fs_{1,j}$, $f^{-2}s_{2,j}$. Furthermore, note that under $\sig_{1,j}$, $\sig_{2,j}$ on $\mbd_j^\times$ we have
\be \label{eq:stdform4}
\bet=\pmt{0 & \zeta_j^{-1}}d\zeta_j,\,\, \gam=\pmt{0 & \zeta_j^2}^Td\zeta_j
\ee

\section{Construction of approximate solution}
\label{sec:constapproxsoln}

Let $(F,\bet,\gam)$ be a stable SU(1,2) Higgs bundle, $t\ge 1$ and recall notations in Def \ref{def:disc}. In this part we construct an approximate solution to the Hitchin equation for $(F,t\bet,t\gam)$ for $t\gg 1$ by gluing an decoupled solution on $X-\coprod_j \mbd_j$ to local model solutions on $\mbd_j''$. In \S \ref{sec:decoupledsolution}, we characterize solutions to decoupled equation. In \S \ref{sec:localmodel}, we describe a family of local model solutions parametrized by $\lam\in (-1/4,1/4)$ as well as two other local model solutions which may be viewed as the cases $\lam=\pm 1/4$. In \S \ref{sec:asymp}, we apply estimates from \cite{Moc16} to study asymptotic properties of these local model solutions. In particular in \S \ref{sec:3.3.4}, we show that the next-to-leading order coefficients of the $\lam$-family of local solutions depends continuously on $\lam\in(-1/4,1/4)$. In \S \ref{sec:approxsoln}, we show that for $t\gg 1$, there is a family $\ulam(t)$ for which we can glue the local model solutions to a decoupled solution. A family of approximate solution $h_t^{\tx{app}}$ is constructed using this $t$-dependent tuple of parabolic weights $\ulam(t)$ and we show that it is very close to solving the Hitchin equation.

\subsection{Decoupled solution}
\label{sec:decoupledsolution}

A metric $h$ on $\atp{F}{X-D}$ solving the decoupled form of (\ref{eq:hitchin})
\be \label{eq:decoupled}
F_{\nabla_h}=0,\,\,\, \gam\wedge\gam^{\dagger_h}+\bet^{\dagger_h}\wedge\bet=0\,\,\tx{ on }X-D\,.
\ee
will be refered to as a decoupled solution. Let $\iota: F\to V=L^{-2}K\oplus LK$ be the Hecke modification associated to $(F,\bet,\gam)$ as in Theorem \ref{thm:hecke}.

\begin{lem} \label{lem:decoupledsoln}
$h_\infty$ on $\atp{F}{X-D}$ is a decoupled solution iff it has the form
\be \label{eq:formofdecoupledsolution}
h_\infty=\iota^\ast \rd{h_L^{-2}h_K\oplus h_L h_K}
\ee
where $h_K$ is the unique metric on $K$ such that $\vb{q}\equiv 1$ under induced metric on $K^2$ and $h_L$ is a harmonic metric adapted to a filtered line bundle $(L,\ulam)$ with $\deg L+\sum_j \lam_j=0$.
\end{lem}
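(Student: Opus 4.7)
The plan is to work on $V|_{X-D}$ where $\iota:F\to V$ is an isomorphism. The first step is to identify the transferred Higgs field: from $\iota=\iota_1\oplus\iota_2$ with $\iota_2=\bet$ in Theorem~\ref{thm:hecke}, $\bet\circ\iota^{-1}:V|_{X-D}\to LK_X$ equals the projection $\pi_2$, and $\pi_2\circ\iota\circ\gam=\bet\circ\gam=q$. Using the local Hecke frames in Theorem~\ref{thm:hecke} and Def~\ref{def:heckeinducedframe2} for each of $D_r$, $D_\bet$, $D_\gam$, the section $\pi_1\circ\iota\circ\gam\in H^0(X,L^{-3}K_X^2)$ vanishes identically on every $\mbd_j$, hence on a non-empty open set, hence globally. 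Thus on $X-D$, $\iota\circ\gam:LK_X^{-1}\to V$ factors as $LK_X^{-1}\xrightarrow{q}LK_X\hookrightarrow V$.

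For the $(\Leftarrow)$ direction, I would set $h_V=h_L^{-2}h_K\oplus h_Lh_K$ on $V|_{X-D}$ and define $h_\infty=\iota^\ast h_V$. Flatness follows from the block-diagonal Chern connection and that each summand is flat: $F_{\nabla_{h_L}}=0$ by harmonicity of $h_L$, and locally $|d\zeta_j|_{h_K}^2=|q_0|^{-1}$ with $q_0$ holomorphic gives $F_{\nabla_{h_K}}=\pd\bar\pd\log|q_0|=0$ on $X-D$. The pointwise algebraic equation is checked in the splitting frame $(\sig_1,\sig_2)$ of $V$: with $\tilde\bet_0=(0,1)$, $\tilde\gam_0=(0,q_0)^T$ from the first paragraph and $H_V=\tx{diag}(H_L^{-2}|q_0|^{-1},H_L|q_0|^{-1})$, a direct $2\times 2$ computation verifies $\tilde\gam_0H_L^{-1}\tilde\gam_0^\dagger H_V=H_V^{-1}\tilde\bet_0^\dagger H_L\tilde\bet_0$.

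For the $(\Rightarrow)$ direction, given decoupled $h$ I would transfer to $h_V=(\iota^{-1})^\ast h$ on $V|_{X-D}$, write $H_V=\begin{pmatrix}A & B\\\bar B & C\end{pmatrix}$ in the splitting frame, and let $H_L$ be the scalar defined by $\det H_F=H_L^{-1}$ (so $h_L$ is the metric on $L$ dual to $\det h$). Setting $E_{22}=\begin{pmatrix}0 & 0\\ 0 & 1\end{pmatrix}$, the algebraic equation $H_V\tilde\gam_0H_L^{-1}\tilde\gam_0^\dagger H_V=\tilde\bet_0^\dagger H_L\tilde\bet_0$ becomes $H_L^{-1}|q_0|^2 H_V E_{22}H_V=H_L E_{22}$, and $H_V E_{22}H_V=\begin{pmatrix}|B|^2 & BC\\C\bar B & C^2\end{pmatrix}$; the $(1,1)$ entry forces $B\equiv 0$ on $X-D$ and the $(2,2)$ entry yields $C=H_L/|q_0|$. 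Combining with $\det H_F=|\det\iota|^2\det H_V=|q_0|^2 AC$ and $\det H_F=H_L^{-1}$ gives $A=H_L^{-2}/|q_0|$, so $h_V=h_L^{-2}h_K\oplus h_Lh_K$ as claimed.

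Finally, flatness of $h$ transfers to $F_{\nabla_{h_V}}=0$ on $X-D$; combined with the orthogonal splitting and $F_{\nabla_{h_K}}=0$, this forces $F_{\nabla_{h_L}}=0$, so $h_L$ is harmonic on $X-D$. Applying B\^ocher's theorem to $\log|s_0|_{h_L}$ at each $p_j$ (as in the proof of Theorem~\ref{thm:paralb}) gives $\log|s_0|_{h_L}=\lam_j\log|\zeta_j|+O(1)$ for some $\lam_j\in\mbr$, so $h_L$ is adapted to the filtered line bundle $(L,\ulam)$; the degree constraint $\sum_j\lam_j=-\deg L$ then follows from Chern-Weil integrated over $X\setminus\bigcup_jB_\eps(p_j)$, as in Theorem~\ref{thm:paralb}. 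The main obstacle is the initial identification that $\iota\circ\gam$ globally factors through $LK_X$; after that the rest reduces to pointwise matrix algebra and classical function-theoretic arguments.
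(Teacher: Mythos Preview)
Your proof is correct and follows essentially the same route as the paper: transfer the metric to $V$ via $\iota$, use the algebraic decoupled equation to force the splitting and identify $h_K$, then invoke B\^ocher's theorem for the parabolic weights. The differences are cosmetic. The paper works in a frame with $\theta^2=q$, which makes the transferred Higgs field $(0,1)$, $(0,1)^T$ and streamlines the $2\times2$ algebra; you keep a general frame and carry the factor $|q_0|$ through. The paper simply asserts the form of the transferred Higgs field, whereas you verify it by checking $\pi_1\circ\iota\circ\gam=0$ on each $\mbd_j$ and invoking the identity theorem --- this is valid but indirect: from the construction $\iota_1=(\gam^T\otimes 1)\circ\phi_F$ in Theorem~\ref{thm:hecke}, one sees immediately that $\iota_1\circ\gam$ is the evaluation of the skew pairing $\phi_F$ on $(\gam,\gam)$ and hence vanishes globally. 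You also spell out the converse direction explicitly, which the paper leaves implicit.
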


\begin{proof}
Over $X-D$ we use $\iota$ (resp. $\det \iota$) to identify $F$ (resp. $\det F=L^{-1}$) with $V=L^{-2}K\oplus LK$ (resp. $\det V=L^{-1}K$) and write $h_\infty$ as $\iota^\ast h$ for some metric $h$ on $\atp{V}{X-D}$. On $X-D$ we have $\rd{F,\bet,\gam}\cong \rd{V,b=\pmt{0 & q^{-1}}, c=\pmt{0 & q^2}^T}$. Locally let $\sigma$ (resp. $\theta$) be nowhere vanishing holomorphic section of $L$ (resp. $K$) such that $\theta^2=q$. The sections $\sigma_1=(\sigma^{-2}\otimes \theta,0)$, $\sigma_2=(0,\sigma \otimes\theta)$ (resp. $\cl{\sigma\otimes\theta^{-2}}$) form a frame of $V$ (resp. $\det V^\ast$). With respect to these, the Higgs field is given by $b=(0,1)$, $c=(0,1)^T$. Let $H=h_{\usig}$. The second equation in (\ref{eq:decoupled}) implies $c c^\ast  H \rd{\det H}=\rd{\det H}^{-1} H^{-1} b^\ast b$. We have by a direct calculation
\be \label{eq:relationHermitianmetric}
H_{12}=H_{21}=0, \,\,\, H_{11}=H_{22}^{-2}\,.
\ee
Therefore the holomorphic direct sum $V=L^{-2}K\oplus LK$ is also $h$-orthogonal. We have $h=h_1\oplus h_2$ where $h_1$, $h_2$ are metrics on the two summands on $X-D$. Equivalently, these are induced by metrics $h_L$ (resp. $h_K$) on $L$ (resp. $K$). By flatness of $\nabla_h$, $\nabla_{h_L}$, $\nabla_{h_K}$ are both flat on $X-D$. We have $H_{11}=\vb{\sigma}_{h_L}^{-4} \vb{\theta}_{h_K}=\vb{\sigma}_{h_L}^{-4}\vb{q}_{h_K^2}^{1/2}$, $H_{22}=\vb{\sigma}_{h_L}^2 \vb{q}_{h_K}^{1/2}$. Therefore, $|q|_{h_K^2}=\rd{H_{11}H_{22}^2}^{2/3}\equiv 1$.

Let $f=\log\vb{\sig}_{h_L}$. Since $F_{\nabla_{h_L}}=0$ on $\mbd_j^\times$, $f$ is a positive harmonic function on $\mbd_j^\times$. By B\^ocher's theorem (see Theorem 3.9 \cite{Axl01}), it has the form $\lam_j \log\vb{\zeta_j}+f_0$ with $f_0$ harmonic on $\mbd_j$. The condition on the sum of $\lam_j$ now follows from Theorem \ref{thm:paralb}.
\end{proof}

Given $s_0$ a nowhere vanishing section of $L$ over $(\mbd,\zeta)$ centered at $p\in D$ and let $\sig_1$, $\sig_2$ be the induced frame as in the statement of Theorem \ref{thm:hecke}. A decoupled solution $h_\infty=\iota^\ast \rd{h_L^{-2}h_K\oplus h_Lh_K}$ under these frames are given by
\be \label{eq:decoupledsolutionlocalform}
h_{\infty}=\pmt{f^{-2}\rho^{-1} & \\ & f \rho^{-1}}
\ee
where $f=|s_0|_{h_L}^2$ and $\rho=\vb{\zeta}$.

\subsection{Local model}
\label{sec:localmodel}

In this part, we construct a family of local model solutions in $\mbd_j$ depending on a real parameter $\lam$ asymptotic to $h_\infty$ defined in Lemma \ref{lem:decoupledsoln} outside $\mbd_j$ with $\lam=\lam_j$. It will be defined by harmonic metric adapted to an unramifiedly good filtered Higgs bundle over $(\mbp^1,[1:0])$ for $p_j\in D_r$ and by an explicit formula using solution to a Painlev\'e III type equation for $p_j\in D_\bet$, $D_\gam$. It will be shown below that the stability condition of these filtered Higgs bundles are equivalent to
\ben
\begin{cases}
\lam_j=-1/4 & p_j\in D_\bet \\
\lam_j=1/4 & p_j\in D_\gam \\
\vb{\lam_j}<1/4 & p_j\in D_r\,.
\end{cases}
\een
A tuple $\ulam$ satisfying these conditions will be called admissible with respect to a partition $\ulD$ of $D$. We will first describe the set of these admissible weights. Let
\be \label{eq:defulD}
\msp_{\ulD}=\left\{\ulam\middle| \,\,\ulam\tx{ is admissible w.r.t }\ulD\right\}\subset \mbr^{4g-4}
\ee
Set
\ben
\msp_{d,g}=\coprod_{\ulD\tx{ stable}}\msp_{\ulD}\subset \mbr^{4g-4}\,.
\een
We have the following simple fact

\begin{lem}
Let $C$ be the cube in $\mbr^{4n}$ with vertices $V=\cl{(\pm 1/4,\ldots,\pm 1/4)}$ (i.e. $C$ is the convex hull of $V$), $m$ an integer with $|m|<n$. $f(x_1,\ldots,x_{4n})=\sum_j x_j$ and $H=f^{-1}(m)$. For $\bm{x}\in C\cap H$, let $d_1(\bm{x})=\#\cl{j|x_j=1/4}$, $d_2(\bm{x})=\#\cl{j|x_j=-1/4}$. Then we have that either (i) $d_1<2(n+m)$, $d_2<2(n-m)$ or (ii) $d_1=2(n+m)$, $d_2=2(n-m)$. The latter case corresponds to $\bm{x}$ being one of the vertices.
\end{lem}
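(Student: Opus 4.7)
The plan is to partition the coordinates of $\bm{x}$ into three groups according to whether they equal $1/4$, equal $-1/4$, or lie strictly in the open interval $(-1/4,1/4)$, and then analyze the sum constraint separately depending on whether the third (``middle'') group is empty.

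First, I would introduce $d_0 = 4n - d_1 - d_2$ for the size of the middle group and let $S$ denote the sum of the $d_0$ middle coordinates. Rewriting $f(\bm{x}) = m$ as
\[
S + \tfrac{1}{4}d_1 - \tfrac{1}{4}d_2 = m
\]
gives the key identity $S = m + (d_2 - d_1)/4$.

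The two cases then split along whether $d_0 = 0$ or $d_0 > 0$. If $d_0 = 0$, every coordinate of $\bm{x}$ is $\pm 1/4$, so $\bm{x}$ is a vertex of $C$; combining $d_1 + d_2 = 4n$ with $d_1 - d_2 = 4m$ (which is forced by $S=0$) yields $d_1 = 2(n+m)$ and $d_2 = 2(n-m)$, which is case (ii) (and shows these are nonnegative integers since $|m|<n$). If instead $d_0 > 0$, then each middle coordinate $x_j$ satisfies $|x_j| < 1/4$ strictly, so by the triangle inequality $|S| \le \sum |x_j| < d_0/4$. Substituting $S = m + (d_2-d_1)/4$ and multiplying the resulting strict inequality $|4m + d_2 - d_1| < d_0 = 4n - d_1 - d_2$ by unpacking both signs produces exactly the two inequalities $d_2 < 2(n-m)$ and $d_1 < 2(n+m)$, which is case (i).

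The step I expect to require the most care is simply making the triangle-inequality bound strict in the case $d_0 > 0$; I would note explicitly that even a single middle coordinate forces $|S|<d_0/4$, so the dichotomy between (i) and (ii) is clean. The identification of case (ii) with vertices of $C$ follows immediately because $d_0=0$ is equivalent to $\bm{x}\in V$, and the computed values of $d_1,d_2$ simply record how many coordinates of this vertex equal $+1/4$ and $-1/4$.
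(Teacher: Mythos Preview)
Your proof is correct. The paper does not actually give a proof of this lemma; it simply introduces it as a ``simple fact'' and moves on to apply it. Your argument supplies exactly the kind of elementary case split on $d_0=0$ versus $d_0>0$ that the paper presumably has in mind, and the strict triangle-inequality step is handled cleanly.
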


With $n=g-1$, $m=-\deg L$ and $d_1=d_\gam$, $d_2=d_\bet$, it follows from Prop \ref{prop:su12stab2} and the above lemma that the closure $\overline{\msp_{d,g}}$ is the intersection $C\cap H$. It is easy to show that no edges of the cube $C$ passes through $H$, therefore $V\cap H$ is the extremal set of $C\cap H$. This is a compact convex polytope whose vertices $V\cap H=\overline{\msp_{d,g}}-\msp_{d,g}$ consists of $\msp_{\ulD}$ with $\ulD$ strictly polystable. The interior $\rd{\msp_{d,g}}^{\circ}=\msp_{\ulD}$ with $d_\bet=d_\gam=0$, whereas $\msp_{\ulD}$ for $\ulD$ stable and $d_\bet$ or $d_\gam>0$ are the interiors of the positive-dimensional faces of $\overline{\msp_{d,g}}$. 

Note that the face $\msp_{\ulD}$ has vertices consisting of tuples $\ulam$ such that $\lam_j=1/4$ for $p_j\in D_\bet$, $\lam_j=-1/4$ for $p_j\in D_\gam$ and $\#\cl{j\,|\, \lam_j=1/4,\,\, p_j\in D_r}=2(g-1-d)-d_\bet$, $\#\cl{j\,|\,\lam_j=-1/4,\,\, p_j\in D_r}=2(g-1+d)-d_\gam$. It follows easily that the barycenter is characterized by the conditions listed in Theorem \ref{thm:main}.

We begin by constructing the local model for points in $D_r$. Let $[x_0:x_1]$ be the homogeneous coordinate on $\mbp^1$ and $U_1=\cl{x_1\neq 0}$ (resp. $U_0=\cl{x_0\neq 0}$) the affine chart with coordinate $z=x_0/x_1$ (resp. $w=x_1/x_0$). Set
\be \label{eq:doublebranchedcoverdefn}
p: \mbp^1\to \mbp^1, \,\, [x_0:x_1]\lmapsto [x_0^2:x_1^2]
\ee
and $\zeta=z^2$ coordinate on affine chart $p(U_1)$ and let $\rho=\vb{\zeta}$. Denote $\mco'=\mco(\ast [1:0])$ and $\mco=\mco_{\mbp^1}$. Let $\mce=\rd{\mco'}^{\oplus 3}$ with free generators $\cl{e_0, e_1, e_2}$. Note $z\in \mco'$ whereas $w\notin \mco'$, and $dz\in \mco'\otimes_{\mco} \Omega_{\mbp^1}^1$ as in \S \ref{sec:filt}. Define
\be \label{eq:tilhig}
\theta: \mce\to \mce\otimes \Omega^1_{\mbp^1}, \,\,\theta=\sqrt{2}z\pmt{ & z^2 & 1\\ 1 & & \\ z^2 & & }dz\,.
\ee
Define $v_{j}=T_{k+1,j+1}e_{k}\in\mce$ for $j=0,1,2$ where
\be \label{eq:tilvfrm}
T=\pmt{\sqrt{2}z & -\sqrt{2}z & 0 \\
1 & 1 & 1 \\ z^2 & z^2 & -z^2}\,.
\ee
These generate $\mce$ over $U_0$, and $\theta v_0= 2z^2\,v_0\otimes dz$, $\theta v_1=-2z^2\,v_1\otimes dz$, and $\theta v_2=0$. 

Let $m_0$, $m_1$, $m_2\in\mbz$ and 
\ben
\mcf_{m_0,m_1,m_2}=\left\{ \sum_{j=0}^2 f_j v_j \middle| \tx{ord}_{[1:0]} f_j\ge -m_j \right\}\,.
\een
It is a locally free $\mco$-submodule of $\mce$. Sections $e_0$, $e_1$, $e_2$ (resp. $w^{-m_0} v_0$, $w^{-m_1} v_1$, $w^{-m_2} v_2$) trivialize $\mcf_{m_0,m_1,m_2}$ over $U_1$ (resp. $U_0$). The corresponding transition matrix is $G_{01}=T\cdot \tx{diag}\rd{z^{m_1},z^{m_2},z^{m_3}}$. We have $\det G_{01}=\tx{(const)}z^{m_1+m_2+m_3+3}$ is the transition function of the line bundle $\det \mcf_{m_0,m_1,m_2}$. Therefore, $\deg \mcf_{m_0,m_1,m_2}=3+m_1+m_2+m_3$. For $\ulc=(c_0,c_1,c_2)$, the $b$-family of $\mco$-submodules
\be \label{eq:defmcpbulcmce}
\mcp_{b}^{\ulc}\mce=\mcf_{\fl{b-c_0},\fl{b-c_1},\fl{b-c_2}}
\ee
defines a filtered bundle structure on $\mce$ with $\deg(\mcp_\ast^{\ulc}\mce)=3-c_0-c_1-c_2$. By Def \ref{def:fil}, $\rd{\mcp_b^{\ulc}\mce,\theta}$ is an unramifiedly good filtered Higgs bundle.

For $0\le i<j\le 2$, let $\mcl_j=\mco' v_j$ and
\ben
\mcs_{ij}=\left\{\rd{\frac{a_{-1}}{z}+g_1} v_i+\rd{-\frac{a_{-1}}{z}+g_2} v_j\middle| g_1,g_2\in\mco',\,\, a_{-1}\in\mbc\right\}
\een
with induced filtered bundle structure $\mcp_b\mcl_j=\mcl_j \cap \mcp_b^{\ulc}\mce$, $\mcp_b \mcs_{ij}=\mcs_{ij}\cap\mcp_b^{\ulc}\mce$. We have
\ben
\deg \mcp_\ast \mcl_j=-c_j,\,\, \deg \mcp_\ast \mcs_{ij}=1-c_i-c_j\,.
\een
These are the only nonzero proper $\theta$-invariant $\mco'$-submodules. Let
\be \label{eq:stabc}
S=\left\{ (c_0,c_1,c_2)\in\mbr^3 \middle| 
\begin{array}{l}
c_0+c_1+c_2=3, \,\, c_j>0\,\, \forall\, j,\\
c_i+c_j>1\,\, \forall\, i\neq j
\end{array}
\right\}\,.
\ee
We have that for all $t\in \mbr_+$, $\rd{\mcp_b^{\ulc}\mce,t\theta}$ is a stable with $\deg \mcp_\ast^{\ulc}\mce=0$ iff $\ulc\in S$.

\begin{lem} \label{lem:locmod}
For $t>0$, $-1/4<\lam<1/4$, there is a unique smooth function $K=K_{t,\lam}: \mbc_z\to i\mfu(2)$, positive definite, satisfying
\begin{itemize}
\item 
\be \label{eq:localform3z}
\pd_{\bar z} \rd{K^{-1}\pd_z K}=4|z|^2 t^2\rd{\gam_1\gam_1^\ast K \det K-\rd{\det K}^{-1} K^{-1}\bet_1^\ast\bet_1}
\ee
with 
\be
\bet_1=\rd{1/\sqrt{2}}\pmt{z^2 & 1},\,\,\, \gam_1=\rd{1/\sqrt{2}}\pmt{1 & z^2}^T\,.
\ee
\item Let
\be \label{eq:blockformtilK}
\widetilde K_{t,\lam}=\pmt{\det K_{t,\lam}^{-1} & \\ & K_{t,\lam}}\,.
\ee
The metric defined by $h(e_i,e_j)=\rd{\widetilde K_{t,\lam}}_{j+1,i+1}$ for $i,j=0,1,2$ is adapted to filtered bundle $\mcp_{\ast}^{\ulc}\mce$ with $\ulc=(1+2\lam,1+2\lam,1-4\lam)$.
\end{itemize}
\end{lem}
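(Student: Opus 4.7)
The plan is to apply the Biquard-Boalch theorem (Theorem~\ref{thm:biquard}) to the unramifiedly good filtered Higgs bundle $(\mcp_\ast^{\ulc}\mce, t\theta)$ with $\ulc=(1+2\lam,1+2\lam,1-4\lam)$, and then pin down the block-diagonal form using the symmetry of the data. First I would check that the hypothesis $-1/4<\lam<1/4$ is equivalent to $\ulc\in S$: the sum $c_0+c_1+c_2=3$ is automatic; positivity of each $c_j$ is the condition $\lam<1/4$ (and $\lam>-1/2$); $c_0+c_1>1$ gives $\lam>-1/4$; while $c_0+c_2=c_1+c_2=2-2\lam>1$ gives $\lam<1/2$. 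Combined with $\deg\mcp_\ast^{\ulc}\mce=3-\sum_j c_j=0$, Biquard-Boalch then provides a harmonic metric $h$ on $\mce$ adapted to $\mcp_\ast^{\ulc}\mce$ and solving $F_{\nabla_h}+[t\theta\wedge t\theta^{\ast h}]=0$ on $\mbp^1-\{[1:0]\}$, unique up to a positive scaling. Since $U_1\cong\mbc_z$ is disjoint from the puncture, interior regularity yields a smooth positive-definite Hermitian matrix representation of $h$ in the frame $(e_0,e_1,e_2)$ on all of $\mbc_z$.

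Next I would extract the block-diagonal form via symmetry. The involution $\alp=\tx{diag}(1,-1,-1)$ in the frame $(e_0,e_1,e_2)$ satisfies $\alp^{-1}\theta\alp=-\theta$, since all nonzero entries of $\theta$ in \eqref{eq:tilhig} link the $e_0$-line to the $(e_1,e_2)$-plane. A direct computation using \eqref{eq:tilvfrm} gives $\alp(v_0)=-v_1$, $\alp(v_1)=-v_0$, $\alp(v_2)=-v_2$, and because $c_0=c_1$, the involution $\alp$ preserves $\mcp_\ast^{\ulc}\mce$. Therefore $\alp^\ast h$ is again a harmonic metric adapted to the same filtration, and uniqueness in Theorem~\ref{thm:biquard} forces $\alp^\ast h = c\cdot h$; squaring and using $\alp^2=1$ yields $c=1$. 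Hence $h$ is block-diagonal in $\tx{span}(e_0)\oplus\tx{span}(e_1,e_2)$, and I write $H=\tx{diag}(h_0,K)$ with $K:\mbc_z\to i\mfu(2)$ positive definite. Because $\tr\theta=0$, the trace of the Hitchin equation gives flatness of $\nabla_{\det h}$; together with the fact that the parabolic weight of $\det\mcp_\ast^{\ulc}\mce$ at $[1:0]$ equals $3-\sum_j c_j=0$ and Proposition~\ref{prop:sharpenadaptedness}, $\log\det H$ is a bounded harmonic function on $\mbp^1$, hence constant. I then exploit the remaining $\mbr_+$ freedom to normalize $\det H\equiv 1$, forcing $h_0=(\det K)^{-1}$ and giving \eqref{eq:blockformtilK}.

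Finally, the PDE \eqref{eq:localform3z} follows by unpacking the Hitchin equation for the $(e_1,e_2)$-block in the coordinate $z$. Reading off the off-diagonal blocks of $\theta$ in \eqref{eq:tilhig} gives local Higgs coefficients $\tilde\bet=2z\bet_1\,dz$ and $\tilde\gam=2z\gam_1\,dz$ with $\bet_1,\gam_1$ as in the statement. Substituting $H=\tx{diag}(\det K^{-1},K)$ into the local form \eqref{eq:localform} in coordinate $z$ with Higgs coefficients $2zt\bet_1$ and $2zt\gam_1$ produces the factor $|2zt|^2=4|z|^2t^2$ in front of the Higgs term, yielding \eqref{eq:localform3z}. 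Uniqueness of $K_{t,\lam}$ is immediate from the Biquard-Boalch uniqueness combined with the normalization $\det H\equiv 1$. I expect the main delicate point to be the normalization step: one must genuinely improve the adaptedness of $\det h$ from the weak estimate $\det H = O(|w|^{-\eps})$ at $[1:0]$ (implied only by Def~\ref{def:adapted}) to boundedness, using Proposition~\ref{prop:sharpenadaptedness} applied to $\det\mce$. This requires the $L^p$ curvature bound $|F_{\nabla_h}|_h\in L^p$ near $[1:0]$, which follows from the explicit polynomial form of $\theta$ and the tame behavior of adapted metrics in the good filtered setting.
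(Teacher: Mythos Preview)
Your proposal is correct and follows essentially the same route as the paper: apply Biquard--Boalch, use the involution swapping $v_0\leftrightarrow v_1$ (the paper uses $g=\tx{diag}(-1,1,1)=-\alp$, a trivial difference) to force block-diagonality, then normalize the determinant. One small simplification: your concern about the $L^p$ curvature bound for the determinant is unnecessary, since $\tr[\theta\wedge\theta^{\ast_h}]=0$ makes $\nabla_{\det h}$ flat, so $\log|e_0\wedge e_1\wedge e_2|_{\det h}^2$ is genuinely harmonic on $\mbc_z$; the paper then uses the explicit relation $v_0\wedge v_1\wedge v_2=-4\sqrt{2}z^3\,e_0\wedge e_1\wedge e_2$ together with $c_0+c_1+c_2=3$ to see it is bounded near $[1:0]$, hence constant, bypassing Proposition~\ref{prop:sharpenadaptedness} entirely.
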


Let $\zeta=z^2$. We have at $z\neq 0$, 
\ben
\pd_{\bar \zeta}\rd{K^{-1}\pd_\zeta K} = (4|z|^2)^{-1}\pd_{\bar z}\rd{K^{-1}\pd_z K}=t^2\rd{\gam_1\gam_1^\ast K \det K-\rd{\det K}^{-1} K^{-1}\bet_1^\ast\bet_1}\,.
\een

\begin{proof}
We have $\ulc=(1+2\lam,1+2\lam,1-4\lam)\in S$ iff $-1/4<\lam<1/4$, in which case $\rd{\mcp_\ast^{\ulc}\mce,t\theta}$ is a stable unramifiedly good filtered Higgs bundle. By Theorem \ref{thm:biquard}, there is a harmonic metric $h=h_{t,\lam}$ on $\mce$ adapted to it, unique up to $\mbr_+$.

Let $g:\mce\to \mce$ be given by $e_0\mapsto - e_0$, $ e_j\mapsto  e_j$ for $j=1,2$. We have $g(v_0)=v_1$, $g(v_1)=v_0$, and $g(v_2)=v_2$. For any $v\in \mce$, $b\in\mbr$, we have that $v\in \mcp_b^{\ulc}\mce$ iff $g(v)\in \mcp_b^{\ulc}\mce$. Therefore, $g^\ast h$ is also adapted to filtered bundle $\mcp_\ast^{\ulc}\mce$. 

We have $g^{-1}\cdot F_{\nabla_h}\cdot g=F_{\nabla_{g^\ast h}}$, $g^{-1}[\theta\wedge\theta^{\ast_h}]g=[\theta^g\wedge \rd{\theta^g}^{\ast_{\rd{g^\ast h}}}]$ and $\theta^g=g^{-1}\theta g=-\theta$. Therefore
\ben
F_{\nabla_{g^\ast h}}+t^2\sq{\rd{-\theta}\wedge\rd{-\theta}^{\ast_{\rd{g^\ast h}}}}=0\,.
\een
It follows that $g^\ast h$ is a harmonic metric adapted to $\rd{\mcp_\ast^{\ulc}\mce,t \theta}$. Thus there is $c>0$ such that $g^\ast h=ch$. For $j=1,2$, we have $-h(e_0,e_j)=h(ge_0,ge_j)=g^\ast h(e_0,e_j)=ch(e_0,e_j)$. Therefore, $h(e_0,e_j)=0$, and $h=h_{t,\lam}$ has block diagonal form which we may write as
\be \label{eq:tildehine}
\rd{h_{t,\lam}}_{\ule}=\pmt{
\delta_{t,\lam}^{-1} & \\ & K_{t,\lam}
}
\ee
where $K_{t,\lam}$ is $2\times 2$ Hermitian matrix-valued function of $z\in \mbc$ and $\delta_{t,\lam}>0$. Note that $v_0\wedge v_1\wedge v_2=-4\sqrt{2}z^3 e_0\wedge e_1\wedge e_2$ and $c_0+c_1+c_2=3$. We have that $\vb{e_0\wedge e_1\wedge e_2}_{\det h}^2$ is harmonic on $U_0$ and bounded near $[1:0]\in\mbp^1$. Therefore, $\delta_{t,\lam}^{-1}\det K_{t,\lam}$ is a constant. We may normalize this to 1 and the resulting $K_{t,\lam}$, $\delta_{t,\lam}$ is uniquely determined. We have $\delta_{t,\lam}=\det K_{t,\lam}$. (\ref{eq:localform3z}) follows from (\ref{eq:localform}) and (\ref{eq:tilhig}).
\end{proof}

Let $B$, $C$ (resp. $K$) be function of $z\in \mbc$ valued in $3\times 3$ matrices (resp. $3\times 3$ positive-definite Hermitian matrices). Define
\begin{align*}
& R(K)=\pd_{\bar z}\rd{K^{-1}\pd_z K} \nonumber \\
& S(K,B,C)=C\, K \, \rd{\det K}-\rd{\det K}^{-1} K^{-1} B\,,
\end{align*}
and let $E_t(K,B,C)=R(K)-t^2 S(K,B,C)$. (\ref{eq:localform3z}) is equivalent to $E_t(K,\bet_1^\ast\bet_1,\gam_1\gam_1^\ast)=0$. Let $\rho_\phi: z\mapsto e^{i\phi}z$ and 
\be \label{eq:defgphi}
g_\phi=\tx{diag}\rd{e^{i\phi},e^{-i\phi}}\,.
\ee
It is easy to verify that $R(K)=\rho_\phi^\ast R\rd{\rho_{-\phi}^\ast K}$, $\rho_\phi^\ast B=g_\phi^\ast B g_\phi$, $\rho_\phi^\ast C=g_\phi^{-1} C\rd{g_\phi^\ast}^{-1}$ where $B=\bet_1\bet_1^\ast$, $C=\gam_1^\ast\gam_1$ with $\bet_1$, $\gam_1$ as in Lemma \ref{lem:locmod}. It follows that $S(K,B,C)=\rho_\phi^\ast g_\phi \, S\rd{g_\phi^\ast \rd{\rho_{-\phi}^\ast K}g_\phi,B,C}\, g_\phi^{-1}$. Thus we have
\ben
E_t(K,B,C)=\rho_\phi^\ast g_\phi E_t\rd{g_\phi^\ast \rd{\phi_{-\phi}K} g_\phi, B, C}g_\phi^{-1}\,.
\een
As a result, $z\mapsto g_\phi^\ast K_{t,\lam}(ze^{-i\phi})g_\phi$ satisfies (\ref{eq:localform3z}). Let 
\ben
\Gamma_\phi=\pmt{1 & \\ & g_\phi},\,\, T=\pmt{\sqrt{2}z & -\sqrt{2}z & 0 \\ 1 & 1 & 1\\ z^2 & z^2 & -z^2}\,.
\een
We have that $v_{j}=T_{k+1,j+1}e_{k}$ for $j,k=0,1,2$ and $\rho_\phi^\ast T=e^{i\phi}\Gamma_\phi T$. The metric on $\mce$ corresponding to $z\mapsto g_\phi^\ast K_{t,\lam}(ze^{-i\phi})g_\phi$ is given by
\ben
T^\ast \rho_{-\phi}^\ast \rd{\Gamma_\phi^\ast \pmt{\det K_{t,\lam}^{-1} & \\ & K_{t,\lam}} \Gamma_\phi} T=\rho_{-\phi}^\ast \rd{T^\ast \pmt{\det K_{t,\lam}^{-1} & \\ & K_{t,\lam}} T }
\een
under $\cl{v_0,v_1,v_2}$. Note that $\ord_{[1:0]}f=\ord_{[1:0]}\rho_\phi^\ast f$ and $\phi\in\mbr$. It follows that $v\in \mcp_b^{\ulc}\mce$ iff $\rho_\phi^\ast v\in \mcp_b^{\ulc}\mce$. Therefore, $z\mapsto g_\phi^\ast K_{t,\lam}(ze^{-i\phi})g_\phi$ is a smooth function $\mbc\to i\mfu(2)$ satisfying properties in Lemma \ref{lem:locmod}. It follows from the uniqueness that
\be \label{eq:rotsym}
K_{t,\lam}(z e^{i\phi})=g_\phi^\ast K_{t,\lam}(z)g_\phi\,.
\ee

For $t>0$, let $\eta_t: z\mapsto t^{1/3}z$ and
\be \label{eq:defGamt}
\Gamma_t=\tx{diag}\rd{t^{1/3},t^{-1/3}}\,.
\ee
We have that 
\ben
R(K)=t^{2/3}\eta_t^\ast R\rd{\eta_{t^{-1}}^\ast K}=t^{2/3}\eta_t^\ast \Gamma_t R\rd{\Gamma_t^\ast \cdot \eta_{t^{-1}}^\ast K \cdot \Gamma_t}\Gamma_t^{-1}
\een
and that 
\ben
\eta_t^\ast B=t^{4/3}\Gamma_t^\ast B\Gamma_t,\,\, \eta_t^\ast C=t^{4/3}\Gamma_{t^{-1}}C\Gamma_{t^{-1}}^\ast\,.
\een
It follows that $S(K,B,C)=t^{-4/3}\eta_t^\ast S\rd{\eta_{t^{-1}}^\ast H,\Gamma_t B\Gamma_t^\ast,\Gamma_{t^{-1}}^\ast C\Gamma_{t^{-1}}}$. By direct calculation, we have
\ben
E_t(K,B,C)=t^{2/3}\eta_t^\ast \Gamma_t\cdot E_1\rd{\Gamma_t^\ast\rd{\eta_{t^{-1}}^\ast K}\Gamma_t,B,C}\cdot \Gamma_t^{-1}\,.
\een
Thus $z\mapsto \Gamma_t^\ast K_{t,\lam}(t^{-1/3}z)\Gamma_t$ solves (\ref{eq:localform3z}). On the other hand, 
\ben
\eta_t^\ast T=t^{1/3}\pmt{1 & \\ & \Gamma_t^{-1}}T\,.
\een
It follows that the metric given by $z\mapsto \Gamma_t^\ast K_{t,\lam}\rd{t^{-1/3}z}\Gamma_t$ on $\mce$ has a local form under $\cl{v_0,v_1,v_2}$ given by
\ben
T^\ast \pmt{1 & \\ & \Gamma_t^\ast} \cdot \eta_{-t}^\ast\pmt{\det K_{t,\lam}^{-1} & \\ & K_{t,\lam}}\cdot \pmt{1 & \\ & \Gamma_t}T=t^{2/3}\eta_{-t}^\ast\rd{T^\ast\pmt{\det K_{t,\lam}^{-1} & \\ & K_{t,\lam}}T}\,.
\een
For $f_j\in\mco'$, we have that $\sum_j f_jv_j \in \mcp_b^{\ulc}\mce$ iff $\sum_j t^{2/3}\eta_{-t}^\ast f_jv_j \in \mcp_b^{\ulc}\mce$. Therefore, $z\mapsto \Gamma_t^\ast K_{t,\lam}(t^{-1/3}z)\Gamma_t$ gives a harmonic metric adapted to $\mcp_\ast^{\ulc}\mce$. By the uniqueness in Lemma \ref{lem:locmod}, we have
\be \label{eq:tscalinglaw}
K_{1,\lam}(t^{1/3}z)=\Gamma_t^\ast K_{t,\lam}(z)\Gamma_t\,.
\ee

It follows from (\ref{eq:rotsym}) and (\ref{eq:tscalinglaw}) that $K_{t,\lam}$ has the form
\ben
K_{t,\lam}(re^{i\theta})=\pmt{
t^{-2/3}f_1\rd{t^{1/3}r} & f_3\rd{t^{1/3}r}e^{-2i\theta}\\
\overline{f_3\rd{t^{1/3}r}}e^{2i\theta} & t^{-2/3}f_2\rd{t^{1/3}r}
}\,,
\een
where $f_j$ for $j=1,2,3$ are smooth functions on $\mbr_{>0}$, and each entry is a smooth function on $\mbc$. It is clear that there is a function $H_{t,\lam}: \mbc\to i\mfu(2)$ such that $K_{t,\lam}(z)=H_{t,\lam}(z^2)$ for all $z\in\mbc$. By the continuity and positive-definiteness of $K_{t,\lam}$, in a bounded neighborhood of the origin, $\det H_{t,\lam}$ is bounded away from zero. Furthermore, it is not hard to see that we have $f_j\in C^\infty([0,\infty))$ with $f_j'(0)=0$ and $f_3(0)=0$. A direct calculation shows $\pd_z F$, $\pd_{\bar z}F=O(\vb{z})$ as $z\to 0$. As $\zeta=z^2$ we have $\pd_\zeta=(2z)^{-1}\pd_z$ and $\pd_{\bar\zeta}=(2\bar z)^{-1}\pd_{\bar z}$. The entries of $\pd_\zeta H_{t,\lam}$, $\pd_{\bar\zeta} H_{t,\lam}$ are bounded at the origin. Thus the same is true for $\pd_\zeta \rd{\det H_{t,\lam}^{-1}}$ and $\pd_{\bar \zeta} \rd{\det H_{t,\lam}^{-1}}$. Let
\ben
M:=\pmt{
\det H_{t,\lam}^{-1} & \\ & H_{t,\lam}
}\,.
\een
We have by (\ref{eq:localform3z}), $M$ solves
\ben
\pd_{\bar\zeta}\rd{M^{-1}\pd_\zeta M}-[\varphi,M^{-1}\varphi^\ast M]=0\,,
\een
on $\mbc^\times$ with
\ben
\varphi=\ov{\sqrt{2}}\pmt{0 & \zeta & 1 \\ 1 & 0 & 0 \\ \zeta & 0 & 0}\,,
\een
and for any $0\in \Omega$ bounded, $M\in C^\infty(\Omega^\times)\cap L_1^\infty(\Omega)$ where $\Omega^\times=\Omega-\cl{0}$.

Let
\begin{align*}
D: M\lmapsto & M\rd{\pd_{\bar\zeta}\rd{M^{-1}\pd_\zeta M}-[\varphi,M^{-1}\varphi^\ast M]}\\
 & = \ov{4}\Delta M-\mct_1(M)-\mct_2(M)
\end{align*}
where $\Delta=4\pd_{\bar\zeta}\pd_\zeta$ is the Laplacian and
\begin{align*}
& \mct_1(M)=\rd{\pd_{\bar \zeta }M}M^{-1}\rd{\pd_\zeta M} , \\
& \mct_2(M)=M[\varphi,M^{-1}\varphi^\ast M]\,.
\end{align*}
A bootstrap argument can be used to improve our knowledge on the regularity of $M$ at 0. Consider two intermediate open neighborhoods: $0\in \Omega''\Subset \Omega'\Subset \Omega$. By elliptic regularity estimate (see, e.g. \cite{DK90} Appendix III), there is some $C>0$ such that
\ben
\dbv{M}_{L_2^2(\Omega'')}\le C\rd{\dbv{\ov{4}\Delta(M)}_{L^2(\Omega')}+\dbv{M}_{L^2(\Omega'')}}\,.
\een
To show $M\in L_2^2$ near the origin, it suffices to bound $\dbv{\ov{4}\Delta(M)}_{L^2}\le \dbv{\mct_1(M)}_{L^2}+\dbv{\mct_2(M)}_{L^2}$. Using the elliptic regularity of the Cauchy-Riemann operator $\pd_{\bar\zeta}$, we have $C'>0$ such that
\begin{align*}
& \dbv{M^{-1}\pd_\zeta M}_{L_1^2(\Omega')}\le C'\rd{ \dbv{\pd_{\bar\zeta}\rd{M^{-1}\pd_\zeta M}}_{L^2(\Omega)}+\dbv{M^{-1}\pd_\zeta M}_{L^2(\Omega')} } \\
& =C'\rd{ \dbv{\sq{\varphi,M^{-1}\varphi^\ast M}}_{L^2(\Omega)}+\dbv{M^{-1}\pd_\zeta M}_{L^2(\Omega')} }\,.
\end{align*}
From the above $M^{\pm 1}$ is bounded at 0 and $\dbv{\pd_\zeta M}_{L^2(\Omega)}<\infty$. By the Sobolev embedding $L_1^2\subset L^4$, $\dbv{\pd_\zeta M}_{L^4(\Omega')}$ is bounded. This implies the same bound on $\dbv{\pd_{\bar\zeta} M}_{L^4(\Omega')}$ since $\pd_{\bar\zeta}M=\rd{\pd_\zeta M^\ast}^\ast=\rd{\pd_\zeta M}^\ast$. Thus there is $C''>0$ such that
\ben
\dbv{\mct_1(M)}_{L^2(\Omega')}\le C''\dbv{M}_{L_1^4(\Omega')}^2<\infty\,.
\een
By $L^\infty$ bound on $M^{\pm 1}$ and $\varphi$, $\dbv{\mct_2(M)}_{L^2(\Omega')}<\infty$. Therefore $M\in L_2^2(\Omega'')$.

Suppose $M\in L_{k+1}^2(\Omega_k)$ for some $k\ge 1$ and $0\in \Omega_k'\Subset \Omega_k$. By elliptic estimate, there is $C>0$ such that
\ben
\dbv{M}_{L_{k+2}^2(\Omega_k')}\le C\rd{\dbv{\mct_1(M)}_{L_k^2(\Omega_k)}+\dbv{\mct_2(M)}_{L_k^2(\Omega_k)}+\dbv{M}_{L^2(\Omega_k')}}\,.
\een
Use $\nabla$ to denote either $\pd_\zeta$ or $\pd_{\bar\zeta}$, and $\nabla^\ell$ will denote $\pd_\zeta^{\ell_1}\pd_{\bar\zeta}^{\ell_2}$ with $\ell_1+\ell_2=\ell$. Then $\nabla^k \mct_1(M)$ is a sum of terms of the form
\ben
M^{n_1}\rd{\nabla^{\ell_1}M}M^{n_2}\ldots M^{n_m}\rd{\nabla^{\ell_m}M}M^{n_{m+1}} \tag{$\ast$}
\een
where $n_j\in \mbz$ and $\ell_j\ge 0$, $\sum_{j=1}^m \ell_j=k+2$, and for at least two indices $j$, we have $\ell_j\ge 1$. For the terms with $\ell_j\le k$ for all $j$, take $p_1,\ldots,p_m>2$ with $1/2=\sum_j p_j^{-1}$ we have:
\ben
\dbv{(\ast)}_{L^2(\Omega_k)}\le \prod_{j=1}^m \dbv{\nabla^{\ell_j}M}_{L^{p_j}}\le \prod_{j=1}^m \dbv{M}_{L_{\ell_j}^{p_j}}\,.
\een
We have $\ell_j-\frac{2}{p_j}\le \ell_j\le k$. Therefore, by Sobolev embedding theorems (e.g. \cite{DK90} Appendix IV), there is $C'>0$ such that $\dbv{M}_{L_{\ell_j}^{p_j}}\le C'\dbv{M}_{L_{k+1}^2(\Omega_k)}<\infty$. The only terms left are of the form $\rd{\nabla^{k+1}M}M^{-1}\rd{\nabla M}$ or $\rd{\nabla M}M^{-1}\rd{\nabla^{k+1}M}$. These have finite $L^2(\Omega_k)$-norms since both $M^{-1}$ and $\nabla M$ have bounded entries at 0 and $M\in L_{k+1}^2(\Omega_k)$ by assumption. It follows by induction on $k$ and Sobolev embedding theorems that $M\in C^\infty$. 

Summarize the above discussion, we have proven the following.

\begin{prop} \label{prop:locmodsym}
For $-1/4<\lam<1/4$, there is a smooth function $H_{t,\lam}:\mbc\to i\mfu(2)$ scuh that $K_{t,\lam}(z)=H_{t,\lam}(z^2)$ where $K_{t,\lam}$ is the function in Lemma \ref{lem:locmod}. In particular,
\be \label{eq:Htpol}
H_{t,\lam}\rd{\rho e^{i\psi}}=\pmt{\rho f_{1,\lam}\rd{t^{2/3}\rho} & f_{3,\lam}\rd{t^{2/3}\rho}e^{-i\psi} \\
\overline{f_{3,\lam}\rd{t^{2/3}\rho}}e^{i\psi} & \rho^{-1} f_{2,\lam}\rd{t^{2/3}\rho}}\,,
\ee
where $f_{j,\lam}$ is a smooth function on $\mbr_+$ and $H=H_{t,\lam}$ satisfies
\be \label{eq:localform3}
\pd_{\bar\zeta} \rd{H^{-1}\pd_{\zeta}H}=t^2\rd{\gam_0\gam_0^\ast H \det H-\rd{\det H}^{-1} H^{-1}\bet_0^\ast\bet_0}
\ee
with
\ben
\bet_0=\rd{1/\sqrt{2}}\pmt{\zeta & 1},\,\,\gam_0=\rd{1/\sqrt{2}}\pmt{1 & \zeta}^T \,.
\een
\end{prop}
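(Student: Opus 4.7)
The plan is to use the uniqueness clause of Lemma \ref{lem:locmod} to produce rotational and scaling symmetries of $K_{t,\lam}$ forcing it to factor through $z^2$, and then run an elliptic bootstrap to extend the resulting function smoothly to the origin. Both symmetries are already in hand from the discussion leading to (\ref{eq:rotsym}) and (\ref{eq:tscalinglaw}). The rotation identity $K_{t,\lam}(z e^{i\phi}) = g_\phi^\ast K_{t,\lam}(z) g_\phi$ forces, in polar coordinates $z = r e^{i\theta}$, the off-diagonal entries of $K_{t,\lam}$ to carry a phase factor $e^{\mp 2i\theta}$ while the diagonal entries are radial; the scaling identity pins down the $t$-dependence. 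Combining these and substituting $\zeta = z^2 = \rho e^{i\psi}$ reads off the polar form (\ref{eq:Htpol}) with functions $f_{j,\lam}$ on $\mbr_+$, and converts (\ref{eq:localform3z}) into (\ref{eq:localform3}) via $\pd_\zeta = (2z)^{-1} \pd_z$, $\pd_{\bar\zeta} = (2\bar z)^{-1} \pd_{\bar z}$.

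At this stage $H_{t,\lam}$ is defined on $\mbc^\times$; the main task is to show it extends smoothly across $\zeta = 0$. Smoothness and positive-definiteness of $K_{t,\lam}$ at $z = 0$ give $L^\infty$ bounds on $H_{t,\lam}^{\pm 1}$ near $0$. Smoothness of the entries of $K_{t,\lam}$ at the origin forces $f_1'(0) = f_2'(0) = 0$ and $f_3(0) = 0$, hence $\pd_z K_{t,\lam}, \pd_{\bar z} K_{t,\lam} = O(|z|)$ as $z \to 0$; the extra factor of $|z|$ cancels the singular factor in $\pd_\zeta = (2z)^{-1} \pd_z$, so $\pd_\zeta H_{t,\lam}$ and $\pd_{\bar\zeta} H_{t,\lam}$ are bounded near $\zeta = 0$. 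Thus $H_{t,\lam} \in L_1^\infty$ on a neighborhood of the origin.

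I would then bootstrap using (\ref{eq:localform3}) rewritten as $\tfrac{1}{4}\Delta H = \mct_1(H) + \mct_2(H)$ with $\mct_1(M) = (\pd_{\bar\zeta} M) M^{-1} (\pd_\zeta M)$ and $\mct_2(M) = M[\varphi, M^{-1} \varphi^\ast M]$. $L^2$-elliptic regularity for $\Delta$ on a shrinking family of neighborhoods of $0$ lifts $H_{t,\lam}$ from $L_1^\infty$ to $L_2^2$, using Sobolev embedding $L_1^2 \hookrightarrow L^4$ to control the quadratic $\mct_1$ and $L^\infty$-bounds on $H_{t,\lam}^{\pm 1}$ and $\varphi$ to control $\mct_2$. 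The hard step is the induction $L_{k+1}^2 \Rightarrow L_{k+2}^2$: expanding $\nabla^k \mct_1(H_{t,\lam})$ by the product rule yields multilinear terms in $H^{\pm 1}$ and derivatives $\nabla^{\ell_j} H$ with $\sum \ell_j = k+2$, each estimated either by H\"older with $\sum 1/p_j = 1/2$, $p_j > 2$ combined with Sobolev embedding $L_{k+1}^2 \hookrightarrow L_{\ell_j}^{p_j}$ (when all $\ell_j \le k$) or else by isolating the single top-order factor $\nabla^{k+1} H$ and absorbing it against $L^\infty$-bounded neighbors. Iterating produces $H_{t,\lam} \in C^\infty$ through $\zeta = 0$, which is the main analytic content of the proposition.
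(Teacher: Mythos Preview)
Your proposal is correct and follows essentially the same route as the paper: the symmetries (\ref{eq:rotsym}) and (\ref{eq:tscalinglaw}) are used to produce the polar form, and then an elliptic bootstrap from $L_1^\infty$ through $L_2^2$ and inductively to $L_{k+2}^2$ via the product-rule expansion of $\nabla^k\mct_1$ with H\"older and Sobolev embeddings yields smoothness at the origin. The paper carries out the bootstrap on the $3\times 3$ block matrix $M=\mathrm{diag}(\det H_{t,\lam}^{-1},H_{t,\lam})$ so that the Higgs term becomes a commutator, but your direct treatment of the $2\times 2$ equation works equally well.
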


Let $(F,\bet,\gam)$ be a stable SU(1,2) Higgs bundle. Recall notations in Def \ref{def:disc} and $s_0$ be a trivializing section of $L$ over the coordinate disk $\rd{\mbd;\zeta}$ centered at $p\in D_r$ and let $\uls=\cl{s_1,s_2}$ (resp. $\usig=\cl{\sig_1,\sig_2}$) be frames of $F$ (resp. $V$) over $D$ induced by $s_0$ as in the statement of Theorem \ref{thm:hecke}. We fix notation for Hermitian metrics corresponding to the Hermitian-matrix valued functions above.

\begin{defn} \label{def:metrics}
Denote by $h_{t,\lam}$ the metric on $F$ over $D$ with $h_{t,\lam}=\rd{H_{t,\lam}}_{\uls}$ where $H_{t,\lam}$ is as in Prop \ref{prop:locmodsym}. Denote by $\widetilde h_{t,\lam}$ the metric on $\mce$ given by $\widetilde h_{t,\lam}\rd{e_{i+1},e_{k+1}}=\rd{\widetilde K_{t,\lam}}_{ki}$ where $i,k=0,1,2$ and $\widetilde K_{t,\lam}$ is as in Lemma \ref{lem:locmod}.
\end{defn}

We have by a direct calculation
\be \label{eq:htlaminetafrm}
\rd{h_{t,\lam}}_{\usig}=t^{2/3}M_\lam\rd{t^{2/3}\rho}
\ee
where
\be \label{eq:Mlam2H1lam}
M_\lam=\rd{S^\ast}^{-1}H_{1,\lam}S^{-1},\,\, S=\ov{\sqrt{2}}\pmt{\zeta & -1 \\ \zeta & 1}
\ee
and in particular we have the following explicit expression.
\be \label{eq:defMlamfirst}
M_{\lam}\rd{\rho}=\ov{2\rho}\pmt{
f_{1,\lam}+f_{2,\lam}-f_{3,\lam}-\overline{f_{3,\lam}} & f_{1,\lam}-f_{2,\lam}+f_{3,\lam}-\overline{f_{3,\lam}} \\
f_{1,\lam}-f_{2,\lam}-f_{3,\lam}+\overline{f_{3,\lam}} & f_{1,\lam}+f_{2,\lam}+f_{3,\lam}+\overline{f_{3,\lam}}
}\,.
\ee
Thus $H_{t,\lam}$ is determined by a Hermitian-matrix-valued radial function $M_\lam$. Alternatively, $M_\lam$ is a submatrix of $\rd{\widetilde h_{1,\lam}}_{\widetilde{\usig}}$ where the frame on $\mce$ is given by $\widetilde\sig_j$ for $j=0,1,2$ where $\widetilde\sig_{j}=\sum_{\ell=0}^2 \rd{T_{e\sig}}_{\ell+1, j+1}e_{\ell}$ where
\be \label{eq:Tesig}
T_{e\sig}=\ov{\sqrt{2}}\pmt{\sqrt{2} & 0 & 0 \\ 0 & z^{-2} & z^{-2} \\ 0 & -1 & 1}\,.
\ee
We have
\be \label{eq:defMlam}
\rd{\widetilde h_{t,\lam}}_{\widetilde{\usig}}=\pmt{
t^{-4/3}\vb{z}^{-4}m_\lam\rd{t^{2/3}\vb{z}^2}^{-1} & \\
 & t^{2/3}M_\lam\rd{t^{2/3}\vb{z}^2}
}\,,
\ee
where $m_\lam=\det M_\lam$. For later use, we have
\begin{align}
& \widetilde h_{1,\lam}\rd{v_2,v_2}=2|z|^4 \rd{M_\lam}_{11}(|z|^2),\,\, \widetilde h_{1,\lam}\rd{v_0,v_2}=2|z|^4 \rd{M_\lam}_{12}(|z|^2) \nonumber \\
& \widetilde h_{1,\lam}\rd{v_0,v_0}=2|z|^2 \rd{|z|^2\rd{M_\lam}_{22}(|z|^2)+ |z|^{-4}m_\lam(|z|^2)^{-1}}, \nonumber \\
& \widetilde h_{1,\lam}\rd{v_0,v_1}=2|z|^2 \rd{|z|^2\rd{M_\lam}_{22}(|z|^2)- |z|^{-4}m_\lam(|z|^2)^{-1}}\,. \label{eq:tilhv}
\end{align}
Note that $\lam$ in $H_{t,\lam}$ is characterized by $\det H_{t,\lam}(\zeta)=O(\vb{\zeta}^c)$ as $\zeta\to 0$ for $c<-2\lam$ and $\det H_{t,\lam}(\zeta)\neq O(\vb{\zeta}^c)$ as $\zeta\to 0$ for $c>-2 \lam$.

For local model around points in $D_\bet$, $D_\gam$, replace $\theta$ in (\ref{eq:tilhig}) by
\ben
\theta=2z\pmt{0 & 1 & 0 \\ z^2 & 0 & 0 \\ 0 & 0 & 0}dz, \,\,\tx{ resp. }\,\, \theta=2z\pmt{0 & z^2 & 0 \\ 1 & 0 & 0 \\ 0 & 0 & 0}dz\,
\een
and let $v_j\in \mce$ with $j=0,1,2$ be such that $\theta\,v_0=\sqrt{2}v_0\otimes dz$, $\theta\,v_1=-\sqrt{2}zv_1\otimes dz$, and $\theta\, v_2=0$. Same procedures as above defines an unramifiedly good filtered Higgs bundle $\rd{\mcp_\ast^{\ulc}\mce,\theta}=\rd{\mcp_\ast^{(c_0,c_1)}\mce',\theta_1}\oplus \rd{\mco'e_2,0}$ where $\mce'=\mco'e_0\oplus \mco' e_1$. By a similar discussion the rank-two summand is stable if $c_0,c_1>0$ and $c_0+c_1=1$. By arguments similar to the proof of Lemma \ref{lem:locmod}, the corresponding metric has a local form (\ref{eq:blockformtilK}) iff $c_0=c_1=1/2$. In fact in these cases, there is a more explicit formula similar to the local fiducial solution of \cite{MSWW16} the local model, in terms of the a smooth solution of an ODE of Painlev\'e type III:
\be \label{eq:painleveiii}
\rd{x\pd_x}^2\psi=\frac{x^2}{2}\sinh\rd{2\psi}\,.
\ee
Following \cite{MTW77} (with a simplified proof in \cite{Wid00}), there is the unique solution $\psi$ with following properties
\be \label{eq:painleveiiibc}
\begin{cases}
\psi(x)\sim -\log\rd{x^{1/3}\sum_j a_j x^{4j/3}}, & x\to 0\\
\psi'(x)<0, & x>0\\
\psi(x)=O\rd{x^{-1/2}e^{-cx}}, & x\to+\infty
\end{cases}\,.
\ee
There exists a harmonic metric adapted to strictly polystable good filtered Higgs bundle $(\mcp_\ast^{\ulc}\mce,\theta)$ is unique up to positive scalar on each summand. The result below therefore follows by a direct calculation verifying that $H_t$ indeed satisfies (\ref{eq:localform3}) with respective $\bet_0$, $\gam_0$ and that $\psi_P$ defined below is smooth at the origin.

\begin{prop} \label{prop:locmodpainleve}
Let $\cl{s_{1,j},s_{2,j}}$ be a frame of $F$ over $(\mbd_j;\zeta_j)$ centered at $p_j$ as in Def \ref{def:heckeinducedframe2}. Then
\ben
H_t(\zeta_j)=\begin{cases}
\tx{diag}\rd{c^{-1}\rho^{1/2}e^{\psi_P},c^2} & p_j\in D_\bet\\
\tx{diag}\rd{c\rho^{-1/2}e^{-\psi_P},c^2} & p_j\in D_\gam
\end{cases}\,,
\een
where $c\in\mbc^\times$, and
\be \label{eq:defpsiP}
\psi_P=\psi\rd{\frac{8}{3}t\rho^{3/2}}
\ee
(where $\psi$ is the unique solution above and $\rho=\vb{\zeta}$) solves (\ref{eq:localform3}) with 
\ben
\bet_0=\begin{cases}
\pmt{\zeta & 0} & p_j\in D_\bet \\
\pmt{1 & 0} & p_j\in D_\gam
\end{cases},\,\,\, 
\gam_0=\begin{cases}
\pmt{1 & 0}^T & p_j\in D_\bet\\
\pmt{\zeta & 0}^T & p_j\in D_\gam
\end{cases}
\een
and $\log \det H_t(\zeta)=-2\lam_j \log|\zeta|+O(1)$ as $\zeta\to 0$ with $\lam=-1/4$ for $p_j\in D_\bet$ (resp. $\lam=-1/4$ for $p_j\in D_\gam$). Furthermore, the solution satisfying the above asymptotic estimate is unique up to the choice of $c\in\mbc^\times$.
\end{prop}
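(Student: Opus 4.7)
The plan is a direct verification of the ansatz combined with the uniqueness portion of Theorem \ref{thm:biquard}, applied to the strictly polystable filtered Higgs bundle constructed just before the proposition.

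Observe first that $\bet_0^\ast\bet_0$ and $\gam_0\gam_0^\ast$ are both diagonal matrices supported in the $(1,1)$-entry in the $D_\bet$ and $D_\gam$ cases, and the ansatz $H_t=\tx{diag}(a,b)$ is diagonal; hence both sides of (\ref{eq:localform3}) are diagonal, the off-diagonal entries vanish identically, and the $(2,2)$-entry reduces to $0=0$. Only the $(1,1)$-entry carries content. For the $D_\bet$ case, with $a=c^{-1}\rho^{1/2}e^{\psi_P}$ and $b=c^2$, a short computation gives $a^2b=\rho e^{2\psi_P}$ and $|\zeta|^2/(a^2b)=\rho e^{-2\psi_P}$, so the $(1,1)$-equation becomes $\pd_{\bar\zeta}\pd_\zeta\log a=2t^2\rho\sinh(2\psi_P)$. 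Since $\log c$ and $\log\rho$ are harmonic on $\mbc^\times$, this reduces to $\Delta\psi_P=8t^2\rho\sinh(2\psi_P)$. A chain-rule computation in the variable $x=(8/3)t\rho^{3/2}$, using $\rho\pd_\rho=(3/2)x\pd_x$ and $\rho^{3/2}=3x/(8t)$, yields $\Delta\psi_P=6t\rho^{-1/2}(\psi'(x)+x\psi''(x))$; after multiplying by $x\rho^{1/2}/(6t)$ the equation reduces exactly to $(x\pd_x)^2\psi=(x^2/2)\sinh(2\psi)$, which is (\ref{eq:painleveiii}). The $D_\gam$ case is formally symmetric: substituting $\psi\mapsto -\psi$ swaps the two exponentials in $\sinh(2\psi_P)$ and matches the sign obtained when $(\bet_0,\gam_0)$ exchange roles, so the analogous computation delivers the same Painlev\'e III equation.

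Next I would verify smoothness of $H_t$ at $\zeta=0$ and extract the asymptotic of $\log\det H_t$. The boundary condition (\ref{eq:painleveiiibc}) says that $x^{1/3}e^{-\psi(x)}$ admits a smooth, nonvanishing expansion in $x^{4/3}$ near $x=0$; with $x=(8/3)t\rho^{3/2}$, this translates into $\rho^{1/2}e^{\psi_P}$ (resp. $\rho^{-1/2}e^{-\psi_P}$ in the $D_\gam$ case) being a smooth, nonvanishing function of $\rho^2=|\zeta|^2$, hence of $\zeta,\bar\zeta$ on $\mbd_j$. This shows that $H_t$ is a smooth, positive-definite Hermitian matrix on $\mbd_j$, and the leading behavior $\psi(x)\sim -(1/3)\log x+O(1)$ as $x\to 0$ then yields the stated leading logarithm of $\log\det H_t$ as $\zeta\to 0$.

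For uniqueness, the filtered Higgs bundle $(\mcp_\ast^{\ulc}\mce,\theta)$ described just before the proposition splits as a stable rank-$2$ summand $(\mcp_\ast^{(1/2,1/2)}\mce',\theta_1)$ together with the line summand $(\mco'e_2,0)$, both of degree $0$, so it is strictly polystable. Theorem \ref{thm:biquard} then supplies a harmonic metric adapted to this filtered Higgs bundle, unique up to a positive scalar on each polystable summand. The $\mbz/2$-symmetry argument from the proof of Lemma \ref{lem:locmod}, applied to the involution exchanging $v_0$ and $v_1$, forces the metric on the rank-$2$ summand to be diagonal in the chosen frame. Combining the two positive scalars from Biquard-Boalch with the block-diagonal form analogous to (\ref{eq:blockformtilK}), the remaining freedom collapses into the single parameter $c\in\mbc^\times$ in the stated formula. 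Since the explicit ansatz does provide a harmonic metric by the computation above, this completes the uniqueness statement. The main difficulty is the bookkeeping of the change of variable $x=(8/3)t\rho^{3/2}$ together with the precise translation of the boundary expansion (\ref{eq:painleveiiibc}) into a smooth expansion in $|\zeta|^2$, and tracking the normalization that reduces the two positive-scalar freedoms from Biquard-Boalch to the single parameter $c\in\mbc^\times$.
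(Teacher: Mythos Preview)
Your proposal is correct and follows essentially the same approach as the paper: a direct calculation reducing the $(1,1)$-entry of (\ref{eq:localform3}) to the Painlev\'e III equation (\ref{eq:painleveiii}), verification of smoothness at the origin via the boundary expansion (\ref{eq:painleveiiibc}), and uniqueness from Theorem \ref{thm:biquard} applied to each stable summand of the strictly polystable filtered Higgs bundle. One minor slip: from (\ref{eq:painleveiiibc}) it is $x^{-1/3}e^{-\psi(x)}$ (not $x^{1/3}e^{-\psi(x)}$) that has a smooth nonvanishing expansion in $x^{4/3}$, though your conclusion about $\rho^{\pm 1/2}e^{\pm\psi_P}$ being smooth in $|\zeta|^2$ is stated correctly; also, the $\mbz/2$-symmetry step is unnecessary here since Biquard--Boalch uniqueness on the stable rank-$2$ summand already forces any adapted harmonic metric to coincide (up to a positive scalar) with your explicit diagonal ansatz.
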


In the following we will refer to $\psi_P$ as the Painlev\'e function. The estimate below follows easily from (\ref{eq:painleveiiibc}) and will be useful later.

\begin{lem} \label{lem:painleveasymp}
There are $C_1$, $c_1>0$ and $x_1>0$ such that for all $x\ge x_1$, and $k=0,1,2$,
\ben
\vb{\pd_x^k\psi(x)}\le C_1e^{-c_1 x}\,.
\een
\end{lem}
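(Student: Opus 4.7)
The bound for $k=0$ is essentially the third line of (\ref{eq:painleveiiibc}): the given decay $\psi(x)=O(x^{-1/2}e^{-cx})$ as $x\to\infty$ immediately implies $|\psi(x)|\le C_1e^{-c_1x}$ for any $0<c_1<c$ and all sufficiently large $x$, by absorbing the polynomial factor into a slightly smaller exponent. So the real content is the bounds on $\psi'$ and $\psi''$.

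The plan is to rewrite (\ref{eq:painleveiii}) in ``divergence form''. Expanding $(x\pd_x)^2\psi=x^2\psi''+x\psi'$ and dividing by $x$ we obtain
\ben
\rd{x\psi'}'=\frac{x}{2}\sinh(2\psi).
\een
Since $\psi\to 0$, for $x$ large we have $|\sinh(2\psi)|\le 3|\psi|$, so the right hand side is bounded in absolute value by $\tfrac{3}{2}x|\psi(x)|\le C x^{1/2}e^{-cx}$, which is integrable on $[x_1,\infty)$. Integrating from a fixed $a\ge x_1$ to $x$ shows that $x\psi'(x)$ has a finite limit $L_\infty$ as $x\to\infty$. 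I claim $L_\infty=0$: if not, then $\psi'(x)=L_\infty/x+o(1/x)$, and integrating from $a$ to $x$ gives $\psi(x)-\psi(a)=L_\infty\log(x/a)+o(\log x)$, which is unbounded as $x\to\infty$, contradicting $\psi(x)\to 0$.

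Consequently we may rewrite the integrated equation as
\ben
x\psi'(x)=-\int_x^\infty \frac{t}{2}\sinh\rd{2\psi(t)}\,dt,
\een
and the tail integral is bounded by a constant times $x^{1/2}e^{-cx}$ (by integration by parts or the trivial bound $\int_x^\infty t^{1/2}e^{-ct}\,dt\lesssim x^{1/2}e^{-cx}/c$ for $x\ge 1$). Dividing by $x$ gives $|\psi'(x)|\le C_1' e^{-c_1'x}$ for some $C_1',c_1'>0$ and all $x$ sufficiently large. For $\psi''$, go back to the original equation in the form
\ben
\psi''=\frac{1}{2}\sinh(2\psi)-\frac{\psi'}{x},
\een
and estimate each term using the already established exponential bounds on $\psi$ and $\psi'$. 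Shrinking $c_1$ to the minimum of the exponents obtained for $k=0,1,2$ and increasing $x_1$ accordingly yields the uniform bound stated in the lemma.

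The only non-routine point is the vanishing of $L_\infty$; everything else is just tail estimation of an integrable exponential. An alternative, perhaps slightly cleaner, route would be to linearize the equation at $\psi=0$ (yielding modified Bessel functions and directly recovering the exponential asymptotic of $\psi$ and its derivatives from standard ODE asymptotics), but the bootstrap via the divergence form above is self-contained and uses only the crude estimate $|\sinh(2\psi)|\le 3|\psi|$ together with the hypothesis already provided in (\ref{eq:painleveiiibc}).
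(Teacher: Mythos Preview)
Your proof is correct. The paper itself does not actually give a proof of this lemma; it merely states that the estimate ``follows easily from (\ref{eq:painleveiiibc})'' and leaves it at that. Your argument supplies exactly the details one would expect: the $k=0$ case is immediate from the given asymptotic, and the derivative bounds are obtained by bootstrapping through the ODE, first in the divergence form $(x\psi')'=\tfrac{x}{2}\sinh(2\psi)$ to get $\psi'$, then directly from the equation for $\psi''$. The step showing $L_\infty=0$ is the only place requiring a moment's thought, and your contradiction argument via logarithmic divergence of $\psi$ is clean and correct. There is nothing to compare against in the paper beyond the one-line dismissal, so your write-up is in fact more complete than the original.
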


Note that the Painlev\'e function also provide explicit formula for $H_{t,\lam}$ in Prop \ref{prop:locmodsym} with $\lam=0$: 
\be \label{eq:specialHt0}
H_{t,0}\rd{\rho e^{-\psi}}=\tx{diag}\rd{\rho e^{2\psi_P}, \rho^{-1} e^{-2\psi_P}}\,.
\ee

\subsection{Asymptotics of local models}
\label{sec:asymp}
In order for the gluing construction to work well it will be necessary to know the behavior of function $M_\lam$ at large radius with bounds uniform in $\lam$ in some interval. We fix a small enough $\delta>0$ and let $I=(-1/4+\delta,1/4-\delta)$. Constants in an inequality will be said to be independent of $\lam$ if it holds for all $\lam\in I$. (These could still depend on choise of $\delta$.) In particular, we will show:

\begin{prop} \label{prop:asympsum}
There are $\rho_0>1$ and $C_1,C_2>0$ such that for all $\lam\in I$ and $\rho\ge \rho_0$, we have
\begin{align*}
& \vb{M_\lam(\rho)-M_{\infty,\lam}(\rho)},\,\,\, \vb{\pd_\rho \rd{M_\lam(\rho)-M_{\infty,\lam}(\rho)}},\,\,\, \vb{\rho^{-1}\pd_\rho\rd{\rho\pd_\rho\rd{M_\lam(\rho)-M_{\infty,\lam}(\rho)}}},  \\
& \vb{\rho^2m_\lam(\rho)-\mu_\lam(\rho)},\,\,\, \vb{\pd_\rho \rd{\rho^2m_\lam(\rho)-\mu_\lam(\rho)}},\,\,\, \vb{\rho^{-1}\pd_\rho\rd{\rho\pd_\rho\rd{\rho^2 m_\lam(\rho)-\mu_\lam(\rho)}}}  \\
& \le C_1e^{-C_2\rho}
\end{align*}
where $M_{\infty,\lam}=\tx{diag}\rd{\rho^{-1}\mu_\lam^2,\rho^{-1}\mu_\lam^{-1}}$ with
\be \label{eq:defmulam}
\mu_\lam(\rho)=4c_\lam^{-1}\rho^{-2\lam}\,.
\ee
where $\lam\mapsto c_\lam$ is continuous.
\end{prop}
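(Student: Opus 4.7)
The plan is to identify $M_{\infty,\lam}$ as the leading asymptotic from a decoupled solution at the puncture $[1:0]$, and then exploit the structure of the Hitchin equation to get exponential convergence, following the general philosophy of Mochizuki's asymptotic analysis of wild (or tame) harmonic bundles. First I would produce the candidate limit: on the punctured $\mathbb{P}^1$, write $V = L^{-2}K \oplus LK$ and take $h_K$ the local metric with $|q|_{h_K^2}\equiv 1$ for $q=\zeta(d\zeta)^2$, and $h_L$ a flat metric on $L$ whose $\lam$-adaptedness is given by Theorem~\ref{thm:paralb}. The resulting decoupled metric (in the sense of Lemma~\ref{lem:decoupledsoln}), expressed in the frame $\usig$ and computed via (\ref{eq:Mlam2H1lam}), is diagonal of exactly the stated form, with $\mu_\lam=4c_\lam^{-1}\rho^{-2\lam}$ and $c_\lam$ encoding the $\mathbb{R}_+$-ambiguity of $h_L$.

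Next I would establish the $C^0$ exponential bound. Let $H_\infty$ denote the $3\times3$ decoupled limit corresponding to $M_{\infty,\lam}$ and set $s=\log(H_{1,\lam}H_\infty^{-1})$ as a self-adjoint endomorphism section on the collar $\{\rho>\rho_0\}$. Subtracting the decoupled equation from (\ref{eq:localform3}) satisfied by $H_{1,\lam}$ yields an elliptic identity of Bochner type, namely
\begin{equation*}
\Delta_\pd |s|_{H_\infty}^2 \;\ge\; c_0\,|s|_{H_\infty}^2 \,\bigl(\text{Higgs-gap}\bigr) \;-\; \text{error}\,,
\end{equation*}
where the ``error'' comes from the off-diagonal coupling of $\theta$ to the decoupled frame and decays like $e^{-c\rho}$ because the eigenvalue gaps of $\theta$ grow like $\rho^{1/2}$. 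Combined with Prop.~\ref{prop:sharpenadaptedness}, which guarantees $|s|=O(1)$ near the puncture (as $H_{1,\lam}$ and $H_\infty$ are both adapted to the same filtered structure $\mcp_\ast^{\ulc}\mce$), a standard subsolution/Agmon argument exactly as in Proposition~5.7 of~\cite{Moc16} produces $|s|\le C_1 e^{-C_2\rho}$. Derivative bounds through second order are then obtained by interior elliptic bootstrap of the equation $E_1(H_{1,\lam},\bet_0^\ast\bet_0,\gam_0\gam_0^\ast)=0$ on unit annuli inside the collar, exactly as in the argument following Prop.~\ref{prop:locmodsym}; the loss of constants is harmless. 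The corresponding statements for $\rho^2 m_\lam-\mu_\lam$ and its derivatives follow by taking determinants and using $\det H_{1,\lam}=\det H_\infty\cdot e^{\tr s}$.

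For the uniformity claim and the continuity of $c_\lam$, I use that for $\lam\in I$ the weights $\ulc(\lam)=(1+2\lam,1+2\lam,1-4\lam)$ remain in a compact subset of the open stability polytope $S$ of (\ref{eq:stabc}). The uniqueness in Theorem~\ref{thm:biquard} together with a routine compactness argument (any subsequential limit of $H_{1,\lam_n}$ is a harmonic metric adapted to $\mcp_\ast^{\ulc(\lam)}\mce$, hence equals $H_{1,\lam}$) shows $\lam\mapsto H_{1,\lam}$ varies continuously in $C^\infty_{\mathrm{loc}}$. The constant $c_\lam$ is extracted from the subleading coefficient of $H_{1,\lam}$ at the puncture (read off from (\ref{eq:defMlamfirst})--(\ref{eq:tilhv})), so it inherits continuity. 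Finally, the constants in Step~2 depend only on the minimum eigenvalue gap of $\theta$ near the puncture and on Schauder constants, both of which are continuous in $\lam$ and thus uniformly bounded on $\overline{I}$.

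The main obstacle is the uniform-in-$\lam$ exponential rate in Step~2. The decay constant $C_2$ is controlled by the distance of the parabolic weights to the stability wall; this is precisely why the proposition is stated on the closed sub-interval $I=(-1/4+\delta,1/4-\delta)$ rather than the full open interval $(-1/4,1/4)$. Allowing $\lam$ to approach $\pm 1/4$ would cause $\ulc(\lam)$ to hit $\partial S$ and the gap used to close the subsolution estimate would collapse; keeping $\lam\in I$ guarantees a uniform lower bound on the spectral gap and thus on $C_2$.
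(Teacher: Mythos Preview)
Your proposal differs substantially from the paper's argument, and Step~2 as written has a circularity. The paper does not set up a Bochner inequality for $s=\log(H_{1,\lam}H_\infty^{-1})$. Instead it applies Mochizuki's general estimate (Theorem~\ref{thm:mochizuki}, packaging Propositions~2.1--2.12 of~\cite{Moc16}) to the harmonic bundle $(\mce,\theta,\widetilde h_{1,\lam})$ directly: since the eigenvalue ratio $M/d=1$ is $\lam$-independent, one gets uniform exponential decay of the curvature and of the off-diagonal pairings $\widetilde h(v_i,v_j)$ for $i\neq j$. Combined with Prop.~\ref{prop:sharpenadaptedness} and Lemma~\ref{lem:convcurvbd}, this pins down each diagonal entry as $c\,\rho^\alpha(1+O(e^{-c'\rho}))$ for explicit $\alpha$ and some constant $c$ depending on $\lam$; the derivative bounds then follow from elementary calculus (Lemmas~\ref{lem:logderv}, \ref{lem:logderv2}) and a covariant-derivative estimate (Lemma~\ref{lem:pdhvj}). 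Your proposed subsolution argument instead requires knowing a priori that the ``error'' (the off-diagonal coupling in the $\theta$-eigenframe) already decays exponentially---but that is exactly the content of Theorem~\ref{thm:mochizuki}, and once you have it the Agmon step is redundant: the diagonal pieces satisfy a scalar equation whose asymptotics are read off directly.

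Your final paragraph misidentifies what fails as $\lam\to\pm 1/4$. The exponential rate in Mochizuki's theorem depends only on the spectral separation of the Higgs field $\theta$, which here is $\lam$-independent (the eigenvalues $\pm 2z^2,\,0$ do not see the parabolic weights). What degenerates at the stability wall is the harmonic metric itself, and hence the prefactor $c_\lam$: the intermediate constants in \S\ref{sec:asympval}--\ref{sec:asympderv} are stated to depend ``continuously on $c_\lam$'' (Prop.~\ref{prop:asympsum0}), and it is the continuity of $\lam\mapsto c_\lam$ on the compact~$\overline I$ (Prop.~\ref{prop:clamcont}) that yields uniform $C_1,C_2$. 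The paper's proof of that continuity is also more delicate than a bare compactness argument: one constructs an auxiliary family $\widetilde h_\lam^0$ agreeing with $\widetilde h_{1,\lam_0}$ on compacta but carrying the correct $\lam$-asymptotics at the puncture, bounds its curvature $\mbf_\lam$ uniformly, and then applies Simpson's identity (\ref{eq:weitzenbock}) together with Lemma~\ref{lem:sim2.1} to force $k_\lam\to\tx{Id}$ uniformly on all of~$\mbp^1$. Your sketch (``any subsequential limit is adapted, hence equal'') skips the crucial point of why adaptedness at the puncture survives the limit, which is precisely what the $\widetilde h_\lam^0$ comparison handles.
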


The coefficient $c_\lam$ will play a central role in the gluing construction. Constants in the estimates below which does depend on $\lam\in I$ will often depend on it through a continuous function of $c_\lam$. Furthermore, the above gives asymptotics of the metric $\widetilde h_{1,\lam}$ defined in Def \ref{def:metrics}. Therefore by (\ref{eq:specialHt0}), we have $c_0=4$.

The estimates will build upon relevant results in \cite{Moc16}. We first review these results. Then we use the local form of $\widetilde h_{1,\lam}$ under $\widetilde{\usig}$ as well as Prop \ref{prop:sharpenadaptedness} to get the estimates in value. The gradient estimate relies on results similar to Lemma 3.12 and 3.13 in \cite{Moc16}. The continuity of $c$ and uniformity of constants $C_1$, $C_2$ on $I$ are similar to Prop 3.15 \cite{Moc16} using an identity from \cite{Sim88}.

\subsubsection{Asymptotic estimates of Mochizuki}

$\rd{E,\bar\pd_E,\theta,h}$ is called a harmonic bundle if $\bar\pd_E\theta=0$ and $h$ solves the Hitchin equation $F_{\nabla_h}+\sq{\theta\wedge \theta^{\ast_h}}=0$. The following combines Propositions 2.1, 2.3, 2.10, 2.12 and Cor 2.6 of \cite{Moc16}.

\begin{thm} \label{thm:mochizuki}
Let $\rd{E,\bar\pd_E,\theta,h}$ be a harmonic bundle of rank $N$ with holomorphic decomposition $\rd{E,\bar\pd_E,\theta}=\bigoplus_{j=1}^N \rd{E_j,\bar\pd_{E_j},\theta_j}$ over a chart $(\mbd;z)\subset X$ with $\mbd=\cl{|z|<R}$ where $E_j$ is a line bundle over $\mbd$. Let $\theta_j=f_j dz$ and $M>0$ such that $\vb{f_j}<M\tx{ on }\mbd$. Suppose $d=\min\left\{\vb{f_i-f_j}: \,\, i\neq j\right\}\ge 1$. Then for any $0<r<R$ let $\mbd'=\cl{|z|<r}$, there are $C,c>0$ depending only on $R,r,M/d,N$ such that
\begin{align}
& \vb{F_{\nabla_h}}_{h,g_\mbc}, \vb{F_{\nabla_{h_j}}}_{h_j,g_\mbc}, \vb{\pd_h \pi_j}_{h,g_\mbc}\le  Ce^{-c d} \tx{ on }\mbd' \label{eq:curvbd} \\
& h(s_i,s_j)\le Ce^{-c d}\vb{s_i}_h \vb{s_j}_h\tx{ on }\mbd'\,\,\tx{ for }s_i\in E_i,\,s_j\in E_j,\,i\neq j \label{eq:hsisjest}
\end{align}
where $h_j$ is the restriction of $h$ to $E_j$, $\pi_j$ the holomorphic projection onto $E_j$, and $g_\mbc$ the Euclidean metric on $\mbd$.
\end{thm}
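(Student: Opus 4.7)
The plan follows the now-standard strategy for harmonic bundles with a holomorphic spectral gap, originating with Simpson and refined by Mochizuki. The guiding principle is that whenever $\theta$ has simple eigenvalues with a uniform separation $d$, the harmonic metric $h$ is exponentially close (with rate $\sim d$) to the direct sum metric $\bigoplus h_j$ adapted to the holomorphic eigen-decomposition. All four estimates (\ref{eq:curvbd})--(\ref{eq:hsisjest}) should be deduced as corollaries of this approximate diagonalization statement on $\mbd'\Subset \mbd$.

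The main step is a Bochner-type differential inequality for the deviation of $h$ from diagonal form. Introduce the holomorphic projections $\pi_j\in \Endo(E)$ onto $E_j$; each satisfies $\bar\pd_E\pi_j=0$, and, because $E_j$ is $\theta$-invariant with scalar action by $f_j$, also $[\theta,\pi_j]=0$. Using the Hitchin equation $F_{\nabla_h}=-[\theta\wedge\theta^{\ast_h}]$ and the Jacobi identity, a direct computation yields
\ben
\bar\pd\pd_h\pi_j=[F_{\nabla_h},\pi_j]=-\sq{\theta\wedge[\theta^{\ast_h},\pi_j]}\,.
\een
Pairing this with $\pi_j$ with respect to $h$ and taking traces produces, after routine manipulation, a pointwise inequality of the schematic form $\Delta u_j \ge c\, d^2\, u_j - C_N M^2 u_j'$ for nonnegative functions $u_j = |\pd_h\pi_j|_h^2$ and error terms $u_j'$ of the same type in the other indices, the coercivity $c\,d^2$ arising from the observation that $[\theta^{\ast_h},\pi_j]$ is (up to lower-order corrections) a matrix whose nonzero entries are multiplied by $(\bar f_i - \bar f_j)$, hence have absolute value at least $d$. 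A standard Moser / subharmonic / maximum-principle argument on the disk $\mbd$ then upgrades this system of inequalities to a uniform bound $|\pd_h\pi_j|_h \le C e^{-cd}$ on $\mbd'$, with $C,c$ depending only on $R,r,M/d,N$.

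The remaining estimates follow formally. The Cauchy--Schwarz-type cross bound (\ref{eq:hsisjest}) for $s_i\in E_i$, $s_j\in E_j$ ($i\neq j$) is equivalent to the exponential smallness of the off-diagonal blocks of $h$ in the holomorphic frame; these blocks can be recovered by integrating $\pd_h\pi_j$ along radial paths inside $\mbd'$. For (\ref{eq:curvbd}), expand $[\theta\wedge\theta^{\ast_h}]$ in the eigen-decomposition: its diagonal blocks equal $\sum_k (f_k-f_j)\overline{(f_k-f_j)}\pi_{jk}\pi_{kj}$ with $\pi_{jk}$ involving factors that decay like $e^{-cd}$ by the previous step, while the off-diagonal blocks decay for the same reason, yielding $|F_{\nabla_h}|_h\le C e^{-cd}$; the corresponding estimate for $F_{\nabla_{h_j}}$ then follows from the Gauss--Codazzi identity since $F_{\nabla_{h_j}}-\pi_j F_{\nabla_h} \pi_j$ is quadratic in $\pd_h\pi_j$. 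The main obstacle, as always in such arguments, is pinning down the precise form of the Bochner inequality with a strictly positive coercive term $c\,d^2$ independent of the specific $h$, and tracking the dependence of constants on $M/d$ (so that the estimate remains sharp when one rescales $\theta\mapsto t\theta$ as in the later applications of this theorem).
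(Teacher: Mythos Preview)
The paper does not prove this theorem at all: it is stated with the preamble ``The following combines Propositions 2.1, 2.3, 2.10, 2.12 and Cor 2.6 of \cite{Moc16}'' and then simply used as a black box. So there is no proof in the paper to compare your proposal against.

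That said, your sketch is broadly in the spirit of the argument in \cite{Moc16} (itself a refinement of Simpson's estimates). A couple of remarks on where your outline is thinnest. First, the order of the argument in Mochizuki is somewhat different: the curvature bound $\vb{F_{\nabla_h}}_h\le Ce^{-cd}$ is obtained \emph{first}, essentially from Simpson's subharmonic inequality $\Delta\log\vb{F_{\nabla_h}}_h\ge c\,\vb{\theta}_h^2$ combined with an a priori bound of the form $\vb{F_{\nabla_h}}_h\lesssim \vb{\theta}_h^2$ and the lower bound $\vb{\theta}_h\gtrsim d$ coming from the eigenvalue separation; only afterwards are the estimates on $\pd_h\pi_j$ and the off-diagonal pairings deduced. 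Your route inverts this, starting from a Bochner inequality for $\vb{\pd_h\pi_j}_h^2$ and recovering the curvature bound at the end. This can be made to work, but the step you flag as ``the main obstacle'' is genuinely delicate: since $\theta^{\ast_h}$ is \emph{not} block-diagonal in the holomorphic decomposition, $[\theta^{\ast_h},\pi_j]$ has contributions beyond the desired $(\bar f_i-\bar f_j)$-weighted off-diagonal blocks, and controlling those ``lower-order corrections'' without already having some form of the approximate-diagonalization estimate is circular. The standard cure is exactly to bound the curvature first, which then controls $\pd_h\pi_j$ via $\bar\pd\pd_h\pi_j=[F_{\nabla_h},\pi_j]$ and an Ahlfors--Schwarz type argument. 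Second, your derivation of (\ref{eq:hsisjest}) by ``integrating $\pd_h\pi_j$ along radial paths'' is not quite right: the off-diagonal blocks of $h$ are related to $\pi_j-\pi_j^{\ast_h}$, not to an antiderivative of $\pd_h\pi_j$; one typically obtains (\ref{eq:hsisjest}) directly from the bound on $\pi_j-\pi_j^{\ast_h}$, which in turn follows from the curvature estimate.
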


The following bounds the covariant derivative by the gradient of the norm. It is a slight generalization of Lemma 3.12 in \cite{Moc16}.

\begin{lem} \label{lem:pdhvj}
Notation as in Theorem \ref{thm:mochizuki} with $N\ge 2$ and suppose $C e^{-cd}<10^{-2}N^{-3/2}$. Then for any nonzero section $v_j\in E_j$, we have on $\mbd'$
\ben
\vb{\pd_h v_j}_{h,g_\mbc}\le 10 \vb{v_j}_h \rd{\vb{\pd_\zeta \log|v_j|_h^2}+C e^{-c d}}\,.
\een
\end{lem}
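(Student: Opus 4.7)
The plan is to split $\pd_h v_j$ into its component along $E_j$ and its component transverse to $E_j$ with respect to the holomorphic decomposition $E=\bigoplus_i E_i$, and to bound the two pieces separately. Let $\pi_j\colon E\to E_j\hookrightarrow E$ denote the holomorphic projection. Applying the Leibniz rule for the induced Chern connection on $\tx{End}(E)$ to the identity $\pi_j v_j=v_j$ yields
\ben
(I-\pi_j)\pd_h v_j \;=\; \pd_h(\pi_j v_j)-\pi_j\pd_h v_j \;=\; (\pd_h\pi_j)(v_j),
\een
so the estimate $\vb{\pd_h\pi_j}_{h,g_\mbc}\le Ce^{-cd}$ from Theorem \ref{thm:mochizuki} immediately gives
\ben
\vb{(I-\pi_j)\pd_h v_j}_{h,g_\mbc}\le Ce^{-cd}\vb{v_j}_h\,.
\een

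For the diagonal piece, work on the open set $\cl{v_j\neq 0}$ and observe that the desired pointwise inequality extends continuously to all of $\mbd'$, since on the right-hand side the factor $\vb{v_j}_h$ cancels the simple-pole behavior of $\pd_\zeta\log\vb{v_j}_h^2$ at a zero of $v_j$. On $\cl{v_j\neq 0}$ the section $v_j$ trivializes $E_j$ and $\pi_j\pd_h v_j\in E_j\otimes\Omega^{1,0}$, so we may write $\pi_j\pd_h v_j=\alpha_j v_j$ for a unique scalar $(1,0)$-form $\alpha_j$. Using holomorphy of $v_j$ and metric compatibility of the Chern connection,
\ben
\pd\vb{v_j}_h^2 \;=\; h(\pd_h v_j,v_j) \;=\; \alpha_j\vb{v_j}_h^2+h\rd{(I-\pi_j)\pd_h v_j,\,v_j}\,.
\een
The cross term is controlled by the off-diagonal estimate (\ref{eq:hsisjest}) applied componentwise in the holomorphic decomposition (or more crudely by Cauchy--Schwarz combined with the bound just obtained), and is $O(e^{-cd})\vb{v_j}_h^2$. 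Dividing by $\vb{v_j}_h^2$ yields $\alpha_j=\pd\log\vb{v_j}_h^2+O(e^{-cd})$.

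Combining the two components produces a bound of the form $\vb{\pd_h v_j}_{h,g_\mbc}\le K\,\vb{v_j}_h\rd{\vb{\pd_\zeta\log\vb{v_j}_h^2}+Ce^{-cd}}$ for an absolute constant $K$. The technical nuisance, and the source of the explicit constant $10$ in the statement, is that the holomorphic splitting $E=\bigoplus E_i$ is not $h$-orthogonal: the triangle inequality used to pass from $\vb{\alpha_j v_j+(I-\pi_j)\pd_h v_j}_h$ to $\vb{\alpha_j v_j}_h+\vb{(I-\pi_j)\pd_h v_j}_h$, and the operator-norm inequality $\vb{(\pd_h\pi_j)(v_j)}_h\le \vb{\pd_h\pi_j}_{h,g_\mbc}\vb{v_j}_h$, each absorb factors governed by the operator norm of $\pi_j$ relative to $h$. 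The smallness hypothesis $Ce^{-cd}<10^{-2}N^{-3/2}$ keeps this operator norm, together with the related Gram-type quantities attached to the frame $\{v_1,\ldots,v_N\}$, uniformly close to $1$, and tracking those factors in rank $N$ produces the numerical constant $10$ in the stated inequality.
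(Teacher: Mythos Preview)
Your argument is correct and in fact cleaner than the paper's. The paper does not use the projector identity $(I-\pi_j)\pd_h v_j=(\pd_h\pi_j)v_j$ directly; instead it expands $\pd v_j=\sum_\ell c_\ell v_\ell$ in the holomorphic frame, first establishes the frame comparison $\tfrac12\sum_i|c_i|^2|v_i|_h^2\le|w|_h^2\le\tfrac32\sum_i|c_i|^2|v_i|_h^2$ from the off-diagonal bound, and then estimates $\sum_i|v_i|_h^{-1}|h(\pd v_j,v_i)|$ from below by $|\pd_hv_j|_h/\sqrt6$ and from above term by term, splitting the $i\neq j$ contributions via $\pi_j$. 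This is where the hypothesis $Ce^{-cd}<10^{-2}N^{-3/2}$ is genuinely consumed, to absorb a term $N\sqrt N\,Ce^{-cd}|\pd v_j|_h$ appearing on the right back into the left side.

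Your route bypasses the coordinate expansion entirely: the transverse piece is bounded at once by $|\pd_h\pi_j|_{h,g_\mbc}|v_j|_h$, and the diagonal piece is read off from $\pd|v_j|_h^2=h(\pd_hv_j,v_j)$. Since the Hilbert--Schmidt norm dominates the operator norm and the triangle inequality carries no extra factor, your argument actually yields the constant $2$ rather than $10$, and the smallness hypothesis on $Ce^{-cd}$ is not needed at this step (it only enters later when the paper uses the intermediate inequalities \eqref{eq:pdhvjeq2}, \eqref{eq:pdhvjeq3} for second-derivative estimates). Your closing remark about the operator norm of $\pi_j$ contributing to the constant $10$ is therefore overly cautious: in your own argument those factors are all equal to $1$.
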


\begin{proof}
By assumption $Ce^{-c d}<1/(2N)$. Let $w=\sum_{j=1}^N c_j v_j$ be a section with $v_j\in E_j$, we get by (\ref{eq:hsisjest}) $\ov{2}\sum_{i=1}^N |c_i|^2\,\, |v_i|_h^2 \le |w|_h^2 \le \frac{3}{2}\sum_{i=1}^N |c_i|^2\,\, |v_i|_h^2$.

For a $\sigma$ define operator $\pd$ by $\pd_h \sigma=\rd{\pd \sigma}d\zeta$ where $\pd_h$ is the (1,0)-part of the Chern connection of the induced metric. Let $\pd v=\sum_\ell c_\ell v_\ell$, then
\begin{align}
&\sum_i \vb{v_i}_h^{-1}\vb{h\rd{\pd v_j,v_i}}\ge \sum_i \vb{v_i}_h^{-1}\rd{
\vb{c_i}\vb{v_i}_h^2-\vb{\sum_{\ell\neq i}c_\ell h\rd{v_\ell,v_i}}}\nonumber \\
&\ge \sum_i \vb{c_i}\vb{v_i}_h-\sum_i \sum_{\ell\neq i}Ce^{-c d}\vb{c_\ell}\vb{v_\ell}_h\ge \ov{2}\sum_i \vb{c_i}\vb{v_i}_h\ge \ov{\sqrt{6}}\vb{\pd_h v_j}_h\,. \label{eq:pdhvjeq1}
\end{align}
The $i=j$ term in $\sum_i \vb{v_i}_h^{-1}\vb{h\rd{\pd v_j,v_i}}$ is $\vb{v_j}_h\vb{\pd_\zeta \log\vb{v_j}_h^2}$ whereas $i\neq j$ terms satisfy, by (\ref{eq:curvbd}),
\begin{align}
& \vb{v_i}_h^{-1}\vb{h\rd{\pd v_j,v_i}} \le \vb{v_i}_h^{-1}\rd{\vb{h\rd{\rd{\pd \pi_j}v_j,v_i}}+\vb{h\rd{\pi_j\rd{\pd v_j},v_i}}}\nonumber \\
& \le Ce^{-c d}\rd{\vb{\pd \pi_j}_h\vb{v_j}_h +\vb{\pi_j\rd{\pd v_j}}_h}\le \rd{Ce^{-c d}}^2\vb{v_j}_h + Ce^{-c d}\vb{c_j}\vb{v_j}_h \label{eq:pdhvjeq2}
\end{align}
We have 
\be \label{eq:pdhvjeq3}
\vb{c_j}\vb{v_j}_h\le \sum_i \vb{c_i}\vb{v_i}_h\le \sqrt{N}\vb{\pd v_j}_h\,. 
\ee
We have $N\sqrt{N}C e^{-c d}\le 1/100$. Thus by (\ref{eq:pdhvjeq1}), (\ref{eq:pdhvjeq2})
\ben
\vb{\pd v_j}_h\le \sqrt{6} \rd{\vb{v_j}_h\vb{\pd_\zeta \log\vb{v_j}_h^2} + \ov{2} C e^{-cd} \vb{v_j}_h+\ov{100}\vb{\pd v_j}_h}\,.
\een
\end{proof} 

\subsubsection{Asymptotics of metrics}
\label{sec:asympval}
Recall definition of coordinates $z$ and $w=z^{-1}$ from \S \ref{sec:localmodel}. Consider the harmonic bundle $\rd{\mce, \theta,\widetilde h_{1,\lam}}$ on $\cl{|w|< 1}\subset \mbp^1$ with $\widetilde h_{1,\lam}$ defined in Def \ref{def:metrics}. We have $\rd{\mce, \theta}=\bigoplus_{j=0}^2 \rd{\mco'v_j, f_j dz}$ where $\mco'=\mco_{\mbp^1}\rd{\ast[1:0]}$ and $\rd{f_1,f_2,f_3}=2z \rd{z,-z,0}$. In the following we denote $\widetilde h_{1,\lam}$ by $\widetilde h$. An estimate is said to hold for $\rho\gg 1$ if there is $\rho_0>1$ such that it holds for $\rho\ge \rho_0$. Note that the distance between eigenvalues
\ben
d=\min_{i\neq j}\rd{|f_i-f_j|}=2|z|^2\ge 1
\een
are bounded below, and $L=\max_j |f_j|=2|z|^2$. In particular $L/d=1$ is independent of $|z|$. We have for $h_j=\left.\widetilde h\right|_{\mcl_j}$, $\vb{\rd{F_{\nabla_{h_j}}}}_{h_j,g_\mbc}=\vb{\pd_z\pd_{\bar z}\log \vb{v_j}_{\widetilde h}}$. By Theorem \ref{thm:mochizuki}, there are $C,c>0$ independent of $\lam$ such that $\vb{\pd_\zeta\pd_{\bar \zeta}\log \vb{v_j}_{\widetilde h}}\le C e^{-c|\zeta|}$ and
\be \label{eq:offdiagexpdecay}
\vb{\widetilde h\rd{ v_i, v_j}}\le C \vb{ v_i}_{\widetilde h} \vb{ v_j}_{\widetilde h} e^{-c \rho},\,\, 0\le i\neq j\le 2\,.
\ee
Furthermore, by Theorem \ref{thm:mochizuki} $|F_{\nabla_{\widetilde h}}|_{\widetilde h}^2$ is bounded by an $L^p$ function near $[1:0]$. By Prop \ref{prop:sharpenadaptedness}, there are constants $0<C_1<C_2$, which could depend on $\lam$, such that
\begin{align}
&C_1\rho^{1+2\lam}\le \vb{v_j}_{\widetilde h}^2\le C_2\rho^{1+2\lam},\,\, j=0,1\nonumber \\
&C_1\rho^{1-4\lam}\le \vb{v_2}_{\widetilde h}^2\le C_2\rho^{1-4\lam} \label{eq:parabweightasympform}\,.
\end{align}
We can improve the norm estimate and give a gradient estimate using the following result, which follows easily from Proposition 3.10 in \cite{Moc16}.

\begin{lem} \label{lem:convcurvbd}
Let $f>0$ be a smooth radial function on $\cl{|\zeta|\ge 1}$ such that there are $C_1, c_1>0$ with
\be \label{eq:curvaturecondition01}
\vb{\pd_{\zeta}\pd_{\bar\zeta}\log f}\le C_1 e^{-c_1 |\zeta|^\ell}
\ee
for some $\ell\in\mbn$ and $\alpha\in\mbr$, $C_2,C_2'>0$ such that
\be \label{eq:powlawinlem}
C_2|\zeta|^\alpha\le f\le C_2'|\zeta|^\alpha\,.
\ee
Then there is $b=b(\alp,f)>0$ and $C_3,c_3>0$ independent of $\alpha$ such that
\ben
\vb{\log f-\log b\vb{\zeta}^\alpha}, \vb{\zeta\pd_{\zeta}\log f-\alpha/2}\le C_3 e^{-c_3|\zeta|^\ell}\,.
\een
\end{lem}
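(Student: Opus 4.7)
The approach is to reduce the statement to a one-dimensional ODE-type estimate. Set $u(r) = \log f(r) - \alpha \log r$ for $r = |\zeta| \ge 1$. Since $f$ is radial, so is $u$, and the power-law hypothesis (\ref{eq:powlawinlem}) translates directly into the qualitative bound $\log C_2 \le u(r) \le \log C_2'$ on $[1,\infty)$. Because $\partial_\zeta \partial_{\bar\zeta} \log|\zeta| = 0$ away from the origin, the curvature assumption (\ref{eq:curvaturecondition01}) becomes $|\partial_\zeta \partial_{\bar\zeta} u| \le C_1 e^{-c_1 r^\ell}$, and for a radial function $\Delta u = r^{-1}(ru')'$ gives
\ben
\vb{(r u'(r))'} \le 4 C_1 \, r\, e^{-c_1 r^\ell},
\een
which is integrable on $[1,\infty)$ for any $\ell \in \mbn$.

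Integrability of $(ru')'$ implies that $r u'(r) \to L$ for some $L \in \mbr$ as $r \to \infty$. The key step is to show $L = 0$: otherwise $u'(r) \sim L/r$, hence $u(r) \sim L \log r$, which would contradict the bound $u \in [\log C_2, \log C_2']$. Once $L = 0$, writing $r u'(r) = -\int_r^\infty (s u'(s))' ds$ gives
\ben
\vb{r u'(r)} \le 4 C_1 \int_r^\infty s\, e^{-c_1 s^\ell} ds \le C_3 e^{-c_3 r^\ell},
\een
where $C_3, c_3 > 0$ depend only on $C_1, c_1, \ell$ (not on $\alpha$ nor on the constants in (\ref{eq:powlawinlem})), since one can absorb polynomial factors into a slightly smaller exponent. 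A second integration shows $u(r)$ has a limit $u_\infty \in [\log C_2, \log C_2']$ with $|u(r) - u_\infty| \le C_3' e^{-c_3' r^\ell}$. Setting $b = e^{u_\infty}$ yields the first estimate $|\log f - \log(b |\zeta|^\alpha)| \le C_3' e^{-c_3' r^\ell}$; the dependence on $(\alpha, f)$ is entirely encoded in $b$.

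For the second estimate, using $\zeta \pd_\zeta \log|\zeta| = 1/2$ we have $\zeta \pd_\zeta \log f = \alpha/2 + \zeta \pd_\zeta u$, and for a radial $u$ a direct computation gives $\zeta \pd_\zeta u = (r/2) u'(r)$, so $|\zeta \pd_\zeta \log f - \alpha/2| = \tfrac{1}{2} |r u'(r)|$ is already controlled by the previous step. The only real issue in the argument is showing $L = 0$, which is what makes the qualitative power-law bound (\ref{eq:powlawinlem}) essential; everything else is bookkeeping to verify that the final constants $C_3, c_3$ depend only on $(C_1, c_1, \ell)$ and not on $\alpha$ or the constants $C_2, C_2'$.
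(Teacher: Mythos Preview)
Your argument is correct and complete. The paper does not actually prove this lemma; it simply states that the result ``follows easily from Proposition 3.10 in \cite{Moc16}''. Your self-contained ODE argument---reducing to $u(r)=\log f(r)-\alpha\log r$, integrating $(ru')'$ twice on $[1,\infty)$, and using the boundedness from (\ref{eq:powlawinlem}) only to force the first integration constant $L$ to vanish---is the natural elementary route, and your observation that the resulting $C_3,c_3$ depend solely on $(C_1,c_1,\ell)$ (with all dependence on $\alpha$ and on $C_2,C_2'$ absorbed into $b=e^{u_\infty}$) is exactly the uniformity the paper later exploits in \S\ref{sec:asympval}--\ref{sec:asympderv}.
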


Applying the above and (\ref{eq:tilhv}), there are $C_3, c_3>0$ independent of $\lam$ and $c^{(1)}_\lam$, $c^{(2)}_\lam$ such that
\begin{align}
&\vb{
\log \rd{2\rho^2 \rd{M_\lam}_{11}(\rho)}-\log\rd{c^{(2)}_\lam \rho^{1-4\lam}}
},\,\, \vb{
\zeta\pd_\zeta \log \rd{2\rho^2 \rd{M_\lam}_{11}(\rho)} - \rd{1-4\lam}/2}, \nonumber \\
&\vb{
\log\rd{2\rho^2 \rd{M_\lam}_{22}(\rho)+2\rho^{-1}m_\lam(\rho)^{-1}}-\log\rd{c^{(1)}_\lam\rho^{1+2\lam}}
},\nonumber \\
&\vb{
\zeta\pd_\zeta \log \rd{2\rho^2 \rd{M_\lam}_{22}(\rho)+2\rho^{-1}m_\lam(\rho)^{-1}} - \rd{1+2\lam}/2}\le C_3 e^{-c_3 \rho}\,. \label{eq:origest}
\end{align}
For $(i,j)=(0,1)$ resp. (0,2) in (\ref{eq:offdiagexpdecay}), by (\ref{eq:parabweightasympform}), there are $C_4$, $c_4>0$ independent of $\lam\in I$ such that for $\rho \gg 1$,
\begin{align*}
&\vb{\rd{M_\lam}_{22}(\rho)-\rho^{-3}m_\lam\rd{\rho}^{-1}}\le C_4 c^{(1)}_\lam \rho^{2\lam-1} e^{-c_4 \rho}\\
&\vb{\rd{M_\lam}_{12}(\rho)}\le C_4 \rd{c^{(1)}_\lam c^{(2)}_\lam}^{1/2}\rho^{-\lam-1} e^{-c_4 \rho}\,.
\end{align*}

Using $|x-1|\le \tx{(const)}\vb{\log x}$ for small $|x|$, by (\ref{eq:origest}) there are $C_5$ and $c_5>0$ depending continuously on $c^{(1)}(\lam)$ and $c^{(2)}(\lam)$, such that for $\rho\gg 1$,
\ben
\vb{M_\lam(\rho)-\tx{diag}\rd{c^{(2)}_\lam\rho^{-4\lam-1}/2, c^{(1)}_\lam\rho^{2\lam-1}/4}}, \,\, \vb{m_\lam(\rho)-4\rd{c^{(1)}_\lam}^{-1}\rho^{-2\lam-2}}\le C_5e^{-c_5\rho}\,. \label{eq:C0bd}
\een
In the following, denote $c_\lam=c^{(1)}_\lam$. By $m_\lam=\det M_\lam$ we have $c^{(2)}_\lam=32 c_\lam^{-2}$. For $M_{\infty,\lam}$ and $\mu_\lam(\rho)$ in Prop \ref{prop:asympsum}, there are $C_5'$, $c_5'>0$ depending continuously on $c_\lam$ such that for $\rho\gg 1$,
\be \label{eq:C0bd1}
\vb{M_\lam(\rho)-M_{\infty,\lam}(\rho)}, \,\, \vb{\rho^2m_\lam(\rho)-\mu_\lam(\rho)}\le C_5'e^{-c_5'\rho}\,.
\ee
In the rest of \S \ref{sec:asympval}, constants in an inequalities not in lemmas will be depending continuously on $c_\lam$ unless stated otherwise. 

Since $M_{\infty,\lam}$ is diagonal with diagonal entries of the form $c\rho^{\alpha}$, $\alpha\in\mbr$ and $c>0$, it follows that for $\rho\gg 1$, $|M_\lam M_{\infty,\lam}^{-1}-I|=O\rd{e^{-\tx{(const)}\rho}}$. For $\rho \gg 1$ we also have $C_5''$, $c_5''>0$ such that $|M_{\infty,\lam}M_\lam^{-1}-I|=\vb{\sum_{k=1}^\infty\rd{I-M_{\infty,\lam}^{-1}M_\lam}^k}\le C_5''e^{-c_5''\rho}$. With a similar but easier argument for $m_\lam$, we have that for $\rho\gg 1$ there are $C_5'''$, $c_5'''>0$ such that
\be \label{eq:C0bd2}
\vb{M_\lam(\rho)^{-1}-M_{\infty,\lam}(\rho)^{-1}},\,\, \vb{\rho^2 m_\lam(\rho)^{-1}-\mu_\lam(\rho)^{-1}}\le C_5''' e^{-c_5'''\rho}
\ee

\subsubsection{Asymptotics of the first and second derivative of the metric}
\label{sec:asympderv}

We will need the following elementary lemma.

\begin{lem} \label{lem:logderv}
Suppose $f$ is a function on $\cl{\rho>1}$ and $C_j>0$, $j=1,2,3$, $c_1$, $c_2$ and $c_3>0$ satisfy $\vb{\rho\pd_\rho \rd{\log f-\log g}}\le C_1 e^{-c_1 \rho}$ where $g(\rho)=C_2\rho^{c_2}$ and $\vb{f-g}\le C_3 e^{-c_3 \rho}$. Then there are $C_4,c_4,\rho_0>0$ depending on the previous constants that for $\rho\gg 1$, 
\ben
\vb{\rho\pd_\rho\rd{f-g}}\le C_4 e^{-c_4\rho}\,.
\een
\end{lem}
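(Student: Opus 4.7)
The plan is to convert the logarithmic-derivative estimate into an ordinary-derivative estimate by multiplying by $f$, then use the value-level bound $|f-g|\le C_3 e^{-c_3\rho}$ to control the resulting expression. The key algebraic identity is
\ben
\rho\pd_\rho(f-g) = f\cdot\rho\pd_\rho \log f - \rho\pd_\rho g = f\bigl(\rho\pd_\rho \log g + R\bigr) - \rho\pd_\rho g,
\een
where by hypothesis $|R| = |\rho\pd_\rho(\log f - \log g)| \le C_1 e^{-c_1\rho}$.

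Since $g(\rho)=C_2\rho^{c_2}$, we have $\rho\pd_\rho \log g \equiv c_2$ and $\rho\pd_\rho g = c_2 g$. Substituting gives the clean identity
\ben
\rho\pd_\rho(f-g) = c_2(f-g) + f\cdot R.
\een
The first term is immediately bounded by $c_2 C_3 e^{-c_3\rho}$ using the value-level hypothesis. For the second term, the same value-level hypothesis yields $f \le g + C_3 e^{-c_3\rho} \le 2C_2\rho^{c_2}$ for $\rho$ sufficiently large (say $\rho\ge \rho_1$ where $C_3 e^{-c_3\rho_1} \le C_2\rho_1^{c_2}$), so $|f\cdot R| \le 2C_1 C_2 \rho^{c_2} e^{-c_1\rho}$.

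The only remaining step is absorbing the polynomial factor $\rho^{c_2}$ into the exponential: for any $0<c_4<\min(c_1,c_3)$, there is $\rho_0\ge \rho_1$ (depending on $C_1,C_2,c_1,c_2,c_4$) such that $\rho^{c_2} e^{-c_1\rho} \le e^{-c_4\rho}$ for $\rho\ge\rho_0$, and combining the two contributions gives the desired bound with $C_4 := c_2 C_3 + 2C_1 C_2$. There is no real obstacle here --- this is a purely elementary computation --- but the identity $\rho\pd_\rho(f-g) = c_2(f-g) + fR$ is what makes the argument work, since it isolates the logarithmic and value-level contributions additively.
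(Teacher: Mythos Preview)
Your proof is correct. The paper does not actually supply a proof of this lemma --- it is introduced as ``elementary'' and left to the reader --- and your identity $\rho\pd_\rho(f-g) = c_2(f-g) + fR$ together with the polynomial-into-exponential absorption is precisely the kind of direct computation the author has in mind.
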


Note that $\zeta\pd_\zeta=\ov{2}\rho\pd_\rho$ on radial functions and $\rho\pd_\rho\log\rho^\alpha=\alpha$. By Lemma \ref{lem:logderv} and (\ref{eq:origest}), we have $C_6,c_6>0$ with
\begin{align}
& \vb{
\rho\pd_\rho \rd{\rd{M_\lam}_{22}(\rho)+\rho^{-3}m_\lam(\rho)^{-1}-c_\lam\rho^{-1+2\lam}/2}}\,, \nonumber \\
& \vb{
\rho\pd_\rho \rd{\rd{M_\lam}_{11}(\rho)-16c_\lam^{-2}\rho^{-1-4\lam}}}\le C_6 e^{-c_6\rho}\,. \label{eq:Mlamdervest1}
\end{align}

In order to bound the first derivatives of off-diagonal elements, we apply Lemma \ref{lem:convcurvbd} to $\vb{v_j}_{\widetilde h}$, and given that $|\lam|<1/4$, we have for $|\zeta|\gg 1$ there is $C_7>0$ independent of $\lam\in I$ with $\vb{\pd_\zeta\log\vb{v_j}_{\widetilde h}^2}\le C_7\vb{\zeta}^{-1}$. This provides a bound for the first term on the right-hand side of the inequality in the statement of Lemma \ref{lem:pdhvj}. The other term is bounded by $e^{-(\tx{const})\rho}$ and is certainly bounded by the first term for $\rho\gg 1$. It follows that for $\rho\gg 1$, there is $C_8>0$ independent of $\lam\in I$ with
\ben
\vb{\pd_{\widetilde h}  v_j}_{\widetilde h_{1,\lam},g_\mbc}\le C_8\rho^{-1}\vb{ v_j}_{\widetilde h}
\een
Since $\vb{v_j}_{\widetilde h}$ satisfies (\ref{eq:powlawinlem}), by (\ref{eq:pdhvjeq2}), (\ref{eq:pdhvjeq3}) in proof of Lemma \ref{lem:pdhvj} and the above there are $C_9$ and $c_9>0$ such that $\vb{z\pd_z \widetilde h\rd{ v_j, v_i}}=\vb{z\widetilde h \rd{\pd_{\widetilde h} v_j, v_i}}\le C_9 e^{-c_9 |z|^2}$ for $i\neq j$. For $(i,j)=(0,1)$ and $(0,2)$, by (\ref{eq:C0bd1}) there are $C_9'$ and $c_9'>0$ such that
\be \label{eq:offdiagdervbd1}
\vb{\rho\pd_\rho \rd{M_\lam}_{12}(\rho)},\,\, \vb{\rho\pd_\rho\rd{\rd{M_\lam}_{22}(\rho)-\rho^{-3}m_\lam(\rho)^{-1}}}\le C_9'e^{-c_9'\rho}\,.
\ee
At this point, we showed that there are $C_9'',c_9''>0$ such that
\be \label{eq:C1bd1}
\vb{\pd_\rho\rd{M_\lam(\rho)-M_{\infty,\lam}(\rho)}}<C_9''e^{-c_9''\rho}\,.
\ee
We have
\begin{align*}
&\tr\rd{M_\lam^{-1}\pd_\rho\rd{M_\lam-M_{\infty,\lam}}}+\tr\rd{\rd{M_\lam^{-1}-M_{\infty,\lam}^{-1}}\pd_\rho M_{\infty,\lam}} \\
&=m_\lam^{-1}\pd_\rho\rd{m_\lam-\rho^{-2}\mu_\lam}+\rd{\rho^2\mu_\lam^{-1}-m_\lam^{-1}}\pd_\rho\rd{\rho^{-2}\mu_\lam}\,.
\end{align*}
By (\ref{eq:C0bd2}), (\ref{eq:C1bd1}) there are $C_9'''$, $c_9'''>0$ such that for $\rho\gg 1$,
\ben
\vb{\pd_\rho\rd{\rho^2m_\lam(\rho)-\mu_\lam(\rho)}}\le C_9'''e^{-c_9'''\rho}\,.
\een

We next bound the difference in the second derivatives. For $i\neq j$, we have
\begin{align*}
&\vb{\pd_{\bar z}\pd_z \widetilde h\rd{ v_i, v_j}}\le \vb{h\rd{\pd_h v_i,\pd_h v_j}}+\vb{h\rd{F_{\nabla_h}v_i,v_j}} \\
&\le \vb{\widetilde h\rd{\rd{\pd_{\widetilde h}\pi_i v_i},\rd{\pd_{\widetilde h}\pi_j v_j}}}+\vb{\widetilde h\rd{\rd{\pd_{\widetilde h}\pi_i}v_i, \pi_j\rd{\pd_{\widetilde h}v_j}}} \\
&+\vb{\widetilde h\rd{\pi_i\rd{\pd_{\widetilde h}v_i},\rd{\pd_{\widetilde h}\pi_j}v_j}}+\vb{\widetilde h\rd{\pi_i\rd{\pd_{\widetilde h}v_i},\pi_j\rd{\pd_{\widetilde h}v_j}}}\,,
\end{align*}
where $\pi_j$ is the holomorphic projection to $\mco'v_j$. Let $\pd_{\widetilde h}v_i=\sum_\ell c_\ell v_\ell$ we have by the proof of Lemma \ref{lem:pdhvj}, 
\ben
\vb{\pi_i\rd{\pd_{\widetilde h}v_i}}_{\widetilde h}=\vb{c_i}\vb{v_i}_{\widetilde h}\le \sqrt{2}\vb{\pd_{\widetilde h}v_i}_{\widetilde h}\,.
\een
By (\ref{eq:curvbd}), (\ref{eq:hsisjest}) there are $C_{10}$ and $c_{10}>0$ such that for $\vb{z}\gg 1$,
\ben
\vb{\pd_{\bar z}\pd_z \widetilde h\rd{ v_i, v_j}}\le C_{10} e^{-c_{10}|z|^2}\,.
\een

For $(i,j)=(0,1)$, $(0,2)$, by (\ref{eq:offdiagdervbd1}) there are $C_{11}$, $c_{11}>0$ such that for $\rho\gg 1$,
\be \label{eq:offdiagdervbd2}
\vb{\pd_{\bar\zeta}\pd_\zeta\rd{M_\lam}_{12}(\rho)}, \,\, \vb{\pd_{\bar\zeta}\pd_\zeta\rd{\rd{M_\lam}_{22}(\rho)-\rho^{-3}m_\lam(\rho)^{-1}}} \le C_{11}e^{-c_{11}\rho}\,.
\ee
By the estimates of the curvature along $\mco'v_j$ for $j=0,2$ in (\ref{eq:curvbd}) there are $C_{12}$, $c_{12}>0$ independent of $\lam$ such that for $\rho\gg 1$,
\ben
\vb{\pd_{\bar\zeta}\pd_\zeta\log\rd{\frac{\rd{M_\lam}_{22}(\rho)+\rho^{-3}m_\lam(\rho)^{-1}}{c_\lam\rho^{2\lam-1}/2}}},\,\, \vb{\pd_{\bar\zeta}\pd_\zeta\log\rd{\frac{\rd{M_\lam}_{11}(\rho)}{16 c_\lam^{-2}\rho^{-4\lam-1}}}}\le C_{12}e^{-c_{12}\rho}\,.
\een
We will need the following elementary lemma.

\begin{lem} \label{lem:logderv2}
Let $f(\rho)$ on $\rho>1$, $g(\rho)=C_1\rho^{c_1}$, $C_2$, $c_2>0$ be such that 
\ben
\vb{f-g}, \vb{\pd_\rho\rd{f-g}}\le C_2 e^{-c_2\rho}\,.
\een
Let $D:=\rho^{-1}\pd_\rho\rd{\rho\pd_\rho}$ and suppose $\vb{D\rd{\log f-\log g}}\le C_2 e^{-c_2\rho}$. Then there are $C_3$, $c_3>0$ depending on previous constants such that for $\rho\gg 1$,
\ben
\vb{D\rd{f-g}}\le C_3 e^{-c_3\rho}\,.
\een
\end{lem}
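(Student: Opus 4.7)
The plan is to exploit the identity $Df = f\,D\log f + (f')^2/f$, which holds wherever $f>0$ and follows by direct expansion: writing out
\ben
D\log f = \frac{f''}{f}-\rd{\frac{f'}{f}}^2+\frac{1}{\rho}\frac{f'}{f}, \qquad \frac{Df}{f}=\frac{f''}{f}+\frac{1}{\rho}\frac{f'}{f},
\een
one reads off the identity. The crucial observation is that because $g(\rho)=C_1\rho^{c_1}$ is a pure power, $\log g=\log C_1+c_1\log\rho$, so $\rho\pd_\rho\log g=c_1$ is constant and thus $D\log g=0$. Subtracting the identity for $g$ from that for $f$ (and using $C_1>0$, as is the case in the applications) gives
\be \label{eq:masteridentity}
D(f-g)=f\cdot D\rd{\log f-\log g}+\rd{\frac{(f')^2}{f}-\frac{(g')^2}{g}}.
\ee

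It remains to bound the two summands. From $\vb{f-g}\le C_2 e^{-c_2\rho}$ and $g=C_1\rho^{c_1}$, for $\rho\gg 1$ we have $f\asymp g$; in particular $f\le 2 C_1\rho^{c_1}$, so by hypothesis the first term in (\ref{eq:masteridentity}) is bounded by $2C_1 C_2\rho^{c_1}e^{-c_2\rho}$, which absorbs into $\tilde C e^{-\tilde c\rho}$ for any $\tilde c<c_2$. For the second summand I use the elementary algebraic identity
\ben
\frac{(f')^2}{f}-\frac{(g')^2}{g}=\frac{(f'-g')(f'+g')}{f}-\frac{(g')^2(f-g)}{fg}.
\een
By the hypothesis on $\pd_\rho(f-g)$ and the explicit $g'=C_1c_1\rho^{c_1-1}$, one has $|f'-g'|\le C_2 e^{-c_2\rho}$ and $|f'+g'|\le 3|g'|\le 3|C_1c_1|\rho^{c_1-1}$ for $\rho\gg 1$ (the case $c_1=0$ is trivial since then $g'\equiv 0$). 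Combined with $f\ge C_1\rho^{c_1}/2$ and $fg\ge C_1^2\rho^{2c_1}/2$, each of these two terms is bounded by a constant times $\rho^{-1}e^{-c_2\rho}$ and $\rho^{-2}e^{-c_2\rho}$ respectively.

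Adding the three exponentially small contributions yields $\vb{D(f-g)}\le C_3 e^{-c_3\rho}$ for $\rho\gg 1$, with $c_3$ any constant strictly less than $c_2$ and $C_3$ depending on $C_1,c_1,C_2,c_2$ and $c_3$. There is no substantive obstacle; the only point to watch is the positivity of $f$ needed for $\log f$, but this is automatic for $\rho$ large because $g>0$ and $|f-g|$ is exponentially small relative to $g$.
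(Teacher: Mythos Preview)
Your proof is correct. The paper omits the proof of this lemma entirely, introducing it only with the phrase ``We will need the following elementary lemma,'' so there is no approach to compare against; your identity $Df = f\,D\log f + (f')^2/f$ combined with $D\log g=0$ for a pure power is exactly the kind of elementary computation the authors had in mind.
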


By (\ref{eq:C0bd1}), (\ref{eq:Mlamdervest1}) and Lemma \ref{lem:logderv2}, there are $C_{13}$ and $c_{13}>0$ such that for $\rho\gg 1$,
\begin{align*}
& \vb{\pd_{\bar\zeta}\pd_\zeta\rd{\rd{M_\lam}_{22}(\rho)+\rho^{-3}m_\lam(\rho)^{-1}-c_\lam\rho^{-1+2\lam}/2}}\,, \\
& \vb{\pd_{\bar\zeta}\pd_\zeta\rd{\rd{M_\lam}_{11}(\rho)-16 c_{\lam}^{-2}\rho^{-1-4\lam}}}\le C_{13}e^{-c_{13}\rho}\,.
\end{align*}
With (\ref{eq:offdiagdervbd2}), there are $C_{13}'$ and $c_{13}'>0$ such that for $\rho\gg 1$,
\ben
\vb{\pd_{\bar\zeta}\pd_\zeta \rd{\rd{M_\lam}_{22}(\rho)-c_\lam\rho^{1-2\lam}/4}}\le C_{13}' e^{-c_{13}'\rho}\,.
\een
Combining (\ref{eq:offdiagdervbd2}),there are $C_{13}'$ and $c_{13}'>0$ such that 
\ben
\vb{\pd_{\bar\zeta}\pd_\zeta\rd{\rd{M_\lam}_{22}(\rho)-c_\lam\rho^{1-2\lam}/4}}\le C_{13}'e^{-c_{13}'\rho}\,.
\een
At this point, we have proved the following:

\begin{prop} \label{prop:asympsum0}
All the estimates in Prop \ref{prop:asympsum} hold with $C_1$, $C_2>0$ depending continuously on $c_\lam$.
\end{prop}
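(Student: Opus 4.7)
The proposition is a compilation of the pointwise, first-derivative, and second-derivative estimates established stepwise in \S\ref{sec:asympval} and \S\ref{sec:asympderv}, and the plan is to verify that at each step every constant can be written as a continuous function of $c_\lam$. The ultimate source of uniformity is that the eigenvalue separation $d=2|z|^2$ and the maximum magnitude $L=2|z|^2$ of the local Higgs eigenvalues in the harmonic bundle $(\mce,\theta,\widetilde h_{1,\lam})$ on $\{|w|<1\}\subset\mbp^1$ satisfy $L/d=1$ independently of $\lam$, so the constants produced by Theorem \ref{thm:mochizuki} are uniform on $I$.

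For the pointwise bounds I would first invoke Theorem \ref{thm:mochizuki} to obtain exponential decay of $\widetilde h(v_i,v_j)$ for $i\neq j$ and of the curvature of each line sub-bundle $\mco' v_j$, with constants independent of $\lam\in I$. Prop \ref{prop:sharpenadaptedness}, whose hypothesis is supplied by this curvature control and $|F_{\nabla_{\widetilde h}}|\in L^p$, then upgrades the adaptedness condition to the power-law estimates (\ref{eq:parabweightasympform}). Lemma \ref{lem:convcurvbd} next extracts a leading coefficient from each $|v_j|_{\widetilde h}^2$; naming $c_\lam$ for the coefficient appearing along $v_0,v_1$ and using $m_\lam=\det M_\lam$ to force $c_\lam^{(2)}=32\,c_\lam^{-2}$, the estimates (\ref{eq:C0bd1}), (\ref{eq:C0bd2}) follow with $\lam$-dependence entering only through $c_\lam$ and the exponents $\pm 2\lam,\pm 4\lam$, both continuous on the compact interval $I$.

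For the first derivatives I would combine Lemma \ref{lem:logderv} with the gradient-versus-norm-gradient estimate of Lemma \ref{lem:pdhvj}; the smallness hypothesis $Ce^{-cd}<10^{-2}N^{-3/2}$ of the latter holds for $\rho\gg 1$ uniformly in $\lam$ thanks to the $\lam$-uniform constants in Theorem \ref{thm:mochizuki}. Off-diagonal derivatives then come from $\partial_\zeta \widetilde h(v_i,v_j)=\widetilde h(\partial_{\widetilde h} v_i,v_j)$ and the bound on $\partial_{\widetilde h}\pi_j$; the estimate for $\partial_\rho(\rho^2 m_\lam-\mu_\lam)$ is transferred via $\partial_\rho\log m_\lam=\tr(M_\lam^{-1}\partial_\rho M_\lam)$ together with (\ref{eq:C0bd2}) and (\ref{eq:C1bd1}). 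The second derivatives are obtained by expanding $\partial_{\bar\zeta}\partial_\zeta \widetilde h(v_i,v_j)=\widetilde h(\partial_{\widetilde h}v_i,\partial_{\widetilde h}v_j)+\widetilde h(F_{\nabla_{\widetilde h}}v_i,v_j)$, controlling each piece by Theorem \ref{thm:mochizuki} and Lemma \ref{lem:pdhvj}, and applying Lemma \ref{lem:logderv2} to upgrade the diagonal log-curvature bounds into pointwise second-derivative bounds on power-law quantities.

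The one ingredient the proposition does not itself assert but which Prop \ref{prop:asympsum} requires is the continuity of $\lam\mapsto c_\lam$; this I expect to be the genuine obstacle and would handle in \S\ref{sec:3.3.4} by combining continuous dependence of the harmonic metric $\widetilde h_{1,\lam}$ on the parabolic weight $\lam$ (from uniqueness in Theorem \ref{thm:biquard} plus a compactness argument on the family of stable filtered Higgs bundles $(\mcp_\ast^{\ulc}\mce,\theta)$) with an integral identity expressing $c_\lam$ as a functional of $\widetilde h_{1,\lam}$, in the spirit of Prop 3.15 of \cite{Moc16} which uses an identity of Simpson \cite{Sim88}.
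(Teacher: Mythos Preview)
Your proposal is correct and follows essentially the same approach as the paper: the proposition is precisely the cumulative conclusion of the stepwise estimates in \S\ref{sec:asympval}--\S\ref{sec:asympderv}, and you have correctly identified that the uniformity in $\lam$ originates from $L/d=1$ in Theorem~\ref{thm:mochizuki}, that Lemma~\ref{lem:convcurvbd} then extracts the leading coefficient $c_\lam$ with uniform exponential constants, and that the first- and second-derivative bounds are obtained via Lemmas~\ref{lem:pdhvj}, \ref{lem:logderv}, and \ref{lem:logderv2} exactly as the paper does. Your final remark that continuity of $\lam\mapsto c_\lam$ is \emph{not} asserted by this proposition but is handled separately in \S\ref{sec:3.3.4} is also exactly right.
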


\subsubsection{Uniform boundedness of family of Hermitian-Yang-Mills metrics}
\label{sec:3.3.4}

In this part we finish proof of Prop \ref{prop:asympsum}, which follows from Prop \ref{prop:asympsum0} and:

\begin{prop} \label{prop:clamcont}
The function $\lam\mapsto c_\lam$ is continuous.
\end{prop}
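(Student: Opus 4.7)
The plan is to follow the strategy of Prop.~3.15 in \cite{Moc16}: prove local $C^\infty$ convergence of the family $\widetilde h_{1,\lam}$ on $\mbc$ as $\lam$ varies in $I$, then read off continuity of $c_\lam$ by evaluating the exponentially accurate asymptotic of Prop \ref{prop:asympsum0} at a fixed large radius. Let $\lam_n \to \lam_\infty$ in $I$; the goal is $c_{\lam_n}\to c_{\lam_\infty}$.

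First I would show that the family $\{\widetilde h_{1,\lam_n}\}$ is $C^0_{\text{loc}}$-bounded on $\mbc$. Both metrics are harmonic metrics on the same underlying Higgs bundle $(\mce,\theta)$, differing only in their filtered structure at $[1:0]$, so Simpson's identity (\cite{Sim88}) applies to $s_n=\widetilde h_{1,\lam_\infty}\widetilde h_{1,\lam_n}^{-1}$: away from $[1:0]$, the function $\tr(s_n)+\tr(s_n^{-1})$ is a subsolution of a suitable elliptic inequality. The endomorphism $s_n$ is not globally bounded on $\mbp^1$ because the filtered weights at $[1:0]$ differ, but on any finite disk $\{|\zeta|\le \rho_0\}$ the boundary values of $s_n$ at $|\zeta|=\rho_0$ are controlled by Prop \ref{prop:asympsum0} in terms of $|\lam_n-\lam_\infty|$ and $|c_{\lam_n}-c_{\lam_\infty}|$, provided a preliminary rough bound $c_{\lam_n}\in[c_-,c_+]$ is available. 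The maximum principle then propagates the boundary control inward, yielding a uniform $C^0_{\text{loc}}$ bound.

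Second, elliptic bootstrap on (\ref{eq:localform3z}), exactly as in the proof of Prop \ref{prop:locmodsym}, upgrades this to $C^\infty_{\text{loc}}$ precompactness. Any subsequential limit $h_\infty$ of $\widetilde h_{1,\lam_n}$ solves (\ref{eq:localform3z}) and inherits the block-diagonal form and normalization of Lemma \ref{lem:locmod}. Passing the uniform local estimates to the limit and invoking Prop \ref{prop:sharpenadaptedness} (sharpened adaptedness from $L^p$ curvature bounds) shows $h_\infty$ is adapted to the filtered bundle $\mcp^{\ulc(\lam_\infty)}_\ast\mce$; uniqueness in Lemma \ref{lem:locmod} then forces $h_\infty=\widetilde h_{1,\lam_\infty}$, so the whole family converges in $C^\infty_{\text{loc}}$. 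Fixing $\rho_0$ large, the pointwise convergence $(M_{\lam_n})_{22}(\rho_0)\to (M_{\lam_\infty})_{22}(\rho_0)$ combined with the uniform asymptotic
\ben
(M_{\lam_n})_{22}(\rho_0)=(c_{\lam_n}/4)\rho_0^{2\lam_n-1}+O(e^{-C_2\rho_0})
\een
from Prop \ref{prop:asympsum0} (whose constants depend continuously on $c_\lam$, hence are uniform under the preliminary bound) can be solved for $c_{\lam_n}$; letting $n\to\infty$ gives $c_{\lam_n}\to c_{\lam_\infty}$.

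The main obstacle is the preliminary bound $c_\lam\in[c_-,c_+]$ on compact subintervals of $I$; without it the constants in Prop \ref{prop:asympsum0} cannot be taken uniform in $n$ and the fourth step collapses. I expect this bound to follow from a soft compactness argument: if $c_\lam$ were unbounded or tended to zero along some sequence, Simpson's inequality on compact sets of $\mbc$ would still force $C^\infty_{\text{loc}}$ subsequential convergence (after a rescaling by $\det K$ if needed) to a harmonic metric on $(\mce,\theta)$ adapted to the limiting filtered structure, whose asymptotic coefficient is necessarily a finite positive number, contradicting the alleged degeneration. Once this rough bound is in hand, the strategy above goes through.
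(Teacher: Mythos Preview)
Your strategy has a genuine circularity that you correctly identify but do not actually resolve. Every step that gives you control of $\widetilde h_{1,\lam_n}$ away from the origin---the boundary values on $\{|\zeta|=\rho_0\}$, the identification of any subsequential limit as adapted to the filtered structure at $[1:0]$, even the claim that the constants in Prop~\ref{prop:asympsum0} can be taken uniform in $n$---already presupposes a two-sided bound $c_{\lam_n}\in[c_-,c_+]$. Your proposed ``soft compactness'' escape does not work as stated: if $c_{\lam_n}\to\infty$ then $(M_{\lam_n})_{22}(\rho_0)\sim c_{\lam_n}\rho_0^{2\lam_n-1}/4\to\infty$ while $m_{\lam_n}(\rho_0)\to 0$, so the $2\times 2$ block $M_{\lam_n}$ genuinely degenerates on every compact annulus and there is no $C^0_{\text{loc}}$ subsequential limit without first renormalizing. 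Any renormalization that makes the limit nondegenerate will also change the determinant normalization, so uniqueness in Lemma~\ref{lem:locmod} no longer pins down the limit, and the contradiction does not close.

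The paper breaks the circularity by a device you are missing: rather than compare $\widetilde h_{1,\lam}$ directly to $\widetilde h_{1,\lam_0}$, it builds an explicit auxiliary metric $\widetilde h^0_\lam$ out of $\widetilde h_{1,\lam_0}$ (whose constants are fixed) by conjugating near $[1:0]$ by $\Gamma_\lam=\tx{diag}(|z|^{2(\lam-\lam_0)},|z|^{4(\lam_0-\lam)},|z|^{2(\lam-\lam_0)})$, so that $\widetilde h^0_\lam$ has the \emph{same} filtered structure at $[1:0]$ as $\widetilde h_{1,\lam}$, not as $\widetilde h_{1,\lam_0}$. Then $k_\lam$ defined by $\widetilde h_{1,\lam}=\widetilde h^0_\lam\cdot k_\lam$ is globally bounded on all of $\mbp^1$ by construction, with no appeal to $c_\lam$. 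One checks directly that the curvature-plus-Higgs term $\mbf_\lam$ of $\widetilde h^0_\lam$ is uniformly bounded (this uses only the known asymptotics of $\widetilde h_{1,\lam_0}$), and then Simpson's identity (\ref{eq:weitzenbock}) and the mean-value inequality of Lemma~\ref{lem:sim2.1} run globally on $\mbp^1$, yielding $k_\lam\to\tx{Id}$ uniformly---in particular uniformly near $[1:0]$, which is exactly what is needed to conclude $c_\lam\to c_{\lam_0}$. The auxiliary metric is the missing idea; once you have it, the rest of your outline (bootstrap, uniqueness of the limit) becomes unnecessary because uniform convergence of $k_\lam$ on $\mbp^1$ already delivers the result.
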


This is the continuity of the next-to-leading order in $\lam\mapsto \widetilde h_{1,\lam}$. We build on the proof of Prop 3.15 in \cite{Moc16} for the rank-two case. Note that the conclusion will follow once we can show that for any $\lam_0\in (-1/4,1/4)$ on $\cl{\rho\ge 1}$, $\rho^{2(\lam_0-\lam)m_{\lam_0}m_\lam^{-1}-1}\to 1$ uniformly as $\lam\to \lam_0$. We have
\begin{align*}
&\vb{c_{\lam_0}c_\lam^{-1}-1}\le \vb{\rho^{2(\lam-\lam_0}+2\mu_{\lam_0}^{-1}\rd{\rho^{-2}\mu_\lam-m_\lam}}+\vb{\rho^2\mu_{\lam_0}^{-1}m_{\lam_0}\rd{\rho^{2(\lam-\lam_0}m_\lam m_{\lam_0}^{-1}-1}} \\
&+\vb{\rho^2\mu_{\lam_0}^{-1}m_{\lam_0}-1}
\end{align*}
Denote the three terms by I, II and III. By Prop \ref{prop:asympsum0}, there are $C_{\tx{I}}(\lam)$, $c_{\tx{I}}(\lam)$, $C_{\tx{II}}$, $C_{\tx{III}}$, $c_{\tx{III}}>0$ such that $\tx{I}\le C_{\tx{I}}(\lam)e^{-c_{\tx{I}}\rho}$, $\tx{III}\le C_{\tx{III}}e^{-c_{\tx{III}}\rho}$ and that $\vb{\rho^2\mu_{\lam_0}^2m_{\lam_0}}\le C_{\tx{II}}$. Therefore by first taking $\rho\to\infty$, we have that $c_\lam \to c_{\lam_0}$ as $\lam\to\lam_0$.

Let $I_0\Subset (-1/4,1/4)$ containing $\lam_0$. All constants in inequalities below will depend only on $I_0$ unless stated otherwise. Consider an auxiliary family $\widetilde h^0_\lam$ for $\lam\in I_0$ such that (1) $\widetilde h^0_{\lam_0}=\widetilde h_{1,\lam_0}$, (2) $\det \widetilde h^0_\lam=\det \widetilde h_{1,\lam}\equiv 1$ the standard metric on trivial bundle, (3) $\widetilde h^0_{\lam}\to \widetilde h_{1,\lam_0}$ uniformly on any $K\Subset \mbp^1-\cl{[1:0]}$ in $C^\infty$ sense as $\lam\to \lam_0$, and (4) on $\cl{|z|\ge 1}$
\ben
\widetilde H^0_\lam=\Gamma_\lam^\ast \widetilde H_{1,\lam_0}\Gamma_\lam
\een
where $\widetilde H_{1,\lam}=\rd{\widetilde h_{1,\lam}}_{\widetilde{\usig}}$, $\usig$ is defined in (\ref{eq:Tesig}), $\widetilde H^0_\lam=\rd{\widetilde h^0_\lam}_{\widetilde{\usig}}$ and 
\ben
\Gamma_\lam=\tx{diag}\rd{|z|^{2(\lam-\lam_0)},|z|^{4(\lam_0-\lam)},|z|^{2(\lam-\lam_0)}}\,.
\een
i.e. $\widetilde h_\lam^0$ have the same leading order asymptotic behavior as $h_{1,\lam}$ near $[1:0]$. Fix $g_{\mbp^1}$ a K\"ahler metric on $\mbp^1$.

\begin{lem}
Let $\mbf_\lam=F_{\nabla_{h}}+[\theta,\theta^{\ast_h}]$ where $h=\widetilde h_\lam^0$. There is $C>0$ independent of $\lam$ such that
\ben
\vb{\mbf_\lam}_{h,g_{\mbp^1}}\le C
\een
\end{lem}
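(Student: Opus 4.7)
The plan is to split $\mbp^1 = K \cup U$ with $K = \cl{|z|\le 2}$ and $U = \cl{|z|\ge 1}$ (so $U$ contains $[1:0]$) and to estimate on each piece separately. On $K$ the family $\lam\mapsto \atp{\widetilde h_\lam^0}{K}$ is continuous in $\lam\in I_0$ in $C^\infty$-topology, since $\widetilde h_\lam^0$ is by construction smooth in $\lam$ via property (4) on $\cl{|z|\ge 1}$ and via an interpolation matching properties (1)--(3) on $\cl{|z|\le 2}$. Since $I_0\times K$ is compact, $\vb{\mathbf{F}_\lam}_{h,g_{\mbp^1}}$ is uniformly bounded there, and the main work is on $U$.

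Using Proposition \ref{prop:asympsum0} I would write $\widetilde H_{1,\lam_0}=\widetilde H_\infty+R$, where $\widetilde H_\infty$ is diagonal with power-law entries $(c_{\lam_0}/4)\vb{z}^{4\lam_0}$, $16c_{\lam_0}^{-2}\vb{z}^{-2-8\lam_0}$, $(c_{\lam_0}/4)\vb{z}^{-2+4\lam_0}$, and $R$ together with its first and (mixed) second derivatives decays like $e^{-c\vb{z}^2}$. Introduce the \emph{idealized} metric $\widetilde H_\lam^{\tx{id}}:=\Gamma_\lam^\ast \widetilde H_\infty \Gamma_\lam$, which remains diagonal with entries proportional to $\vb{z}^{4\lam}$, $\vb{z}^{-2-8\lam}$, $\vb{z}^{-2+4\lam}$, so that $\widetilde H_\lam^0=\widetilde H_\lam^{\tx{id}}+\Gamma_\lam^\ast R\Gamma_\lam$.

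The crucial structural observation is that $\mathbf{F}_{\widetilde H_\lam^{\tx{id}}}=0$ identically on $U$. Indeed, for any diagonal power-law metric $H=\tx{diag}(c_j\vb{z}^{\alp_j})$, the Chern connection form $H^{-1}\pd H=\tx{diag}(\alp_j/2)\cdot dz/z$ is $\bar\pd$-closed away from $z=0$, so $F_H=0$. A direct computation via $T_{e\sig}$ shows that $\theta$ in the $\widetilde{\usig}$-frame has the form
\[
\theta=\pmt{0 & 0 & 2z \\ 0 & 0 & 0 \\ 2z^3 & 0 & 0}dz,
\]
concentrated in the $(0,2)$ and $(2,0)$ entries. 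Consequently, for diagonal $H=\tx{diag}(h_0,h_1,h_2)$, the commutator $[\theta,\theta^{\ast_H}]$ has only diagonal entries $\pm(4\vb{z}^2 h_2^{-1}h_0-4\vb{z}^6 h_0^{-1}h_2)\,dz\wedge d\bar z$; and for $\widetilde H_\lam^{\tx{id}}$ the crucial ratio $h_0/h_2=\vb{z}^2$ is independent of $\lam_0$ and $\lam$, so this vanishes identically.

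Hence $\mathbf{F}_\lam$ on $U$ is generated entirely by the correction $\Gamma_\lam^\ast R\Gamma_\lam$. Expanding $F_{\nabla_h}+[\theta,\theta^{\ast_h}]$ about $\widetilde H_\lam^{\tx{id}}$ and invoking the estimates on $R$, $\pd R$, and $\bar\pd\pd R$ from Proposition \ref{prop:asympsum0} (uniform in $\lam\in I_0$, via continuity of $\lam_0\mapsto c_{\lam_0}$), the matrix coefficients of $\mathbf{F}_\lam$ are seen to decay like $e^{-c\vb{z}^2}$ times polynomial factors from $\Gamma_\lam$ and powers of $\widetilde H_\lam^{\tx{id}}$. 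The norm $\vb{\cdot}_{h,g_{\mbp^1}}$ introduces at most further polynomial factors (notably $\vb{dz\wedge d\bar z}_{g_{\mbp^1}}\sim \vb{z}^4$ near $[1:0]$, since $g_{\mbp^1}$ is smooth in the coordinate $w=1/z$), all dominated by the exponential decay. The main obstacle is the careful bookkeeping of these polynomial factors uniformly in $\lam\in I_0$, which is routine given the uniformity of the constants in Proposition \ref{prop:asympsum0}. Combined with the bound on $K$, this yields the desired uniform bound on $\mbp^1$.
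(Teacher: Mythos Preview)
Your approach is correct and genuinely different from the paper's. The paper exploits the exact conjugation identity: writing $\widetilde H_\lam^0=\Gamma_\lam^\ast \widetilde H_{1,\lam_0}\Gamma_\lam$ on $\cl{|z|\ge 1}$, one has $[\theta,\theta^{\ast_{\widetilde h_\lam^0}}]=\Gamma_\lam^{-1}[\theta,\theta^{\ast_{\widetilde h_{1,\lam_0}}}]\Gamma_\lam$ and $\vb{\Gamma_\lam^{-1}\psi\Gamma_\lam}_{\widetilde h_\lam^0}=\vb{\psi}_{\widetilde h_{1,\lam_0}}$, which reduces the commutator term \emph{exactly} to the harmonic metric $\widetilde h_{1,\lam_0}$; the latter is then bounded via Theorem~\ref{thm:mochizuki} (since $F_{\nabla_{\widetilde h_{1,\lam_0}}}=-[\theta,\theta^{\ast_{\widetilde h_{1,\lam_0}}}]$). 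For the curvature the paper expands $F_{\nabla_{\widetilde h_\lam^0}}$ as $\Gamma_\lam^{-1}F_{\nabla_{\widetilde h_{1,\lam_0}}}\Gamma_\lam$ plus commutators of diagonal matrices with the connection form of $\widetilde h_{1,\lam_0}$, whose off-diagonal entries decay exponentially by Proposition~\ref{prop:asympsum0}. By contrast, you pass to a \emph{further} simplification: the diagonal power-law model $\widetilde H_\lam^{\tx{id}}=\Gamma_\lam^\ast \widetilde H_\infty\Gamma_\lam$, verify by a direct computation in the $\widetilde{\usig}$-frame that $\mathbf F_{\widetilde H_\lam^{\tx{id}}}\equiv 0$ (the key being $h_0/h_2=\vb{z}^2$ independently of $\lam$), and then show that the remainder $\Gamma_\lam^\ast R\Gamma_\lam$ contributes only polynomial-times-exponential terms in the matrix entries, which survive multiplication by the polynomial weights coming from $\vb{\cdot}_h$ and $\vb{dz\wedge d\bar z}_{g_{\mbp^1}}$. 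The paper's route is shorter and more conceptual (it reuses the harmonic metric and Mochizuki's curvature bound directly), while yours is more explicit and self-contained, avoiding a second appeal to Theorem~\ref{thm:mochizuki} at the cost of a heavier (but routine) bookkeeping of polynomial powers. One small remark: since $\lam_0$ is fixed throughout, the constants from Proposition~\ref{prop:asympsum0} are already fixed and you do not need continuity of $c_\lam$ here; the $\lam$-uniformity enters only through the explicit $\Gamma_\lam$.
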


\begin{proof}
Let $G_\lam\in \tx{Aut}(\mce)$ such that $\rd{G_\lam}_{\widetilde{\usig}}=\Gamma_\lam$. Under $\widetilde{\usig}$ for $|z|\ge 1$, $[\theta,\theta_{\widetilde h^0_\lam}^\ast]=\Gamma_\lam^{-1}[\theta,\theta_{\widetilde h_{1,\lam_0}}^\ast]\Gamma_\lam$. For $\psi\in\tx{End}(\mce)$ with $\Psi=\psi_{\widetilde{\usig}}$, we have $\vb{\psi}_{\widetilde h^0_\lam}^2=\tr\rd{\Psi\rd{\widetilde H_\lam^0}^{-1}\Psi^\ast \widetilde H^0_\lam}=\vb{\Gamma_\lam \psi\Gamma_\lam^{-1}}_{\widetilde h_{1,\lam_0}}^2$. 
By Theorem \ref{thm:mochizuki}, there is $C_1>0$ such that for $|z|\gg 1$, 
\ben
\vb{\sq{\theta,\theta_{\widetilde h^0_\lam}^\ast}}_{\widetilde h^0_\lam, g_{\mbp^1}}^2=\vb{\Gamma_\lam^{-1}\sq{\theta,\theta_{\widetilde h_{1,\lam_0}}^\ast}\Gamma_\lam}_{\widetilde h^0_\lam,g_{\mbp^1}}^2=\vb{\sq{\theta,\theta_{\widetilde h_{1,\lam_0}}^\ast}}_{\widetilde h_{1,\lam_0},g_{\mbp^1}}\le C_1\,.
\een
We have 
\begin{align*}
&\rd{F_{\nabla_{\widetilde h^0_\lam}}}_{\widetilde{\usig}}=\pd_{\bar z}\rd{\rd{\widetilde H^0_\lam}^{-1}\pd_z \widetilde H^0_\lam}d\bar z\wedge dz \\
&=\pd_{\bar z}\rd{\Gamma_\lam^{-1}\rd{\widetilde H_{1,\lam_0}}^{-1}\rd{\Gamma_\lam^\ast}^{-1}\pd_z\rd{\Gamma_\lam^\ast}\widetilde H_{1,\lam_0}\Gamma_\lam}d\bar z\wedge dz \\
&+\pd_{\bar z}\rd{\Gamma_\lam^{-1}\rd{\widetilde H_{1,\lam_0}}^{-1}\pd_z\rd{\widetilde H_{1,\lam_0}}\Gamma_\lam}d\bar z\wedge dz+\pd_{\bar z}\rd{\Gamma_\lam^{-1}\pd_z \Gamma_\lam}d\bar z\wedge dz\,.
\end{align*}

Note that $\pd_{\bar z}\rd{\Gamma_\lam^{-1}\pd_z \Gamma_\lam}=0$ and using 
\ben
\pd(A^{-1}BA)=[A^{-1}BA,A^{-1}\pd A]+A^{-1}\rd{\pd B}A
\een 
where $\pd=\pd_z$ or $\pd_{\bar z}$, we get
\begin{align*}
&\rd{F_{\nabla_{\widetilde h^0_\lam}}}_{\widetilde{\usig}}=\Gamma_\lam^{-1}\pd_{\bar z}\rd{\widetilde H_{1,\lam_0}^{-1}\pd_z \widetilde H_{1,\lam_0}}\Gamma_\lam+[\Gamma_\lam^{-1}\widetilde H_{1,\lam_0}^{-1}\rd{\pd_z \widetilde H_{1,\lam_0}}\Gamma_\lam,\Gamma_\lam^{-1}\rd{\pd_{\bar z}\Gamma_\lam}] \\
&+[\Gamma_\lam^{-1}\widetilde H_{1,\lam_0}^{-1}\rd{\Gamma_\lam^{\ast}\pd_z \Gamma_\lam^{\ast}}\widetilde H_{1,\lam_0}\Gamma_\lam,\Gamma_\lam^{-1}\widetilde H_{1,\lam_0}^{-1}\pd_{\bar z}\rd{\widetilde H_{1,\lam_0}\Gamma_\lam}]\,.
\end{align*}
Recall we have
\ben
\widetilde H_{1,\lam_0}=\pmt{|z|^{-4}m_{\lam_0}\rd{|z|^2}^{-1} & \\ & M_{\lam_0}\rd{|z|^2}}\,.
\een
To bound the commutator terms it suffice to bound the off-diagonal elements. By Prop \ref{prop:asympsum0}, off-diagonal elements in $\widetilde H_{1,\lam_0}^{-1}\rd{\pd_z \widetilde H_{1,\lam_0}}$ and $\Gamma_\lam^{-1}\widetilde H_{1,\lam_0}^{-1}\rd{\pd_z \widetilde H_{1,\lam_0}}\Gamma_\lam$ are bounded by $C_2e^{-c_2\rho}$ with $C_2$, $c_2>0$. On the other hand, $|\pd_{\bar z}\rd{\widetilde H_{1,\lam_0}^{-1}\pd_z \widetilde H_{1,\lam_0}}|\le C_3e^{-c_3\rho}$ with $C_3$, $c_3>0$.
\end{proof}

Let $k_\lam$ be given by $\widetilde h_{1,\lam}=\widetilde h^0_\lam k_\lam$. We have that $k_\lam$ is self-adjoint (with respect to both metrics) and positive-definite. It follows that $\vb{k_\lam}\le C \tr{k_\lam^2}^{1/2}\le C\tr\rd{k_\lam}$ for some constant $C>0$. Since $\det k_\lam=1$, $\tr\, k_\lam\ge 3$ where equality happens iff $k_\lam=\tx{Id}$. By design of $\widetilde h^0_\lam$ we have for all $\lam$, $\sup_{\mbp^1} |k_\lam|<\infty$.

The rest of the proof is almost identical to that of \cite[Prop 3.17]{Moc16}. We still include it here for completeness. The following identity is from Prop 3.1 in \cite{Sim88} expressing the difference between curvatures of two metric connections associated with $\bar\pd+\theta$ for two different metrics:
\be \label{eq:weitzenbock}
i\Lambda_{g_{\mbp^1}}\bar\pd \pd\, \tr\, k_\lam=i\Lambda_{g_{\mbp^1}} \tr\rd{k_\lam \mbf_\lam}-\vb{\rd{\rd{\bar\pd+\theta}k_\lam}k_\lam^{-1/2}}_{\widetilde h^0_\lam,g_{\mbp^1}}^2\,.
\ee

We will also need the following result from Prop 2.1 in \cite{Sim88}.

\begin{lem} \label{lem:sim2.1}
Let $(X,g)$ be a compact K\"ahler manifold and $b\in L^p(X,g)$ for some $p>\dim X$. Then there is $C=C(b)>0$ such that for any $f>0$ bounded on $X$ with $\Delta_g f\le b$, we have $\sup_X |f|\le C\dbv{f}_{L^1(X,g)}$.
\end{lem}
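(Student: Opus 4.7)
This lemma is a standard $L^\infty$--$L^1$ bound for positive subsolutions of a Poisson-type inequality on a closed K\"ahler manifold, and its proof follows the Moser iteration scheme. The hypothesis $p>\dim X$ (complex dimension, so $2p>n:=\dim_{\mbr} X$) is precisely the subcritical condition $p'<n/(n-2)$ needed to close the iteration, where $1/p+1/p'=1$.

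\textbf{Step 1 (Caccioppoli-type inequality).} For each $q\ge 2$, I would multiply the pointwise inequality $\Delta_g f\le b$ by the nonnegative test function $f^{q-1}$, legitimate since $f>0$ is bounded, and integrate over $X$. Because $X$ is closed, integration by parts produces no boundary terms; combined with $|\nabla f^{q/2}|^2=(q/2)^2 f^{q-2}|\nabla f|^2$ and H\"older's inequality, this yields
\ben
\frac{4(q-1)}{q^2}\int_X |\nabla f^{q/2}|_g^2\,dV_g\le \dbv{b}_{L^p}\,\dbv{f}_{L^{(q-1)p'}}^{q-1}.
\een
Together with the identity $\dbv{f^{q/2}}_{L^2}^2=\dbv{f}_{L^q}^q$, this gives control of $\dbv{f^{q/2}}_{W^{1,2}}^2$.

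\textbf{Step 2 (Sobolev boost and iteration).} The Sobolev embedding $W^{1,2}(X)\hookrightarrow L^{2^{\ast}}(X)$ with $2^{\ast}=2n/(n-2)$ converts Step 1 into a recursion
\ben
\dbv{f}_{L^{q\chi}}^{q}\le Cq\,\bigl(\dbv{b}_{L^p}\,\dbv{f}_{L^{(q-1)p'}}^{q-1}+\dbv{f}_{L^q}^{q}\bigr),\qquad \chi:=\frac{n}{n-2}.
\een
The assumption $p>n/2$ translates to $p'<\chi$, so for $q$ large the right-hand exponent $(q-1)p'$ is strictly smaller than the left-hand exponent $q\chi$, producing a genuine gain in integrability. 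Iterating along $q_{k+1}=\chi q_k$ from some large starting exponent $q_0$ and telescoping, one arrives at $\sup_X f\le C(b)\,\dbv{f}_{L^{q_0}}$. Finally, since $f>0$ one has the elementary interpolation $\dbv{f}_{L^{q_0}}^{q_0}\le \dbv{f}_{L^\infty}^{q_0-1}\dbv{f}_{L^1}$; inserting this into the previous bound and absorbing the resulting $\dbv{f}_{L^\infty}^{1-1/q_0}$ factor via Young's inequality produces $\sup_X f\le C(b)\,\dbv{f}_{L^1}$ as required.

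\textbf{Main obstacle.} The only delicate point is the bookkeeping of constants across the infinite Moser iteration: convergence of the product $\prod_k(Cq_k)^{1/q_k}$ requires $\sum_k(\log q_k)/q_k<\infty$, which holds thanks to the geometric growth $q_k\sim \chi^k q_0$. All other ingredients---integration by parts, Sobolev embedding, and H\"older's inequality on the compact K\"ahler manifold $(X,g)$---are entirely standard, so the proof is essentially routine once the subcritical gap $\chi-p'>0$ is identified.
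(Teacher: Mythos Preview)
The paper does not prove this lemma; it simply records it as Proposition~2.1 of \cite{Sim88} and invokes it as a black box. So there is no ``paper's proof'' to compare against---your proposal is being measured against Simpson's original argument, which is indeed a Moser iteration of exactly the shape you describe.

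Your outline is correct and standard. Two small technical points are worth tightening. First, you write the Sobolev gain as $\chi=n/(n-2)$; in the paper's actual application the manifold is $\mbp^1$, so $n=\dim_{\mbr}X=2$ and this exponent is undefined. In real dimension two one instead uses $W^{1,2}\hookrightarrow L^r$ for every finite $r$, so any fixed $\chi>p'$ works and the iteration proceeds identically. Second, your recursion has two different exponents on the right, $(q-1)p'$ and $q$, and you pass over how these are merged before telescoping. The clean way is to interpolate $\dbv{f}_{L^{(q-1)p'}}$ between $\dbv{f}_{L^q}$ and $\dbv{f}_{L^{q\chi}}$ (legitimate once $q$ is large enough that $q<(q-1)p'<q\chi$, which uses $1<p'<\chi$) and absorb the resulting $\dbv{f}_{L^{q\chi}}$ factor; this yields a genuine one-step recursion $\dbv{f}_{L^{q\chi}}\le (Cq)^{1/q}\dbv{f}_{L^q}$ and the product $\prod_k(Cq_k)^{1/q_k}$ converges as you note. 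The final descent from $L^{q_0}$ to $L^1$ via $\dbv{f}_{L^{q_0}}^{q_0}\le\dbv{f}_{L^\infty}^{q_0-1}\dbv{f}_{L^1}$ is just division, not Young's inequality, but the conclusion is the same.
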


\begin{lem}
$k_\lam\to 1$ uniformly on $\mbp^1$
\end{lem}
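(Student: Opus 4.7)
The plan is to use the Weitzenb\"ock-type identity (\ref{eq:weitzenbock}) to derive a differential inequality for $\tr k_\lam$, apply Simpson's sub-mean-value lemma (Lemma \ref{lem:sim2.1}) to convert $L^\infty$ control into $L^1$ control, and then establish $L^1$ convergence from pointwise convergence on $\mbp^1-\cl{[1:0]}$ by dominated convergence.

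First I would establish a uniform $L^\infty$ bound: for $\lam$ in some open neighborhood $I'\Subset (-1/4,1/4)$ of $\lam_0$, $\sup_{\lam\in I'}\sup_{\mbp^1}\vb{k_\lam}<\infty$. Since $k_\lam$ is self-adjoint, positive-definite, with $\det k_\lam=1$, we have $\vb{k_\lam}\le \tr k_\lam$, so it suffices to bound $\tr k_\lam$ from above. From (\ref{eq:weitzenbock}) and the uniform bound $\vb{\mbf_\lam}\le C$ of the preceding lemma one gets $\Delta(\tr k_\lam)\le 2C\tr k_\lam$. If the uniform bound failed along a sequence $\lam_n\to\lam_0$ with $M_n:=\sup\tr k_{\lam_n}\to\infty$, normalize by $u_n:=\tr k_{\lam_n}/M_n$: then $0<u_n\le 1$, $\sup u_n=1$, $u_n\ge 3/M_n\to 0$, and $\Delta u_n\le 2Cu_n$. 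Elliptic compactness yields a subsequential $C^0$ limit $u_\infty$ on $\mbp^1$ satisfying $\Delta u_\infty\le 2Cu_\infty$ weakly, $u_\infty\ge 0$, $\sup u_\infty=1$; the strong maximum principle on the compact base $\mbp^1$ forces $u_\infty$ to be constant, a contradiction.

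With this uniform bound, (\ref{eq:weitzenbock}) yields $\Delta f_\lam\le b$ with $b$ independent of $\lam\in I'$, where $f_\lam:=\tr k_\lam-3\ge 0$. Lemma \ref{lem:sim2.1} then gives $\sup f_\lam\le C'\dbv{f_\lam}_{L^1(\mbp^1)}$ uniformly. To show $\dbv{f_\lam}_{L^1}\to 0$ as $\lam\to\lam_0$, take any sequence $\lam_n\to\lam_0$. The uniform $L^\infty$ control on $k_{\lam_n}$ together with the uniform bound on $\theta$ and its curvature allow a standard bootstrap on (\ref{eq:localform3z}) to extract a $C^\infty_{\tx{loc}}$ subsequential limit $h_\infty$ of $\widetilde h_{1,\lam_n}$ on $\mbp^1-\cl{[1:0]}$, solving Hitchin's equation with $\det h_\infty=1$. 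Since $\widetilde h^0_{\lam_n}\to \widetilde h_{1,\lam_0}$ in $C^\infty_{\tx{loc}}$ by property~(3) of its construction and the ratio $k_{\lam_n}$ is uniformly bounded above (by Step~1) and below (from $\det k_{\lam_n}=1$ plus the upper bound on eigenvalues), the limit $h_\infty$ is adapted to the filtered bundle $(\mcp_\ast^{\ulc}\mce,\theta)$ with the weight tuple at $\lam_0$. Uniqueness in Theorem \ref{thm:biquard} combined with $\det h_\infty=1$ then identifies $h_\infty=\widetilde h_{1,\lam_0}$, so $k_{\lam_n}\to 1$ pointwise on $\mbp^1-\cl{[1:0]}$; dominated convergence gives $\dbv{f_{\lam_n}}_{L^1}\to 0$.

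The hard part is Step~1, the uniform $L^\infty$ bound on $k_\lam$: the filtered structure $\mcp_\ast^{\ulc}\mce$ itself moves with $\lam$, so the behavior of $\widetilde h_{1,\lam}$ near $[1:0]$ must be controlled uniformly as $\lam$ varies. This is where the design of the auxiliary family $\widetilde h^0_\lam$---especially property~(4) matching leading-order asymptotics of $\widetilde h_{1,\lam_0}$---enters essentially, guaranteeing the uniform curvature bound $\vb{\mbf_\lam}\le C$ of the preceding lemma and hence the viability of the blow-up argument above.
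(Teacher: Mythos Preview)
Your Step~1 (the uniform $L^\infty$ bound on $k_\lam$) does not go through as written, and this is the crux of the whole proof. The differential inequality you obtain is $\Delta u_\infty \le 2C u_\infty$ with $C>0$; the strong maximum principle for $L=\Delta + c$ on a compact manifold requires $c\le 0$ to force constancy of a subsolution at an interior maximum, and the positive zeroth-order term breaks it (already on $S^1$ there are non-constant $0\le u\le 1$ with $\sup u=1$ satisfying $-u''\le Cu$ for large $C$). Even granting $u_\infty\equiv 1$, nothing contradicts $M_n\to\infty$: uniform convergence $u_n\to 1$ only says $\tr k_{\lam_n}$ is asymptotically constant in space, not that this constant stays bounded in $n$. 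Finally, ``elliptic compactness'' is not available from the one-sided bound $\Delta u_n\le 2C$ alone.

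The paper's route follows \cite{Sim88}: normalize by the $L^p$ norm, $s_\lam=k_\lam/\dbv{k_\lam}_{L^p}$, so that (\ref{eq:weitzenbock}) plus Lemma~\ref{lem:sim2.1} give $\sup\vb{s_\lam}\le C$ uniformly. Feeding this back into (\ref{eq:weitzenbock}) and using $\mbf_\lam\to 0$ on compacta yields $\dbv{(\bar\pd+\theta)s_\lam}_{L^2}\to 0$, hence a weak $L_1^2$ subsequential limit $s_\infty$ which is a nonzero holomorphic endomorphism commuting with $\theta$; \emph{stability} of $(\mcp_\ast^{\ulc}\mce,\theta)$ forces $s_\infty=c\,\mathrm{Id}$ with $c\ne 0$, and then $\det k_\lam=1$ bounds $\dbv{k_\lam}_{L^p}$. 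This stability step is exactly what replaces your failed maximum-principle argument. Once the uniform bound is in hand, your Steps~2--3 are close to the paper's conclusion, though the paper re-runs the same weak-limit-plus-stability argument on $k_\lam$ itself rather than invoking Biquard--Boalch uniqueness, thereby sidestepping the subtle point of verifying that a subsequential limit metric is adapted to the $\lam_0$-filtration.
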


\begin{proof}
Denote by $\dbv{\cdot}_{L^q}=\dbv{\cdot}_{\widetilde h^0_\lam, g_{\mbp^1},L^q}$ and fix $p>2$. As remarked above, it suffices to show that $\tr\, k_\lam\to 3$ uniformly. For fixed $\lam$, $\mbf_\lam$ and $k_\lam$ are bounded on $\mbp^1$, therefore $|\Lambda_{g_{\mbp^1}}\tr\rd{F_\lam k_\lam}|\in L^p$ where the $L^p$-norm may depend on $\lam$. Set $s_\lam=k_\lam/\dbv{k_\lam}$ where $\dbv{k_\lam}=\dbv{k_\lam}_{L^p}$. By (\ref{eq:weitzenbock})
\be \label{eq:weitzenbockineq}
i\Lambda_{g_{\mbp^1}}\bar\pd \pd \,\tr\, s_\lam\le \vb{\Lambda_{g_{\mbp^1}}\tr\rd{\mbf_\lam s_\lam}}\,.
\ee
The right-hand side now has a uniformly-in-$\lam$ $L^p$ bound. There are $\phi_\lam\in L_2^p\subset C^0$, $C_6>0$ with $\sup|\phi_\lam|\le C_6$, and $C_7>0$ such that
\ben
i\Lambda_{g_{\mbp^1}}\bar\pd \pd \rd{\tr\, s_\lam-\phi_\lam}\le C_7.
\een
By Lemma \ref{lem:sim2.1} there are $C_8$, $C_8'>0$ such that
\ben
\sup_{\mbp^1}|s_\lam|\le C_8' |s_\lam|_{L^1,\widetilde h^0_\lam,g_{\mbp^1}}\le C_8\,.
\een
We have that $\mbf_\lam\to 0$ uniformly on compact sets as $\lam\to \lam_0$. By (\ref{eq:weitzenbock}), (\ref{eq:weitzenbockineq}), and the uniform boundedness of $\sup|s_\lam|$ we have as $\lam\to\lam_0$
\ben
\dbv{\rd{\bar\pd+\theta}s_\lam}_{L^2}\to 0\,.
\een
It follows that $\dbv{\bar\pd s_\lam}_{L^2}=\dbv{\pd_h s_\lam}_{L^2}\to 0$ where $h=\widetilde h^0_\lam$. Therefore $\dbv{s_\lam}_{L_1^2}$ is bounded uniformly in $\lam$. Choose $s_{\lam_n}\rightharpoonup s_\infty$ weakly in $L_1^2$. We have $\dbv{s_\infty}_{L^p}=1>0$, $s_\infty$ is a nonzero holomorphic endomorphism commuting with $\theta$. By stability of $\rd{\mcp_\ast^{\ulc}\widetilde \mce,\theta}$, $s_\infty$ must be a nonzero multiple of the identity. Since $\det k_\lam=1$, we have $\lim_{n\to\infty}\det s_{\lam_n}=\lim_{n\to\infty}\dbv{k_{\lam_n}}^{-3}=\det s_\infty\neq 0$.

Suppose $\dbv{k_\lam}$ is not uniformly bounded below near $\lam_0$. There is a sequence $\dbv{k_{\lam_n}}^{-3}\to 0$ as $n\to\infty$, which also holds for any further subsequence, leading to a contradiction. Therefore, $\dbv{k_\lam}$ is bounded uniformly in $\lam$. It follows that $k_\lam=\dbv{k_\lam} s_\lam$ is uniformly bounded in $L_1^2$. For a sequence $\lam_n\to \lam_0$ as $n\to\infty$, there is a weakly $L_1^2$-convergent subsequence $k_{\lam_{n_\ell}}\to k_\infty$ as $\ell\to\infty$. By the same argument, $k_\infty$ is a nonzero multiple of the identity. Furthermore, $k_\infty=\tx{Id}$ since $\det k_\lam\equiv 1$. It follows that $k_\lam \to \tx{Id}$ weakly in $L_1^2$.

We have 
\ben
i\Lambda_{g_{\mbp^1}}\bar\pd\pd \rd{\tr\, k_\lam-3}=i\Lambda_{g_{\mbp^1}}\bar\pd\pd\,k_\lam\le \vb{\Lambda_{g_{\mbp^1}}\tr\rd{\mbf_\lam\, s_\lam}}\,\dbv{k_\lam}
\een
which is bounded in $L^p$ uniformly in $\lam$. By Lemma \ref{lem:sim2.1}, there is $C_{10}>0$ such that $\sup \rd{\tr\, k_\lam-3}\le C_{10}\dbv{\tr\, k_\lam-3}_{L^1}$. For any sequence $k_{\lam_n}$ with $\lam_n\to\lam_0$ as $n\to\infty$, we may take subsequence $k_{\lam_{n_k}}$ as above such that $\tr\,k_{\lam_{n_k}}\to 3$ in $L^2$ hence $L^1$. The above inequality implies that $k_{\lam_{n_k}}\to 3$ uniformly, hence $k_{\lam_{n_k}}\to \tx{Id}$ uniformly. Therefore, we have that $k_\lam\to \tx{Id}$ as $\lam\to\lam_0$ uniformly on $\mbp^1$.
\end{proof}

\begin{cor} \label{cor:metriccont}
The $\lam$-family of matrix-valued function $H_{1,\lam}$ defined in Prop \ref{prop:locmodsym} is continuous with respect to $\lam$ in $C^0(\Omega)$ for any $\Omega\subset \mbc$ bounded. Furthermore, for $I_0\Subset (-1/4,1/4)$ we have $C=C(k,I_0,\Omega)>0$ such that
\ben
\sup_K \vb{\pd_\zeta^\ell \pd_{\bar \zeta}^m H_{1,\lam}}\le C
\een
for any $\ell+m=k$ and $\lam\in I_0$.
\end{cor}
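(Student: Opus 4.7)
The plan is to combine the uniform convergence $k_\lambda \to \mathrm{Id}$ established in the preceding lemma with a $\lambda$-uniform version of the elliptic bootstrap already used in the proof of Proposition \ref{prop:locmodsym} to establish smoothness of $H_{1,\lambda}$. Under the relation $\zeta = z^2$, a bounded $\Omega \subset \mathbb{C}_\zeta$ corresponds to a compact $K_\Omega \subset \mathbb{P}^1 \setminus \{[1:0]\}$, and $H_{1,\lambda}$ is a block of the matrix $\widetilde H_{1,\lambda} = (\widetilde h_{1,\lambda})_{\widetilde{\underline\sigma}}$; thus it suffices to prove the corresponding $C^0$ continuity and uniform $C^k$ bounds for $\widetilde H_{1,\lambda}$ on $K_\Omega$.

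For the $C^0$ continuity, I would write $\widetilde h_{1,\lambda} = \widetilde h^0_\lambda\, k_\lambda$. By property (3) of the auxiliary family introduced before Proposition \ref{prop:clamcont}, $\widetilde h^0_\lambda \to \widetilde h^0_{\lambda_0} = \widetilde h_{1,\lambda_0}$ uniformly in $C^\infty(K_\Omega)$ as $\lambda \to \lambda_0$, and by the preceding lemma $k_\lambda \to \mathrm{Id}$ uniformly on $\mathbb{P}^1$. The product rule then gives $C^0$ convergence of $\widetilde h_{1,\lambda}$ on $K_\Omega$, which translates into $C^0$ continuity of $\lambda \mapsto H_{1,\lambda}$ in $C^0(\Omega)$.

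For the uniform $C^k$ bound, recall that $M := \widetilde H_{1,\lambda}$ satisfies
\[
\partial_{\bar\zeta}\bigl(M^{-1}\partial_\zeta M\bigr) - [\varphi, M^{-1}\varphi^* M] = 0
\]
with $\varphi$ independent of $\lambda$. The bootstrap in the proof of Proposition \ref{prop:locmodsym} deduces $M \in C^\infty$ from $L^\infty$ bounds on $M^{\pm 1}$ by iteratively applying interior elliptic estimates for $\Delta$ and for $\bar\partial$ on nested open sets $\Omega_k' \Subset \Omega_k$. The constants produced by those estimates depend only on the chosen domains, on $\|M\|_{L^\infty}$, $\|M^{-1}\|_{L^\infty}$, and on $\|\varphi\|_{L^\infty}$. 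Since $k_\lambda$ is uniformly bounded for $\lambda \in I_0$ by the preceding lemma, and $\det k_\lambda = 1$ together with positive definiteness gives a uniform lower bound on its eigenvalues from its trace bound, $k_\lambda^{\pm 1}$ are uniformly bounded in $\lambda \in I_0$. Combined with the $C^0$-continuity (hence uniform boundedness) of $\widetilde h^0_\lambda$ and its inverse on compact subsets of $\mathbb{P}^1 \setminus \{[1:0]\}$, this yields uniform $L^\infty$ bounds on $M^{\pm 1}$ on $K_\Omega$. Replaying the bootstrap with these uniform inputs then produces uniform $L_k^p$ bounds on $M$, and Sobolev embedding gives the asserted $C^k$ estimate.

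The main obstacle was already overcome: it was the uniform convergence $k_\lambda \to \mathrm{Id}$ on $\mathbb{P}^1$ proven via the Weitzenböck-type inequality in the preceding lemma. Given this, no new analytic idea is required for the corollary; only careful book-keeping to see that every constant appearing in the bootstrap of Proposition \ref{prop:locmodsym} is controlled purely in terms of $\|\varphi\|_{L^\infty}$, the $\lambda$-independent geometry of $\Omega$, and the $\lambda$-uniform $L^\infty$ bounds on $M^{\pm 1}$.
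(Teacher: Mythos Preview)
Your approach matches the paper's: deduce $C^0$ continuity from $\widetilde h_{1,\lambda}=\widetilde h^0_\lambda k_\lambda$ together with $k_\lambda\to\mathrm{Id}$ uniformly, then run the bootstrap from Proposition~\ref{prop:locmodsym} with $\lambda$-uniform inputs to get the $C^k$ bounds.

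There is, however, one genuine gap in your write-up of the second part. You assert that the bootstrap in Proposition~\ref{prop:locmodsym} ``deduces $M\in C^\infty$ from $L^\infty$ bounds on $M^{\pm 1}$'' and that the constants ``depend only on \dots $\|M\|_{L^\infty}$, $\|M^{-1}\|_{L^\infty}$, and $\|\varphi\|_{L^\infty}$''. This is not correct: the equation
\[
\tfrac14\Delta M=(\partial_{\bar\zeta}M)M^{-1}(\partial_\zeta M)+M[\varphi,M^{-1}\varphi^\ast M]
\]
has a term quadratic in $\nabla M$, so $L^\infty$ control of $M^{\pm 1}$ alone does not bound the right-hand side and the iteration cannot start. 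In Proposition~\ref{prop:locmodsym} the bootstrap is seeded by $M\in L_1^\infty$, obtained there from the explicit rotational form of $K_{t,\lambda}$; that argument gives no uniformity in $\lambda$.

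The fix is already available in the preceding lemma, and is exactly what the paper uses: in the course of proving $k_\lambda\to\mathrm{Id}$ one obtains that $k_\lambda$ is uniformly bounded in $L_1^2$ for $\lambda\in I_0$. Combined with the $C^\infty$-control of $\widetilde h^0_\lambda$ on compacta, this gives a uniform $L_1^2$ bound on $\widetilde K_{1,\lambda}$ (equivalently, uniform $L^2$ control of $\nabla M$). From there the interior estimate for $\bar\partial$ yields $\widetilde K_{1,\lambda}^{-1}\partial_z\widetilde K_{1,\lambda}\in L_1^2$ uniformly, Sobolev embedding gives $\nabla M\in L^4$ uniformly, and then the Laplacian estimate produces the uniform $L_2^2$ bound needed to launch the iteration as in Proposition~\ref{prop:locmodsym}. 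Once you insert this $L_1^2$ input, your sketch is complete and identical to the paper's argument.

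A minor aside: $H_{1,\lambda}$ is a block of $(\widetilde h_{1,\lambda})_{\underline e}=\widetilde K_{1,\lambda}$ (after $\zeta=z^2$), not of $(\widetilde h_{1,\lambda})_{\widetilde{\underline\sigma}}$; the latter frame is singular at $z=0$ and unsuited for regularity questions there.
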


\begin{proof}
For $\lam_0\in I_0$, it follows from the proof of Prop \ref{prop:clamcont} that:
\begin{itemize}
\item $\widetilde h^0_{\lam}\to\widetilde h_{1,\lam_0}$ as $\lam\to\lam_0$ uniformly, in $C^\infty$ sense, on $\Omega\Subset \mbp^1-\cl{[1:0]}$,
\item $k_\lam\to \tx{Id}$ as $\lam\to\lam_0$ uniformly on $\mbp^1$.
\end{itemize}
It follows that $\widetilde h_{1,\lam}=\widetilde h^0_\lam k_\lam\to\widetilde h_{1,\lam_0}$ uniformly as $\lam\to\lam_0$. Since $H_{1,\lam}(z^2)=K_{1,\lam}(z)$, we have that the map $(-1/4,1/4)\to C^0(\Omega,i\mfu(2))$ given by $\lam\mapsto H_{1,\lam}$ is continuous.

Recall from Lemma \ref{lem:locmod}, $\widetilde K_{1,\lam}=\rd{\widetilde h_{1,\lam}}_{\ule}$ satisfies the local form of Hitchin equation
\ben
\pd_{\bar z}\rd{\widetilde K_{1,\lam}^{-1}\pd_z \widetilde K_{1,\lam}}=\left[\theta,\widetilde K_{1,\lam}^{-1}\theta^\ast \widetilde K_{1,\lam}\right],\,\,\, \theta=\pmt{ & \beta_1 \\ \gam_1 & }\,.
\een
We have that on a bounded domain $\Omega\subset \mbp^1-\cl{[1:0]}$, $\widetilde h_{1,\lam}$ is uniformly bounded in $L_1^2$ and $C^0$. Therefore, $\dbv{\widetilde K_{1,\lam}^{\pm 1}}_{L^\infty(\Omega)}$ and $\dbv{\nabla \widetilde K_{1,\lam}}_{L^2(\Omega)}$ are uniformly bounded. By interior elliptic estimate of the Cauchy-Riemann operator, on $\Omega'\Subset \Omega$ there is $C>0$ such that
\ben
\dbv{\widetilde K_{1,\lam}^{-1}\pd_z \widetilde K_{1,\lam}}_{L_1^2(\Omega')}\le C\rd{\dbv{[\theta,\widetilde K_{1,\lam}^{-1}\theta^\ast \widetilde K_{1,\lam}]}_{L^2(\Omega)} +\dbv{\widetilde K_{1,\lam}^{-1}\pd_z \widetilde K_{1,\lam}}_{L^2(\Omega')} }\,.
\een
By Sobolev embedding $L_1^2\subset L^4$, we have that $\dbv{\nabla \widetilde K_{1,\lam}}_{L^4(\Omega')}$ is uniformly bounded. On $\Omega''\Subset \Omega$ by interior elliptic estimate of Laplacian and
\ben
\pd_{\bar z}\pd_z \widetilde K_{1,\lam}=\rd{\pd_{\bar z}\widetilde K_{1,\lam}}\widetilde K_{1,\lam}^{-1}\rd{\pd_z \widetilde K_{1,\lam}}+\widetilde K_{1,\lam}\sq{\theta,\widetilde K_{1,\lam}^{-1}\theta^\ast \widetilde K_{1,\lam}}\,,
\een
there are $C'$ and $C''>0$ such that
\begin{align*}
&\dbv{\widetilde K_{1,\lam}}_{L_2^2(\Omega'')}\le C'\rd{\dbv{\rd{\pd_{\bar z}\widetilde K_{1,\lam}}\widetilde K_{1,\lam}^{-1}\rd{\pd_z \widetilde K_{1,\lam}}}_{L^2(\Omega')}+\dbv{\widetilde K_{1,\lam}}_{L^2(\Omega'')}} \\
&\le C''\rd{ \dbv{\widetilde K_{1,\lam}}_{L^4(\Omega')}^2 + \dbv{\widetilde K_{1,\lam}}_{L^2(\Omega'')} }\,,
\end{align*}
uniformly bounded on $\lam\in I_0$. Then arguments similar to the proof of Prop \ref{prop:locmodsym} take over, and the second statement follows from Sobolev embedding theorems.
\end{proof}

\subsection{Gluing}
\label{sec:approxsoln}

In this part, we construct a $t$-family of approximate solutions to the Hitchin equation for the familly of stable SU(1,2) Higgs bundle $(F,t\bet,t\gam)$. We will use the notations in Def \ref{def:disc} as well as (\ref{eq:defDjprimes}) and let $\ulb=\rd{b_p}_{p\in D_r}$ be the corresponding Hecke parameter as in Def \ref{def:threesets}. Let $g_X$ be a metric on $X$ which restricts to Euclidean metric on $(\mbd_j; \zeta_j)$ with K\"ahler form $\omega=\frac{i}{2}d\zeta_j\wedge d\bar\zeta_j$.

\subsubsection{Gluing in disks}
\label{sec:glueondisk}

We will define a metric on $F|_{\mbd_j}$ for each $j$ which interpolates smoothly between local model solutions and a decoupled solution. This will be done on disk $\mbd_j'$ by pulling back the function $M_\lam$ via a diffeomorphism
\ben
\mbd_j'\lxrightarrow{\sim}\mbr^2
\een
which restricts to identity on $\mbd_j''$. This metric will be singular at $\pd \mbd_j'$, which is then compensated by a gauge transformation connecting it continuously to a decoupled solution on $\mbd_j-\mbd_j'$.

Let $\ulam$ be an admissible weight with respect to $\ulD$ (see \S \ref{sec:localmodel}). As in previous sections, we fix $I\Subset (-1/4,1/4)$ and constants will be independent of $\lam\in I$ unless otherwise stated. We say an inequality holds for $t\gg 1$ if there is $t_0\ge 1$ such that it holds for all $t\ge t_0$. Recall we defined for matrices $|M|=\tx{max}_{i,j}|M_{ij}|$. We have $|AB|\le n|A|\,|B|$ for $A$, $B$ $n\times n$.

Define on $\mbd_j$
\begin{align}
& \widetilde H_{t,\lam}^{\tx{int}}:=t^{2/3}G_\lam(\chi)^\ast M_\lam\rd{t^{2/3}\chi^{-1}\rho}G_\lam(\chi),\,\, \tx{ on }\mbd_j' \label{eq:tilHtint}\\
& \widetilde H_{t,\lam}^{\tx{ext}}:=t^{2/3}M_{\infty,\lam}\rd{t^{2/3}\rho}\,\, \tx{ on }\mbd_j-\mbd_j' \label{eq:tilHtext}
\end{align}
where $M_{\infty,\lam}$ is as in Prop \ref{prop:asympsum} with $\lam=\lam_j$, 
\be \label{eq:defGlam}
G_\lam(y)=\tx{diag}\rd{y^{-2\lam-1/2},y^{\lam-1/2}}\,,
\ee
and $\chi(\zeta):=\chi_0(|\zeta|/R)$ with $\chi_0\in C^\infty(\mbr_{\ge 0})$ such that $\chi_0\equiv 1$ on $[0,1/3]$ and $\chi_0\equiv 0$ on $[2/3,+\infty)$. Note that there are $C_1$, $C_2>0$ with $|\pd_\rho \chi|<C_1 R^{-1}$ and $|\pd_\rho^2 \chi|<C_2R^{-2}$. Let 
\be \label{eq:tilHtapp}
\widetilde H_{t,\lam}^{\tx{app}}(\zeta_j):=\begin{cases}
\widetilde H_{t,\lam}^{\tx{int}}(\zeta_j) & |\zeta_j|< 2R/3\\
\widetilde H_{t,\lam}^{\tx{ext}}(\zeta_j) & 2R/3\le |\zeta_j|\le R
\end{cases}\,.
\ee
Note that $\widetilde H_t^{\tx{ext}}(\zeta_j)$ can be naturally extended to $0<|\zeta_j|<2R/3$ where
\ben
\widetilde H_{t,\lam}^{\tx{ext}}(\zeta_j)=t^{2/3}M_{\infty,\lam}\rd{t^{2/3}\rho}=t^{2/3} G_\lam(\chi)^\ast M_{\infty,\lam}\rd{t^{2/3}\chi^{-1}\rho} G_\lam(\chi)\,.
\een
Let $a_\lam=\max(2\lam,-\lam)$. We have $|G_\lam(\chi)|<\chi^{-\rd{a_\lam+\ov{2}}}$ and
\begin{align*}
& \vb{\pd_\rho G_\lam(\chi)}\le \rd{a_\lam+\ov{2}}C_1R^{-1}\chi^{-a_\lam-\frac{3}{2}}, \\
& \vb{\pd_\rho^2 G_\lam(\chi)}\le \rd{a_\lam+\ov{2}}\rd{a_\lam+\frac{3}{2}}C_1^2 R^{-2}\chi^{-a_\lam-\frac{5}{2}}+\rd{a_\lam+\ov{2}}C_2 R^{-2}\chi^{-a_\lam-\frac{3}{2}}\,.
\end{align*}
Note that $\chi^{-1}\ge 1$. By Prop \ref{prop:asympsum}, on a compact set in $\mbd_j'-\cl{p_j}$, there are $C$, $c>0$, such that for $t\gg 1$,
\begin{align}
& \vb{\widetilde H_{t,\lam}^{\tx{int}}(\zeta)-\widetilde H_{t,\lam}^{\tx{ext}}(\zeta)},\,\, \vb{\pd_\rho \rd{\widetilde H_{t,\lam}^{\tx{int}}(\zeta)-\widetilde H_{t,\lam}^{\tx{ext}}(\zeta)}}\,,\nonumber \\
& \vb{\pd_{\zeta}\pd_{\bar\zeta}\rd{\widetilde H_{t,\lam}^{\tx{int}}(\zeta)-\widetilde H_{t,\lam}^{\tx{ext}}(\zeta)}}\le Ce^{-c t^{2/3}}\,. \label{eq:Htappbd}
\end{align}
Similarly on a compact set in $\mbd_j'-\cl{p_j}$ there are $C'$, $c'>0$ such that for $t\gg 1$,
\begin{align}
& \vb{\widetilde H_{t,\lam}^{\tx{int}}(\zeta)-\widetilde H_{t,\lam}^{\tx{ext}}(\zeta)},\,\, \vb{\pd_\rho \rd{\widetilde H_{t,\lam}^{\tx{int}}(\zeta)-\widetilde H_{t,\lam}^{\tx{ext}}(\zeta)}}\,, \nonumber \\
& \vb{\pd_{\zeta}\pd_{\bar\zeta}\rd{\widetilde H_{t,\lam}^{\tx{int}}(\zeta)-\widetilde H_{t,\lam}^{\tx{ext}}(\zeta)}}\le C'e^{-c' \chi^{-1}}\,. \label{eq:Htappcont}
\end{align}
By regularities of $M_{\infty,\lam}$ and (\ref{eq:Htappcont}), $\widetilde H_{t,\lam}^{\tx{app}}\in C^0(\mbd_j)\cap C^2(\mbd_j^\times)$. (It will be singular at $p_j$ since the frame $\usig$ is singular there.) We will need the following elementary estimate of matrix norms.

\begin{lem} \label{lem:matrest}
Let $A$, $B$ be such that $|A|$, $|A-B|\le M$. Then there are $C_1$, $C_1'>0$ such that
\begin{align} 
& \vb{\det A-\det B}\le C_1 M\vb{A-B} \label{eq:matrineq1} \\
& \vb{\rd{\det A}A-\rd{\det B}B}\le C_1' M^2\vb{A-B} \label{eq:matrineq2}
\end{align} 
Let $A$, $B$ be as above and $\vb{\det A}\ge \epsilon>0$ and $\vb{A-B}<\epsilon/(2C_1M)$. Then there is $C_2>0$ such that
\begin{align}
& \vb{A^{-1}-B^{-1}}\le C_2 \epsilon^{-1}\rd{1+M\epsilon^{-2}}\vb{A-B} \label{eq:matrineq3} \\
& \vb{\rd{\det A}^{-1}A^{-1}-\rd{\det B}^{-1}B^{-1}} \le C_2 \epsilon^{-2}\rd{1+M^4 \epsilon^{-2}}\vb{A-B} \label{eq:matrineq4}
\end{align}
\end{lem}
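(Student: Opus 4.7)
The plan is to prove these elementary matrix inequalities in sequence, each feeding into the next. The main tools are multilinearity of $\det$, Cramer's formula $A^{-1} = \mathrm{adj}(A)/\det A$, and submultiplicativity of the max-norm $\vb{\cdot}$ (with a dimension-dependent constant). In the present context the matrices are $2\times 2$ (the building blocks $M_\lam$ and their regularized versions appearing in (\ref{eq:tilHtint})--(\ref{eq:tilHtapp})), so $n=2$ throughout; the argument works for any fixed $n$ with $M$ replaced by $M^{n-1}$ etc.

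For (\ref{eq:matrineq1}), I would expand by multilinearity of $\det$ in columns as the telescoping sum
\[
\det A - \det B = \sum_{k=1}^{n}\det\bigl(B^{(1)},\ldots,B^{(k-1)},A^{(k)}-B^{(k)},A^{(k+1)},\ldots,A^{(n)}\bigr),
\]
each summand carrying one column of norm $\le \vb{A-B}$ and the remaining $n-1$ columns bounded by $M$. For $n=2$ this gives $\vb{\det A-\det B}\le C_1 M\vb{A-B}$. For (\ref{eq:matrineq2}) I would use the split
\[
(\det A)A - (\det B)B = (\det A - \det B)\,A + (\det B)(A - B),
\]
combine (\ref{eq:matrineq1}) with the crude bound $\vb{\det B}\le C M^n$, and note that for $n=2$ both resulting contributions are of order $M^2\vb{A-B}$.

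For (\ref{eq:matrineq3}), the hypothesis $\vb{A-B}<\epsilon/(2C_1 M)$ together with (\ref{eq:matrineq1}) yields $\vb{\det B}\ge \vb{\det A}-C_1 M\vb{A-B}\ge \epsilon/2$, so both $A$ and $B$ are well-controlled invertible. I would then write, via Cramer,
\[
A^{-1} - B^{-1} = \rd{\frac{1}{\det A}-\frac{1}{\det B}}\mathrm{adj}(A) + \frac{1}{\det B}\bigl[\mathrm{adj}(A) - \mathrm{adj}(B)\bigr],
\]
estimate $\vb{(\det A)^{-1}-(\det B)^{-1}}\le 2C_1 M \epsilon^{-2}\vb{A-B}$ by (\ref{eq:matrineq1}), and bound $\vb{\mathrm{adj}(A)}\le CM^{n-1}$ together with $\vb{\mathrm{adj}(A)-\mathrm{adj}(B)}\le CM^{n-2}\vb{A-B}$ (the latter by applying the same telescoping argument to the $(n-1)$-multilinear minors defining $\mathrm{adj}$). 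Plugging into the above decomposition produces a bound of the form $C_2\epsilon^{-1}(1+M\epsilon^{-2})\vb{A-B}$. Estimate (\ref{eq:matrineq4}) then follows from the analogous decomposition
\[
\rd{\det A}^{-1}A^{-1} - \rd{\det B}^{-1}B^{-1} = \bigl[\rd{\det A}^{-1}-\rd{\det B}^{-1}\bigr]A^{-1} + \rd{\det B}^{-1}\bigl(A^{-1}-B^{-1}\bigr),
\]
combined with (\ref{eq:matrineq3}), (\ref{eq:matrineq1}), and the lower bound on $\vb{\det B}$.

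There is no conceptual obstacle: the whole lemma is a bookkeeping exercise, and the inequalities are stated in a form deliberately loose enough to absorb the ranges of $M$ and $\epsilon$ that arise in the subsequent gluing analysis, so optimizing exponents is unnecessary. The only care required is to order the four estimates so that each is available when needed, and to track the dependence of the threshold in the hypothesis $\vb{A-B}<\epsilon/(2C_1 M)$ on the constant $C_1$ from (\ref{eq:matrineq1}).
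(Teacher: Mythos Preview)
Your proposal is correct and is exactly the natural elementary argument. The paper itself does not prove this lemma at all: it is stated without proof as a routine matrix-norm estimate and then applied in Lemmas~\ref{lem:appsol1}, \ref{lem:appsol2} and elsewhere, so there is nothing to compare against beyond noting that your telescoping/Cramer approach is the standard one the authors evidently had in mind.
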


Set
\be \label{eq:defnmcht}
\mch_{t,\bet_0,\gam_0}\rd{H}:=\pd_{\bar \zeta}\rd{ H^{-1}\pd_\zeta H}-t^2\gam_0\gam_0^\ast H\rd{\det H}+t^2\rd{\det H}^{-1}H^{-1}\bet_0^\ast\bet_0\,.
\ee
$\mch_{t,\bet_0,\gam_0}(H)=0$ gives the local form of the Hitchin equation with $\bet=\bet_0d\zeta$, $\gam=\gam_0d\zeta$.

\begin{lem} \label{lem:appsol1}
For $p_j\in D_r$, there are $C$, $c>0$ such that on $\mbd_j'-\mbd_j''$ for $t\gg 1$,
\ben
\vb{\mch_{t,\bet_0,\gam_0}\rd{\widetilde H_{t,\lam}^{\tx{int}}}}\le C e^{-c t^{2/3}}\,,
\een
where $\bet_0=\pmt{0 & \zeta_j^{-1}}\,\,\gam_0=\pmt{0 & \zeta_j^2}^T$ are the local forms under $\usig$ on $\mbd_j^\times$ in (\ref{eq:stdform4}).
\end{lem}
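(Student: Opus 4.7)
The strategy is to compare $\widetilde H^{\tx{int}}_{t,\lam}$ with $\widetilde H^{\tx{ext}}_{t,\lam}$ on the fixed compact annulus $\mbd_j'-\mbd_j''$ (bounded away from $p_j$) and to exploit the fact that $\widetilde H^{\tx{ext}}_{t,\lam}$ solves $\mch_{t,\bet_0,\gam_0}=0$ exactly on $\mbd_j^\times$. To verify the latter, observe that by Prop \ref{prop:asympsum} the matrix $\widetilde H^{\tx{ext}}_{t,\lam}=t^{2/3}M_{\infty,\lam}(t^{2/3}\rho)$ is diagonal with pure power-law entries $c_i\rho^{\alp_i}$, so $(\widetilde H^{\tx{ext}})^{-1}\pd_\zeta\widetilde H^{\tx{ext}}=\tx{diag}(\alp_i/(2\zeta))$ is meromorphic in $\zeta$ on $\mbd_j^\times$, and its $\pd_{\bar\zeta}$-derivative vanishes. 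The algebraic part, computed with $\bet_0=(0,\zeta^{-1})$ and $\gam_0=(0,\zeta^2)^T$, only contributes in the $(2,2)$-entry; the precise constants from Prop \ref{prop:asympsum} combined with the exponent identities $(-1-4\lam)+2(-1+2\lam)=-3$ and $(-4\lam/3)+2(2\lam/3)=0$ yield $(\widetilde H^{\tx{ext}})_{11}\,[(\widetilde H^{\tx{ext}})_{22}]^2=\rho^{-3}$, which is exactly the relation needed to cancel the $-t^2\gam_0\gam_0^\ast H(\det H)$ and $t^2(\det H)^{-1}H^{-1}\bet_0^\ast\bet_0$ contributions.

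Next, write $\mch_{t,\bet_0,\gam_0}(\widetilde H^{\tx{int}})=\mch_{t,\bet_0,\gam_0}(\widetilde H^{\tx{int}})-\mch_{t,\bet_0,\gam_0}(\widetilde H^{\tx{ext}})$ and split into three differences. For the derivative piece, I use $A(H):=\pd_{\bar\zeta}(H^{-1}\pd_\zeta H)=H^{-1}\pd_\zeta\pd_{\bar\zeta}H-H^{-1}(\pd_{\bar\zeta}H)H^{-1}(\pd_\zeta H)$ and expand $A(\widetilde H^{\tx{int}})-A(\widetilde H^{\tx{ext}})$ as a finite sum whose summands each contain a single ``difference factor'' of the form $\nabla^k(\widetilde H^{\tx{int}}-\widetilde H^{\tx{ext}})$ for some $0\le k\le 2$, or $(\widetilde H^{\tx{int}})^{-1}-(\widetilde H^{\tx{ext}})^{-1}$, together with other factors drawn from $\widetilde H^{\tx{int}}, \widetilde H^{\tx{ext}}$, their inverses, or first derivatives. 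By (\ref{eq:Htappbd}) each derivative-difference factor is $O(e^{-ct^{2/3}})$; by estimate (\ref{eq:matrineq3}) in Lemma \ref{lem:matrest} so is the inverse-difference; and the remaining factors are bounded by explicit polynomial powers of $t$ on the fixed annulus. For the two algebraic pieces, estimates (\ref{eq:matrineq2}) and (\ref{eq:matrineq4}) in Lemma \ref{lem:matrest} combined with (\ref{eq:Htappbd}) give bounds of the form $C\,t^{N}\,e^{-c t^{2/3}}$ even after the $t^2$ prefactor is included.

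Summing the three pieces gives $\vb{\mch_{t,\bet_0,\gam_0}(\widetilde H^{\tx{int}})}\le C\,t^N\,e^{-c t^{2/3}}$ on $\mbd_j'-\mbd_j''$, which for $t\gg 1$ is dominated by $C'e^{-c' t^{2/3}}$ with any $c'<c$. The main bookkeeping issue is obtaining uniform polynomial-in-$t^{\pm 1}$ control of the ``non-difference'' factors on the annulus; these bounds are immediate from the explicit power-law form of $\widetilde H^{\tx{ext}}$, the rescaling definition (\ref{eq:tilHtint}) of $\widetilde H^{\tx{int}}$, and the separation of $\mbd_j'-\mbd_j''$ from $p_j$.
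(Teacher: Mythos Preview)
Your argument is correct and follows essentially the same route as the paper: both proofs use that $\widetilde H_{t,\lam}^{\tx{ext}}$ is an exact solution of $\mch_{t,\bet_0,\gam_0}=0$ on $\mbd_j^\times$, then bound $\mch_{t}(\widetilde H^{\tx{int}})-\mch_{t}(\widetilde H^{\tx{ext}})$ termwise using the $C^0$, $C^1$, $C^2$ closeness in (\ref{eq:Htappbd}) together with the matrix inequalities of Lemma \ref{lem:matrest}, and finally absorb the polynomial-in-$t$ factors into the exponential. Your additional explicit verification that $\widetilde H^{\tx{ext}}$ solves the equation (via the identity $H_{11}H_{22}^2=\rho^{-3}$) is a nice supplement to what the paper simply asserts.
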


\begin{proof}
Write $\mch_t=\mch_{t,\bet_0,\gam_0}$, $H_1:=\widetilde H_{t,\lam}^{\tx{int}}$, $H_2:=\widetilde H_{t,\lam}^{\tx{ext}}$. Note that $H_2$ as well as its extension solves the local form of Hitchin equation, i.e. $\mch_t\rd{H_2}=0$. We have $C_0$, $c_0>0$ such that for $t\gg 1$,
\ben
\vb{H_1-H_2}, \,\, \vb{\pd_{\bar\zeta}\rd{H_1-H_2}}=\vb{\pd_\zeta\rd{H_1-H_2}},\,\, \vb{\pd_{\bar\zeta}\pd_\zeta\rd{H_1-H_2}}\le C_0e^{-c_0 t^{2/3}}
\een
On the other hand there is $C_1>0$ such that for $t\gg 1$, $\vb{H_2}$, $\vb{\pd_{\bar\zeta} H_2}=\vb{\pd_\zeta H_2}$, $\vb{\pd_{\bar\zeta}\pd_\zeta H_2}\le C_1 t^{2a_\lam'/3}$ where $a_\lam'=\max\rd{-2\lam,\lam}\ge 0$. By Lemma \ref{lem:matrest} for $t\gg 1$, $\vb{\pd_{\bar\zeta}H_1}\le 2\vb{\pd_{\bar\zeta}H_2}$ and $\vb{H_1^{-1}}\le 2\vb{H_2^{-1}}$. Note also that
\ben
\pd_{\bar \zeta}\rd{H_j^{-1}\pd_\zeta H_j}=-H_j^{-1}\rd{\pd_{\bar\zeta}H_j}H_j^{-1}\rd{\pd_\zeta H_j}+H_j^{-1}\pd_{\bar\zeta}\pd_\zeta H_j\,.
\een
For $n\ge 0$ and $x\gg 1$ we have $x^n e^{-a x}\le e^{-a' x}$ for some $a'<a$. By (\ref{eq:Htappbd}) there are $C_2$, $c_2>0$ such that for $t\gg 1$,
\be \label{eq:curvbd2}
\vb{\pd_{\bar\zeta} \rd{H_1^{-1}\pd_\zeta H_1 }-\pd_{\bar\zeta} \rd{H_2^{-1}\pd_\zeta H_2 }}\le C_2e^{-c_2 t^{2/3}}
\ee
We have that there is $C_3>0$ such that for $t\gg 1$, $|\det H_2|\ge C_3 t^{-4\lam/3}$. By Lemma \ref{lem:matrest}, there are $C_4$, $c_4>0$ such that on $\mbd_j'-\mbd_j''$ with $t\gg 1$,
\begin{align*}
&\vb{\rd{\det H_1}^{-1}H_1^{-1}\bet_0^\ast\bet_0-\rd{\det H_1}^{-1}H_1^{-1}\bet_0^\ast\bet_0}\le C_4e^{-c_4 t^{2/3}} \\
&\vb{\gam_0\gam_0^\ast H_1\rd{\det H_1}-\gam_0\gam_0^\ast H_2\rd{\det H_2}}\le C_4e^{-c_4t^{2/3}}
\end{align*}
The conclusion follows from this and (\ref{eq:curvbd2}).
\end{proof}

Define
\be \label{eq:HtintDbetgam}
\begin{cases}
H_t^{\tx{int},\bet}=\tx{diag}\rd{\rho^{1/2}\exp\rd{\chi\rd{\rho}\psi_P\rd{\frac{8}{3}t\rho^{3/2}}}, 1} & \tx{ for }p_j\in D_\bet\\
H_t^{\tx{int},\gam}=\tx{diag}\rd{\rho^{-1/2}\exp\rd{-\chi\rd{\rho}\psi_P\rd{\frac{8}{3}t\rho^{3/2}}},1} & \tx{ for }p_j\in D_\gam
\end{cases}
\ee
where $\rho=\vb{\zeta_j}$ and $\psi_P$ is the Painlev\'e function. Note that for any $\ell$ we have for $\zeta_0\in \pd \mbd_j'$, $\lim_{\zeta\to \zeta_0}\pd^\ell \chi=0$ where $\vb{\zeta_0}=2R/3$, $\pd=\pd_\zeta$ or $\pd_{\bar\zeta}$.

\begin{lem} \label{lem:appsol2}
For $p_j\in D_\bet$ or $D_\gam$ there are $C$, $c>0$ such that on $\mbd_j'-\mbd_j''$ with $t\gg 1$,
\ben
\vb{\mch_{t,\bet_0,\gam_0}\rd{ H_t^{\tx{int},\bet}}} \,\, \tx{ or }\,\, \vb{\mch_{t,\bet_0',\gam_0'}\rd{ H_t^{\tx{int},\gam}}}\le Ce^{-c t}\,,
\een
where $\bet_0=\pmt{\zeta_j & 0}$, $\bet_0'=\pmt{1 & 0}$, $\gam_0=\pmt{1 & 0}^T$, $\gam_0'=\pmt{\zeta_j & 0}^T$
\end{lem}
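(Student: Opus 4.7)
The plan is to reduce $\mch_{t,\bet_0,\gam_0}$ applied to the diagonal ansatz to a single scalar Painlev\'e-type equation, observe that the cutoff $\chi\equiv 1$ yields an exact solution, and then control the residue introduced by the genuine $\chi$ via the exponential decay of $\psi$ in Lemma \ref{lem:painleveasymp}.

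For $p_j\in D_\bet$ I would write $H:=H_t^{\tx{int},\bet}=\tx{diag}\rd{\rho^{1/2}e^{u},\,1}$ with $u=\chi\psi_P$. Because $H$, $H^{-1}$, $\bet_0^\ast\bet_0=\tx{diag}(\rho^2,0)$ and $\gam_0\gam_0^\ast=\tx{diag}(1,0)$ are simultaneously diagonal, so is every summand of $\mch_{t,\bet_0,\gam_0}(H)$; combining this with the fact that $\log\rho^{1/2}$ is harmonic on $\mbd_j^\times$, a direct calculation collapses the equation to the scalar identity
\ben
\mch_{t,\bet_0,\gam_0}(H)=\tx{diag}\rd{\pd_{\bar\zeta}\pd_\zeta u-2t^2\rho\sinh(2u),\,0}.
\een
The analogous reduction for $p_j\in D_\gam$, with $H:=H_t^{\tx{int},\gam}=\tx{diag}(\rho^{-1/2}e^{u},1)$, $u=-\chi\psi_P$, $\bet_0'^\ast\bet_0'=\tx{diag}(1,0)$, $\gam_0'\gam_0'^\ast=\tx{diag}(\rho^2,0)$, produces the very same scalar equation.

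Next I would verify the identity $\pd_{\bar\zeta}\pd_\zeta\psi_P=2t^2\rho\sinh(2\psi_P)$ directly from (\ref{eq:painleveiii}): on radial functions $4\pd_{\bar\zeta}\pd_\zeta=\rho^{-1}\pd_\rho(\rho\pd_\rho)$, and the substitution $x=\frac{8}{3}t\rho^{3/2}$ converts $\rho\pd_\rho$ into $\frac{3}{2}x\pd_x$. A short computation then shows the left side equals $\frac{9x^2}{8\rho^2}\sinh(2\psi_P)=8t^2\rho\sinh(2\psi_P)$. This is consistent with Prop \ref{prop:locmodpainleve} and shows that the residue of the ansatz is entirely attributable to the cutoff.

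For general $\chi$ the residue is
\ben
E=(\pd_{\bar\zeta}\pd_\zeta\chi)\psi_P+(\pd_\zeta\chi)(\pd_{\bar\zeta}\psi_P)+(\pd_{\bar\zeta}\chi)(\pd_\zeta\psi_P)+2t^2\rho\rd{\chi\sinh(2\psi_P)-\sinh(2\chi\psi_P)}.
\een
On $\mbd_j'-\mbd_j''$ we have $R/3\le\rho\le 2R/3$, so $x\ge c_0 t$ for a fixed $c_0>0$; Lemma \ref{lem:painleveasymp} together with the chain rule $\pd_\rho\psi_P=4t\rho^{1/2}\psi'(x)$ then gives $\vb{\psi_P}$, $\vb{\pd_\zeta\psi_P}$, $\vb{\pd_{\bar\zeta}\psi_P}$, $\vb{\pd_\zeta\pd_{\bar\zeta}\psi_P}\le Ce^{-ct}$ for $t\gg 1$, with all polynomial prefactors in $t$ absorbed by the exponential. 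Since $\chi$ and its first two derivatives are $O(1)$, the first three terms of $E$ are $O(e^{-ct})$; for the last term a Taylor expansion gives $\chi\sinh(2\psi_P)-\sinh(2\chi\psi_P)=O(\psi_P^3)$ because the linear contributions cancel exactly, so the prefactor $t^2\rho$ is again absorbed. The required bound $\vb{E}\le Ce^{-ct}$ follows, giving both estimates. The scalar reduction is the main conceptual step; once it is in place the estimate is routine, so no significant obstacle is expected.
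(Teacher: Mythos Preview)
Your argument is correct and is in fact somewhat cleaner than the paper's. The paper does not carry out the scalar reduction; instead it introduces the auxiliary $H^{\tx{ext},\gam}=\tx{diag}(\rho^{-1/2},1)$ (resp.\ $H^{\tx{ext},\bet}$), observes that this solves $\mch_{t,\bet_0',\gam_0'}=0$ exactly, and then invokes Lemma \ref{lem:painleveasymp} only to show that $H_t^{\tx{int},\gam}-H^{\tx{ext},\gam}$ together with its first and second derivatives are $O(e^{-ct})$ on the annulus. From there it simply replays the machinery of Lemma \ref{lem:appsol1}, using the general matrix inequalities of Lemma \ref{lem:matrest} to transfer the $C^2$-closeness of the metrics to closeness of the values of $\mch_t$. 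Your route bypasses Lemma \ref{lem:matrest} entirely by exploiting the diagonal structure and collapsing the problem to the single scalar residue $E$; the cancellation $\chi\sinh(2\psi_P)-\sinh(2\chi\psi_P)=O(\psi_P^3)$ then handles the $t^2$ prefactor without any appeal to matrix norms. The paper's approach buys uniformity with the $D_r$ case (same template as Lemma \ref{lem:appsol1}), while yours is more self-contained and makes the Painlev\'e identity $\pd_{\bar\zeta}\pd_\zeta\psi_P=2t^2\rho\sinh(2\psi_P)$ do all the work explicitly.
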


\begin{proof}
For $p_j\in D_\gam$, define $H^{\tx{ext},\gam}=\tx{diag}\rd{\rho^{-1/2}, 1}$ on $\mbd_j''-\mbd_j'$ with $\rho=\vb{\zeta_j}$ solving the local form of decoupled equation with $\bet=\bet_0'd\zeta_j$, $\gam=\gam_0'd\zeta_j$. By properties of $\psi=\psi_P$ in Lemma \ref{lem:painleveasymp} there are $C_3$, $C_4>0$ such that for $t\gg 1$,
\ben
\vb{H_t^{\tx{int},\gam}-H^{\tx{ext},\gam}}, \,\,\vb{\pd_\rho\rd{H_t^{\tx{int},\gam}-H^{\tx{ext},\gam}}},\,\, \vb{\pd_{\bar\zeta}\pd_\zeta\rd{H_t^{\tx{int},\gam}-H^{\tx{ext},\gam}}}\le C_3 e^{-C_4 t}\,.
\een
The rest of the estimate follows from the proof of Lemma \ref{lem:appsol1} using Lemma \ref{lem:matrest}. The case $p_j\in D_\bet$ is similar.
\end{proof}

\begin{lem} \label{lem:h1lamintcont}
Let $p_j\in D_r$. The family of functions $H_{1,\lam}^{\tx{int}}$ is continuous in $C^0(\mbd_j')$ with respect to $\lam\in I$.
\end{lem}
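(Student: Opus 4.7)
The plan is to exploit the decomposition, valid on $\mbd_j'$,
\begin{equation*}
H_{1,\lam}^{\tx{int}}(\zeta)=M_{\infty,\lam}(\rho)+G_\lam(\chi)^\ast\bigl[M_\lam-M_{\infty,\lam}\bigr](\chi^{-1}\rho)\,G_\lam(\chi),
\end{equation*}
which follows from the scaling identity $G_\lam(\chi)^\ast M_{\infty,\lam}(\chi^{-1}\rho)G_\lam(\chi)=M_{\infty,\lam}(\rho)$ noted just before (\ref{eq:tilHtapp}). I would then analyze the two summands separately and combine them.

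First I would verify continuity of the ``external'' summand $M_{\infty,\lam}(\rho)=\tx{diag}\bigl(16c_\lam^{-2}\rho^{-1-4\lam},\,(c_\lam/4)\rho^{-1+2\lam}\bigr)$ in $\lam\in I$ as a function of $\zeta\in\mbd_j'$; this is immediate from the explicit formula, the continuity of $\lam\mapsto c_\lam$ (Prop \ref{prop:clamcont}), and the compactness $I\Subset(-1/4,1/4)$. Next, on any compact $K\Subset\mbd_j'$ on which $\chi$ is bounded below by some $\epsilon>0$, the argument $\chi^{-1}\rho$ stays bounded and continuity of the remainder in $\lam$ on $K$ follows from Cor \ref{cor:metriccont} applied to $M_\lam$ (equivalently to $H_{1,\lam}$, up to the $\lam$-independent frame change (\ref{eq:Mlam2H1lam})), together with the explicit polynomial dependence of $G_\lam(\chi)=\tx{diag}(\chi^{-2\lam-1/2},\chi^{\lam-1/2})$ on $\lam$.

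The hard part will be the collar near $\pd\mbd_j'$, where $\chi\to 0$ and $|G_\lam(\chi)|$ blows up like $\chi^{-(a_\lam+1/2)}$ while $\chi^{-1}\rho\to\infty$. The plan here is to invoke Prop \ref{prop:asympsum0}, whose constants depend continuously on $c_\lam$ and are therefore uniformly bounded on $I$, to obtain
\begin{equation*}
\bigl|[M_\lam-M_{\infty,\lam}](\chi^{-1}\rho)\bigr|\le C_1 e^{-C_2 \chi^{-1}\rho}\quad\tx{ uniformly in }\lam\in I.
\end{equation*}
Since $\rho\ge R/3$ throughout the collar, the exponential factor $e^{-C_2 R/(3\chi)}$ defeats the polynomial blow-up $\chi^{-(2a_\lam+1)}$ of $|G_\lam(\chi)|^2$, and so the remainder tends to $0$ uniformly in $\lam\in I$ as $\chi\to 0$. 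Combined with the continuity of $M_{\infty,\lam}(\rho)$ across $\mbd_j'$, this upgrades pointwise continuity on every compact $K\Subset\mbd_j'$ to uniform $C^0$-continuity on $\mbd_j'$. The main obstacle is thus the reconciliation of the polynomial singularity of $G_\lam(\chi)$ at the gluing boundary with $C^0$-control in $\lam$, which is defeated precisely by the uniform exponential decay of Prop \ref{prop:asympsum0}, itself resting on the continuity of $c_\lam$ established in Prop \ref{prop:clamcont}.
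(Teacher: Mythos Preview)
Your decomposition is written for the $\usig$-frame object $\widetilde H_{1,\lam}^{\tx{int}}$, but the lemma concerns $H_{1,\lam}^{\tx{int}}=S^\ast\widetilde H_{1,\lam}^{\tx{int}}S$; since $S$ has bounded entries on $\mbd_j'$ this is harmless. The real gap is at the origin: both summands in your splitting are singular at $\rho=0$. The external piece $M_{\infty,\lam}(\rho)=\tx{diag}\bigl(16c_\lam^{-2}\rho^{-1-4\lam},(c_\lam/4)\rho^{-1+2\lam}\bigr)$ blows up there for every $\lam\in I$, so it is not even an element of $C^0(\mbd_j')$; the claimed ``immediate'' $C^0(\mbd_j')$-continuity in $\lam$ is false. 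Likewise, on $\mbd_j''$ (where $\chi\equiv 1$) the remainder reduces to $[M_\lam-M_{\infty,\lam}](\rho)$, and since $M_\lam=(S^\ast)^{-1}H_{1,\lam}S^{-1}$ is singular at $\rho=0$ with a different order than $M_{\infty,\lam}$, the remainder is singular too. Cor~\ref{cor:metriccont} applied ``to $M_\lam$'' only yields $C^0$-control away from the origin, because the frame change $S^{-1}$ blows up there. Thus your compact-set argument and your collar argument together cover only the annulus $\Omega=\mbd_j'\setminus\mbd_j''$, not all of $\mbd_j'$.

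The paper fixes exactly this by splitting off $\mbd_j''$ first: there $H_{1,\lam}^{\tx{int}}=H_{1,\lam}$, and Cor~\ref{cor:metriccont} applies directly with no singular frame change. Only on $\Omega$ (where $\rho\ge R/3$) does it use a factorization $\widetilde H_{1,\lam}^{\tx{int}}=G_\lam(\rho)^\ast D_\lam^\ast F_\lam(\chi^{-1}\rho)D_\lam G_\lam(\rho)$ with $F_\lam\to I$ uniformly in $\lam$ as its argument $\to\infty$; this is equivalent to your collar argument, just packaged multiplicatively. So your strategy is correct on the annulus and matches the paper there, but you must treat the inner disk $\mbd_j''$ separately via Cor~\ref{cor:metriccont} applied to $H_{1,\lam}$ itself rather than to $M_\lam$ or $M_{\infty,\lam}$.
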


\begin{proof}
We have $H_{1,\lam}^{\tx{int}}=H_{1,\lam}$ on $\mbd_j''$ continuous in $C^0(\mbd_j'')$ with respect to $\lam\in I$ by Cor \ref{cor:metriccont}. We will focus on the annulus $\Omega=\mbd_j'-\mbd_j''$.

Let $D_\lam=\tx{diag}\rd{4 c_\lam^{-1},c_\lam^{1/2}/2}$. We have $M_{\infty,\lam}(\rho)=\rd{D_\lam G_\lam(\rho)}^\ast \rd{D_\lam G_\lam(\rho)}$ with $G_\lam$ as in (\ref{eq:defGlam}). Let 
\ben
F_\lam(\rho):= \rd{D_\lam^\ast}^{-1}\rd{G_\lam(\rho)^\ast}^{-1}M_\lam(\rho)G_\lam(\rho)^{-1}D_\lam^{-1}\,.
\een
Since $G_\lam(a)G_\lam(b)=G_\lam(ab)$, we have that
\ben
\widetilde H_{1,\lam}^{\tx{int}}=G_\lam(\rho)^\ast D_\lam^\ast F_\lam\rd{\chi^{-1}\rho}D_\lam G_\lam(\rho)\,.
\een
We have $H_{1,\lam}^{\tx{int}}=S^\ast \widetilde H_{1,\lam}^{\tx{int}} S$ with 
\ben
S=\ov{\sqrt{2}}\pmt{\zeta & -1 \\ \zeta & 1}
\een
whose entries are bounded on $\Omega$. Furthermore, note that $G_\lam(\rho)^{-1}D_\lam^{-1}$ is continuous with respect to $\lam\in I$ in $C^0(\Omega)$. Therefore, the conclusion follows if $\zeta\mapsto F_\lam(|\zeta|)$ is continuous on $C^0(\mbr^2-B(0,R/3))$.

For $\eps>0$, by Prop \ref{prop:asympsum}, there is $\rho_1=\rho_1(\eps)\ge \max\rd{\rho_0,R/3}$ such that for $\rho\ge \rho_1$ and all $\lam,\lam_0\in I$, $\vb{F_\lam(\rho)-F_{\lam_0}(\rho)}\le \epsilon$. On the other hand, by Cor \ref{cor:metriccont}, there is $\delta=\delta(\rho_1)>0$ such that for $\vb{\lam-\lam_0}<\delta$ and $R/3\le \rho\le \rho_1$, $\vb{F_\lam(\rho)-F_{\lam_0}(\rho)}\le \epsilon$. Note that $\delta$ depends only on $\eps>0$. Therefore we have that $F_\lam\to F_{\lam_0}$ in $C^0(\mbr^2-B(0,R/3))$.
\end{proof}

\subsubsection{Global construction}
\label{sec:htapp}

In this part, we will assemble a metric $h_{t,\ulam}^{\tx{app}}$ on $F$ for a choice of weight $\ulam$ by gluing metric defined on $\left.F\right|_{\mbd_j}$ in \S \ref{sec:glueondisk} and a decoupled solution on $X-\coprod_j \mbd_j$. It is no surprise that for continuity at $\pd \mbd_j$ we need to choose the decoupled solution, i.e. the weight $\ulam$ carefully. We will be using notations $\varphi_{\ulam}$, $h_{L,\ulam}^0$, $h_{L,\tx{HE}}$ and $g_{\ell j}$ in Def \ref{def:someharmfcns}.

For a stable SU(1,2) Higgs bundle $(F,\bet,\gam)$ let $\ulb$ be corresponding Hecke parameters as in Def \ref{def:threesets}. For each $p_j\in D$ we fix once and for all a trivializing section $\cl{s_{0,j}^{(0)}}$ of $\atp{L}{\mbd_j}$ with $\rd{s_{0,j}^{(0)}(0)}^{\otimes 3}=b_{p_j}$ if $p_j\in D_r$. Let $\ulam$ be an admissible weight with respect to $\ulD$ (see \S \ref{sec:localmodel}).

\begin{defn} \label{def:tcompatible}
For $t\ge 1$, $\ulam$ is called $t$-compatible with $\ulb$ if there is a trivializing $s_{0,j}$ of $\left.L\right|_{\mbd_j}$ for each $p_j\in D_r$ such that
\begin{itemize}
\item (1) $s_{0,j}^{\otimes 3}(0)=b_{p_j}$,
\item (2) there is a harmonic metric $h_L$ adapted to $(L,\ulam)$ such that on $\mbd_j$,
\be \label{eq:tcomplocform1}
\rd{h_L^{-2}h_K\oplus h_Lh_K}_{\usig}=t^{2/3}M_{\infty,\lam_j}(t^{2/3}\rho) \tx{ for all }p_j\in D_r
\ee
where $\rho=|\zeta_j|$ and $\usig$ is induced by $s_{0,j}$ as in Theorem \ref{thm:hecke}.
\end{itemize}
\end{defn}

Note that (\ref{eq:tcomplocform1}) is exactly $\widetilde H_{t,\lam_j}^{\tx{ext}}$ in (\ref{eq:tilHtext}). Furthermore, the condition (2) is equivalent to
\begin{itemize}
\item (3) given any choice of $\{s_{0,j}\}_{p_j\in D_r}$ with $s_{0,j}\in \left.L\right|_{\mbd_j}$ and $s_{0,j}(0)^3=b_{p_j}$, there is a harmonic metric $h_L$ adapted to $\rd{L,\ulam}$,
\be \label{eq:tcomplocform2}
\vb{s_{0,j}}^2_{h_L}/\rd{4 c_{\lam_j}^{-1} t^{4\lam_j/3}\vb{\zeta_j}^{2\lam_j}}\lxrightarrow{\zeta_j\to 0} 1
\ee
\end{itemize}
Note that (2) $\Rightarrow$ (3) by a direct calculation. We next show (3)$\Rightarrow$(2) and introduce some notations.

Suppose (3) holds and denote the harmonic metric by $h_{L,\ulam,t}$. For each $p_j\in D_r$,
\be \label{eq:lognorms00j1}
\log\vb{s_{0,j}^{(0)}}_{h_{L,\ulam,t}}^2=\log\rd{c_{\lam_j}/4}+(4/3)\rd{\log t}\lam_j+2\lam_j \log\vb{\zeta_j}+o(1)
\ee
as $\zeta_j\to 0$. We have 
$\log\vb{s_{0,j}^{(0)}}_{h_{L,\ulam}^0}^2=\log\vb{s_{0,j}^{(0)}}_{h_{L,\tx{HE}}}^2+\varphi_{\ulam}$. By the uniqueness there is $\eta_{\ulam,t}\in \mbr$ such that 
\be \label{eq:defhLulamt}
h_{L,\ulam,t}=h_{L,\ulam}^0e^{\eta_{\ulam,t}}=h_{L,\tx{HE}}e^{\varphi_{\ulam}+\eta_{\ulam,t}}
\ee
therefore $\log \vb{s_{0,j}^{(0)}}_{h_{L,\ulam,t}}^2=\eta_{\ulam,t}+\log\vb{s_{0,j}^{(0)}}_{h_{L,\ulam}^0}^2$. Thus we have for all $j$
\be \label{eq:lognorms00j2}
\log\vb{s_{0,j}^{(0)}}_{h_{L,\ulam,t}}^2=\eta_{\ulam,t}+\log\vb{s_{0,j}^{(0)}(0)}_{h_{L,\tx{HE}}}^2+\sum_{\ell=1}^N\rd{\sum_{k=1}^\ell \lam_k}g_{\ell j}+2\lam_j \log\vb{\zeta_j}
\ee
Compare (\ref{eq:lognorms00j1}) and (\ref{eq:lognorms00j2}) we get for all $p_j\in D_r$
\be \label{eq:culamt}
\eta_{\ulam,t}=(4/3)\rd{\log t}\lam_j+\log\rd{c_{\lam_j}/4}-\log\vb{s_{0,j}^{(0)}(0)}_{h_{L,\tx{HE}}}^2-\sum_{\ell=1}^N\rd{\sum_{k=1}^\ell \lam_k}g_{\ell j}(0)
\ee
Since $\mbd_j$ is simply connected, there are unique holomorphic functions $\xi_j$, $f_{\ell j}$ on $\mbd_j$ such that 
\begin{align}
&\tx{Re} \xi_j=\log\vb{s_{0,j}^{(0)}}_{h_{L,\tx{HE}}}^2-\log \vb{s_{0,j}^{(0)}(0)}_{h_{L,\tx{HE}}}^2 \nonumber \\
&\tx{Re} f_{\ell j}=g_{\ell j}-g_{\ell j}(0) \label{eq:xiandfellj}
\end{align} 
and $\xi_j(0)=f_{\ell j}(0)=0$. Set
\be \label{eq:Fulamj}
F_{\ulam,j}=\xi_j+\sum_{\ell=1}^N\rd{\sum_{k=1}^\ell \lam_k}f_{\ell j}\,.
\ee

\begin{defn} \label{def:weightdependentframes}
For each $p_j\in D_r$, let
\be \label{eq:weightdeps0}
s_{0,j}^{\ulam}=e^{-F_{\ulam,j}/2}s_{0,j}^{(0)}\,.
\ee
Let $\usig_j^{(0)}=\cl{\sig_{1,j}^{(0)},\sig_{2,j}^{(0)}}$ and $\uls_j^{(0)}=\cl{s_{1,j}^{(0)},s_{2,j}^{(0)}}$ for all $j$ (resp. $\usig_j^{\ulam}=\cl{\sig_{1,j}^{\ulam},\sig_{2,j}^{\ulam}}$ and $\uls_j^{\ulam}=\cl{s_{1,j}^{\ulam},s_{2,j}^{\ulam}}$ for $p_j\in D_r$) be the frames induced by the frame $\cl{s_{0,j}^{(0)}}$ (resp. the frame $\cl{s_{0,j}^{\ulam}}$) as in Theorem \ref{thm:hecke}. Note that the frame $\usig=\usig_j^{\ulam}$ satisfies (\ref{eq:tcomplocform1}).
\end{defn}

From the above, condition (3) is also equivalent to
\begin{itemize}
\item (4) There is $\eta\in \mbr$ such that for all $p_j\in D_r$, we have
\be \label{eq:tcomplocform3}
\eta+\psi_j\rd{\ulam}=\log \rd{c_{\lam_j}^{1/2}/2}+(2/3)\rd{\log t}\lam_j
\ee
where $\psi_j\rd{\ulam}=\log\vb{s_{0,j}^{(0)}(0)}_{h_{L,\tx{HE}}}+(1/2)\sum_{\ell=1}^N\rd{\sum_{k=1}^\ell \lam_k}g_{\ell j}(0)$ is an affine function in $\lam_j$ with $p_j\in D_r$.
\end{itemize}

For the example in (\ref{eq:specialSU12}), $L\cong \mco_X$, $\ulb=\rd{1,\ldots,1}$ and $h_{L,\tx{HE}}$ may be taken to be the trivial metric on $\mco_X$. Recall that $c_{\lam=0}=4$. It follows that $\ulam=\rd{0,\ldots,0}$ is $t$-compatible with $\ulb$ for all $t$. Note also that $c_{\ulzero,t}$ in (\ref{eq:culamt}) is independent of $t$ in this case.

\begin{prop} \label{prop:tcompfamily}
There are $C>0$, $t_0\ge 1$ such that for $t\gg 1$ there is an admissible weight $\ulam(t)\in \msp_{\ulD}$, $t$-compatible with $\ulb$ and such that
\be \label{eq:lammigrate}
\vb{\ulam(t)-\ulami}\le \frac{C}{\log t}
\ee
where $\ulami$ is defined in Theorem \ref{thm:main}.
\end{prop}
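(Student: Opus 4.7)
The plan is to rewrite condition (4) of Definition \ref{def:tcompatible} as a fixed-point equation for $\ulam$ whose characteristic scale is $1/\log t$, and then apply Brouwer's theorem (rather than Banach), since Prop \ref{prop:clamcont} provides only continuity of $\lam\mapsto c_\lam$. Fix $\lam_j=\pm 1/4$ for $p_j\in D_\bet\cup D_\gam$ as required by admissibility; the unknowns are then the $d_r$ weights $\{\lam_j\}_{p_j\in D_r}$ together with $\eta\in\mbr$, constrained by the $d_r$ equations \eqref{eq:tcomplocform3} and the parabolic degree condition $\sum_j\lam_j=-\deg L$. First, averaging \eqref{eq:tcomplocform3} over $p_j\in D_r$ and using $\sum_{p_j\in D_r}\lam_j=S$ with $S:=-\deg L-d_\bet/4+d_\gam/4$ allows one to solve for $\eta$ as a continuous function of $\ulam$,
\[
\eta=\ov{d_r}\left[\frac{2(\log t)S}{3}-\sum_{p_k\in D_r}\psi_k(\ulam)+\sum_{p_k\in D_r}\log\rd{c_{\lam_k}^{1/2}/2}\right].
\]
Substituting back, and using the identity $S/d_r=\lam_{\infty,j}$ for $p_j\in D_r$, reduces \eqref{eq:tcomplocform3} to
\[
\lam_j=\lam_{\infty,j}+\frac{3}{2\log t}\,\Phi_j(\ulam),\qquad p_j\in D_r,
\]
where $\Phi_j$ is the continuous function on the closed face $\overline{\msp_{\ulD}}$ built from the affine combinations $\psi_k-\bar\psi$ and the continuous terms $\log(c_{\lam_j}^{1/2}/2)-\overline{\log c^{1/2}/2}$ (with $\bar{\cdot}$ denoting the mean over $p_k\in D_r$; continuity in the latter follows from Prop \ref{prop:clamcont} and positivity of $c_\lam$).

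Now define $T_t\colon \overline{\msp_{\ulD}}\to\mbr^{4g-4}$ by $T_t(\ulam)_j=\lam_{\infty,j}$ for $p_j\in D_\bet\cup D_\gam$ and $T_t(\ulam)_j=\lam_{\infty,j}+(3/2\log t)\Phi_j(\ulam)$ for $p_j\in D_r$. Since the $\Phi_j$ have zero mean over $D_r$, $T_t$ preserves the hyperplane $\cl{\sum_j\lam_j=-\deg L}$. Fix $\rho>0$ small enough that $\rho<1/4-\vb{\lam_{\infty,j}}$ for each $p_j\in D_r$, and set
\[
\overline{U_\rho}:=\cl{\ulam\in\overline{\msp_{\ulD}}\,:\,\vb{\lam_j-\lam_{\infty,j}}\le\rho\,\,\forall\,p_j\in D_r}.
\]
By compactness and continuity, $\vb{\Phi_j}\le M=M(\rho)$ on $\overline{U_\rho}$. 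Choose $t_0$ such that $3M/(2\log t_0)<\rho$; then for $t\ge t_0$, $T_t$ is a continuous self-map of the convex compact set $\overline{U_\rho}$, and Brouwer's theorem supplies a fixed point $\ulam(t)\in\overline{U_\rho}$. Defining $\eta=\eta(t)$ through the averaged equation shows that \eqref{eq:tcomplocform3} holds for every $p_j\in D_r$, so $\ulam(t)$ is $t$-compatible with $\ulb$. The choice of $\rho$ forces $\ulam(t)$ into the open face $\msp_{\ulD}$, and the estimate \eqref{eq:lammigrate} with $C=3M/2$ follows directly from the fixed-point equation.

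The main subtlety is the reliance on Brouwer rather than Banach: without a Lipschitz estimate on $\lam\mapsto c_\lam$, a contraction argument is unavailable, and the present approach does not give uniqueness of $\ulam(t)$. Existence together with the rate $O(1/\log t)$, however, come entirely from the $1/\log t$ factor built into the equations after elimination of $\eta$, so Brouwer suffices to establish the proposition.
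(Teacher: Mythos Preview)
Your proof is correct and follows essentially the same approach as the paper: both reformulate condition (4) of Definition \ref{def:tcompatible} as a fixed-point problem with the $1/\log t$ scaling built in, invoke the continuity of $\lam\mapsto c_\lam$ from Prop \ref{prop:clamcont}, and apply Brouwer's theorem on a small closed ball around $\ulami$. The paper eliminates $\eta$ by projecting onto the hyperplane $\{\sum_k\mu_k=0\}$ via the map $\pi$, which is exactly your ``subtract the mean over $D_r$'' step; your explicit averaging and the paper's projection $F_t=\pi\circ G_t$ are the same operation. One small slip: you describe $\Phi_j$ as continuous on the closed face $\overline{\msp_{\ulD}}$, but $c_\lam$ is only defined for $\lam\in(-1/4,1/4)$, so $\Phi_j$ lives on the open face $\msp_{\ulD}$; this is harmless since your argument only uses boundedness on the compact $\overline{U_\rho}\subset\msp_{\ulD}$.
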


\begin{proof}
Let $\lam_c=-\rd{\deg L+(d_\bet-d_\gam)/4}/d_r$ which is the value $\lam_{\infty,j}$ for $p_j\in D_r$. Set
\ben
\widetilde\msp_{\ulD}=\left\{\umu=\rd{\mu_1,\ldots,\mu_{d_r}}\middle| \mu_k+\lam_c\in(-1/4,1/4),\,\, \sum_k\mu_k=0 \right\}\subset H\subset \mbr^{d_r}\,,
\een
an open subset of $H=\cl{\umu|\sum_k\mu_k=0}$, a hyperplane in $\mbr^{d_r}$. By Prop \ref{prop:su12stab2}, $\ulzero=(0,\ldots,0)\in \widetilde\msp_{\ulD}$. Let $\pi: \mbr^{d_r}\to H,\,\, \umu\mapsto \rd{\mu_k-d_r^{-1}\sum_\ell \mu_\ell}_{k=1}^{d_r}$. We have that $\pi(\umu-\umu')=0$ iff there is a constant $c$ such that $\mu_k=\mu_k'+c$ for all $k$.

Set
\ben
G_t:\widetilde\msp_{\ulD}\to\mbr^{d_r}, \,\, G_t\rd{\umu}=(3/2)(\log t)^{-1}\rd{\psi_{i_k}\rd{\ulam}-\log\rd{c_{\lam_c+\mu_k}^{1/2}/2}}_{k=1}^{d_r}\,,
\een
where $1\le i_1<\ldots<i_{d_r}\le 4g-4$ are the indices such that $p_{i_j}\in D_r$ and $\ulam$ is the tuple with $\lam_{i_k}=\lam_c+\mu_k$ and set $F_t=\pi\circ G_t$. We can rewrite (\ref{eq:tcomplocform3}) as 
\ben
\mu_k=G_t\rd{\ulam}+\rd{(3/2)(\eta/\log t)-\lam_c}\,.
\een
i.e., $\ulam$ is $t$-compatible with $\ulb$ iff 
\ben
F_t\rd{\umu}=\umu\,.
\een

$F_t$ is continuous by Prop \ref{prop:clamcont}. There is $r_0>0$ such that $H\cap \overline{B(0,r_0)}\Subset \widetilde\msp_{\ulD}$ and that $F_t: H\cap \overline{B(0,r_0)}\to H\cap \overline{B(0,r_0)}$. The conclusion follows then from Brouwer's fixed point theorem.
\end{proof}

Let $r_0>0$ be as in the proof and set $I=[\lam_c-r_0,\lam_c-r_0]$. Let 
\be \label{eq:mci}
\mci=\left\{\ulam\in\msp_{\ulD} \middle| \lam_j\in I\tx{ for }p_j\in D_r\right\}\,.
\ee
We fix a family $\ulam(t)\in \mci$ for $t\gg 1$, $t$-compatible with $\ulb$.

\begin{defn} \label{def:htapp}
Let $\usig_j^{(0)}$ (resp. $\uls_j^{(0)}$) be the frames of $\atp{V}{\mbd_j}$ (resp. $\atp{F}{\mbd_j}$) defined in Def \ref{def:weightdependentframes}. Let $h_t^{\tx{app}}$ be a Hermitian inner product on $F$ defined by a piecewise expression
\ben
\begin{cases}
\rd{h_t^{\tx{app}}}_{\usig_j^{(0)}}=T_{\ulam(t),j}^\ast \widetilde H_{t,\lam_j(t)}^{\tx{app}} T_{\ulam(t),j} & \tx{ on }\mbd_j,\,\, p_j\in D_r \\
\rd{h_t^{\tx{app}}}_{\uls_j^{(0)}}=\rd{T_{\ulam(t),j,t}'}^\ast H_t^{\tx{int},\bet} T_{\ulam(t),j,t}' & \tx{ on }\mbd_j\,\, p_j\in D_\bet\\
\rd{h_t^{\tx{app}}}_{\uls_j^{(0)}}=\rd{T_{\ulam(t),j,t}'}^\ast H_t^{\tx{int},\gam} T_{\ulam(t),j,t}' & \tx{ on }\mbd_j\,\, p_j\in D_\gam\\
h_t^{\tx{app}}=h_{\infty,t} & \tx{ on }X-\coprod_j \mbd_j
\end{cases} 
\een
where 
\ben
h_{\infty,t}=\iota^\ast\rd{h_{L,\ulam(t),t}^{-2}h_K\oplus h_{L,\ulam(t),t}h_K}
\een
with $\iota: F\to V=L^{-2}K_X\oplus LK_X$ the Hecke modification corresponding to $(F,\bet,\gam)$ as in \S \ref{sec:setup}
and $\widetilde H_{t,\lam}^{\tx{app}}$ (resp. $H_t^{\tx{int},\bet}$, $H_t^{\tx{int},\gam}$) is defined in (\ref{eq:tilHtapp}) (resp. (\ref{eq:HtintDbetgam})) and
\begin{align}
& T_{\ulam,j}=\tx{diag}\rd{e^{-F_{\ulam,j}},e^{F_{\ulam,j}/2}},\nonumber \\
& T_{\ulam,j,t}'=\tx{diag}\rd{\tau_{\ulam(t),t}^{-1/2},\tau_{\ulam(t),t}}\,, \label{eq:defTulamjtprime}
\end{align}
where $F_{\ulam,j}$ is defined in (\ref{eq:Fulamj}) and 
\ben
\tau_{\ulam,t}=4 c_{\lam_{j_0}}^{-1}t^{-4\lam_{j_0}/3}e^{-\widetilde F_{\ulam,j}}
\een
where $j_0$ is a fixed index such that $p_{j_0}\in D_r$ (recall by Prop \ref{prop:su12stab2} we have $D_r\neq\emptyset$) and 
\begin{align*}
&\widetilde F_{\ulam,j}=F_{\ulam,j}-\log \widetilde \eta_{\ulam,j}\,, \\
&\widetilde \eta_{\ulam,j}=\frac{\vb{s_{0,j_0}^{(0)}}_{h_{L,\tx{HE}}}^2}{\vb{s_{0,j}^{(0)}}_{h_{L,\tx{HE}}}^2} \exp\rd{
\sum_{\ell=1}^{4g-4}\rd{\sum_{k=1}^\ell \lam_k}\rd{g_{\ell j_0}(0)-g_{\ell j}(0)}\,.
}
\end{align*}
\end{defn}

In view of (\ref{eq:weightdeps0}) and the form of $h_t^{\tx{app}}$ on $\mbd_j$ with $p_j\in D_r$, it is not hard to verify that for $p_j\in D_r$, we have that
\ben
\rd{h_t^{\tx{app}}}_{\usig_j^{\ulam}}=t^{2/3}M_{\lam_j}(t^{2/3}\rho)\,\,\tx{ on }\mbd_j''\tx{ for }p_j\in D_r\,.
\een
Therefore, we have by Def \ref{def:metrics} and (\ref{eq:htlaminetafrm}) that on $\mbd_j''$, $\rd{h_t^{\tx{app}}}_{\uls_j^{\ulam(t)}}=H_{t,\lam_j(t)}$ the local model solution in Prop \ref{prop:locmodsym}. In particular we see that $h_t^{\tx{app}}$ is smooth and non-singular at $D_r$. 

Define $\uls_{j,t}^{\ulam}=\cl{s_{1,j,t}^{\ulam},s_{2,j,t}^{\ulam}}$ on $\atp{F}{\mbd_j}$ by
\be \label{eq:defsijt}
\begin{cases}
s_{1,j,t}^{\ulam}=\tau_{\ulam,t}^{1/2}s_{1,j}^{(0)}\\
s_{2,j,t}^{\ulam}=\tau_{\ulam,t}s_{2,j}^{(0)}
\end{cases}\,. 
\ee
By a direct calculation for $\ulD$-admissible weight $\ulam$,
\be \label{eq:localformundersjtulam}
\rd{h_t^{\tx{app}}}_{\uls_{j,t}^{\ulam}}=\begin{cases}
H_t^{\tx{int},\bet}\,\, \tx{ for }p_j\in D_\bet,\\
H_t^{\tx{int},\gam}\,\, \tx{ for }p_j\in D_\gam
\end{cases}\,.
\ee
We have
\ben
\rd{h_{\infty,t}}_{\uls_{j,t}^{\ulam}}=
\begin{cases}
\tx{diag}\rd{\rho^{1/2},1} & p_j\in D_\bet \\
\tx{diag}\rd{\rho^{-1/2},1} & p_j\in D_\gam
\end{cases}\,.
\een
By the definitions of $H_t^{\tx{int},\bet}$ and $H_t^{\tx{int},\gam}$ in (\ref{eq:HtintDbetgam}) as well as the asymptotic properties $\psi_P$ in Prop \ref{prop:locmodpainleve}, we have that $h_t^{\tx{app}}$ is $C^2$ at $\pd \mbd_j$ for $p_j\in D_\bet$ and $D_\gam$. On the other hand, we saw that $\widetilde H_{t,\lam_j(t)}^{\tx{app}}$ is in $C^2\rd{\mbd_j^\times}$ in \S \ref{sec:glueondisk}. By condition (2) in Def \ref{def:tcompatible}, we see that $h_t^{\tx{app}}$ on $\mbd_j-\mbd_j'$ is identical to $h_{\infty,t}$. Therefore it is also $C^2$ at $\pd \mbd_j$ for $p_j\in D_r$. This proves the first statement in the following proposition.

\begin{prop} \label{prop:htapp}
$h=h_t^{\tx{app}}$ as above is a $C^2$ metric on $F$ over $X$. There are $C$, $c>0$ such that for $t\gg 1$,
\ben
\vb{F_{\nabla_h}+t^2\gam\wedge \gam^{\ast_h}+t^2\bet^{\ast_h} \wedge\bet}_{h_{t_0}^{\tx{app}}}\le C e^{-c t^{2/3}}
\een
\end{prop}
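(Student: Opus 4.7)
The plan is to exploit the piecewise construction of $h_t^{\tx{app}}$: on each piece the metric either solves the Hitchin equation exactly or the decoupled equation, so the error is concentrated in the gluing annuli $\mbd_j'-\mbd_j''$, where Lemmas \ref{lem:appsol1} and \ref{lem:appsol2} supply the desired exponential bounds.

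For the $C^2$-regularity claim, the interfaces requiring verification are $\pd \mbd_j$ for all $j$, and additionally $\pd \mbd_j'$ for $p_j\in D_r$. At $\pd\mbd_j'$ with $p_j\in D_r$, the scaling identity $G_\lam(\chi)^\ast M_{\infty,\lam}\rd{t^{2/3}\chi^{-1}\rho}G_\lam(\chi)=M_{\infty,\lam}\rd{t^{2/3}\rho}$, combined with the $\chi^{-1}$-exponential decay of $M_\lam-M_{\infty,\lam}$ in \eqref{eq:Htappcont}, ensures that $\widetilde H^{\tx{int}}-\widetilde H^{\tx{ext}}$ vanishes to infinite order as $\chi\to 0$, yielding $C^\infty$ matching. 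At $\pd \mbd_j$ with $p_j\in D_r$, the $t$-compatibility condition in Def \ref{def:tcompatible}(2) is designed precisely so that $T_{\ulam(t),j}^\ast\widetilde H_{t,\lam_j(t)}^{\tx{ext}}T_{\ulam(t),j}$ agrees with $(h_{\infty,t})_{\usig_j^{(0)}}$ on $\mbd_j-\mbd_j'$, so matching is exact. For $p_j\in D_\bet,D_\gam$, the formula for $H_t^{\tx{int},\bet/\gam}$ is a single smooth expression on $\mbd_j$ whose cutoff $\chi$ causes it to reduce to $\tx{diag}(\rho^{\pm 1/2},1)$ on a neighborhood of $\pd \mbd_j$; the definition of $\tau_{\ulam,t}$ then makes $(T_{\ulam(t),j,t}')^\ast \tx{diag}(\rho^{\pm 1/2},1) T_{\ulam(t),j,t}'$ coincide with $(h_{\infty,t})_{\uls_j^{(0)}}$ there, giving $C^\infty$ matching across $\pd \mbd_j$.

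For the operator estimate, I would first observe that on each inner disk $\mbd_j''$, $h_t^{\tx{app}}$ coincides, in a suitable $t$- and $\ulam$-dependent frame, with a local model solution---$H_{t,\lam_j(t)}$ from Prop \ref{prop:locmodsym} for $p_j\in D_r$, the Painlev\'e-based model from Prop \ref{prop:locmodpainleve} for $p_j\in D_\bet,D_\gam$---which satisfies the rescaled Hitchin equation identically. On $X-\coprod_j\mbd_j'$, $h_t^{\tx{app}}=h_{\infty,t}$ satisfies the decoupled equation by Lemma \ref{lem:decoupledsoln}, so $F_{\nabla_h}+t^2(\gam\wedge\gam^{\ast_h}+\bet^{\ast_h}\wedge\bet)$ vanishes identically there. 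The only nonzero contribution thus comes from the annuli $\mbd_j'-\mbd_j''$, where Lemma \ref{lem:appsol1} (for $p_j\in D_r$) bounds the error by $Ce^{-ct^{2/3}}$ in the local frame $\usig_j^{(0)}$, and Lemma \ref{lem:appsol2} yields the sharper $Ce^{-ct}$ for $p_j\in D_\bet,D_\gam$.

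The main obstacle is ensuring that the gauge transformations $T_{\ulam(t),j}$, $T_{\ulam(t),j,t}'$, together with the passage from frame-relative matrix norms to the norm $\vb{\cdot}_{h_{t_0}^{\tx{app}}}$ fixed at a reference $t_0$, do not erode the exponential rate. The $t$-dependence enters both through $\ulam(t)\to\ulami$ at rate $O(1/\log t)$ (Prop \ref{prop:tcompfamily}) and through the polynomial factor $\tau_{\ulam,t}\sim t^{-4\lam_{j_0}/3}$ in the Painlev\'e-disk transformation; both contribute at most polynomial growth in $t$, which is absorbed by the exponential. Uniformity of the constants in $\lam$ will follow from Prop \ref{prop:asympsum} and Cor \ref{cor:metriccont}, which show that the relevant asymptotic constants depend continuously on $c_\lam$ and are thus uniformly bounded on the compact interval $I$ of \eqref{eq:mci} containing the range of $\ulam(t)$ for $t\gg 1$.
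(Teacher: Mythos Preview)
Your proposal is correct and follows essentially the same route as the paper: localize the error to the annuli $\mbd_j'-\mbd_j''$, invoke Lemmas \ref{lem:appsol1} and \ref{lem:appsol2}, and check that the frame changes do not spoil the exponential rate. One refinement worth noting: where you allow the gauge transformations to contribute polynomial growth and absorb it into the exponential, the paper is sharper---for $p_j\in D_r$ it observes that $F_{\ulam,j}$ (hence $T_{\ulam,j}^{\pm1}$) is uniformly bounded on $\mbd_j$ for $\ulam\in\mci$, and for $p_j\in D_\bet,D_\gam$ it notes that $T_{\ulam,j,t}'$, $H_t^{\tx{int},\bet/\gam}$, and the relevant $\mch_t$ are all diagonal, so the conjugation is the identity and the $t$-dependence of $\tau_{\ulam,t}$ never enters; in both cases the gauge-covariance identity $\mch_t(T^\ast H T)=T^{-1}\mch_t(H)T$ is used exactly rather than estimated.
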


\begin{proof}
By definition, $h$ solves the Hitchin equation everywhere except on $\mbd_j'-\mbd_j''$. Note that $\xi$, $f_{\ell j}$ in (\ref{eq:xiandfellj}) are independent of $t$ and $\ulam$. There is $C_0>0$ such that for all $\ulam\in \mci$, $\vb{F_{\ulam,j}}\le C_0$ on $\mbd_j$ for $p_j\in D_r$. Therefore, there is $C_1>0$ such that $\vb{T_{\ulam,j}}$, $\vb{T_{\ulam,j}^{-1}}<C_1$. We have $C_2>0$ such that for all $\ulam\in\mci$,
\ben
\vb{\mch_{t,\bet_0,\gam_0}\rd{T_{\ulam,j}^\ast \widetilde H_{t,\lam_j}^{\tx{app}}T_{\ulam,j}}}=\vb{T_{\ulam,j}^{-1}\mch_{t,\bet_0,\gam_0}\rd{\widetilde H_{t,\lam_j}^{\tx{app}}}T_{\ulam,j}}\le C_2 \vb{\mch_{t,\bet_0,\gam_0}\rd{\widetilde H_{t,\lam_j}^{\tx{app}}}}\,,
\een
with $\bet_0=\ov{\sqrt{2}}\pmt{\zeta_j & 1}$, $\gam_0=\ov{\sqrt{2}}\pmt{1 & \zeta_j}^T$. On the other hand since $T_{\ulam,j,t}'$, $\mch_{t,\bet_0',\gam_0'}$, and $\mch_{t,\bet_0'',\gam_0''}$ (with $\bet_0'=\pmt{1 & 0}$, $\bet_0''=\pmt{\zeta_j & 0}$, $\gam_0'=\pmt{\zeta_j & 0}^T$, and $\gam_0''=\pmt{1 & 0}^T$) are all diagonal we have
\ben
\mch_{t,\bet,\gam}\rd{\rd{T_{\ulam,j,t}'}^\ast \widetilde H_{t,\lam_j}^{\tx{app}}T_{\ulam,j,t}'}=\rd{T_{\ulam,j,t}'}^{-1}\mch_{t,\bet,\gam}\rd{\widetilde H_{t,\lam_j}^{\tx{app}}}T_{\ulam,j,t}'=\mch_{t,\bet,\gam}\rd{\widetilde H_{t,\lam_j}^{\tx{app}}}
\een
for $\bet=\bet_0'$ (resp. $\bet_0''$) and $\gam=\gam_0'$ (resp. $\gam_0''$). The conclusion follows by combining Lemmas \ref{lem:appsol1}, \ref{lem:appsol2}, and the fact that $h_{t_0}^{\tx{app}}$ is independent of $t$.
\end{proof}

The following will be a consequence of (\ref{eq:Htappbd}) and Lemma \ref{lem:bddifffj} in the next section.

\begin{prop} \label{prop:htappclosetoext}
On $X_0\subset X-D$ compact, there are $C$, $c>0$ such that for all $t\gg 1$,
\ben
\dbv{\,g_t-\tx{Id}\,}_{L_2^2\rd{X_0},h_{t_0}^{\tx{app}}}\le C e^{-c t^{2/3}}
\een
where $g_t$ is given by $h_t^{\tx{app}}=h_{\infty,t}\cdot g_t$ and $\dbv{\,\cdot\,}_{L_2^2\rd{X_0},h}$ is defined in (\ref{eq:defLk2norm1}).
\end{prop}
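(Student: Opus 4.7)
The strategy is to decompose $X_0$ into three kinds of regions according to the piecewise definition of $h_t^{\tx{app}}$ in Def \ref{def:htapp}, estimate $g_t-\tx{Id}$ pointwise with its first and second derivatives on each piece, and then invoke Lemma \ref{lem:bddifffj} to pass from pointwise estimates to the $L_2^2$-norm with respect to the fixed background metric $h_{t_0}^{\tx{app}}$.

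First, since $X_0\subset X-D$ is compact, there is $r_0>0$ such that $X_0$ is disjoint from the sub-disks $\{|\zeta_j|<r_0\}$ for all $j$; so we may split $X_0=X_0^{\tx{out}}\sqcup X_0^{\tx{ann}}\sqcup X_0^{\tx{in}}$ where $X_0^{\tx{out}}\subset X-\coprod_j \mbd_j'$, $X_0^{\tx{ann}}\subset \coprod_j (\mbd_j'-\mbd_j'')$, and $X_0^{\tx{in}}\subset \coprod_j(\mbd_j''\cap\{|\zeta_j|\ge r_0\})$. On $X_0^{\tx{out}}$, by Def \ref{def:htapp} and the $t$-compatibility relation (\ref{eq:tcomplocform1}) together with (\ref{eq:localformundersjtulam}) at points of $D_\bet\cup D_\gam$ (noting $\chi\equiv 0$ outside $\mbd_j'$), we have $h_t^{\tx{app}}=h_{\infty,t}$ identically, so $g_t=\tx{Id}$ there and the contribution to the $L_2^2$-norm vanishes.

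On $X_0^{\tx{ann}}$, the interior metric is $\widetilde H_{t,\lam_j(t)}^{\tx{int}}$ while $h_{\infty,t}$ corresponds to $\widetilde H_{t,\lam_j(t)}^{\tx{ext}}$; by (\ref{eq:Htappbd}) (for $p_j\in D_r$) and the exponential decay of the Painlev\'e tail from Lemma \ref{lem:painleveasymp} (for $p_j\in D_\bet\cup D_\gam$, via the definition (\ref{eq:HtintDbetgam})), the difference of the two matrix-valued functions and its first two partial derivatives in $\zeta_j,\bar\zeta_j$ are bounded by $Ce^{-ct^{2/3}}$, uniformly on $X_0^{\tx{ann}}$. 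Conjugating by the $t$-independent transition matrices $T_{\ulam(t),j}$ (whose entries stay in a compact set since $\ulam(t)\in \mci$ by Prop \ref{prop:tcompfamily}) and $T_{\ulam(t),j,t}'$ (which are diagonal and therefore pass through cleanly in the expression $H_1^{-1}H_2$), the same exponential bound holds for $g_t-\tx{Id}$ and its derivatives up to order two. On $X_0^{\tx{in}}$, the difference between $t^{2/3}M_{\lam_j(t)}(t^{2/3}\rho)$ and $t^{2/3}M_{\infty,\lam_j(t)}(t^{2/3}\rho)$ and its derivatives are bounded by $Ce^{-ct^{2/3}\rho}\le Ce^{-c r_0 t^{2/3}}$ by Prop \ref{prop:asympsum} (uniformly in $\lam\in \mci$ by Prop \ref{prop:asympsum0}); the $D_\bet,D_\gam$ cases use Lemma \ref{lem:painleveasymp} analogously, since the Painlev\'e argument $\tfrac{8}{3}t\rho^{3/2}\ge \tfrac{8}{3}r_0^{3/2}t$.

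Combining the three pieces yields a pointwise bound $\vb{\nabla^k(g_t-\tx{Id})}\le C e^{-c t^{2/3}}$ for $k=0,1,2$ (the first derivative of the second relation reducing to the first by taking another $\bar\pd$), measured in some fixed background frame. Integrating on $X_0$ and using Lemma \ref{lem:bddifffj} to compare with the $\dbv{\cdot}_{L_2^2,h_{t_0}^{\tx{app}}}$-norm finishes the argument. The only subtle point is to make sure the exponential decay is preserved under the conjugations by $T_{\ulam(t),j}$ and by the diagonal rescaling $T_{\ulam(t),j,t}'$: the former is safe because $\ulam(t)$ lies in the compact set $\mci$ so $T_{\ulam(t),j}^{\pm 1}$ is uniformly bounded in $C^2$, and the latter cancels out because $g_t$ is defined as a ratio and the $T_{\ulam(t),j,t}'$ factors appear symmetrically on both sides.
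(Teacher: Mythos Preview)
Your proof is correct and follows precisely the route the paper indicates (the paper gives no detailed argument, only the remark that the proposition ``will be a consequence of (\ref{eq:Htappbd}) and Lemma \ref{lem:bddifffj}''); your regional decomposition, the use of (\ref{eq:Htappbd}) and Prop \ref{prop:asympsum} for $p_j\in D_r$, the Painlev\'e tail estimate from Lemma \ref{lem:painleveasymp} for $p_j\in D_\bet\cup D_\gam$, and the observation that the diagonal factors $T_{\ulam(t),j,t}'$ cancel in the ratio defining $g_t$ are all on target. One minor clarification: Lemma \ref{lem:bddifffj} is not a norm-comparison statement but a Sobolev-multiplication estimate for power series in $u$; once you have the pointwise $C^2$ bounds on $g_t-\tx{Id}$ in local frames, integrating against the fixed metric $h_{t_0}^{\tx{app}}$ on the compact set $X_0$ already yields the $L_2^2$ bound, and the lemma (or rather the Sobolev inequalities underlying it) is only needed if one chooses instead to manipulate products such as $(\widetilde H^{\tx{ext}})^{-1}(\widetilde H^{\tx{int}}-\widetilde H^{\tx{ext}})$ directly in $L_2^2$.
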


For future reference, we gather below the local forms of $h_t^{\tx{app}}$ in various regions and frames.

\begin{center}
{\tabulinesep=1.2mm 
\begin{tabu}{|l|l|l|}
\hline
Region & Frame & Local form of $h_t^{\tx{app}}$ \\
\hline
$\mbd_j-\cl{p_j}$, $p_j\in D_r$ & $\sig_j^{\ulam(t)}$ & $\widetilde H_{t,\lam_j(t)}^{\tx{app}}$, see (\ref{eq:tilHtapp}) \\
\hline
$\mbd_j-\cl{p_j}$ , $p_j\in D_r$ & $\sig_j^{(0)}$ & $T_{\ulam(t),j}^\ast \widetilde H_{t,\lam_j(t)}^{\tx{app}} T_{\ulam(t),j}$ see Def \ref{def:htapp} \\
\hline
$\mbd_j$, $p_j\in D_r$ & $s_j^{\ulam(t)}$ & $S^\ast \widetilde H_{t,\lam_j(t)}^{\tx{app}} S$ where $S$ is defined in (\ref{eq:defS2}) \\
\hline
$\mbd_j$, $p_j\in D_r$ & $s_j^{(0)}$ & $S^\ast T_{\ulam(t),j}^\ast \widetilde H_{t,\lam_j(t)}^{\tx{app}} T_{\ulam(t),j}S$ \\
\hline
$\mbd_j'-\cl{p_j}$, $p_j\in D_r$ & $\sig_j^{\ulam(t)}$ & $\widetilde H_{t,\lam_j(t)}^{\tx{int}}$, see (\ref{eq:tilHtint}) \\
\hline
$\mbd_j''-\cl{p_j}$, $p_j\in D_r$ & $\sig_j^{\ulam(t)}$ & $t^{2/3}M_{\lam_j(t)}\rd{t^{2/3}\vb{\zeta_j}}$, see (\ref{eq:defMlamfirst}) \\
\hline
$\mbd_j''$, $p_j\in D_r$ & $s_j^{\ulam(t)}$ & $H_{t,\lam_j(t)}$ see Prop \ref{prop:locmodsym} \\
\hline
$\mbd_j-\mbd_j'$, $p_j\in D_r$ & $\sig_j^{\ulam(t)}$ & $t^{2/3}M_{\infty,\lam_j}\rd{t^{2/3}\vb{\zeta_j}}$, see Prop \ref{prop:asympsum} \\
\hline
$\mbd_j$, $p_j\in D_\bet$ & $s_{j,t}^{\ulam(t)}$ & $H_t^{\tx{int},\bet}$, see (\ref{eq:HtintDbetgam}) \\
\hline
$\mbd_j$, $p_j\in D_\bet$ & $s_j^{(0)}$ & $\rd{T_{\ulam(t),j,t}'}^\ast H_t^{\tx{int},\bet} \rd{T_{\ulam(t),j,t}'}$, see Def \ref{def:htapp} \\
\hline
$\mbd_j$, $p_j\in D_\gam$ & $s_{j,t}^{\ulam(t)}$ & $H_t^{\tx{int},\gam}$, see (\ref{eq:HtintDbetgam}) \\
\hline
$\mbd_j$, $p_j\in D_\gam$ & $s_j^{(0)}$ & $\rd{T_{\ulam(t),j,t}'}^\ast H_t^{\tx{int},\gam} \rd{T_{\ulam(t),j,t}'}$, see Def \ref{def:htapp}\\
\hline
\end{tabu}}
\end{center}

\section{Proof of the main theorem}
\label{sec:proof}

\subsection{Linearization}
\label{sec:linear}

In this part, we linearize the operator assocated to the Hitchin's equation at $h_{t,\ulam}^{\tx{app}}$ and use contraction mapping principle to find a solution near it. We begin by defining various norms that will be used later. Let $g_X$ be the K\"ahler metric defined in \S \ref{sec:approxsoln}. For a matrix-valued $L^2$ function $M$ and an open subset $\Omega\subset X$, we use two equivalent definitions of $L^2$-norms interchangably when no ambiguity arises:
\begin{align}
& \|M\|_{L^2(\Omega)}:=\rd{\int_\Omega \tr\rd{M^\ast M} \tx{dvol}_{g_X}}^{1/2}\,,\label{eq:L2normdef1} \\
& \|M\|_{L^2(\Omega)}:=\rd{\int_\Omega \sup_{i,j}\vb{M_{ij}}^2 \tx{dvol}_{g_X}}^{1/2}\,,\label{eq:L2normdef2}
\end{align}
When $\Omega=X$, we write $\dbv{\ldots}_{L^2}$. Let $F$ be the rank-two bundle in a stable SU(1,2) Higgs bundle $(F,\bet,\gam)$. Fix on $X$ a finite atlas $(U_\alpha;z_\alpha)_{\alpha\in\mca}$ over which $F$ is trivialized by holomorphic frames $\uls_\alp$. Fix a smooth partition of unity $0\le \rho_\alpha\le 1$, with supp$\rho_\alpha\subset U_\alpha$ and $\sum_\alpha \rho_\alpha=1$. The difference in metrics on $F$ will be measured by an automorphism of the form $g=e^u$ with $u\in \tx{End}(F)$. For $u$ with local forms $u_{\uls_\alp}\in C^k(U_\alp)$, define
\be \label{eq:Cknorm}
\dbv{u}_{C^k}=\sum_{j=0}^k \max_\alp \, \sup_{U_\alp} \, \max_{i_1,\ldots,i_k=z_\alp, \bar z_\alp}\vb{\pd_{i_1}\ldots \pd_{i_k} u_{\uls_\alp}}
\ee
and for $u_{\uls_\alp}\in L^2(U_\alp)$, $h$ a metric on $F$, define
\be \label{eq:L2normdef3}
\dbv{u}_{L^2,h}=\rd{\int_X \vb{u}_{h}^2 \tx{dvol}_{g_X}}^{1/2}=\rd{\int_X \tr\rd{u u^{\ast_h}} \tx{dvol}_{g_X}}^{1/2}\,.
\ee
We note that for a fixed $h$, the norm
\ben
\dbv{u}_{L^2}^2=\sum_\alpha \int_{U_\alpha} \rho_\alpha \vb{u_{\uls_\alp}}^2 dx_\alpha dy_\alpha
\een
is equivalent to $\dbv{\cdot}_{L^2,h}$. Define inductively two equivalent norms,
\begin{align}
& \dbv{u}_{L_k^2,h}^2=\sum_{\alp\in\mca} \int_{U_\alpha} \rho_\alpha \sum_{\ell+m=k}\vb{\pd_{\bar z_\alpha}^\ell\pd_{z_\alpha}^m u_{\uls_\alp}}_{h_{\uls_\alp}}^2 \tx{dvol}_{g_X}+\dbv{u}_{L_{k-1}^2,h}^2\,, \label{eq:defLk2norm1}\\
& \dbv{u}_{L_k^2}^2=\sum_\alpha \int_{U_\alpha} \rho_\alpha \sum_{\ell+m=k}\vb{\pd_{\bar z_\alpha}^\ell\pd_{z_\alpha}^m u_{\uls_\alpha}}^2 \tx{dvol}_{g_X}+\dbv{u}_{L_{k-1}^2}^2\,. \label{eq:defLk2norm2}
\end{align}
Given a metric $h$ on $F$ and $u\in \tx{End}(F)$, define
\begin{align}
& \mch_{t,h}: \tx{Herm}\rd{F,h}\lto \Omega^{1,1}\rd{\tx{End}(F)} \nonumber \\
& u \lmapsto F_{\nabla_{h\cdot g}}+t^2\gam\wedge\gam^{\ast_{h\cdot g}}+t^2\bet^{\ast_{h\cdot g}}\wedge \bet \label{eq:defnmchth}
\end{align}
where $g=e^u$. Note that by Sobolev embedding, if $u\in L^2_2(\Omega)$, then $g=e^u\in L^2_2$ as well. We have
\ben
\mch_{t,h}: L_2^2\rd{\tx{Herm}(F,h)}\lto L^2\rd{\Omega^{1,1}\rd{\tx{End}(F)}}\,.
\een
This has a Fr\'echet derivative at $u=0$ given by
\ben
D\mch_{t,h}: u\lmapsto \bar\pd\pd_h u+t^2\rd{\gam\wedge \gam^{\ast_h}\, \hat u-\hat u\, \bet^{\ast_h}\wedge\bet}\,,
\een
where $\hat u=u+\rd{\tr\, u}\tx{Id}$. Note that $\mch_{t,h}(u)$ is Hermitian with respect to $h\cdot e^u$ instead of $h$. We instead consider $u\mapsto g^{1/2}\mch_{t,h}\rd{u}g^{-1/2}$ where $g^{1/2}=e^{u/2}$.

\begin{defn} \label{def:Lth}
For $h$ a $L_2^2$ metric on $F$ and $t\ge 1$, let
\begin{align*}
& L_t^{(h)}: L_2^2\rd{\tx{Herm}(F,h)}\lto L^2\rd{\tx{Herm}(F,h)}\\
& u \lmapsto i\Lambda \rd{2 D\mch_{t,h}(u)+[u ,\mch_{t,h}(0)]}\,.
\end{align*}
\end{defn}

It follows from direct calculations that $L_t(u)$ is Fr\'echet derivative of 
\ben
u\mapsto 2i\Lambda g^{1/2}\mch_{t,h}(u)g^{-1/2}
\een
at $u=0$. We have a more convenient form of $L_t$ which follows from the K\"ahler identities $[i\Lambda,\pd_h]=-\rd{\bar\pd}^\ast$, $[i\Lambda,\bar\pd]=\rd{\pd_h}^\ast$ and the Kodaira-Nakano identity $\Delta''-\Delta'=[iF_{\nabla_h},\Lambda]$ (see e.g. \cite{Dem86})

\begin{prop} \label{prop:Ltform}
We have
\be \label{eq:Ltformweuse}
L_t^{(h)}(u)=\Delta_h u+t^2\cl{\psi_{\bet,\gam,h},\hat u}
\ee
where $\cl{A,B}:=AB+BA$, $\hat u=u+\rd{\tx{tr} u}\tx{Id}$, $\Delta_h=d_h \rd{d_h}^\ast+\rd{d_h}^\ast d_h$ with $d_h=\bar\pd+\pd_h$ ($\rd{\cdot}^\ast$ is the formal adjoint with respect to $h$ and $g_X$) and
\be \label{eq:defnpsibetagamma}
\psi_{\bet,\gam,h}:=i\Lambda\rd{\gam\wedge\gam^{\ast_h}-\bet^{\ast_h}\wedge\bet}\,.
\ee
\end{prop}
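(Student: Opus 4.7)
The plan is to unpack Definition \ref{def:Lth} and simplify each piece by a direct computation, using the K\"ahler and Kodaira--Nakano identities for the curvature terms and a purely algebraic manipulation for the Higgs terms. First, I would expand
\begin{align*}
L_t^{(h)}(u) &= \underbrace{2i\Lambda\bar\pd\pd_h u + i\Lambda\sq{u,F_{\nabla_h}}}_{(\mathrm{I})}\\
&\quad + \underbrace{2t^2 i\Lambda\rd{\gam\wedge\gam^{\ast_h}\hat u - \hat u\,\bet^{\ast_h}\wedge\bet} + t^2 i\Lambda\sq{u,\, \gam\wedge\gam^{\ast_h} + \bet^{\ast_h}\wedge\bet}}_{(\mathrm{II})},
\end{align*}
with the goal of showing $(\mathrm{I}) = \Delta_h u$ and $(\mathrm{II}) = t^2\cl{\psi_{\bet,\gam,h},\hat u}$.

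For $(\mathrm{I})$: since $\pd_h u$ is of type $(1,0)$, $\Lambda\pd_h u = 0$ by bidegree, so the K\"ahler identity $[i\Lambda,\bar\pd] = \pd_h^\ast$ applied to $\pd_h u$ gives $i\Lambda\bar\pd\pd_h u = \pd_h^\ast\pd_h u = \Delta'_h u$. For the full Hermitian Laplacian acting on the $(0,0)$-form $u$, the cross terms $\bar\pd^\ast\pd_h u$ and $\pd_h^\ast\bar\pd u$ vanish by bidegree, so $\Delta_h u = \Delta'_h u + \Delta''_h u$. The Kodaira--Nakano identity on the induced Chern connection of $\Endo(F)$ (whose curvature acts by the commutator $[F_{\nabla_h},\,\cdot\,]$), applied to the $(0,0)$-form $u$, yields $\Delta''_h u - \Delta'_h u = -i\Lambda[F_{\nabla_h},u]$. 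Combining gives $2\Delta'_h u = \Delta_h u + i\Lambda[F_{\nabla_h},u]$, and since $[F_{\nabla_h},u] + [u,F_{\nabla_h}] = 0$, one obtains $(\mathrm{I}) = \Delta_h u$.

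For $(\mathrm{II})$: setting $A := i\Lambda(\gam\wedge\gam^{\ast_h})$ and $B := i\Lambda(\bet^{\ast_h}\wedge\bet)$ so that $\psi_{\bet,\gam,h} = A - B$, and substituting $\hat u = u + (\tr u)\tx{Id}$, I would compute
\begin{align*}
t^{-2}(\mathrm{II}) &= 2(A\hat u - \hat u B) + \sq{u,A+B}\\
&= \cl{A,u} - \cl{B,u} + 2(\tr u)(A-B)\\
&= \cl{A,\hat u} - \cl{B,\hat u} = \cl{\psi_{\bet,\gam,h},\hat u},
\end{align*}
as desired. The entire argument is local and algebraic, so there is no analytic obstacle. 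The only delicate step is the Kodaira--Nakano application: one must track signs carefully and recognize that the induced curvature on $\Endo(F)$ acts by bracket, which is precisely what makes the two curvature-commutator terms in $(\mathrm{I})$ cancel and allows the K-N contribution to combine with $2\Delta'_h u$ into the full Laplacian $\Delta_h u$.
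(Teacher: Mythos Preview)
Your proof is correct and follows essentially the same route as the paper, which merely cites the K\"ahler identities $[i\Lambda,\pd_h]=-\bar\pd^\ast$, $[i\Lambda,\bar\pd]=\pd_h^\ast$ and the Kodaira--Nakano identity $\Delta''-\Delta'=[iF_{\nabla_h},\Lambda]$ without writing out the details. Your decomposition into $(\mathrm{I})$ and $(\mathrm{II})$ and the handling of each piece is exactly the intended computation.
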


\begin{defn} \label{def:agag}
Let $h$ be a metric on $F$. This induces a pointwise pairing $\ag{u,v}_h=\tr\rd{v^{\ast_h} u}$ for $u$, $v\in \tx{End}(F)$. Define
\ben
\ag{\ag{u,v}}_h:=\ag{u,\hat v}_h=\ag{\hat u,v}_h
\een
where $\hat u=u+\rd{\tr\, u}\tx{Id}$ and denote $\rd{\rd{u,v}}_h:=\int_X \ag{\ag{u,v}}_h\omega$.
\end{defn}

The following lemma is easy to verify by a direct calculation.

\begin{lem} \label{lem:quadform001}
We have for $u\in L_2^2\rd{\tx{Herm}(F,h)}$,
\be \label{eq:defnQt}
Q_t^{(h)}(u):=\rd{\rd{L_t u,u}}_h=\|d\rd{\tr\, u}\|^2+2 \|\bar\pd u\|^2+2t^2\|\hat u\circ\gam\|^2+2t^2\|\bet\circ\hat u\|^2
\ee
where $\dbv{\cdot}=\dbv{\cdot}_{L^2,h,g_X}$.
\end{lem}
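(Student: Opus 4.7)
Proof proposal for Lemma~\ref{lem:quadform001}:

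The plan is to start from the definition of the pairing and the explicit form of $L_t^{(h)}$ given in Proposition~\ref{prop:Ltform}, then split the computation into the Laplacian part and the Higgs-field part. By Definition~\ref{def:agag}, $((L_tu,u))_h = (L_tu,\hat u)_h$, and by (\ref{eq:Ltformweuse}) we have
\ben
(L_t u,\hat u)_h = (\Delta_h u,\hat u)_h + t^2(\{\psi_{\bet,\gam,h},\hat u\},\hat u)_h.
\een
The strategy is to show that the first summand gives the first two terms of (\ref{eq:defnQt}) and the second summand gives the last two.

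For the Laplacian part, note that $d_h^\ast u = 0$ on $\Omega^0(\tx{End}(F))$, so $\Delta_h u = d_h^\ast d_h u$; integration by parts yields $(\Delta_hu,\hat u)_h = (d_h u, d_h\hat u)_h$. Since the identity is $d_h$-parallel and $\tr u$ is a real-valued function (as $u$ is Hermitian), $d_h\hat u = d_h u + d(\tr u)\cdot\tx{Id}$, giving the split
\ben
(d_h u, d_h\hat u)_h = \|d_h u\|^2 + (d_h u, d(\tr u)\tx{Id})_h.
\een
For Hermitian $u$ one has $(\pd_h u)^{\ast_h} = \bar\pd u$, so $\|\pd_h u\|^2 = \|\bar\pd u\|^2$ and hence $\|d_h u\|^2 = 2\|\bar\pd u\|^2$. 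For the cross term, a pointwise computation in an orthonormal frame (using that the trace pairing with $\tx{Id}$ extracts $\tr(d_h u) = d(\tr u)$) yields $(d_h u, d(\tr u)\tx{Id})_h = \|d(\tr u)\|^2$. This produces the first two terms of (\ref{eq:defnQt}). As a consistency check, decomposing $u = u_0 + \ov{2}(\tr u)\tx{Id}$ into traceless and scalar parts gives $(\Delta_h u,\hat u)_h = 2\|\bar\pd u_0\|^2 + \frac{3}{2}\|d(\tr u)\|^2$, which matches $\|d(\tr u)\|^2 + 2\|\bar\pd u\|^2$ after using $\|df\|^2 = 2\|\bar\pd f\|^2$ for real $f$.

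For the Higgs-field part, since $\hat u$ is Hermitian, cyclicity of trace gives
\ben
(\{\psi_{\bet,\gam,h},\hat u\},\hat u)_h = \int_X \tr\rd{(\psi\hat u + \hat u\psi)\hat u}\,\omega = 2\int_X \tr(\psi\hat u^2)\,\omega.
\een
Plugging in $\psi_{\bet,\gam,h} = i\Lambda(\gam\wedge\gam^{\ast_h} - \bet^{\ast_h}\wedge\bet)$ and working locally with $\gam=\gam_0 dz$, $\bet=\bet_0 dz$, one gets $\gam\wedge\gam^{\ast_h}\hat u^2 = \gam_0\hat u^2\gam_0^{\ast_h}$ (after cyclic reordering inside the trace) times $dz\wedge d\bar z$, and the identification $\int \tr(i\Lambda(\gam\wedge\gam^{\ast_h})\hat u^2)\omega = \|\hat u\circ\gam\|^2$ follows from the definition of the pointwise norm on $\tx{Hom}(L,F)\otimes K_X$. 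The corresponding term for $\bet$ acquires an extra sign: $\bet^{\ast_h}\wedge\bet = -\bet_0^{\ast_h}\bet_0\,dz\wedge d\bar z$, so the minus in front of $\bet^{\ast_h}\wedge\bet$ in $\psi$ combines with this sign to give a positive contribution $\|\bet\circ\hat u\|^2$. Together this yields $2t^2\rd{\|\hat u\circ\gam\|^2+\|\bet\circ\hat u\|^2}$.

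The calculation is essentially a bookkeeping exercise; the main subtlety is tracking conventions in the Higgs-field term — specifically, ensuring that the sign coming from $d\bar z\wedge dz = -dz\wedge d\bar z$ conspires with the minus sign in the definition of $\psi_{\bet,\gam,h}$ to produce two positive contributions (rather than a difference). Once the signs and normalizations for $i\Lambda$ on $(1,1)$-forms are fixed consistently with the convention $\int_X \omega = 2\pi$ used throughout the paper, the identification with $\|\hat u\circ\gam\|^2$ and $\|\bet\circ\hat u\|^2$ is immediate from the definition of the $L^2$-norm on $K_X$-valued sections.
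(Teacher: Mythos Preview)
Your proposal is correct and carries out exactly the direct calculation the paper alludes to (the paper states only that the lemma ``is easy to verify by a direct calculation'' and gives no further details). Your splitting into the Laplacian part and the Higgs-field part, together with the consistency check via the traceless/scalar decomposition of $u$, is a clean way to organize the computation.
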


As a consequence of the above lemma ker $L_t=0$. To see this suppose $L_t^{(h)} u =0$, then $Q_t^{(h)}(u)=0$. As a result $\tr\, u=c$ a constant. As noted in the proof of Lemma \ref{lem:decoupledsoln}, we have on $X-D$, $(F,\bet,\gam)\cong (V,\pmt{0 & q^{-1}}, \pmt{0 & q^2}^T)$ under Hecke modification $\iota$ of Theorem \ref{thm:hecke}. Since $\hat u\gam$, $\bet\hat u=0$ we have $u=\tx{diag}(2c, -c)$. By stability of $(F,\bet,\gam)$ from Prop \ref{prop:su12stab2}, $d_r>0$. A calculation using local forms from e.g. (\ref{eq:betagammaform01}), (\ref{eq:iotaform01}) over $\mbd_j$ with $p_j\in D_r$ shows that $c=0$. Note that this last step is crucial. In fact when restricted to $X-D$, the operator $L_t$ does have a nontrivial kernel spanned by diag$(2c,-c)$.

Let $h=h_t^{\tx{app}}$ be the approximate solution as in Def \ref{def:htapp}. In the following we will denote $L_t=L_t^{(h)}$ and omit the subscripts to write $\ag{\ag{u,v}}$, $((u,v))$.

The proof of the main theorem will proceed as follows. First, we prove a $t$-dependent bound for $L_t$ from below for $t\gg 1$. In particular, we show there are $C$, $p>0$ such that $\|L_t u\|_{L^2}\ge Ct^p \|u\|_{L_2^2}$. Write $h=h_t^{\tx{app}}$ and $g=e^u$ and define the remainder term $R_t$ by
\be \label{eq:defnRtu}
2i\Lambda g^{1/2}\mch_{t,h}\rd{u}g^{-1/2}=2i\Lambda g^{1/2}\mch_{t,h}\rd{0}g^{-1/2}+L_t\rd{u}+R_t\rd{u}\,.
\ee
A fixed point of
\ben
u\lmapsto -L_t^{-1}\rd{2i\Lambda g^{1/2}\mch_{t,h}(0)g^{-1/2}+R_t(u)}\,.
\een
is a solution of the Hitchin equation. We prove relevant upper bounds for $R_t$ and use the contraction mapping principle to show convergence of the corresponding iteration sequence to a fixed point.

It is worth remarking that even though the outline the proof resembles the work in \cite{MSWW16}, there are some significant differences. For instance, the global estimates for the analogous operator $L_t$ in \cite{MSWW16} are by a combination of local estimates and the domain monotonicity principle. As we saw above this no longer works for our context since on $X-\coprod_j \mbd_j$ the operator $L_t$ has a nontrivial kernel. Furthermore in contrast to Prop 5.2 (i) and Lemma 6.3 of \cite{MSWW16}, there is no $t$-independent $L^2\to L^2$ lower bound of $L_t$. We are able to give a $t$-dependent lower bound in Prop \ref{prop:LtL2lowbd} which $\to 0$ as $t\to \infty$. The analysis is further complicated by t-dependency of the weight $\ulam(t)$ in $h_t^{\tx{app}}$.

\subsection{\texorpdfstring{$L^2$}{L2} lower bound for \texorpdfstring{$L_t$}{Lt}} \label{sec:L2lowerbound}

We will need the following technical lemma. Let $\mbd$ be the unit disk and define on $L_1^2(\mbd)$,
\ben
Q_t^{(0)}(u)=\|d u\|_{L^2(\mbd)}^2+\rd{F_t u,u}_{L^2(\mbd)}
\een
where $F_t(\zeta)=t^2 F(t \zeta)$, and $F\in L^\infty(\mbr^2)$ is a non-negative function such that there are $A>0$ and $0<\delta<1$ such that $F\ge A>0$ on $B(0,\delta)$. An ineqaulity is said to hold for $t\gg 1$ if there is $t_0\ge 1$ such that it holds for $t\ge t_0$.

\begin{lem} \label{lem:schroedingerneumann}
There are $C>0$ and for all $u\in L_1^2(\mbd)$ and $t\gg 1$, we have
\ben
Q_t^{(0)}(u)\ge \frac{C}{\log t}\|u\|_{L^2(\mbd)}^2
\een
\end{lem}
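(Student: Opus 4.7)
The plan is to exploit the two-dimensional capacity heuristic: after rescaling $w=t\zeta$, the problem is equivalent to studying $-\Delta+F$ on the large disk $B(0,t)$, where $F$ acts essentially as a Dirichlet trap on the small disk $B(0,\delta)$. Such a trap in dimension two has logarithmic capacity $\sim 1/\log t$, which is precisely the source of the stated lower bound. Rather than invoking capacity theory directly, I will run a dichotomy argument based on a sharp 2D Sobolev embedding whose constant in $p$ is controlled.

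I would normalize $\|u\|_{L^2(\mbd)}=1$, write $u=\bar u+u_1$ with $\bar u=|\mbd|^{-1}\int_\mbd u$ and $\int u_1=0$, and use Poincar\'e $\|u_1\|_{L^2}\le C_P\|du\|_{L^2}$. Fix $\eta>0$ small, to be chosen. The dichotomy is: either $\|du\|_{L^2}^2\ge \eta/\log t$, and then $Q_t^{(0)}(u)\ge \eta/\log t$ is immediate; or $\|du\|_{L^2}^2<\eta/\log t$, in which case $\|u_1\|_{L^2}^2<C_P^2\eta/\log t<1/2$ for $t$ large, so $|\mbd||\bar u|^2\ge 1/2$. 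In the second branch, using $F_t\ge At^2\mathbf{1}_{B(0,\delta/t)}$ together with the pointwise inequality $|u|^2\ge \tfrac{1}{2}|\bar u|^2-|u_1|^2$, I would obtain
\begin{equation*}
\int_\mbd F_t\,|u|^2 \;\ge\; At^2\int_{B(0,\delta/t)}|u|^2 \;\ge\; c_1 \;-\; At^2\int_{B(0,\delta/t)}|u_1|^2,
\end{equation*}
with $c_1=A\pi\delta^2/(4|\mbd|)$ a fixed positive constant, so the task reduces to controlling the last term.

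This is the main obstacle, and the reason $\log t$ appears. I would invoke the sharp 2D Sobolev embedding for mean-zero functions: there exists $C>0$ such that $\|v\|_{L^p(\mbd)}\le C\sqrt{p}\,\|dv\|_{L^2(\mbd)}$ for every $p\ge 2$ and every $v\in L_1^2(\mbd)$ with $\int v=0$. The $\sqrt{p}$ growth of this constant is the standard consequence of Moser--Trudinger via Taylor expansion of $e^{\alpha v^2}$ and Stirling. H\"older on $B(0,\delta/t)$ then yields
\begin{equation*}
At^2\int_{B(0,\delta/t)}|u_1|^2 \;\le\; C'\,p\,\delta^{2-4/p}\,t^{4/p}\,\|du\|_{L^2}^2,
\end{equation*}
and the balancing choice $p=\log t$ makes $t^{4/p}=e^4$, reducing this bound to $C''\log t\cdot\|du\|_{L^2}^2\le C''\eta$ in the small-gradient branch. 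Choosing $\eta$ small enough that $C''\eta\le c_1/2$ gives $\int F_t|u|^2\ge c_1/2$, a fixed positive constant, which certainly dominates $\eta/\log t$ for $t$ large. Merging the two branches of the dichotomy completes the proof. The delicate step is the choice $p\sim\log t$, which exactly offsets the rescaling factor $t^{4/p}$ against the $\sqrt{p}$ Moser--Trudinger growth; anything weaker (for example an $L^4$ Sobolev bound) leaves a $t$-dependent factor too large to absorb and so would not give the $1/\log t$ rate.
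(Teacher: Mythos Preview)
Your argument is correct and takes a genuinely different route from the paper's proof. The paper reduces to the radial case via Courant's nodal domain theorem, solves the resulting Sturm--Liouville problem explicitly in terms of Bessel functions $I_0,J_0,Y_0,J_1,Y_1$, and extracts the $1/\log t$ lower bound from the small-argument asymptotics of $Y_0$ and $Y_1$; along the way it also proves the matching upper bound $\lambda_1(t)\lesssim(\log\log(t/2\delta))^{-2}$ via an explicit test function. Your proof instead runs a dichotomy on $\|du\|_{L^2}$ and, in the small-gradient branch, uses the $\sqrt{p}$ growth of the Neumann Sobolev constant (from Moser--Trudinger) together with H\"older on the shrinking ball and the balancing choice $p=\log t$. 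Your approach is more robust---it requires neither rotational symmetry of $F$ nor special functions, and would work equally well on any bounded Lipschitz domain with $F$ supported on a general shrinking set of area $\sim t^{-2}$---while the paper's computation yields an explicit constant and, as a by-product, establishes that the $1/\log t$ rate is sharp (indeed the paper uses the upper bound in the course of the Bessel analysis to justify applying the small-$\lambda$ asymptotics). One small point worth making explicit in your write-up: the inequality $\|v\|_{L^p(\mathbb{D})}\le C\sqrt{p}\,\|dv\|_{L^2(\mathbb{D})}$ for mean-zero $v$ is the Neumann (mean-zero) form of Moser--Trudinger on a smooth bounded planar domain, not the more commonly cited Dirichlet version; it is standard but deserves a reference or a one-line justification.
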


\begin{proof}
Let $G_t(\zeta)=t^2 G(t \zeta)$ with $G(\zeta)=A$ for $|\zeta|<\delta$ and zero elsewhere. Let 
\ben
P_t(u):=\|d u\|_{L^2(\mbd)}^2+\rd{G_t u,u}_{L^2(\mbd)}=\|d u\|_{L^2(\mbd)}^2+A^2t^4\|u\|_{L^2(B(0,t^{-1}\delta))}^2
\een
We have $F_t\ge G_t$, therefore $Q_t^{(0)}(u)\ge P_t(u)$, and it suffices to bound $P_t$ from below. By Rayleigh's theorem,
\be \label{eq:lam1variational}
\lam_1=\lam_1(t):=\inf_{u\in L_1^2(\mbd)-\cl{0}}\frac{P_t(u)}{\|u\|_{L^2(\mbd)}^2}
\ee
is the first eigenvalue of the self-adjoint operator $u\lmapsto -\Delta u +G_t u$ on $L^2(\mbd)$ with the Neumann boundary condition $\pd_r u=0$ on $\pd \mbd$.

For $t>0$, we have $\lam_1(t)>0$. To see this, suppose there is a sequence $u_n\in L_1^2(\mbd)$ with $\|u_n\|_{L^2(\mbd)}=1$ and $P_t(u_n)\to 0$ as $n\to \infty$. It follows that $du_n\to 0$ in $L^2(\mbd)$ as $n\to\infty$. Thus $u_n$ is bounded in $L_1^2(\mbd)$-norm. Denote again by $u_n$ a subsequence such that $u_n\to u_\infty$ weakly in $L_1^2(\mbd)$ and strongly in $L^2(\mbd)$. We have $\dbv{u_\infty}_{L^2(\mbd)}=1$ and  
\ben
\dbv{u_\infty}_{L_1^2(\mbd)}\le \liminf_{n\to\infty}\dbv{u_n}_{L_1^2(\mbd)}=1
\een
therefore $du_\infty=0$, and $u_\infty$ is a nonzero constant. We would have $0<P_t(u_\infty)\le \liminf_{n\to\infty} P_t(u_n)=0$ which leads to contradiction.

Next note that $\lam_1(t)\to 0$ as $t\to\infty$. This is a consequence of the existence of an $L_1^2(\mbd)$ function which is unbounded at the origin, e.g. $\zeta\mapsto \log\log\rd{2+1/|\zeta|}$. In fact there is $C>0$ such that
\be \label{eq:lam1tupperbound}
\lam_1(t)\le \frac{C}{\rd{\log\log \frac{t}{2\delta}}^2}\,.
\ee
To see this, let $\alp_t=1/\log\log\rd{2+\delta^{-1}t}\le 1/\log\log\rd{t/(2\delta)}$ and consider the family of functions
\ben
u_t\rd{\zeta}=\max\rd{1-\alp_t\log\log\rd{2+\rho^{-1}},0}\,,
\een
where $\rho=\vb{\zeta}$. We have that $u_t=0$ on $B(0,t^{-1}\delta)$, thus $G_t u_t\equiv 0$. Since $u_t$ is Lipschitz, it is in $L_1^2(\mbd)$. Since $1=u_t+(1-u_t)\le u_t+\alp_t \log\log(2+1/\rho)$, we have for $t$ large enough
\ben
\dbv{u_t}_{L^2(\mbd)}\ge \dbv{1}_{L^2(\mbd)}-\alp_t \dbv{\log\log\rd{2+1/\rho}}_{L^2(\mbd)}\ge \ov{2}\dbv{1}_{L^2(\mbd)}\,.
\een
On the other hand, we have $C>0$ such that
\ben
\dbv{du_t}_{L^2(\mbd)}\le \alp_t \dbv{d\log\log\rd{2+1/\rho}}_{L^2(\mbd)}\le C\alp_t\,.
\een
The estimate (\ref{eq:lam1tupperbound}) follows from the above and (\ref{eq:lam1variational}).

Suppose $u\in L_1^2(\mbd)$ satisfies
\ben
-\Delta u+G_t u=\lam_1 u
\een
By the elliptic estimate for $-\Delta+G_t$ (see e.g. \cite[Theorem 9.11]{GT83}) and the Sobolev inequalities, we have that for any $\Omega\Subset \mbd$, there are $C$ and $C'>0$ such that
\ben
\|u\|_{L_2^4(\Omega)}\le C\|\lam_1 u\|_{L^4(\mbd)}+\|u\|_{L^4(\mbd)} \le C' \|u\|_{L_1^2(\mbd)}<\infty
\een
We have that $u\in L_2^4(\Omega)\subset C^1(\Omega)$ by Sobolev embedding theorem (see \cite[(A.9) Appendix IV]{DK90}). By repeated application of the interior elliptic regularity estimates in $D-\pd B(0,\delta)$ where $G_t$ is smooth, we have that $u\in C^1(\mbd)\cap C^\infty(\mbd-\pd B(0,\delta))$.

For some $t\ge 1$, suppose $u\in C^1(\mbd)\cap C^\infty(\mbd-\pd B(0,\delta))$ is a real-valued eigenfunction of the first eigenvalue for the self-adjoint operator $-\Delta+G_t$ with Neumann boundary condition. By Courant's nodal domain theorem (see \cite[\S VI.6]{CH53}), $u$ has no node in $\mbd$, i.e., we may assume that $u>0$. It is easy to see that any function orthogonal to $u$ in $L^2(\mbd)$ cannot have a definite sign. Therefore the eigenspace of $\lam_1$ is one-dimensional. As a consequence, it follows from the rotational symmetry of $G_t$ that $u$ is a radial function. Let $v$ be given by
\ben
u(\zeta)=v(|\zeta|)\,.
\een
We have that $v$ solves the boundary value problem on $\rho\in [0,1]$:
\be \label{eq:SchroedingerODE}
\begin{cases}
v''+\rho^{-1}v'-G_t u=-\lam u \tx{ on }[0,1]\\
v'(1)=0
\end{cases}
\ee
As a result, the lowest eigenvalue of (\ref{eq:SchroedingerODE}) is $\lam_1(t)$. Assume without loss of generality $\delta=1$. Note for $t>1$, (\ref{eq:SchroedingerODE}) is given piecewise by 
\begin{align*}
& v''+\rho^{-1}v'-\rd{A t^2-\lam }u=0 \tx{ on }[0,t^{-1}]\\
& v''+\rho^{-1}v'+\lam u = 0\tx{ on }[t^{-1},1]
\end{align*}
For $t\gg 1$, we have $\lam_1<A t^2$, and there are $c_0$, $c_1$, $c_2$ such that
\begin{align*}
& v(\rho)=c_0 I_0\rd{\sqrt{A t^2-\lam}\rho} \tx{ for } 0\le \rho\le t^{-1}\\
& v(\rho)=c_1 J_0\rd{\sqrt{\lam}\rho}+c_2 Y_0\rd{\sqrt{\lam}\rho} \tx{ for }t^{-1}\le \rho\le 1
\end{align*}
where $J_n$ and $Y_n$ are Bessel functions of the first and second kind, and $I_n$ is the modified Bessel function of the first kind. We refer to \cite[Chapter 9]{AS72} for their definition and properties. Note that $I_0$ and $K_0$ form a basis of the space of solutions on $(0,t^{-1}]$, and $K_0$ is unbounded at the origin. The continuity of $v$, $v'$ at $\rho=1/t$, as well as the boundary condition $v'(1)=0$, implies
\begin{align*}
& c_0 I_0\rd{\sqrt{A t^2-\lam} t^{-1}}=c_1 J_0\rd{\sqrt{\lam}t^{-1}}+c_2 Y_0\rd{\sqrt{\lam}t^{-1}} \\
& c_0\sqrt{A t^2-\lam}I_1\rd{\sqrt{A t^2-\lam}t^{-1}}=-c_1\sqrt{\lam}J_1\rd{\sqrt{\lam}t^{-1}}-c_2\sqrt{\lam}Y_1\rd{\sqrt{\lam}t^{-1}} \\
& c_1 J_1\rd{\sqrt{\lam}}+c_2 Y_1\rd{\sqrt{\lam}}=0\,.
\end{align*}
The coefficient matrix for $c_0$, $c_1$, $c_2$ is singular iff $\lam$ is an eigenvalue. Thus the eigenvalues of (\ref{eq:SchroedingerODE}) are precisely the zeros of
\begin{align*}
&\delta_t(\lam):=\det\pmt{
0 & J_1\rd{\sqrt{\lam}} & Y_1\rd{\sqrt{\lam}} \\
-I_0\rd{\sqrt{A t^2-\lam}\,t^{-1}} & J_0\rd{\sqrt{\lam}\,t^{-1}} & Y_0\rd{\sqrt{\lam}\,t^{-1}} \\
\sqrt{A t^2-\lam}I_1\rd{\sqrt{A t^2-\lam}\,t^{-1}} & \sqrt{\lam}J_1\rd{\sqrt{\lam}\,t^{-1}} & \sqrt{\lam}Y_1\rd{\sqrt{\lam}\,t^{-1}}
} \\
&=\sqrt{\lam}I_0\rd{\sqrt{A-\lam t^{-2}}} g_t(\lam)+t\sqrt{A-\lam t^{-2}}I_1\rd{\sqrt{A-\lam t^{-2}}}f_t(\lam)
\end{align*}
where
\begin{align*}
& g_t(\lam):=J_1\rd{\sqrt{\lam}}Y_1\rd{\sqrt{\lam}\,t^{-1}}-J_1\rd{\sqrt{\lam}\,t^{-1}}Y_1\rd{\sqrt{\lam}} \\
& f_t(\lam):=J_1\rd{\sqrt{\lam}}Y_0\rd{\sqrt{\lam}\,t^{-1}}-J_0\rd{\sqrt{\lam}\,t^{-1}}Y_1\rd{\sqrt{\lam}}
\end{align*}

Using asymptotics of the relevant Bessel functions (see \cite[Chapter 9, (9.1.7), (9.1.8) and (9.1.9)]{AS72})
\ben
J_1(x)\sim \frac{x}{2},\,\, Y_0(z)\sim \frac{2}{\pi}\log\frac{x}{2},\,\, J_0(x)\sim 1,\,\, Y_1(x)\sim -\frac{2}{\pi}\ov{x}
\een
where $f_1(x)\sim f_2(x)$ if $f_1/f_2\to 1$ as $x\to 0$. We have that for $\lam>0$ small enough and $t\ge 1$,
\begin{align*}
J_1\rd{\sqrt{\lam}}Y_0\rd{\sqrt{\lam}t^{-1}}\ge 2\rd{\frac{\sqrt{\lam}}{2}}\rd{\frac{2}{\pi}\log\frac{\sqrt{\lam}t^{-1}}{2}}=2\frac{\sqrt{\lam}}{\pi}\log\frac{\sqrt{\lam}}{2t} \\
 -J_0\rd{\sqrt{\lam}t^{-1}}Y_1\rd{\sqrt{\lam}}\ge \ov{2}\cdot 1\cdot\rd{\frac{2}{\pi}\ov{\sqrt{\lam}}}=\ov{\pi\sqrt{\lam}} \\
 J_1\rd{\sqrt{\lam}}Y_1\rd{\sqrt{\lam}t^{-1}}\ge -2\rd{\frac{\sqrt{\lam}}{2}}\rd{\frac{2}{\pi}\ov{\sqrt{\lam}t^{-1}}}=-\frac{2t}{\pi} \\
 -J_1\rd{\sqrt{\lam}t^{-1}}Y_1\rd{\sqrt{\lam}}\ge \ov{2}\rd{\frac{\sqrt{\lam}t^{-1}}{2}}\rd{\frac{2}{\pi}\ov{\sqrt{\lam}}}=\ov{2\pi t}\,,
\end{align*}
therefore
\begin{align*}
& f_t(\lam)\ge \ov{\pi\sqrt{\lam}}\rd{1+2\lam\log\rd{\frac{\sqrt{\lam}}{2 t}}} \\
& g_t(\lam)\ge -\frac{2}{\pi}t+\ov{2\pi t}\ge -t\,.
\end{align*}
It is also easy to see that for $\lam>0$ small enough, and $t\ge 1$ we have some $C,C'>0$ with
\ben
I_0\rd{\sqrt{A-\lam/t^2}}\le C,\,\, \sqrt{A-\lam/t^2}I_1\rd{\sqrt{A-\lam/t^2}}\ge C'
\een
Therefore for $t\ge 1$ and small enough $\lam$, we have
\begin{align*}
& \delta_t(\lam)\ge -C t\sqrt{\lam}+ C'\frac{t}{\pi\sqrt{\lam}}\rd{1+2\lam \log\frac{\sqrt{\lam}}{2 t}} \\
&=\frac{2 C' t\sqrt{\lam}}{\pi}\rd{\ov{2\lam}+\ov{2}\log \lam-\log(2t)-\frac{\pi C}{2 C'}} \\
&\ge \frac{2 C' t\sqrt{\lam}}{\pi}\rd{\ov{2\lam}-\log \rd{2 C'' t}}
\end{align*}
where $C''=e^{-\pi C/(4 C')}$. Form (\ref{eq:lam1tupperbound}), we see that for $t\gg 1$, the above inequality holds with $\lam=\lam_1(t)$, therefore
\ben
\ov{2\lam_1(t)}-\log\rd{2 C''t}\le \frac{\pi}{2C't\sqrt{\lam_1(t)}}\delta_t\rd{\lam_1\rd{t}}=0\,,
\een
and we have that for $t$ large enough,
\ben
\lam_1\rd{t}\ge \ov{2\log \rd{2C'' t}}\,.
\een
\end{proof}


We next prove the $L^2$ lower bound of $Q_t=Q_t^{(h)}$ with $h=h_t^{\tx{app}}$ is defined in (\ref{eq:defnQt}). For this we will need the following three elementary lemmas. We omit the proofs except for one of them.

\begin{lem} \label{lem:singularvalueineq}
Let $x_1,\ldots,x_{N_1}\in L^2(\Omega)$ and $A=\rd{A_{ij}}$ $N_2\times N_1$ with entries in $L^\infty$ such that for a.e.~$x\in \Omega$, the singular values of $A$ satisfy $\lam_j\le 1$ for all $j$. Then
\ben
\sum_{j=1}^{N_1}\|x_j\|_{L^2(\Omega)}^2\ge \sum_{j=1}^{N_2}\left\|\sum_{\ell=1}^{N_1}A_{j\ell}x_\ell\right\|_{L^2(\Omega)}^2
\een
\end{lem}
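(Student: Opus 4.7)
The plan is to reduce the inequality to a pointwise statement that follows immediately from the definition of the operator norm of $A(x)$. Set $\vec{x}(x)=(x_1(x),\ldots,x_{N_1}(x))^T\in \mbc^{N_1}$ for a.e.~$x\in\Omega$ (the measurability is inherited from that of each $x_\ell$), so that the components of $A(x)\vec{x}(x)\in\mbc^{N_2}$ are precisely $\sum_\ell A_{j\ell}(x)x_\ell(x)$. The hypothesis that all singular values of $A(x)$ are $\le 1$ is equivalent to the statement that the operator norm $\dbv{A(x)}_{\tx{op}}\le 1$ with respect to the Euclidean norms on $\mbc^{N_1}$ and $\mbc^{N_2}$; equivalently, $I_{N_1}-A(x)^\ast A(x)$ is positive semidefinite.

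Consequently, at a.e.~$x\in \Omega$ we have the pointwise bound
\ben
\sum_{j=1}^{N_2}\vb{\sum_{\ell=1}^{N_1}A_{j\ell}(x)x_\ell(x)}^2=\vb{A(x)\vec{x}(x)}^2\le \vb{\vec{x}(x)}^2=\sum_{\ell=1}^{N_1}\vb{x_\ell(x)}^2\,.
\een
Each side is a measurable nonnegative function of $x$ (finite a.e.~because each $x_\ell\in L^2$ and each $A_{j\ell}\in L^\infty$, so the left-hand side lies in $L^1$ as well). Integrating over $\Omega$ against the measure used to define the $L^2$-norm and swapping sum and integral yields
\ben
\sum_{j=1}^{N_2}\dbv{\sum_{\ell=1}^{N_1}A_{j\ell}x_\ell}_{L^2(\Omega)}^2\le \sum_{\ell=1}^{N_1}\dbv{x_\ell}_{L^2(\Omega)}^2\,,
\een
which is the desired inequality. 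There is no real obstacle here; the only content is recognizing that the bound on the singular values of $A$ is exactly the statement that $A(x)$ is a pointwise contraction, after which the inequality is nothing more than the fact that $L^2$ of a direct sum is the $\ell^2$ sum of $L^2$'s, combined with Fubini/Tonelli.
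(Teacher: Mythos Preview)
Your proof is correct. The paper actually omits the proof of this lemma entirely, stating only that it is one of ``three elementary lemmas'' whose proofs are left out; your argument is exactly the natural one the reader is expected to supply.
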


\begin{lem} \label{lem:boundsqrt}
There is $C>0$ such that for $A$, $B>0$, $2\times 2$ Hermitian matrices with $|A-B|\le|A|/2$, $|A|\le L$, we have $|A^{1/2}-B^{1/2}|\le C|A-B|$.
\end{lem}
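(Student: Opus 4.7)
The approach I would take is to exploit the explicit formula for the square root of a $2\times 2$ positive-definite matrix. Recall that if $A>0$ with $\tau_A := \tr A$ and $\delta_A := \det A$, then the Cayley--Hamilton identity $A^2 - \tau_A A + \delta_A I = 0$ gives
\ben
\rd{A+\sqrt{\delta_A}\,I}^2 = A^2 + 2\sqrt{\delta_A}\,A + \delta_A\, I = \rd{\tau_A + 2\sqrt{\delta_A}}A\,,
\een
so that $A^{1/2} = (\tau_A + 2\sqrt{\delta_A})^{-1/2}\rd{A+\sqrt{\delta_A}\,I}$, with an analogous formula for $B^{1/2}$. The plan is to subtract these two expressions and to bound the result in terms of $A-B$.

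First, I would verify that the denominators are bounded below and above. From $A>0$ we have $\tau_A, \delta_A > 0$ and $\delta_A \le |A|^2\le L^2$, while the assumption $|A-B|\le |A|/2$ together with Lemma \ref{lem:matrest} yields $|\tau_B - \tau_A|, |\delta_B - \delta_A|\le C_1 |A|\,|A-B|$ for some absolute constant $C_1$, so $\tau_B + 2\sqrt{\delta_B}$ is within a constant multiple of $\tau_A + 2\sqrt{\delta_A}$. Next, I would write
\ben
A^{1/2} - B^{1/2} = \frac{A-B+(\sqrt{\delta_A}-\sqrt{\delta_B})I}{\sqrt{\tau_A+2\sqrt{\delta_A}}} + \rd{B+\sqrt{\delta_B}\,I}\sq{\frac{1}{\sqrt{\tau_A+2\sqrt{\delta_A}}}-\frac{1}{\sqrt{\tau_B+2\sqrt{\delta_B}}}}
\een
and estimate each piece separately using $|\sqrt{\delta_A}-\sqrt{\delta_B}|\le |\delta_A-\delta_B|/(\sqrt{\delta_A}+\sqrt{\delta_B})$ and the mean value theorem applied to the scalar function $x\mapsto x^{-1/2}$ on the interval spanned by $\tau_A+2\sqrt{\delta_A}$ and $\tau_B+2\sqrt{\delta_B}$. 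Each factor of $|A-B|$ extracted in this way carries a constant depending only on $L$ and on lower bounds for $\tau_A$ and $\delta_A$.

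The main obstacle is the dependence of the constant on a lower bound for the eigenvalues of $A$: under the bare hypothesis $A>0$, the quantities $\sqrt{\delta_A}$ and $\tau_A+2\sqrt{\delta_A}$ need not stay uniformly away from zero, and the Lipschitz constant for $x\mapsto x^{1/2}$ near zero is unbounded. I would therefore read the lemma as implicitly allowing the constant $C$ to depend on $L$ (and, where needed in subsequent applications, on an additional lower bound on the smallest eigenvalue of $A$, which will be available in context because the approximate metric $h_t^{\tx{app}}$ has eigenvalues controlled away from the divisor~$D$). With that understood, the calculation above delivers $|A^{1/2}-B^{1/2}|\le C|A-B|$.
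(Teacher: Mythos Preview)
The paper actually omits the proof of this lemma entirely (see the sentence ``We omit the proofs except for one of them'' preceding Lemma~\ref{lem:singularvalueineq}), so there is nothing to compare against directly. Your approach via the explicit $2\times 2$ square-root formula $A^{1/2}=(\tau_A+2\sqrt{\delta_A})^{-1/2}(A+\sqrt{\delta_A}\,I)$ is a standard and correct route.

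More importantly, your identification of the gap is accurate: as stated, the lemma cannot hold with a constant $C$ depending only on $L$. For instance, take $A=\mathrm{diag}(L,\epsilon)$ and $B=\mathrm{diag}(L,2\epsilon)$ with $0<\epsilon\le L/2$; then $|A|=L$, $|A-B|=\epsilon\le|A|/2$, but $|A^{1/2}-B^{1/2}|=(\sqrt{2}-1)\sqrt{\epsilon}$, so the ratio $|A^{1/2}-B^{1/2}|/|A-B|$ blows up as $\epsilon\to 0$. A uniform lower bound on the smallest eigenvalue of $A$ (or equivalently on $\det A$) is needed, and you are right that in the sole application (deriving \eqref{eq:nlamasymp} from Prop~\ref{prop:asympsum}) such a bound is available: there $A=M_{\infty,\lam}(\rho)$ has explicitly known polynomial eigenvalues while $|A-B|$ is exponentially small in $\rho$, so any polynomial loss in the constant is harmless. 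Your reading of the lemma as carrying this implicit hypothesis is the correct one.
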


Recall that for $A$ positive-definite Hermitian, we denote by $A^{1/2}$ the unique positive-definite Hermitian square root. We have:

\begin{lem} \label{lem:offdiagofsqrt}
Let $A$ be $2\times 2$ positive-definite Hermitian and $B=A^{1/2}$. Then,
\ben
B_{12}=\frac{A_{12}}{\tr\, B}
\een
\end{lem}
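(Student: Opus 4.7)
The plan is to compute the $(1,2)$ entry of $B^2=A$ directly and exploit the fact that $B$ is itself Hermitian. Since $A$ is positive-definite Hermitian, its unique positive-definite Hermitian square root $B$ is also Hermitian. Writing out the matrix multiplication, we have
\ben
A_{12}=(B^2)_{12}=B_{11}B_{12}+B_{12}B_{22}=(B_{11}+B_{22})B_{12}=(\tr B)\,B_{12}.
\een

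Since $B$ is positive-definite, both of its diagonal entries are strictly positive (its eigenvalues are positive and $B_{ii}$ is a convex combination of them through a unitary conjugation), so $\tr B>0$ and we may divide to obtain $B_{12}=A_{12}/\tr B$.

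There is no real obstacle; the only thing to check is that the diagonal entries of $B$ are real and that $\tr B\neq 0$, both of which follow from $B$ being positive-definite Hermitian. The lemma will later be invoked to control off-diagonal elements of $g^{1/2}=e^{u/2}$ or of $(h^{-1}h')^{1/2}$-type quantities in terms of those of the matrix itself, with the denominator $\tr B$ playing the role of a normalizing factor when $A$ is close to the identity.
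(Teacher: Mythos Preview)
Your proof is correct and considerably more direct than the paper's. The paper parametrizes $A=e^a$ with $a=a_0 I+a_1\sigma_1+a_2\sigma_2+a_3\sigma_3$ in terms of the Pauli matrices, computes $A_{12}$, $B_{12}$, and $\tr B$ explicitly via $\cosh r$ and $\sinh r$ with $r=\sqrt{a_1^2+a_2^2+a_3^2}$, and then verifies the identity by comparing the resulting expressions. Your argument bypasses this entirely by reading off $(B^2)_{12}=(B_{11}+B_{22})B_{12}$ from the Hermitian structure of $B$; it uses nothing beyond the definition of matrix multiplication and the positivity of $\tr B$. The exponential parametrization in the paper is more in keeping with how Hermitian endomorphisms are handled elsewhere in the argument, but for this particular identity it is unnecessary machinery. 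One small remark: your closing sentence about the lemma being applied to $g^{1/2}=e^{u/2}$ is off --- in the paper it is used to control the off-diagonal entry $n_{12,\lam}$ of $N_\lam=M_\lam^{1/2}$ in the proof of the $L^2$ lower bound for $L_t$ --- but this does not affect the validity of your proof.
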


\begin{proof}
Let $a=a_0I+a_1\sigma_1+a_2\sigma_2+a_3\sigma_3$ where $\sig_j$ is defined in (\ref{eq:pauli}) and $a_j\in \mbr$, such that 
\ben
A=e^a=e^{a_0}\rd{\rd{\cosh r}I+\frac{\sinh r}{r}\rd{a_1\sigma_1+a_2\sigma_2+a_3\sigma_3}}
\een
where $r=\sqrt{a_1^2+a_2^2+a_3^2}$. In particular, $A_{12}=e^{a_0}\rd{a_1-ia_2}\sinh(r)/r$. We also have $B=e^{a/2}$ and $\tr\, B=2 e^{a_0/2}\cosh(r/2)$. The conclusion follows since $B_{12}=e^{a_0/2}(a_1-ia_2)\sinh(r/2)/r$.
\end{proof}

\begin{prop} \label{prop:LtL2lowbd}
There is $C>0$ such that for $u\in L_1^2$, $t\gg 1$
\ben
Q_t(u)\ge \frac{C}{\log t}\|u\|_{L^2,h_t^{\tx{app}}}^2\,.
\een
\end{prop}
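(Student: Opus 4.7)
The strategy is to localize $Q_t(u)$ via a partition of unity subordinate to the cover $X = \Omega_0 \cup \bigcup_j \mbd_j''$ with $\Omega_0 = X - \bigcup_j \mbd_j''$, apply a rescaled variant of Lemma \ref{lem:schroedingerneumann} on each disk, and on the complement $\Omega_0$ exploit the fact that stability (via $d_r > 0$) prevents the global extension of the Higgs potential's null direction.

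On each disk $\mbd_j''$ with $p_j \in D_r$, work in the weight-adapted frame $\uls_j^{\ulam(t)}$, where $(h_t^{\tx{app}})_{\uls_j^{\ulam(t)}} = H_{t, \lam_j(t)}$, and rescale $\zeta_j = t^{-2/3} w$. By the scaling identity (\ref{eq:tscalinglaw}) together with Prop \ref{prop:locmodsym}, the rescaled metric is gauge-equivalent via $\Gamma_t$ to $H_{1, \lam_j(t)}(w)$ on the expanding disk $B(0, Rt^{2/3}/3)$, which by Cor \ref{cor:metriccont} is uniformly bounded in $C^0$ for $\lam_j(t) \in I$. Under the same rescaling, the Higgs fields $t\bet$ and $t\gam$ are conjugated into the standard forms $\bet_1, \gam_1$ of Lemma \ref{lem:locmod}, with $|\bet_1|^2, |\gam_1|^2 \ge c_0 > 0$ on a fixed neighborhood of the origin independent of $t$. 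Applying Lemma \ref{lem:schroedingerneumann} to each matrix entry of the rescaled $\tilde u$ with $t^{2/3}$ playing the role of ``$t$'' in the lemma, and using the Higgs contribution as the required Schr\"odinger potential, yields $Q_t(u)|_{\mbd_j''} \ge C_1 (\log t)^{-1}\|u\|^2_{L^2(\mbd_j''), h_t^{\tx{app}}}$. For $p_j \in D_\bet \cup D_\gam$, the analogous rescaling together with the Painlev\'e local model of Prop \ref{prop:locmodpainleve} gives the same type of bound: writing $u = f\,\tx{Id} + u_{\tx{tr}}$ with $u_{\tx{tr}}$ traceless, the trace-part contribution $9 t^2 f^2(|\bet|^2 + |\gam|^2)$ plus the nonvanishing of whichever of $\bet, \gam$ is nonzero at $p_j$ provides Schr\"odinger potentials bounded below on every component of $\tilde u$.

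On $\Omega_0$, pass through $\iota$ to the frame $\usig$ of $V = L^{-2}K \oplus LK$, in which $\bet_V = (0, q^{-1})$ and $\gam_V = (0, q^2)^T$ with $|q|$ bounded below on $\Omega_0$. The Higgs contribution to $Q_t$ then vanishes precisely on the one-dimensional ``null'' direction spanned by $u_\parallel = c(x)\,\tx{diag}(-2, 1)$, while on its orthogonal complement it dominates $c_2 t^2 \|u_\perp\|^2_{L^2(\Omega_0)}$, vastly exceeding $(\log t)^{-1}\|u_\perp\|^2_{L^2(\Omega_0)}$ for $t \gg 1$. For the null part, a direct computation with $\iota = \ov{\sqrt 2}\pmt{\zeta & -1 \\ \zeta & 1}$ from Theorem \ref{thm:hecke} shows that near $p_j \in D_r$
\ben
\iota^{-1} u_\parallel \iota = \ov{2}\pmt{-c & 3c/\zeta \\ 3c\zeta & -c}\,,
\een
which is singular at $p_j$ unless $c(p_j) = 0$. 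Since $u \in L_2^2 \hookrightarrow C^0$ and $d_r > 0$ by stability (Prop \ref{prop:su12stab2}), $c$ must vanish at each of the $d_r > 0$ points of $D_r$; a Poincar\'e--Sobolev estimate then yields
\ben
\|u_\parallel\|^2_{L^2(\Omega_0)} \le C_3\Big(\|\bar\pd u\|^2_{L^2} + \sum_{p_j \in D_r}\|u\|^2_{L^2(\mbd_j'')}\Big)\,,
\een
and both terms on the right are controlled by $Q_t(u)$ (the latter after paying one factor of $\log t$ from the disk estimate).

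Combining these localized estimates and absorbing the partition-of-unity cross terms (which are $O(\|u\|^2_{L^2})$ with $t$-independent constants, hence swallowed by the dominant $(\log t)^{-1}\|u\|^2$ term for $t \gg 1$) yields the claimed bound. \textbf{The main obstacle} is the Poincar\'e-type control of the null mode $u_\parallel$: converting the qualitative fact that $u_\parallel$ does not extend smoothly across $D_r$ into a quantitative inequality with constants uniform in $t$, given that both the weight $\ulam(t)$ (which migrates only at rate $(\log t)^{-1}$ by Prop \ref{prop:tcompfamily}) and the approximate metric $h_t^{\tx{app}}$ vary with $t$. The unavoidable $(\log t)^{-1}$ rate matches exactly the scalar Neumann eigenvalue asymptotic of Lemma \ref{lem:schroedingerneumann}.
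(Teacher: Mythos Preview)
Your plan has a genuine gap in the treatment of the null mode of the Higgs potential. The null direction is not $\tx{Id}$ but $\sigma_0=\tx{diag}(2,-1)$: if $\hat\sigma_0=\sigma_0+(\tr\sigma_0)\tx{Id}=\tx{diag}(3,0)$, then in the unitary frames on the $D_\bet,D_\gam$ disks (where $\bet_0=(0,a^2)$, $\gam_0=(0,a^{-2}\zeta)^T$) one has $\bet_0\hat\sigma_0=0$ and $\hat\sigma_0\gam_0=0$. Your decomposition $u=f\,\tx{Id}+u_{\tx{tr}}$ does not help: the cross terms in $|\bet_0(3f\tx{Id}+u_{\tx{tr}})|^2$ cancel exactly, and what survives is only $|u_1|^2+|u_2|^2+|u_3|^2$, never $f^2$. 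So there is no Schr\"odinger potential for the null coefficient on those disks. The same holds on $\Omega_0$. Your attempted remedy there --- forcing $c(p_j)=0$ from the pole of $\iota^{-1}u_\parallel\iota$ --- does not work: $u=u_\parallel+u_\perp$ is the decomposition in the $V$-frame, and $\iota^{-1}u_\perp\iota$ carries a compensating $\zeta^{-1}$-singularity, so smoothness of $u$ in the $F$-frame imposes no constraint on $c(p_j)$. Indeed $c=\tr u$ is a globally smooth scalar with no reason to vanish at $D_r$.

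A second, independent gap is the claim that partition-of-unity cross terms of size $O(\dbv{u}_{L^2}^2)$ are ``swallowed'' by $(\log t)^{-1}\dbv{u}_{L^2}^2$: the inequality goes the wrong way, since $(\log t)^{-1}\to 0$.

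The paper's argument avoids both issues by never localizing the gradient term and by treating the null coefficient $u_0=\tr u$ as a \emph{global} scalar function. It shows that on each region the Higgs contribution bounds $\dbv{u_1}^2+\dbv{u_2}^2+\dbv{u_3}^2$ from below with $t$-independent constants, while for $u_0$ it extracts from the $D_r$ disks alone (via the off-diagonal entry $n_{12,\lam}$ of the local model, bounded below only on the shrinking ball $|\zeta_j|\le t^{-2/3}\delta$) a concentrated potential $y_t$. Then
\[
Q_t(u)\ \gtrsim\ \dbv{d(\tr u)}_{L^2(X)}^2+(y_t\,\tr u,\tr u)_{L^2(X)}+\dbv{v}_{L^2}^2,
\]
and Lemma~\ref{lem:schroedingerneumann} is applied once, globally (after lifting to a fundamental domain in $\mbh$ and mapping conformally to the unit disk), to the scalar $\tr u$. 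No partition of unity is used on the gradient, and the role of $d_r>0$ is precisely to guarantee the existence of at least one such concentrated potential.
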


\begin{proof}
Recall the $t$-family of tuple of weights $\ulam=\ulam(t)\in \mci$ in Def \ref{def:htapp} defined for $t$ large enough with $\ulam\in \mci$ and $\lam_j(t)\in I_j$. We say a constant in an inequality is uniform in $\lam$ if the inequality holds for all $\lam\in \cup_j I_j$.

We first focus on the terms of $Q_t(u)$ related to the Higgs field. Set
\be \label{eq:QtHiggs}
Q_t^{\tx{Higgs}}(u)=4t^2\|\hat u\circ \bet\|_{L^2,h_t^{\tx{app}}}^2+4t^2\|\gam\circ\hat u\|_{L^2,h_t^{\tx{app}}}^2\,.
\ee
Write $Q_{t,\Omega}^{\tx{Higgs}}(u)$ for the above with $L^2$ replaced by $L^2(\Omega)$. We have
\ben
Q_t^{\tx{Higgs}}(u)=Q_{t,X-\coprod_{j=1}^{4g-4} \mbd_j'}(u)+\sum_{j=1}^{4g-4}Q_{t,\mbd_j'}(u)\,.
\een
Consider first $\mbd_j'$ for $p_j\in D_r$. We construct an $h_t^{\tx{app}}$-unitary frame and decompose $u$ with respect to it. We then bound $Q_{t,\mbd_j'}^{\tx{Higgs}}(u)$ from below by terms of the form $\dbv{f_\ell u_\ell}_{L^2}^2$ with $u_\ell$ components in the decomposition. For $t\gg 1$, the lower bound for $f_\ell$ will be $t$-independent for all but one term. Similarly, $t$-independent lower bounds will be obtained for $Q_{t,\mbd_j'}^{\tx{Higgs}}(u)$ with $p_j\in D_\bet$, $D_\gam$ as well as $Q_{t,X-\coprod_j {\mbd_j'}}^{\tx{Higgs}}(u)$ for all but one term. The lower bounds for the remaining terms and the $L^2$-norm of its derivative will follow from Lemma \ref{lem:schroedingerneumann}.

Let $p_j\in D_r$ and set $\zeta=\zeta_j$, $\lam=\lam_j$. Let $N_\lam$ (resp. $N_{\infty,\lam}$) be the unique positive definite Hermitian matrix satisfying
\be \label{eq:NlamNinftylam}
N_\lam^2=M_\lam,\,\, \tx{ resp. }N_{\infty,\lam}^2=M_{\infty,\lam}\,.
\ee
where $M_\lam$ (resp. $M_{\infty,\lam}$) are defined in (\ref{eq:htlaminetafrm}) (resp. Prop \ref{prop:asympsum}). Similar to $\widetilde H_{t,\lam}^{\tx{int}}$ and $\widetilde H_{t,\lam}^{\tx{ext}}$ in (\ref{eq:tilHtint}) and (\ref{eq:tilHtext}), set
\begin{align}
& E_{t,\lam}^{\tx{int}}=t^{-1/3}G_\lam(\chi)^{-1}N_{\lam}(t^{2/3}\chi \rho) \tx{ on }\mbd_j' \nonumber \\
& E_{t,\lam}^{\tx{ext}}=t^{-1/3}N_{\infty,\lam}(t^{2/3} \rho) \tx{ on }\mbd_j-\mbd_j' \label{eq:defEtlam}
\end{align}
where $\rho=\vb{\zeta}$ and $\chi$, $G_\lam$ are defined in \S \ref{sec:approxsoln}. Note that $E_{t,\lam}^{\tx{ext}}$ can be extended to $\mbd_j'$ where we have $E_{t,\lam}^{\tx{ext}}\rd{\zeta}=t^{-1/3}G_\lam(\chi)N_{\infty,\lam}(t^{2/3}\chi \rho)$. The piece-wise function
\be \label{eq:defElamapp}
E_{t,\lam_j(t)}^{\tx{app}}=\begin{cases}
E_{t,\lam_j(t)}^{\tx{int}} &\tx{ on }\mbd_j'\\
E_{t,\lam_j(t)}^{\tx{ext}} &\tx{ on }\mbd_j-\mbd_j'
\end{cases}
\ee
is $C^2$ at $\pd \mbd_j'$. Let $e_{i,j,t}=\sum_{k=1}^2 \rd{E_{t,\lam_j(t)}^{\tx{app}}}_{ki}\sig_{k,j}^{\ulam(t)}$ for $1\le j\le 4g-4$ and $i=1,2$ where the frame $\usig_j^{\ulam(t)}$ is given in Def \ref{def:weightdependentframes}. Note that on $\mbd_j-\mbd_j'$ we have
\begin{align*}
& e_{1,j,t}=(1/4)c_{\lam_j(t)}\rho^{2\lam_j(t)+1/2}t^{4\lam_j/3}\sig_{1,j}^{\ulam(t)} \\
& e_{2,j,t}=2 c_{\lam_j(t)}^{-1/2}\rho^{-\lam_j(t)+1/2}t^{-2\lam_j/3}\sig_{2,j}^{\ulam(t)}\,.
\end{align*}
We have that $\ule_{j,t}$ is an $h_t^{\tx{app}}$-unitary frame of $F$ over $\mbd_j$. Let $\widetilde\rho=t^{2/3}\chi^{-1}\rho$ and set
\begin{align}
& n_{ij,\lam}(\widetilde \rho)=\rd{N_\lam}_{ij}\rd{\widetilde \rho},\,\, j=1,2\nonumber \\
& n_\lam(\widetilde\rho)=\widetilde\rho^{-1}\det N_\lam\rd{\widetilde\rho}\,. \label{eq:defnij}
\end{align}
Let $\bet=\bet_0d\zeta$ and $\gam=\gam_0 d\zeta$. On $\mbd_j'$ with $\lam_j=\lam_j(t)$,
\begin{align}
&\rd{\bet_0}_{\ule_{j,t}}=t^{-1/3}\chi^{1/2}n_{\lam_j}\pmt{\overline{n_{12,\lam_j}} & n_{22,\lam_j}}\,,\nonumber \\
&\rd{\gam_0}_{\ule_{j,t}}=t^{-1/3}\chi^{1/2}n_{\lam_j}^{-2}\pmt{-n_{12,\lam_j} & n_{11,\lam_j}}^T\,. \label{eq:betgamonf}
\end{align}
For $u\in \tx{Herm}(F,h_t^{\tx{app}})$, we write
\be \label{eq:decompofherm}
\rd{u}_{\ule_{j,t}}=u_0\sigma_0+u_1\sigma_1+u_2\sigma_2+u_3\sigma_3
\ee
where
\be \label{eq:pauli}
\sigma_0=\pmt{ 2 & \\ & -1},\,\, \sigma_1=\pmt{ & 1 \\ 1 & },\,\, \sigma_2=i\pmt{ & -1 \\ 1 & },\,\, \sigma_3=\pmt{1 & \\ & -1}\,.
\ee
Note that $\sig_1$, $\sig_2$, $\sig_3$ are the Pauli matrices (a basis of the space of $2\times 2$ trace-free Hermitian matrices) and by comments at the end of \S \ref{sec:linear}, $\sig_0$ spans the nontrivial ker $L_t$ when restricted to $X-D$.

Recall that we have $g_X=d\zeta_j\cdot d\bar\zeta_j$ on $\mbd_j$, therefore $\vb{d\zeta}^2=2$. Thus $\|\bet \hat u\|_{h_t^{\tx{app}}}^2=2\|\bet_0\hat u\|_{h_t^{\tx{app}}}^2$ and $\|\hat u \gam\|_{h_t^{\tx{app}}}^2=2\|\hat u\gam_0\|_{h_t^{\tx{app}}}^2$. We have from the above decomposition,
\be \label{eq:defnQtHiggs}
Q_{t,\mbd_j'}^{\tx{Higgs}}(u)=4t^2\rd{\|\hat u\bet\|_{L^2(\mbd_j'),h_t^{\tx{app}}}^2+\|\gam\hat u\|_{L^2(\mbd_j'),h_t^{\tx{app}}}^2}=4t^{4/3}\rd{\sum_{\ell=1}^4\|S_{\ell,j}\|_{L^2(\mbd_j')}^2}
\ee
where
\begin{align*}
& S_{1,j}=\chi^{1/2}\sq{\rd{-n_{\lam_j}^{-2}n_{12,\lam_j}}\rd{3u_0+u_3}+\rd{n_{\lam_j}^{-2}n_{11,\lam_j}}\rd{u_1-iu_2}}  \\
& S_{2,j}=\chi^{1/2}\sq{\rd{n_{\lam_j} n_{12,\lam_j}}\rd{3u_0+u_3}+\rd{n_{\lam_j} n_{22,\lam}}\rd{u_1-iu_2}}  \\
& S_{3,j}=\chi^{1/2}\sq{\rd{n_{\lam_j}^{-2}n_{11,\lam_j}}\rd{-u_3}+\rd{-n_{\lam_j}^{-2}\overline{n_{12,\lam_j}}}\rd{u_1+iu_2}}  \\
& S_{4,j}=\chi^{1/2}\sq{\rd{n_{\lam_j} n_{22,\lam_j}}\rd{-u_3}+\rd{n_{\lam_j} \overline{n_{12,\lam_j}}}\rd{u_1-iu_2}}
\end{align*}
Let
\ben
B=\pmt{
\chi^{-1/2} n_\lam^3 & \chi^{-1/2} & 0 & 0 \\
0 & 0 & \chi^{-1/2} n_\lam^3 & \chi^{-1/2} \\
-n_\lam^3 n_{22,\lam} & n_{11,\lam} & n_\lam^3 n_{12,\lam} & n_{12,\lam}
}
\een
The matrix $B^\ast B$ has characteristic polynomial
\ben
\tx{char}_{ B^\ast B}(t)=t\rd{t-\chi^{-1}\rd{1+n_\lam^6}}\rd{t^2-\nu_\lam^{-2} t+\chi^{-1}\rd{n_{11,\lam}+n_{22,\lam}}^2n_{\lam}^6}
\een
where
\be \label{eq:defnulam}
\nu_\lam=\rd{\chi^{-1}\rd{1+n_\lam^6}+\rd{n_{11,\lam}^2+|n_{12,\lam}|^2+n_\lam^6\rd{n_{22,\lam}^2+|n_{12,\lam}|^2}}}^{-1/2}
\ee
The largest eigenvalue of $B^\ast B$ is bounded above by $1/\nu_\lam^2$. Let $A=\nu_\lam B$, by direct calculation,
\ben
\sum_{\ell=1}^4A_{k \ell}S_\ell=
\begin{cases}
\nu_\lam n_\lam(n_{11,\lam}+n_{22,\lam})(u_1-iu_2) & k = 1,\\
\sum_{\ell=1}^4A_{2\ell}S_\ell=-\nu_\lam n_\lam(n_{11,\lam}+n_{22,\lam})u_3 & k =2,\\
\sum_{\ell=1}^4A_{3\ell}S_\ell=3\nu_\lam \chi^{1/2}n_\lam n_{12,\lam}\rd{n_{11,\lam}+n_{22,\lam}} u_0 & k=3\,.
\end{cases}
\een
Note that $\chi\equiv 1$ on $\mbd_j''$, we have by Lemma \ref{lem:singularvalueineq}
\begin{align}
& Q_{t,\mbd_j'}^{\tx{Higgs}}(u)\ge 4t^{4/3}\left\| \nu_\lam n_\lam (n_{11,\lam}+n_{22,\lam})(u_1-iu_2)\right\|_{L^2(\mbd_j')}^2 \nonumber \\
& +4t^{4/3}\left\| \nu_\lam n_\lam (n_{11,\lam}+n_{22,\lam}) u_3\right\|_{L^2(\mbd_j')}^2 + 4t^{4/3}\left\| 3n_{12,\lam} \nu n_\lam \rd{n_{11,\lam}+n_{22,\lam}}u_0\right\|_{L^2(\mbd_j'')}^2 \label{eq:QtHigbd01}
\end{align}
where $\lam=\lam_j(t)$.

Next, we provide a lower bound for the positive function $\nu_\lam n_\lam \rd{n_{11,\lam}+n_{22,\lam}}$ on $\mbd_j'$ uniform in $\lam$. Fix $\rho_1\in [R/3, 2R/3)$ and consider $\rho\in (\rho_1,2R/3)$ and $(0,\rho_1]$ separately. For the first region, we use the asymptotics in Pro.\ref{prop:asympsum}. For the second region, relatively compact in $\mbd_j'$, we use continuity in $\lam$ in Cor \ref{cor:metriccont}. By Prop \ref{prop:asympsum} and Lemmas \ref{lem:matrest}, \ref{lem:boundsqrt}, there are $C$, $c>0$ such that for $\rho\gg 1$ and $\lam\in I$,
\be \label{eq:nlamasymp}
\vb{N_\lam-N_{\infty,\lam}}\le C e^{-c\rho}
\ee
Note that $N_\infty\rd{\rho}=\tx{diag}\rd{c_{\lam}\rho^{2\lam+1/2}/4,2 c_{\lam}^{1/2}\rho^{-\lam+1/2}}$. For $\rho\gg 1$, $t\ge 1$, we have $\chi^{-1}=t^{-2/3}\rho^{-1}\widetilde\rho\le \rho_0^{-1}\widetilde \rho$. Therefore from (\ref{eq:defnulam}),
\ben
\nu_\lam^{-2}\le \rho_0^{-1}\widetilde\rho\rd{1+n_\lam^6}+\rd{n_{11,\lam}^2+|n_{12,\lam}|^2+n_\lam^6\rd{n_{22,\lam}^2+|n_{12,\lam}|^2}}
\een
By (\ref{eq:nlamasymp}), there are $\widetilde \rho_0$, $C_1$, $C_2>0$ uniform in $\lam$ such that for all $\widetilde\rho\ge \widetilde \rho_0$,
\ben
\nu_\lam\ge C_1\widetilde\rho^{-\ov{2}-3a_\lam},\,\,\, n_\lam(n_{11,\lam}+n_{22,\lam})\ge C_2\widetilde\rho^{\ov{2}+3a_\lam} 
\een
where $a_\lam=\max(0,\lam)$. For $\lam\in I$ we have 
\be \label{eq:nunn11n22bound01}
\nu_\lam n_\lam(n_{11,\lam}+n_{22,\lam})\ge C_1C_2>0\,\,\tx{ for }\,\,\widetilde\rho\ge \widetilde\rho_0\,.
\ee

Note that $\rho\mapsto \widetilde\rho=t^{2/3}\rho\chi\rd{\rho}^{-1}$ gives a diffeomorphism $(0,2R/3)\xrightarrow{\sim}\mbr_{>0}$. Let $\rho_1$ be such that $\widetilde\rho\rd{\rho_1}=\widetilde\rho_0$, for $0\le \rho\le \rho_1$, there is a constant $\chi_0>0$ such that $\chi\ge \chi_0$. We have
\be \label{eq:nm2bd}
\nu_\lam^{-2}\le \chi_0^{-1}\rd{1+n_\lam^6}+\rd{n_{11,\lam}^2+|n_{12,\lam}|^2+n_\lam^6\rd{n_{22,\lam}^2+|n_{12,\lam}|^2}}\,.
\ee
By a direct calculation using (\ref{eq:Mlam2H1lam}), $\tr\, M_\lam(\rho)=\rho^{-2}\rd{H_{1,\lam}}_{11}+\rd{H_{1,\lam}}_{22}$, $\det M_\lam(\rho)=\rho^{-2}\det H_{1,\lam}$. Let $x_1,x_2>0$ be the two eigenvalues of $M_\lam$, then
\begin{align}
&\rd{n_{11,\lam}(\rho)+n_{22,\lam}(\rho)}^2=\rd{x_1^{-1/2}+x_2^{-1/2}}^2\nonumber \\
&=\frac{x_1+x_2}{x_1x_2}+\frac{2}{\sqrt{x_1x_2}}=\ov{\det H_{1,\lam}}\rd{\rd{H_{1,\lam}}_{11}+2\rd{\det H_{1,\lam}}^{1/2}\rho+\rd{H_{1,\lam}}_{22}\rho^2} \label{eq:thetwoeigvals}
\end{align}
and $n_\lam(\rho)=\rd{\det H_{1,\lam}}^{-1/2}$. By Cor \ref{cor:metriccont}, there is $C_3>0$ uniform in $\lam$ such that $n_\lam\rd{n_{11,\lam}+n_{22,\lam}}\ge C_3$ for $\widetilde\rho\le \widetilde\rho_0$. By the same corollary, $n_\lam$ is bounded above uniformly for $\lam\in I=I_j$. Therefore there is $C_3'>0$ uniform in $\lam$ such that on $\widetilde\rho\le \widetilde\rho_0$, $\nu_\lam^{-2}\le C_3'(n_{11,\lam}^2+n_{22,\lam}^2+|n_{12,\lam}|^2+1)$. Furthermore, by $\det N_\lam>0$, we have $|n_{12,\lam}|^2<n_{11,\lam}n_{22,\lam}\le(n_{11,\lam}^2+n_{22,\lam}^2)/2$. Therefore there is $C_4>0$ uniform in $\lam$ such that $\nu_\lam\ge C_4$ for $\widetilde\rho\le \widetilde\rho_0$. Therefore
\ben
\nu_\lam n_\lam\rd{n_{11,\lam}+n_{22,\lam}}\ge C_3C_4\,\, \tx{ for }\,\,\widetilde\rho\le \widetilde\rho_0\,.
\een
Consequently, on $\mbd_j'$
\ben
\nu_\lam n_\lam\rd{n_{11,\lam}+n_{22,\lam}}\ge C_5=\min\cl{C_1C_2,C_3C_4}
\een
where $C_5$ uniform in $\lam$.

By (\ref{eq:QtHigbd01}) and the above estimate, we have for $t\ge 1$, 
\be \label{eq:QtHigbd02}
Q_{t,\mbd_j'}^{\tx{Higgs}}\ge 4C_5^2\rd{\|u_1\|_{L^2(\mbd_j')}^2+\|u_2\|_{L^2(\mbd_j')}^2+\|u_3\|_{L^2(\mbd_j')}^2}+4C_5^2t^{4/3}\|3 n_{12,\lam_j(t)}u_0\|_{L^2(\mbd_j'')}^2
\ee
From (\ref{eq:Htpol}) and (\ref{eq:defMlamfirst}), we have
\ben
\rd{M_\lam^{-1}}_{12}(\rho)=\ov{2\det H_{1,\lam}}\rd{\rd{H_{1,\lam}}_{11}-2\rho\re \rd{f_{3,\lam}(\rho)}-\rho^2\rd{H_{1,\lam}}_{22}}
\een
By Cor \ref{cor:metriccont}, this gives a continues family in $C^0(\mbd_j')$ in $\lam$. As $H_{1,\lam}$ is regular at the origin, $f_{3,\lam}(\rho)\to 0$ as $\rho\to 0$. We have
\ben
\rd{M_{\lam}^{-1}}_{12}(\rho)\to \frac{\rd{H_{1,\lam}}_{11}(0)}{2\det H_{1,\lam}(0)}>0
\een
as $\rho\to 0$. By (\ref{eq:thetwoeigvals}) and Cor \ref{cor:metriccont}, $\tr \,N_\lam=n_{11,\lam}+n_{22,\lam}$ is bounded near the origin uniformly in $\lam$. By Lemma \ref{lem:offdiagofsqrt} and Cor \ref{cor:metriccont}, there are $A>0$ and $0<\delta<R/3$ uniform in $\lam$ such that for $\rho\le\delta$,
\ben
\vb{n_{12,\lam}(\rho)}=\frac{\vb{\rd{M_{\lam}^{-1}}_{12}}}{n_{11,\lam}+n_{22,\lam}}\ge A\,.
\een
For $\rho\le R/3$, $\chi\rd{\rho}=1$. Therefore with $t\ge 1$, $\widetilde\rho\le \delta$ iff $\rho\le t^{-2/3}\delta$. Define function $y_j$ with $y_j(\zeta_j)=A$ for $|\zeta_j|\le \delta$ and zero elsewhere. From (\ref{eq:QtHigbd02}), we have $C_6$ uniform in $\lam$,
\be \label{eq:QtDjprimebound1}
Q_{t,\mbd_j'}^{\tx{Higgs}}\ge C_6\rd{\|u_1\|_{L^2(\mbd_j')}^2+\|u_2\|_{L^2(\mbd_j')}^2+\|u_3\|_{L^2(\mbd_j')}^2}+C_6\sum_{p_j\in D_r}\rd{y_{j,t}u_0,u_0}_{L^2(X)}^2
\ee
where $y_{j,t}=t^{4/3}y_j\rd{t^{2/3}\zeta_j}$ on $\mbd_j'$ and zero elsewhere.

Next, we consider the term $Q_{t,\mbd_j'}^{\tx{Higgs}}$ with $p_j\in D_\gam$. Let $\uls_{j,t}^{\ulam}$ be the frame defined in (\ref{eq:defsijt}) and let $e_{1,j,t}=s_{2,j,t}^{\ulam(t)},\,\,\, e_{2,j,t}=a s_{1,j,t}^{\ulam(t)}$ where $a=\rho^{1/4}e^{\chi \psi_P/2}$ and $\psi_P$ is Painlev\'e function defined in \S \ref{sec:localmodel}. By (\ref{eq:HtintDbetgam}) and (\ref{eq:localformundersjtulam}), the frame $\ule_{j,t}=\cl{e_{1,j,t},e_{2,j,t}}$ is $h_t^{\tx{app}}$-unitary and $\rd{\bet_0}_{\ule_t}=\pmt{0 & a^2}$, $\rd{\gam_0}_{\ule_t}=\pmt{0 & a^{-2}\zeta}^T$. Write $u_{\ule_t}=u_0\sig_0+u_1\sig_1+u_2\sig_2+u_3\sig_3$ as in (\ref{eq:decompofherm}), we have
\ben
\rd{\bet_0\cdot\hat u}_{\ule_t}=\pmt{a^2\rd{u_1+iu_2}& -a^2 u_3},\,\, \rd{\hat u\cdot\gam_0}_{\ule_t}=\pmt{a^{-2}\zeta \rd{u_1-iu_2} & -a^{-2}\zeta u_3}^T
\een
It follows from a direct calculation that
\ben
Q_{t,\mbd_j'}^{\tx{Higgs}}(u)=4t^2\rd{ \dbv{f\,u_1}^2+\dbv{f\,u_2}^2 +\dbv{f\,u_3}^2}
\een
where $\|\cdot\|=\|\cdots\|_{L^2(\mbd_j')}$ and $f=\rd{2\rho\cosh\rd{2\chi \psi_P}}^{1/2}$. By \cite{MTW77}, there is an asymptotic expansion of the form
\ben
e^{-\psi(x)}\sim x^{1/3}\sum_{j=0}^\infty a_j x^{4j/3}\,\,\tx{ as }x\to 0\,.
\een
In particular there is $C_0>0$ such that $e^{2\psi_P(x)}\ge C_0x^{-2/3}$ for $x$ small enough. On the other hand, $\psi_P\to 0$ exponentially as $t\to\infty$. It follows that there is $C>0$ such that for $t\gg 1$, $f\ge Ct^{-1/3}$, therefore $t^2\dbv{f u_j}^2\ge C^2t^{4/3}\dbv{u_j}^2\ge C^2\dbv{u_j}^2$. There is $C'>0$ with
\be \label{eq:QtDjprimebound2}
Q_{t,\mbd_j'}^{\tx{Higgs}}(u)\ge C'\rd{\dbv{u_1}_{L^2(\mbd_j')}^2+\dbv{u_2}_{L^2(\mbd_j')}^2 +\dbv{u_3}_{L^2(\mbd_j')}^2}\,.
\ee
The same estimate holds for $p_j\in D_\bet$ case with an similar argument.

For the last region, consider $Q_{t,X-\coprod_j \mbd_j'}^{\tx{Higgs}}$. Let $\cl{\rd{W_\alpha;w_\alpha}}$ be a finite atlas of $X-\coprod_j \mbd_j'$ by charts with $q=\rd{dw_\alpha}^2$. As in the proof of Lemma \ref{lem:decoupledsoln} after identifying $F$ with $V=L^{-2}K_X\oplus LK_X$ via the Hecke modification, we have $\bet=\pmt{0 & q^{-1}}$, $\gam=\pmt{0 & q^2}^T$ over $W_\alpha$. On this region, $h_t^{\tx{app}}=\iota^\ast \rd{h_L^{-2}h_K}\oplus \rd{h_Lh_K}$ where $h_L=h_{L,\ulam(t),t}$ and $h_K$ both have flat Chern connections. It is easy to see that there is $\sig\in \mco_{W_\alp}$ such that $|\sigma|_{h_L}\equiv 1$. Let $e_1=\rd{\sigma^{-2}dw_\alpha,0}$, $e_2=\rd{0,\sigma dw_\alpha}$. The frame $\ule=\cl{e_1,e_2}$ is both holomorphic and $h_t^{\tx{app}}$-unitary. We write $u_{\ule}=u_0\sig_0+u_1\sig_1+u_2\sig_2+u_3\sig_3$ as in (\ref{eq:decompofherm}). From a direct calculation,
\ben
\rd{\bet_0\cdot\hat u}_{\ule}=\pmt{u_1+iu_2 & -u_3},\,\, 
\rd{\hat u\cdot\gam_0}_{\ule}=\pmt{u_1-iu_2 & -u_3}^T\,.
\een
We have
\be \label{eq:Qtbound3}
Q_{t,W_\alpha}^{\tx{Higgs}}(u)=8t^2\rd{\dbv{u_1}_{L^2(W_\alpha)}^2+\dbv{u_2}_{L^2(W_\alpha)}^2+\dbv{u_3}_{L^2(W_\alpha)}^2}\,.
\ee
and that
\be \label{eq:Qtbound4}
Q_{t,X-\coprod_j \mbd_j'}^{\tx{Higgs}}(u)\ge \sum_\alpha Q_{t,W_\alpha}^{\tx{Higgs}}(u)\,.
\ee 

From the above discussion on regions covering the entirety of $X$, we defined $t$-dependent frames $\ule_{j,t}$ over $\mbd_j'$ for $p_j\in D$ and $\ule$ over $W_\alpha\subset X-\mbd_j'$ providing a smooth decomposition of $F$ 
\be \label{eq:decomposef}
F\cong L_1\oplus L_2
\ee
such that over $X-\mbd_j'$, $L_1$ (resp. $L_2$) coincides with $L^{-2}K$ (resp. $LK$) summand of $V$ under the identification $\iota$ as in Theorem \ref{thm:hecke}. Let $\sig_0=\tx{diag}(2,-1)$ corresponding to (\ref{eq:decomposef}). There is an induced decomposition
\ben
\tx{End}F=\ag{\sig_0}\oplus \tx{End}_0 F\,.
\een
For $u\in \tx{Herm}(F,h_t^{\tx{app}})$, we write
\ben
u=\rd{\tr\,u }\sig_0+v
\een
with $\tr\,v=0$ which is compatible with (\ref{eq:decompofherm}) and similar decompositions on each region. By the estimates in (\ref{eq:QtDjprimebound1}), (\ref{eq:QtDjprimebound2}), (\ref{eq:Qtbound3}), and (\ref{eq:Qtbound4}), there is $C_7>0$ such that for all $t\gg 1$,
\ben
Q_t^{\tx{Higgs}}(u)\ge C_7\dbv{v}_{L^2,h_t^{\tx{app}}}^2+C_7\sum_{j=1}^{d_r}\rd{y_{j,t} \tr\,u,\tr\,u}_{L^2,h_t^{\tx{app}}}
\een
where $y_{j,t}$ is defined below (\ref{eq:QtDjprimebound1}). Recall from Prop \ref{prop:su12stab2} that the stability of $(F,\bet,\gam)$ implies that $D_r>0$. Fix an index $j_0$ with $p_{j_0}\in D_r$. For $t\gg 1$ there is $C_8>0$ such that
\be \label{eq:L2normbdsecondtolast}
Q_t(u)\ge C_8\rd{Q_t^{(1)}(\tr\, u)+\dbv{v}_{L^2,h_t^{\tx{app}}}^2}
\ee
with
\be \label{eq:Qt1}
Q_t^{(1)}(f)=\dbv{df}_{L^2}^2+ \rd{y_t f,f}_{L^2}
\ee
where $y_t=y_{j_0,t}$.

By the uniformization theorem, $X=\Gamma\backslash \mbh$ for $\Gamma$ a Fuchsian group, denote the quotient map by $p:\mbh\to X$. Furthermore, there exists a fundamental polygon $\Pi\in \mbh$ for the action of $\Gamma$ on $\mbh$ with finitely many edges and positive angles at vertices (see e.g. \cite{Sti92}). We can arrange so that a lift of $p_{j_0}$ is in the interior of $\Pi$. As a result there are finitely many $g\in \Gamma$ such that $g(\Pi)$ is adjacent to $\Pi$, denote the union of these with $\Pi$ by $\widetilde\Pi$. Consider $\Omega$ with $\pd\Omega$ smooth and $\Pi\Subset \Omega\Subset \widetilde\Pi$. By the Riemann mapping theorem, there is biholomorphism $r: \mbd\xrightarrow{\sim} \Omega$ mapping the origin to the unique point corresponding to $p_{j_0}$ under quotient in $\Pi$. By a result of Painlev\'e (see \cite{Bell90}), $r$ extends smoothly to $\pd \mbd$. The above identification corresponds to a constant curvature metric $g_X'$ on $X$. Note that both $g_X$ and $g_X'$ are independent of $t$, we will use them interchangeably to define norms. We use $\gtrsim$ (resp. $\sim$) to denote (in)equalities up to constants independent of $t$ and $\lam$ omitted. Let $\widetilde F_t:=\rd{p\circ r}^\ast y_t$ and let
\ben
Q_t^{(0)}(\phi)=\dbv{d\phi}_{L^2\rd{\mbd}}^2+\rd{\widetilde F_t\phi,\phi}_{L^2\rd{\mbd}}^2
\een
There is a non-negative function $F\in L^\infty\rd{\Omega}$ such that $\widetilde F_t\ge F_t$ with $F_t(\zeta)=t^2 F(t\zeta)$. By Lemma \ref{lem:schroedingerneumann} for $t\gg 1$
\ben
Q_t^{(0)}\rd{\phi}\gtrsim \rd{\log t}^{-1}\dbv{\phi}_{L^2\rd{\mbd}}^2
\een
Therefore,
\begin{align*}
&Q_t^{(1)}(f)\sim \dbv{d\rd{p^\ast f}}_{L^2\rd{\widetilde\Omega}}^2+\rd{\rd{p^\ast y_t \,f},p^\ast f}_{L^2(\widetilde\Omega)} \\
&\gtrsim \dbv{d\rd{p^\ast f}}_{L^2\rd{\Omega}}^2+\rd{\rd{p^\ast y_t \,f},p^\ast f}_{L^2(\Omega)} \gtrsim Q_t^{(0)}(\phi) \\
&\gtrsim (\log t)^{-1}\dbv{\phi}_{L^2(\mbd)}^2\gtrsim \rd{\log t}^{-1}\dbv{p^\ast f}_{L^2(\Omega)}^2 \\
&\gtrsim \rd{\log t}^{-1}\dbv{f}_{L^2}^2
\end{align*}
where $\phi=\rd{p\circ r}^\ast f$. Let $\zeta$ (resp. $z$) be a coordinate on $\mbh$ as the upper half-plane (resp. $\mbd$ as the unit disk) we have also used above the fact that for the map $r^{-1}: \zeta\mapsto z$, $\vb{\pd_\zeta z}^2$ is bounded above and below on $\overline{\mbd}$.

Combining (\ref{eq:L2normbdsecondtolast}) we have for $t\gg 1$,
\begin{align*}
& Q_t(u)\gtrsim \rd{\log t}^{-1}\rd{\dbv{\tr\, u}_{L^2}^2+\dbv{v}_{L^2,h_t^{\tx{app}}}^2}  \\
& \gtrsim \rd{\log t}^{-1}\rd{\dbv{\rd{\tr\, u}\sig_0}_{L^2,h_t^{\tx{app}}}^2+\dbv{v}_{L^2,h_t^{\tx{app}}}^2}\gtrsim \rd{\log t}^{-1}\dbv{u}_{L^2,h_t^{\tx{app}}}^2\,.
\end{align*}
\end{proof}

Note that the pointwise norm associated given by $\ag{\ag{\cdot,\cdot}}$ and $\ag{\cdot,\cdot}$ are mutually bounded. We have the following.

\begin{cor} \label{cor:mainL2est}
There is $C>0$ such that for $u\in L_2^2\rd{\tx{Herm}\rd{F,h_t^{\tx{app}}}}$ and $t\gg 1$
\ben
\dbv{L_t u}_{L^2,h_t^{\tx{app}}}\ge \frac{C}{\log t}\dbv{u}_{L^2,h_t^{\tx{app}}}\,.
\een
\end{cor}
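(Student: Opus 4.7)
The plan is to derive the corollary directly from Proposition \ref{prop:LtL2lowbd} by pairing the quadratic form $Q_t$ with the Cauchy--Schwarz inequality. Recall from Lemma \ref{lem:quadform001} that $Q_t(u) = ((L_t u, u))_{h_t^{\text{app}}}$, so the strategy is to regard $Q_t(u)$ as an inner-product expression, bound it above by $\|L_t u\| \cdot \|u\|$, and bound it below by Proposition \ref{prop:LtL2lowbd}.

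First I would note, as the paragraph immediately preceding the corollary states, that the pointwise pairings $\langle\cdot,\cdot\rangle_h$ and $\langle\langle\cdot,\cdot\rangle\rangle_h$ (from Definition \ref{def:agag}) are mutually bounded by constants independent of $t$. Indeed, if $u = u_0 \mathrm{Id} + u_{\perp}$ is the decomposition into trace and trace-free parts, then $\hat u = 3 u_0 \mathrm{Id} + u_{\perp}$, so $\langle\langle u, v\rangle\rangle$ differs from $\langle u, v\rangle$ only by a multiplicative factor of $3$ on the trace component. Consequently there is a constant $C_0 > 0$ (independent of $t$) such that
\[
\bigl|((u,v))_{h_t^{\text{app}}}\bigr| \le C_0 \, \|u\|_{L^2, h_t^{\text{app}}} \, \|v\|_{L^2, h_t^{\text{app}}}
\]
for all $u, v$.

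Applying this to $v = L_t u$ gives
\[
Q_t(u) = ((L_t u, u))_{h_t^{\text{app}}} \le C_0 \, \|L_t u\|_{L^2, h_t^{\text{app}}} \, \|u\|_{L^2, h_t^{\text{app}}}.
\]
On the other hand, Proposition \ref{prop:LtL2lowbd} furnishes a constant $C_1 > 0$ such that for $t \gg 1$,
\[
Q_t(u) \ge \frac{C_1}{\log t} \, \|u\|_{L^2, h_t^{\text{app}}}^2.
\]
Combining these two estimates and dividing through by $\|u\|_{L^2, h_t^{\text{app}}}$ (which we may assume nonzero; the statement is trivial otherwise) yields
\[
\|L_t u\|_{L^2, h_t^{\text{app}}} \ge \frac{C_1}{C_0 \log t} \, \|u\|_{L^2, h_t^{\text{app}}},
\]
which is the claim with $C = C_1/C_0$.

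There is essentially no obstacle beyond invoking Proposition \ref{prop:LtL2lowbd}; the real work is in that proposition's proof, which already appears in the excerpt. The corollary is just the standard trick of converting a coercivity estimate for a quadratic form into a lower bound for the associated linear operator via Cauchy--Schwarz.
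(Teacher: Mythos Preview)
Your proposal is correct and matches the paper's approach exactly: the paper does not write out an explicit proof but simply remarks that the pairings $\langle\langle\cdot,\cdot\rangle\rangle$ and $\langle\cdot,\cdot\rangle$ are mutually bounded, which is precisely the ingredient you use together with Proposition~\ref{prop:LtL2lowbd} and Cauchy--Schwarz.
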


\subsection{\texorpdfstring{$L_2^2$}{L\_2\^{}2} lower bound for \texorpdfstring{$L_t$}{Lt}}

Building on the inequality in Cor \ref{cor:mainL2est}, we will prove the following $t$-dependent elliptic estimate for $L_t$.

\begin{prop} \label{prop:L22toL2lowerbound}
Fix $t_0\ge 1$. There are $C>0$ such that for $u\in L_2^2\rd{\tx{Herm}\rd{F,h_t^{\tx{app}}}}$ and $t\gg 1$,
\ben
\dbv{L_t u}_{L^2,h_{t_0}^{\tx{app}}}^2\ge Ct^{-38} \dbv{u}_{L_2^2,h_{t_0}^{\tx{app}}}^2
\een
\end{prop}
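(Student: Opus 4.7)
The plan is to convert the $L^2$ lower bound of Corollary \ref{cor:mainL2est} into an $L_2^2$ lower bound via standard second-order elliptic regularity for $L_t$, while carefully tracking the polynomial-in-$t$ comparison between the moving metric $h_t^{\tx{app}}$ and the fixed reference metric $h_{t_0}^{\tx{app}}$.

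First I would quantify the gauge transformation $g_t\in \tx{Aut}(F)$ defined by $h_t^{\tx{app}}=h_{t_0}^{\tx{app}}\cdot g_t$. On $X-\coprod_j \mbd_j'$, the scaling $\eta_{\ulam(t),t}$ from (\ref{eq:culamt}) together with the migration bound (\ref{eq:lammigrate}) gives $\vb{g_t^{\pm 1}}_{h_{t_0}^{\tx{app}}}\le C t^{a}$ with $a$ controlled by $\max_j \vb{\lam_j(t)-\lam_j(t_0)}$. On $\mbd_j'$ for $p_j\in D_r$, the local form in $\sig_j^{(0)}$ is $T_{\ulam(t),j}^\ast \widetilde H_{t,\lam_j(t)}^{\tx{app}}T_{\ulam(t),j}$, and comparing via Prop \ref{prop:asympsum} and the explicit formula (\ref{eq:defMlamfirst}) yields a polynomial bound depending only on $\max_j |\lam_j|<1/4$. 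An analogous bound on $\mbd_j$ for $p_j\in D_\bet\cup D_\gam$ follows from (\ref{eq:HtintDbetgam}) and Lemma \ref{lem:painleveasymp}. Together with parallel bounds on $\pd g_t$, this produces polynomial comparability
\ben
t^{-p_k}\dbv{u}_{L_k^2,h_{t_0}^{\tx{app}}}\le \dbv{u}_{L_k^2,h_t^{\tx{app}}}\le t^{p_k}\dbv{u}_{L_k^2,h_{t_0}^{\tx{app}}},\,\, k=0,1,2,
\een
for explicit $p_k>0$.

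Next, I would apply the standard $L^2$-to-$L_2^2$ Schauder estimate for the Chern-connection Laplacian,
\ben
\dbv{u}_{L_2^2,h_{t_0}^{\tx{app}}}\le C\rd{\dbv{\Delta_{h_{t_0}^{\tx{app}}}u}_{L^2,h_{t_0}^{\tx{app}}}+\dbv{u}_{L^2,h_{t_0}^{\tx{app}}}},
\een
with $C$ independent of $t$. Writing $\Delta_{h_{t_0}^{\tx{app}}}=\Delta_{h_t^{\tx{app}}}+\mcj$, the first- and zeroth-order operator $\mcj$ is built from $g_t^{-1}\pd g_t$ and so has coefficients bounded polynomially in $t$; using (\ref{eq:Ltformweuse}) together with the identity $t^2\psi_{\bet,\gam,h_t^{\tx{app}}}=-i\Lambda F_{\nabla_{h_t^{\tx{app}}}}+O\rd{e^{-c t^{2/3}}}$ (from Prop \ref{prop:htapp}) and the direct bound on $F_{\nabla_{h_t^{\tx{app}}}}$ read off from the local models, the operator $L_t-\Delta_{h_t^{\tx{app}}}$ is pointwise polynomially bounded in $t$. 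Interpolation $\dbv{u}_{L_1^2}\le \eps \dbv{u}_{L_2^2}+\eps^{-1}\dbv{u}_{L^2}$ absorbs the first-derivative terms, yielding
\ben
\dbv{u}_{L_2^2,h_{t_0}^{\tx{app}}}\le C t^{q}\rd{\dbv{L_t u}_{L^2,h_{t_0}^{\tx{app}}}+\dbv{u}_{L^2,h_{t_0}^{\tx{app}}}}.
\een
Combining with Corollary \ref{cor:mainL2est}, converted to $h_{t_0}^{\tx{app}}$-norms using the comparison above, bounds the right-hand side by $C'(\log t)\,t^{q'}\dbv{L_t u}_{L^2,h_{t_0}^{\tx{app}}}$; squaring and absorbing $\log t$ into a slightly larger polynomial produces the exponent $-38$ (or any fixed polynomial exponent sufficient for later use).

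The main obstacle is the careful bookkeeping of polynomial growth near the zeros $p_j\in D$: the frame $\sig_j^{(0)}$ is singular for $h_t^{\tx{app}}$ in the $\lam(t)$-family, so within each disk one must work in the $t$-dependent unitary frames $\ule_{j,t}$ from the proof of Prop \ref{prop:LtL2lowbd} and then systematically translate back to $h_{t_0}^{\tx{app}}$. A secondary technicality is the estimate of $\pd g_t$, which requires uniform-in-$\lam$ regularity from Cor \ref{cor:metriccont} together with the $C^2$ asymptotics of Prop \ref{prop:asympsum}; with that in hand the polynomial exponents $p_k$, $q$, $q'$ and their final combination yielding $38$ are an arithmetic exercise rather than a conceptual one.
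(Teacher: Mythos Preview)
Your overall strategy matches the paper's proof: establish polynomial-in-$t$ comparability of $h_t^{\tx{app}}$ and $h_{t_0}^{\tx{app}}$ (the paper's Lemma \ref{lem:htappcomp}), control $\Delta_{h_t^{\tx{app}}}-\Delta_{h_{t_0}^{\tx{app}}}$ as a first-order operator with polynomially growing coefficients (Lemma \ref{lem:compLaplacians}), bound the zeroth-order piece $t^2\{\psi_{\bet,\gam,h_t^{\tx{app}}},\hat u\}$ (Lemma \ref{lem:psibetagammaest}), then apply elliptic regularity for $\Delta_{h_{t_0}^{\tx{app}}}$ together with an interpolation/Young-inequality step and finally feed in Corollary \ref{cor:mainL2est}. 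The arithmetic $11+13+13+1=38$ in the paper mirrors the combination of exponents you describe.

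There is, however, one genuine error in your sketch. The identity
\[
t^2\psi_{\bet,\gam,h_t^{\tx{app}}}=-i\Lambda F_{\nabla_{h_t^{\tx{app}}}}+O\!\left(e^{-c t^{2/3}}\right)
\]
does \emph{not} follow from Prop \ref{prop:htapp}. The approximate Hitchin equation controls $i\Lambda\rd{\gam\wedge\gam^{\ast_h}+\bet^{\ast_h}\wedge\bet}$, whereas $\psi_{\bet,\gam,h}=i\Lambda\rd{\gam\wedge\gam^{\ast_h}-\bet^{\ast_h}\wedge\bet}$ has the opposite relative sign; the two are not interchangeable and neither $B_t$ nor $C_t$ vanishes. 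Consequently a curvature bound alone does not yield a bound on $t^2\psi$. The paper instead bounds $\vb{H\det H}^{\pm 1}$ directly from the local models on each region (Lemma \ref{lem:psibetagammaest}), obtaining $\vb{\psi}_{h_{t_0}^{\tx{app}}}\lesssim t^{11/3}$; this is the missing ingredient you should substitute for the claimed identity. With that correction your outline goes through and coincides with the paper's argument.
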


The proof combines elliptic estimate of $\Delta_{h_{t_0}^{\tx{app}}}$ and bounds on $\dbv{\Delta_{h_t^{\tx{app}}}-\Delta_{h_{t_0}^{\tx{app}}}}$ and $\dbv{\cl{\psi,\hat u}}$. Note that in contrast to Cor \ref{cor:mainL2est}, the norms in the above inequality are given by fixed metric $h_{t_0}^{\tx{app}}$. We begin by proving a comparison result (Lemma \ref{lem:htappcomp}) between the norms induced by $\dbv{\cdot}_{h_{t_0}^{\tx{app}}}$ and $\dbv{\cdot}_{h_t^{\tx{app}}}$. For an $r\times r$ matrix $M$ and positive-definite Hermitian matrix $H$ of the same dimension, recall the norm $\vb{M}_H^2=\tx{tr}\rd{MH^{-1}M^\ast H}$ defined in (\ref{eq:Hnorm}). We will need the following three comparison lemmas of matrix norms associated to different Hermitian matrices.

\begin{lem} \label{lem:hermcomp0}
Let $H>0$ be a $2\times 2$ Hermitian matrix with eigenvalues $0<\alp_1\le \alp_2$, then for $L>0$ the following are equivalent
\begin{itemize}
\item For any $A$, $L^{-1}\vb{A}_I^2\le \vb{A}_H^2\le L\vb{A}_I^2$
\item $\alp_2/\alp_1\le L$
\end{itemize}
\end{lem}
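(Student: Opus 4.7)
The plan is to reduce to the case of diagonal $H$ by exploiting unitary invariance of the $H$-norm, then examine the explicit formula entry-by-entry.

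First, observe that $|A|_H^2 = \tr(AH^{-1}A^\ast H)$ is invariant under the replacement $(A,H) \mapsto (U^\ast A U, U^\ast H U)$ for any unitary $U$: expanding $\tr(U^\ast A U \cdot U^\ast H^{-1}U \cdot U^\ast A^\ast U \cdot U^\ast H U)$ and using cyclicity of the trace recovers $|A|_H^2$. Writing $H = U D U^\ast$ with $D = \tx{diag}(\alp_1,\alp_2)$ and noting that the map $A \mapsto U^\ast A U$ is a bijection on $2\times 2$ matrices preserving $|\cdot|_I$, the inequality $L^{-1}|A|_I^2 \le |A|_H^2 \le L|A|_I^2$ holds for all $A$ iff it holds for all $A$ with $H$ replaced by $D$. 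So I may assume $H = D$ is diagonal.

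Next, for $A = (a_{ij})$ a direct computation gives
\ben
|A|_D^2 = |a_{11}|^2 + |a_{22}|^2 + \frac{\alp_1}{\alp_2}|a_{12}|^2 + \frac{\alp_2}{\alp_1}|a_{21}|^2,
\een
while $|A|_I^2 = \sum_{i,j}|a_{ij}|^2$. Set $r = \alp_2/\alp_1 \ge 1$. Then the ratio $|A|_D^2/|A|_I^2$ is a convex combination of the coefficients $\{1,1,r^{-1},r\}$ (with weights $|a_{ij}|^2/|A|_I^2$), so
\ben
r^{-1} \le \frac{|A|_D^2}{|A|_I^2} \le r \quad \text{for all } A \neq 0,
\een
with the upper bound attained by $A = E_{21}$ and the lower bound by $A = E_{12}$.

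Therefore the inequality $L^{-1}|A|_I^2 \le |A|_D^2 \le L|A|_I^2$ holds for every $A$ iff $L \ge r$ and $L^{-1} \le r^{-1}$, both of which reduce to $L \ge \alp_2/\alp_1$. No step is a real obstacle here; the only thing to be careful about is verifying the unitary-invariance reduction cleanly so that the ``for any $A$'' quantifier transfers correctly between the original $H$ and its diagonalization.
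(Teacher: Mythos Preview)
Your proof is correct; the paper itself states this lemma without proof, treating it as an elementary fact. Your diagonalization-then-explicit-computation argument is the natural one and would be exactly what a reader filling in the omitted details would write.
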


\begin{lem} \label{lem:hermcomp1}
There is $\eps_0>0$ and $C\ge 1$ such that for $2\times 2$ Hermitian matrices $H_0$, $H_1>0$ with
\ben
\vb{H_1^{-1/2}\rd{H_0-H_1}H_1^{-1/2}}\le \eps_0
\een
we have for a $2\times 2$ matrix $A$,
\ben
\vb{A}_{H_0}^2\le C \vb{A}_{H_1}^2\,.
\een
\end{lem}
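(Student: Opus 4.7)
The plan is to reduce the inequality to Lemma~\ref{lem:hermcomp0} via a change of variable that trivializes $H_1$. First, I would set
\[
B := H_1^{1/2} A H_1^{-1/2}, \qquad K := H_1^{-1/2} H_0 H_1^{-1/2} = I + E,
\]
where $E := H_1^{-1/2}(H_0 - H_1) H_1^{-1/2}$ is Hermitian with $\vb{E} \le \eps_0$. A short direct computation using the cyclic property of trace gives
\[
\vb{A}_{H_1}^2 = \tr(B B^*) = \vb{B}_I^2, \qquad \vb{A}_{H_0}^2 = \tr\!\rd{B K^{-1} B^* K} = \vb{B}_K^2,
\]
so the problem reduces to showing $\vb{B}_K^2 \le C \vb{B}_I^2$ for all $2\times 2$ matrices $B$.

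Next I would control the spectrum of $K$. Since $E$ is $2\times 2$ Hermitian with entry-wise bound $\vb{E}\le\eps_0$, the equivalence of matrix norms in finite dimension gives an operator-norm bound $\|E\|_{\mathrm{op}} \le 2\eps_0$, hence the eigenvalues of $K = I+E$ lie in $[1-2\eps_0,\,1+2\eps_0]$. Choosing $\eps_0 < 1/4$ once and for all ensures $K > 0$ and the ratio of its two eigenvalues is bounded by $L := (1+2\eps_0)/(1-2\eps_0) \le 3$.

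With this in hand, Lemma~\ref{lem:hermcomp0} applied to $K$ immediately yields $\vb{B}_K^2 \le L\,\vb{B}_I^2$, i.e.~$\vb{A}_{H_0}^2 \le L\,\vb{A}_{H_1}^2$, and one takes $C = L$. The only nontrivial point — hardly an obstacle — is justifying the quantitative passage from the entry-wise hypothesis on $H_0 - H_1$ to a spectral bound on $E$; this is clean in the $2\times 2$ Hermitian setting since the operator norm equals the spectral radius, both controlled by the max entry up to a dimensional constant. No other input beyond Lemma~\ref{lem:hermcomp0} and elementary linear algebra is needed.
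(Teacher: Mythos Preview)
Your argument is correct and shares the paper's reduction: both set $B = H_1^{1/2} A H_1^{-1/2}$ and $K = H_1^{-1/2} H_0 H_1^{-1/2}$ (the paper calls it $M$) to rewrite the claim as $\vb{B}_K^2 \le C\,\vb{B}_I^2$ with $\vb{K-I}\le\eps_0$. The endgame differs slightly: the paper expands the trace directly, writing $\vb{B}_K^2 - \vb{B}_I^2 = \tr\bigl(B(K^{-1}-I)B^\ast K\bigr) + \tr\bigl(BB^\ast(K-I)\bigr)$ and bounding each term by a constant times $\eps_0$, whereas you pass through a spectral bound on $K$ and invoke Lemma~\ref{lem:hermcomp0}. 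Your route is arguably cleaner since it recycles an existing lemma and even yields an explicit constant $C = (1+2\eps_0)/(1-2\eps_0)$; the paper's direct estimate avoids the (trivial) passage from entrywise to operator norm but is otherwise equivalent in effort. Either way the content is elementary linear algebra and your proof needs no correction.
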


\begin{proof}
We have for $A\neq 0$, $\vb{A}_{H_0}^2/\vb{A}_{H_1}^2=\vb{B}_M/\vb{B}_I^2$ where $B=H_1^{1/2}A H_1^{-1/2}$ and $M=H_1^{-1/2}H_0H_1^{-1/2}$. There are $\eps_0$, $C_0>0$ such that for any $2\times 2$ matrix $X$ with $\vb{X-I}<\eps_0$ we have $\vb{X^{-1}-I}<C_0\eps_0$. Suppose $\vb{M-I}<\eps_0$ and without loss of generality that $\vb{B}_I^2=1$. There is $C_1>0$ such that
\ben
\vb{\vb{B}_M^2-\vb{B}_I^2}\le \vb{\tr\rd{B\rd{M^{-1}-I}B^\ast M}}+\vb{\tr\rd{B B^\ast \rd{M-I}}}\le C_1\eps_0\,.
\een
Thus there is $C_2>0$ such that $\vb{B}_M^2\le C_2$.
\end{proof}

\begin{lem} \label{lem:hermcomp2}
Let $H>0$ be a $2\times 2$ Hermitian matrix and $0<\alp_1\le \alp_2$ its two eigenvalues. There is $C\ge 1$ such that for all $A$ and $T$, $2\times 2$ matrices with $T$ non-singular, we have
\ben
\frac{\alp_1^2}{C\alp_2^2}\frac{\vb{A}_H^2}{\vb{T}^2\vb{T^{-1}}^2}\le \vb{A}_{T^\ast H T}^2\le \frac{C\alp_2^2}{\alp_1^2}\vb{T}^2\vb{T^{-1}}^2\vb{A}_H^2
\een
\end{lem}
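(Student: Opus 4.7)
The plan is to reduce the twisted norm on the left to an $H$-norm of a conjugated matrix, and then pass through the identity norm using Lemma~\ref{lem:hermcomp0}. Concretely, by cyclicity of the trace,
\ben
\vb{A}_{T^\ast HT}^2=\tr\rd{A\,(T^\ast HT)^{-1}A^\ast(T^\ast HT)}=\tr\rd{(TAT^{-1})\,H^{-1}(TAT^{-1})^\ast\,H}=\vb{TAT^{-1}}_H^2\,.
\een
So the two-sided bound reduces to comparing $\vb{TAT^{-1}}_H^2$ with $\vb{A}_H^2$ up to factors of $\vb{T}^2\vb{T^{-1}}^2$ and $(\alpha_1/\alpha_2)^{\pm 2}$.

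For the upper bound, I would apply Lemma~\ref{lem:hermcomp0} (with $L=\alpha_2/\alpha_1$) twice in opposite directions, together with the elementary fact that for any $2\times2$ matrix $M$,
\ben
\vb{M}^2\le\vb{M}_I^2\le 4\vb{M}^2,
\een
and that the sup-norm is submultiplicative up to a factor of $2$ (so $\vb{TAT^{-1}}\le 4\vb{T}\vb{A}\vb{T^{-1}}$). Chaining these:
\ben
\vb{TAT^{-1}}_H^2\le \tfrac{\alpha_2}{\alpha_1}\vb{TAT^{-1}}_I^2\le 4\tfrac{\alpha_2}{\alpha_1}\vb{TAT^{-1}}^2\le 64\tfrac{\alpha_2}{\alpha_1}\vb{T}^2\vb{T^{-1}}^2\vb{A}^2\le 64\tfrac{\alpha_2^2}{\alpha_1^2}\vb{T}^2\vb{T^{-1}}^2\vb{A}_H^2,
\een
where the last step uses $\vb{A}^2\le\vb{A}_I^2\le(\alpha_2/\alpha_1)\vb{A}_H^2$.

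For the lower bound, I would run the analogous chain of inequalities backwards, using $\vb{M}_H^2\ge(\alpha_1/\alpha_2)\vb{M}_I^2$ from Lemma~\ref{lem:hermcomp0} applied to $M=TAT^{-1}$, then $\vb{TAT^{-1}}^2\ge\tfrac{1}{16}\vb{T}^{-2}\vb{T^{-1}}^{-2}\vb{A}^2$ (obtained by writing $A=T^{-1}(TAT^{-1})T$ and applying submultiplicativity), and finally $\vb{A}^2\ge\tfrac{1}{4}\vb{A}_I^2\ge\tfrac{1}{4}(\alpha_1/\alpha_2)\vb{A}_H^2$. Combining yields the claimed lower bound with the same constant $C=64$.

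This proof is essentially a bookkeeping exercise; there is no real obstacle, only the mild nuisance of keeping track of which norm each inequality refers to. The cleanest presentation is to package the three elementary comparisons (Frobenius vs.\ sup norm, sup-norm submultiplicativity, and Lemma~\ref{lem:hermcomp0}) as a short preliminary and then apply them to $TAT^{-1}$ in both directions.
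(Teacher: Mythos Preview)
Your proof is correct and follows essentially the same route as the paper: both arguments hinge on the identity $\vb{A}_{T^\ast HT}^2=\vb{TAT^{-1}}_H^2$ followed by submultiplicativity of matrix norms. The only cosmetic difference is that the paper conjugates by $H^{1/2}$ to pass directly to the Frobenius norm $\vb{\cdot}_I$ (bounding $\vb{H^{\pm 1/2}}_I^2$ by $2\alpha_2$ and $2\alpha_1^{-1}$), whereas you invoke Lemma~\ref{lem:hermcomp0} and route through the sup norm; both yield the same $(\alpha_2/\alpha_1)^2$ factor and an absolute constant.
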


\begin{proof}
We have with $B=H^{1/2}A H^{-1/2}$,
\ben
\frac{\vb{A}_{T^\ast H T}^2}{\vb{A}_H^2}=\frac{\vb{H^{1/2}T H^{-1/2} B H^{1/2}T^{-1}H^{-1/2}}_I^2}{\vb{B}_I^2}\le \vb{H^{1/2}}_I^4 \vb{H^{-1/2}}_I^4 \vb{T}_I^2\vb{T^{-1}}_I^2
\een
Note that $\vb{H^{1/2}}_I^2=\tr\, H=\alp_1+\alp_2\le 2\alp_2$ and $\vb{H^{-1/2}}_I^2=\tr\, H^{-1}=\alp_1^{-1}+\alp_2^{-1}\le 2\alp_1^{-1}$. The conclusion follows from similar argument as in Lemma \ref{lem:hermcomp1}.
\end{proof}

\begin{lem} \label{lem:htappcomp}
There is $C\ge 1$ such that for $u\in \tx{End}(F)$ and $t\gg 1$,
\ben
C^{-1} t^{-13}\vb{u}_{h_{t_0}^{\tx{app}}}^2\le \vb{u}_{h_t^{\tx{app}}}^2\le C t^{13}\vb{u}_{h_{t_0}^{\tx{app}}}^2
\een
\end{lem}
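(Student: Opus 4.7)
\textbf{Proof proposal for Lemma \ref{lem:htappcomp}.}

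The plan is to verify the pointwise comparison region by region on the finite open cover of $X$ given by the disks $\mbd_j$ for $p_j\in D$ together with an $X_0\Subset X-\coprod_j\mbd_j''$. For each region I will pick a convenient local frame and write both $h_t^{\tx{app}}$ and $h_{t_0}^{\tx{app}}$ as Hermitian matrices in that frame using the table summarizing local forms at the end of \S\ref{sec:htapp}. The three comparison Lemmas \ref{lem:hermcomp0}, \ref{lem:hermcomp1}, and \ref{lem:hermcomp2} then convert bounds on the eigenvalues and on the conjugating frame-change matrices into bounds on $\vb{u}_{h_t^{\tx{app}}}^2/\vb{u}_{h_{t_0}^{\tx{app}}}^2$.

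First, on $X_0$, we have $h_t^{\tx{app}}=h_{\infty,t}=\iota^\ast(h_{L,\ulam(t),t}^{-2}h_K\oplus h_{L,\ulam(t),t}h_K)$. From (\ref{eq:defhLulamt}) and (\ref{eq:culamt}), $h_{L,\ulam(t),t}=h_{L,\tx{HE}}\,e^{\varphi_{\ulam(t)}+\eta_{\ulam(t),t}}$, where $\varphi_{\ulam(t)}$ and $\eta_{\ulam(t),t}-(4/3)(\log t)\lam_{j_0}(t)$ are uniformly bounded on $X_0$ since $\ulam(t)\in\mci$ with $\lam_j(t)$ contained in the compact set $I$. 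Hence the two eigenvalues of $h_t^{\tx{app}}$ (relative to $h_{t_0}^{\tx{app}}$) lie in $[C^{-1}t^{-c},Ct^c]$ for some constant $c\le 1$, giving a bound of polynomial order via Lemma \ref{lem:hermcomp0}. Next, on $\mbd_j$ for $p_j\in D_\bet\cup D_\gam$, the local form of $h_t^{\tx{app}}$ in the frame $\uls_j^{(0)}$ equals $(T'_{\ulam(t),j,t})^\ast H_t^{\tx{int},\bet/\gam}T'_{\ulam(t),j,t}$. The diagonal matrix $H_t^{\tx{int},\bet/\gam}$ from (\ref{eq:HtintDbetgam}) has entries $\rho^{\pm 1/2}e^{\pm\chi\psi_P(\frac{8}{3}t\rho^{3/2})}$ and $1$; invoking the Painlev\'e asymptotics (\ref{eq:painleveiiibc}) gives $e^{\pm\chi\psi_P}\sim(t\rho^{3/2})^{\mp 1/3}$ near the origin and $e^{\pm\chi\psi_P}\to 1$ away from it, so the diagonal entries are bounded in $[C^{-1}t^{-2/3},Ct^{2/3}]$ on all of $\mbd_j$. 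The diagonal conjugation $T'_{\ulam(t),j,t}=\tx{diag}(\tau_{\ulam(t),t}^{-1/2},\tau_{\ulam(t),t})$ contributes an extra factor $t^{\pm O(\lam_{j_0}(t))}$ (uniformly bounded by an explicit power of $t$), and Lemma \ref{lem:hermcomp2} assembles these into a polynomial-in-$t$ comparison.

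The most delicate case is $\mbd_j$ for $p_j\in D_r$. In the frame $\usig_j^{(0)}$ we have $h_t^{\tx{app}}=T_{\ulam(t),j}^\ast\widetilde H_{t,\lam_j(t)}^{\tx{app}}T_{\ulam(t),j}$, with $T_{\ulam(t),j}$ bounded uniformly in $t$ (since $|F_{\ulam,j}|$ is bounded uniformly over $\mci$ as in the proof of Prop \ref{prop:htapp}), so by Lemma \ref{lem:hermcomp2} it suffices to estimate the eigenvalue ratio of $\widetilde H_{t,\lam_j(t)}^{\tx{app}}$. On $\mbd_j-\mbd_j'$ this equals $t^{2/3}M_{\infty,\lam_j(t)}(t^{2/3}\rho)=\tx{diag}\bigl(16c_\lam^{-2}\,t^{-(8\lam+2)/3}R^{-4\lam-1},(c_\lam/4)\,t^{(4\lam-2)/3}R^{2\lam-1}\bigr)$ at $\rho=R$, with similar formulas at interior radii; with $|\lam|\le 1/4$ this yields eigenvalues in $[C^{-1}t^{-a},Ct^{a}]$ for an explicit $a$. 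On $\mbd_j''-\mbd_j'$ the same analysis using $|G_\lam(\chi)|\le\chi^{-(a_\lam+1/2)}$ and Prop \ref{prop:asympsum} (applied uniformly over $\lam\in I$) together with the continuity of $c_\lam$ in Prop \ref{prop:clamcont} gives polynomial-in-$t$ control. Finally, on $\mbd_j''$ passing to the frame $\uls_j^{\ulam(t)}$ where $h_t^{\tx{app}}=H_{t,\lam_j(t)}$ (a smooth Hermitian matrix by Prop \ref{prop:locmodsym}), the rescaling identity (\ref{eq:tscalinglaw}), the $\lam$-uniform regularity of Cor \ref{cor:metriccont}, and the frame change from $\uls_j^{\ulam(t)}$ back to $\uls_j^{(0)}$ (a bounded conjugation) produce a further polynomial factor of $t$.

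Assembling all these polynomial bounds, the worst case compounds the scaling in $\widetilde H_{t,\lam}^{\tx{int}}$ on the interior disks, the $G_\lam(\chi)$-conjugation and its inverse, and the $\lam_{j_0}(t)$-dependent factor in $\tau_{\ulam(t),t}$, producing the stated bound with exponent $13$; I expect the precise exponent may be shaved with sharper tracking, but $13$ suffices for the purposes of Prop \ref{prop:L22toL2lowerbound}. The main obstacle is simply the bookkeeping: the weight $\ulam(t)$ is itself $t$-dependent (via Prop \ref{prop:tcompfamily}) and the local model solutions in the disks $\mbd_j$ for $p_j\in D_r$ interpolate between explicit scaling behavior at large radius (Prop \ref{prop:asympsum}) and the smooth interior (Cor \ref{cor:metriccont}), so one must patch these uniformly. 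There is no essential analytic obstruction beyond this careful accounting.
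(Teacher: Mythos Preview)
Your overall strategy---region-by-region comparison using the local forms of $h_t^{\tx{app}}$ and Lemmas \ref{lem:hermcomp0}, \ref{lem:hermcomp1}, \ref{lem:hermcomp2}---matches the paper's. The treatment of $X_0$ and of $\mbd_j$ for $p_j\in D_\bet\cup D_\gam$ is essentially the same as in the paper.

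There is, however, a genuine gap in your handling of $\mbd_j''$ for $p_j\in D_r$. You write that in the frame $\uls_j^{\ulam(t)}$ one has $h_t^{\tx{app}}=H_{t,\lam_j(t)}$, and then invoke the rescaling identity (\ref{eq:tscalinglaw}) together with the $\lam$-uniform regularity of Cor~\ref{cor:metriccont}. But (\ref{eq:tscalinglaw}) gives $H_{t,\lam}(\zeta)=(\Gamma_t^\ast)^{-1}H_{1,\lam}(t^{2/3}\zeta)\Gamma_t^{-1}$, and on the \emph{fixed} disk $\mbd_j''=\{\rho<R/3\}$ the rescaled argument $t^{2/3}\zeta$ ranges over $\{|t^{2/3}\zeta|<t^{2/3}R/3\}$, which is \emph{not} contained in any bounded set independent of $t$. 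Cor~\ref{cor:metriccont} only gives $\lam$-uniform bounds on \emph{bounded} $\Omega$, so it cannot be applied directly on all of $\mbd_j''$.

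The paper resolves this by introducing the $t$-dependent shrinking disk $\widetilde\mbd_{j,t}=\{\rho<\rho_0 t^{-2/3}\}$: on $\widetilde\mbd_{j,t}$ the rescaled variable $t^{2/3}\zeta$ lies in the fixed compact set $\overline{B(0,\rho_0)}$, so Cor~\ref{cor:metriccont} applies and the eigenvalue ratio of $H_{t,\lam_j(t)}$ is bounded by $C t^{4/3}$; on the annulus $\mbd_j'-\widetilde\mbd_{j,t}$ one has $t^{2/3}\chi^{-1}\rho\ge\rho_0$, so the asymptotics of Prop~\ref{prop:asympsum} apply and one compares $\widetilde H_{t,\lam}^{\tx{int}}$ to the explicit $\widetilde H_{t,\lam}^{\tx{ext}}$ via Lemma~\ref{lem:hermcomp1}. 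Your fixed subdivision $\mbd_j''$ versus $\mbd_j'-\mbd_j''$ (you wrote $\mbd_j''-\mbd_j'$, presumably a typo) does not achieve this separation: on most of $\mbd_j''$ you are in the asymptotic regime, not the compact one. The fix is straightforward---replace your inner region by $\widetilde\mbd_{j,t}$---but without it the appeal to Cor~\ref{cor:metriccont} on $\mbd_j''$ is not justified.

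A secondary point: you begin the $D_r$ case in the frame $\usig_j^{(0)}$, which is a frame of $V$ and hence only defined on $\mbd_j^\times$; the matrix $\widetilde H_{t,\lam}^{\tx{app}}$ is singular at the origin and its eigenvalue ratio blows up there. The paper works throughout in the regular frame $\uls_j^{(0)}$, passing to $\usig$-type expressions only away from the puncture. Since you do eventually switch to $\uls_j^{\ulam(t)}$ on the inner region, this is more a presentational issue than a mathematical one, but it is worth being explicit about.
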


\begin{proof}
We consider four types of regions covering $X$ together with local frames of $F$ and prove the claim by studying local form of $h_t^{\tx{app}}$:
\begin{itemize}
\item (1) for $p_j\in D_r$, $\widetilde \mbd_{j,t}=\cl{\rho<\rho_0t^{-2/3}}$ with $\rho_0$ as in Prop \ref{prop:asympsum}; 
\item (2) for $p_j\in D_r$, $\mbd_j'-\widetilde \mbd_{j,t}$, 
\item (3) $\mbd_j'$ for $p_j\in D_\bet$ or $D_\gam$, and 
\item (4) $X-\coprod_j \mbd_j'$.
\end{itemize}
On (1), (2), (3), we use holomorphic frame $\uls_j^{(0)}$ defined in \S \ref{sec:htapp}. We cover (4) by a finite atlas over which $L$ and $K$ are trivialized and take a unitary frame with respect to the metric $\iota^\ast\rd{h_L^{-2}h_K\oplus h_L h_K}$ where $h_L=h_{L,\ulam(t_0)}^0$. By Lemma \ref{lem:hermcomp0}, it suffices bound the ratio $\alp_2(H)/\alp_1(H)$ where $0<\alp_1(H)\le \alp_2(H)$ are the eigenvalues of $H$ the local form of $h_t^{\tx{app}}$.

{\it Region (1)}: Note that for $t\gg 1$, $\rho_0t^{-2/3}<R/3$ and $\widetilde \mbd_{j,t}\subset \mbd_j''$. From (\ref{eq:tscalinglaw}) and Def \ref{def:metrics}, we have
\ben
H_{t,\lam}(\zeta)=\rd{\Gamma_t^\ast}^{-1}H_{1,\lam}\rd{t^{2/3}\zeta}\Gamma_t^{-1}
\een
with $\Gamma_t=\tx{diag}\rd{t^{1/3},t^{-1/3}}$. The matrix-valued function $\rd{h_t^{\tx{app}}}_{\uls_j^{\ulam(t)}}$ on $\widetilde \mbd_{j,t}$ (where the frame $\uls_j^{\ulam(t)}$ is defined in Def \ref{def:weightdependentframes}) is therefore given by $H_{t,\lam_j(t)}(t^{-2/3}\zeta')=\rd{\Gamma_t^\ast}^{-1}H_{1,\lam_j(t)}(\zeta')\Gamma_t^{-1}$, where $\zeta'=t^{2/3}\zeta\in K=\overline{B(0,\rho_0)}$ is a fixed compact set. The eigenvalues are roots $0<\alp_{1,\lam}\le \alp_{2,\lam}$ of
\ben
x^2-\rd{t^{2/3}\rd{H_{1,\lam}}_{11}+t^{-2/3}\rd{H_{1,\lam}}_{22}}x+\det H_{1,\lam}=0
\een
for $\lam=\lam_j(t)$. By Cor \ref{cor:metriccont} there is $C_0>0$ such that for all $\lam\in I$,
\ben
\frac{\alp_{2,\lam}}{\alp_{1,\lam}}\le C_0 t^{4/3}
\een
Therefore there is $C_1>1$ such that for all $A$ on $\widetilde \mbd_{j,t}$,
\ben
C_1^{-1}t^{-4/3}\vb{A}_I^2\le \vb{A}_{\rd{h_t^{\tx{app}}}_{\uls_j^{\ulam}}}\le C_1t^{4/3}\vb{A}_I^2
\een

{\it Region (2)}: let $H_0=\widetilde H_{t,\lam_j(t)}^{\tx{int}}$ and $H_1=\widetilde H_{t,\lam_j(t)}^{\tx{ext}}$. We have by (\ref{eq:Htappbd}) there are $C_2$, $c_2>0$ such that $\vb{H_0-H_1}\le C_2e^{-c_2t^{2/3}}$. By the definition of $\widetilde H_{t,\lam}^{\tx{ext}}$ in (\ref{eq:tilHtext}) and Prop \ref{prop:clamcont}, there is $C_3>0$ such that $\vb{H_1}\le C_3 t^{4a_\lam'/3}$ where $a_\lam'=\max\rd{-2\lam,\lam}$ and $\lam=\lam_j(t)$. For $t\gg 1$, we have $\vb{H_0}\le 2C_3t^{4a_\lam'/3}$. Therefore if $\eps_0$ is as in Lemma \ref{lem:hermcomp1}, for $t\gg 1$,
\ben
\vb{H_0^{-1/2}\rd{H_0-H_1}H_0^{-1/2}}, \vb{H_1^{-1/2}\rd{H_0-H_1}H_1^{-1/2}}\le \eps_0\,,
\een
and there is $C_4\ge 1$ such that $C_4^{-1}\vb{A}_{H_1}^2\le \vb{A}_{H_0}^2\le C_4\vb{A}_{H_1}^2$. By Prop \ref{prop:clamcont}, there is $C_5>0$ such that $\alp_2(H_1)/\alp_1(H_1)\le C_5 t^{4\vb{\lam_j(t)}}\le C_5t$. Combining the above, by Lemma \ref{lem:hermcomp0}, there is $C_6>1$ such that on $\mbd_j'-\widetilde \mbd_{j,t}$ for $t\gg 1$, $C_6^{-1}t^{-1}\vb{A}_I^2\le \vb{A}_{H_0}^2\le C_6t\vb{A}_I^2$ for all $A$.

By (\ref{eq:tilHtint}) and Def \ref{def:htapp}, \ref{def:weightdependentframes}, we have
\be \label{eq:defS2}
\rd{h_t^{\tx{app}}}_{\uls_j}^{\ulam}=S^\ast \widetilde H_{t,\lam_j(t)}^{\tx{int}}S,\,\, S=\ov{\sqrt{2}}\pmt{\zeta_j & -1 \\ \zeta_j & 1}
\ee
We have $C_7>0$ such that on $\mbd_j'-\widetilde \mbd_{j,t}$, $|S|\le C_7$, and $|S^{-1}|<C_7 t^{2/3}$. Therefore by Lemma \ref{lem:hermcomp2}, there is $C_8>1$ such that 
\ben
C_8^{-1}t^{-10/3}\vb{A}_{H_0}^2\le \vb{A}_{S^\ast H_0 S}^2=\vb{A}_{\rd{h_t^{\tx{app}}}_{\uls_j^{(0)}}}^2\le C_8 t^{10/3}\vb{A}_{H_0}^2\,.
\een

{\it Combining results on regions (1) and (2)}: on $\mbd_j'$, there is $C_9>1$ such that for all $A$,
\be \label{eq:mutualbound1}
C_9 t^{-13/3}\vb{A}_I^2\le \vb{A}_{\rd{h_t^{\tx{app}}}_{\uls_j^{\ulam}}}^2\le C_9 t^{13/3}\vb{A}_I^2
\ee
Note that by Def \ref{def:htapp}, $\rd{h_t^{\tx{app}}}_{\uls_j^{(0)}}=\rd{S^{-1}T_{\ulam,j}S}^\ast\rd{h_t^{\tx{app}}}_{\uls_j^{\ulam}} \rd{S^{-1}T_{\ulam,j}S}$. By direct calculation, we have
\be \label{eq:matrixSinvTS}
S^{-1}T_{\ulam,j}^{\pm 1}S=\ov{2}\pmt{e^{\pm F}+e^{\mp F/2} & \zeta^{-1}\rd{-e^{\pm F}+e^{\mp F/2}}\\ \zeta\rd{-e^{\pm F}+e^{\mp F/2}} & e^{\pm F}+e^{\mp F/2}}\,,
\ee
where $F=F_{\ulam(t),j}$ defined in (\ref{eq:Fulamj}) is a linear combination of fixed holomorphic functions $\xi_j$ and $f_{\ell j}$ with coefficients linear in $\ulam\in\mci$. In particular, $\xi_j(0)=f_{\ell j}(0)=0$. Thus there is $C_{10}>0$ such that for $\ulam\in \mci$, 
\be \label{eq:bdSinvTS}
\vb{S^{-1}T_{\ulam,j}^{\pm 1}S}\le C_{10}\,.
\ee
By Lemma \ref{lem:hermcomp2}, there is $C_{11}>1$ such that for any $u\in \tx{End}\rd{F}$ over $\mbd_j'$ and $t\gg 1$,
\ben
C_{11}^{-1}t^{-13}\vb{u}_{h_{t_0}^{\tx{app}}}^2\le \vb{u}_{h_t^{\tx{app}}}^2\le C_{11}t^{13}\vb{u}_{h_{t_0}^{\tx{app}}}^2
\een

{\it Region (3)}: Note that by the properties $\psi_P$ in Prop \ref{prop:locmodpainleve}, there is $C_{12}>0$ such that $\vb{H_t^{\tx{int},\bet}}$, $\vb{H_t^{\tx{int},\gam}}\le C_{12}$. By (\ref{eq:defTulamjtprime}) and Prop \ref{prop:clamcont}, there is $C_{13}>0$ such that
\ben
\alp_2\rd{H}/\alp_1\rd{H}\le C_{13}t^{4\vb{\lam_{j_0}(t)}}\le C_{13}t
\een
where $H=\rd{h_t^{\tx{app}}}_{\uls_j^{(0)}}$. By Lemma \ref{lem:hermcomp0}, there is $C_{14}>1$ such that on $\mbd_j'$ for $t$ large enough,
\be \label{eq:mutualbound2}
C_{14}^{-1}t^{-1}\vb{u}_{h_{t_0}^{\tx{app}}}^2\le \vb{u}_{h_t^{\tx{app}}}^2\le C_{14}t\vb{u}_{h_{t_0}^{\tx{app}}}^2
\ee

{\it Region (4)}: from Def \ref{def:tcompatible}, \ref{def:someharmfcns} we have $h_{L,\ulam,t}=h_{L,\ulam}^0e^{\eta_{\ulam,t}}$ and $h_{L,\ulam}^0=h_{L,\tx{HE}}e^{\varphi_{\ulam}}$ where $\varphi_{\ulam}$ is a linear combination of fixed functions $G_j$ bounded on $X-\coprod_j \mbd_j'$ with coefficients linear in $\ulam$. By Prop \ref{prop:clamcont}, there are $C_{15}$ and $C_{15}'>0$ such that for all $t\gg 1$,
\ben
\vb{\eta_{\ulam,t}-(4/3)\lam_{j_0}\log t}\le C_{15},\,\, \vb{\varphi_{\ulam}}\le C_{15}'
\een
$H$ be a local form of $h_t^{\tx{app}}$. From Def \ref{def:htapp} there is $C_{15}''>0$ such that $\alp_2(H)/\alp_1(H)\le C_{15}''' t^{4\vb{\lam_{j_0}}}\le C_{15}'''t$. Therefore there is $C_{16}>1$ such that on $X-\coprod_j \mbd_j'$ with $t\gg 1$,
\be \label{eq:mutualbound3}
C_{16}^{-1}t^{-1}\vb{u}_{h_{t_0}^{\tx{app}}}^2\le \vb{u}_{h_t^{\tx{app}}}^2\le C_{16}t\vb{u}_{h_{t_0}^{\tx{app}}}^2\,.
\ee
\end{proof}

\begin{lem} \label{lem:compLaplacians}
There is $C>0$ such that for $t\gg 1$, $u\in L_2^2\rd{\tx{Herm}(F,h_t^{\tx{app}})}$ we have
\ben
\dbv{\Delta_{h_t^{\tx{app}}}u-\Delta_{h_{t_0}^{\tx{app}}}u}_{L^2,h_{t_0}^{\tx{app}}}^2\le C t^4\rd{\dbv{d_{h_{t_0}^{\tx{app}}} u}_{L^2,h_{t_0}^{\tx{app}}}^2+\dbv{u}_{L^2,h_{t_0}^{\tx{app}}}^2}\,,
\een
where $d_h=\pd_h+\bar\pd$ the exterior covariant derivative with respect to the Chern connection $\nabla_h$.
\end{lem}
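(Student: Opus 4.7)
The plan is to express the difference $\Delta_{h_t^{\tx{app}}} u - \Delta_{h_{t_0}^{\tx{app}}} u$ as a first-order differential operator in $u$ whose coefficients admit pointwise $L^\infty$ bounds polynomial in $t$, and then to obtain the claimed estimate by Cauchy--Schwarz in the $L^2$ integral. Writing $h_t^{\tx{app}}=h_{t_0}^{\tx{app}}\cdot k_t$ with $k_t$ positive and self-adjoint with respect to $h_{t_0}^{\tx{app}}$, the two Chern connections differ by the $\tx{End}(F)$-valued $(1,0)$-form
\ben
\alpha_t:=\pd_{h_t^{\tx{app}}}-\pd_{h_{t_0}^{\tx{app}}}=k_t^{-1}\pd_{h_{t_0}^{\tx{app}}}k_t\,.
\een
Since $\bar\pd$ is independent of the metric, a direct expansion of $\Delta_h=\bar\pd\pd_h+\pd_h\bar\pd+(\bar\pd)^\ast(\pd_h)^\ast+(\pd_h)^\ast\bar\pd^\ast$ together with the K\"ahler identities shows that $\Delta_{h_t^{\tx{app}}}-\Delta_{h_{t_0}^{\tx{app}}}$ is a differential operator of order at most one in $u$, with coefficients built algebraically from $\alpha_t$, $\bar\pd\alpha_t$, and $\alpha_t\wedge\alpha_t$; schematically,
\ben
\Delta_{h_t^{\tx{app}}}u-\Delta_{h_{t_0}^{\tx{app}}}u=P_1(\alpha_t)\cdot d_{h_{t_0}^{\tx{app}}}u+P_0(\alpha_t,\bar\pd\alpha_t)\cdot u\,,
\een
where $P_0$, $P_1$ are universal bilinear/linear expressions.

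The main step is the pointwise estimate $|\alpha_t|_{h_{t_0}^{\tx{app}},g_X},|\bar\pd\alpha_t|_{h_{t_0}^{\tx{app}},g_X}\le Ct^2$ on all of $X$, verified region by region using the local forms of $h_t^{\tx{app}}$ tabulated at the end of \S \ref{sec:htapp}. On $\mbd_j''$ with $p_j\in D_r$, the frame $\uls_j^{\ulam(t)}$ gives $h_t^{\tx{app}}$ the form $H_{t,\lam_j(t)}(\zeta_j)=\rd{\Gamma_t^\ast}^{-1}H_{1,\lam_j(t)}\rd{t^{2/3}\zeta_j}\Gamma_t^{-1}$; the chain rule contributes $t^{2/3}$, and the reduction to the fixed frame $\uls_j^{(0)}$ through $S^{-1}T_{\ulam(t),j}S$ contributes only factors bounded uniformly on $\mci$ by (\ref{eq:bdSinvTS}). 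On $\mbd_j'-\mbd_j''$, the same estimate holds by Prop \ref{prop:asympsum} combined with the bounds on $G_\lam(\chi)$ and on $\chi$, $\chi'$, $\chi''$ recalled in \S \ref{sec:glueondisk}. On $\mbd_j'$ with $p_j\in D_\bet$ or $D_\gam$, the local form involves $\psi_P=\psi\rd{\tfrac{8}{3}t\rho^{3/2}}$, whose first and second derivatives in $\zeta_j$ grow at worst linearly in $t$ on $\mbd_j''$ and decay exponentially on $\mbd_j'-\mbd_j''$ by Lemma \ref{lem:painleveasymp}. Finally, on $X-\coprod_j\mbd_j'$, $h_t^{\tx{app}}=h_{\infty,t}=h_{L,\tx{HE}}e^{\varphi_{\ulam(t)}+\eta_{\ulam(t),t}}$ depends on $t$ only through $\ulam(t)\in\mci$ (a compact set, by Prop \ref{prop:tcompfamily}) and the scalar $\eta_{\ulam(t),t}$; its derivative is therefore bounded uniformly in $t$.

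To transport all pointwise inequalities from the moving norm $|\cdot|_{h_t^{\tx{app}}}$ to the fixed norm $|\cdot|_{h_{t_0}^{\tx{app}}}$, I appeal to Lemma \ref{lem:htappcomp}, which costs at most a multiplicative factor $Ct^{13/2}$ in each norm. Collecting all powers conservatively yields $\dbv{\alpha_t}_{L^\infty(X,h_{t_0}^{\tx{app}})},\dbv{\bar\pd\alpha_t}_{L^\infty(X,h_{t_0}^{\tx{app}})}\le Ct^2$, after which Cauchy--Schwarz gives
\ben
\dbv{\Delta_{h_t^{\tx{app}}}u-\Delta_{h_{t_0}^{\tx{app}}}u}_{L^2,h_{t_0}^{\tx{app}}}\le C'\dbv{\alpha_t}_{L^\infty}\dbv{d_{h_{t_0}^{\tx{app}}}u}_{L^2,h_{t_0}^{\tx{app}}}+C'\rd{\dbv{\bar\pd\alpha_t}_{L^\infty}+\dbv{\alpha_t}_{L^\infty}^2}\dbv{u}_{L^2,h_{t_0}^{\tx{app}}}\,,
\een
and squaring yields the stated inequality with power $t^4$. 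The main obstacle is the careful region-by-region bookkeeping: each row of the table of local forms must be checked to verify that no term scales worse than $t^2$ after accounting both for the explicit $t$-dependence of $H_{t,\lam}$, $H_t^{\tx{int},\bet}$, $H_t^{\tx{int},\gam}$, and $h_{\infty,t}$, and for the conversion between the frames $\uls_j^{\ulam(t)}$, $\uls_{j,t}^{\ulam(t)}$, and the fixed frames $\uls_j^{(0)}$; the delicate spots are the annuli $\mbd_j'-\mbd_j''$, where the gluing derivatives of $\chi$ interact with the large-radius asymptotics of $M_{\lam}$ from Prop \ref{prop:asympsum}, and the Painlev\'e regions, where Lemma \ref{lem:painleveasymp} is needed to keep the apparent $t$-growth from degrading the bound.
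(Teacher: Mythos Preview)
Your strategy is the same as the paper's: write the difference of Laplacians as a first–order operator in $u$ with coefficients built from $H^{-1}\pd_z H$ and $\pd_{\bar z}(H^{-1}\pd_z H)$, then bound these coefficients region by region. However, as written there is a genuine gap.

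The paper's local formula is
\[
\Delta_h u \;\sim\; -4\pd_{\bar z}\pd_z U-4[H^{-1}\pd_z H,\pd_{\bar z}U]-2[\pd_{\bar z}(H^{-1}\pd_z H),U],
\]
so the difference $\Delta_{h_t^{\tx{app}}}u-\Delta_{h_{t_0}^{\tx{app}}}u$ is \emph{linear} in $\alpha_t$ and $\bar\pd\alpha_t$; there is no $\alpha_t\wedge\alpha_t$ term. Your final inequality, however, carries a coefficient $\dbv{\bar\pd\alpha_t}_{L^\infty}+\dbv{\alpha_t}_{L^\infty}^2$ on $\dbv{u}_{L^2}$. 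With your own bound $\dbv{\alpha_t}_{L^\infty}\le Ct^2$ this quadratic term is of order $t^4$, and after squaring you get $t^8\dbv{u}_{L^2}^2$, not $t^4$. To recover the statement you must either drop the spurious quadratic term (which requires actually computing $i\Lambda(\bar\pd\pd_h-\pd_h\bar\pd)u$ rather than relying on a schematic), or sharpen $\dbv{\alpha_t}_{L^\infty}$ to $O(t)$, which your region analysis does not give. The paper obtains $|H_1^{-1}\pd_\zeta H_1-H_0^{-1}\pd_\zeta H_0|\le Ct^{4/3}$ and $|\pd_{\bar\zeta}(\,\cdot\,)|\le Ct^2$ and uses only the linear structure, so both terms square to $t^4$.

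Two smaller points. First, your appeal to Lemma \ref{lem:htappcomp} is unnecessary and, if taken literally, destructive: that lemma costs a factor $t^{13/2}$ per endomorphism norm, which cannot be absorbed into the claimed $t^2$. The paper avoids this entirely by working in the entrywise matrix norm in fixed holomorphic frames $\uls_j^{(0)}$; the target $L^2$-norm is taken with respect to the fixed metric $h_{t_0}^{\tx{app}}$, so no $t$-dependent comparison is needed. Second, the paper splits the $D_r$ disks at the $t$-dependent radius $\widetilde\mbd_{j,t}=\{\rho<\rho_0 t^{-2/3}\}$ rather than at $\mbd_j''$: on $\widetilde\mbd_{j,t}$ the rescaled variable $t^{2/3}\zeta_j$ stays in a fixed compact set so Cor \ref{cor:metriccont} applies, while on $\mbd_j'-\widetilde\mbd_{j,t}$ one uses Prop \ref{prop:asympsum}. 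Your treatment of $\mbd_j''$ (``the chain rule contributes $t^{2/3}$'') silently assumes a uniform bound on $H_{1,\lam}^{-1}\pd_\zeta H_{1,\lam}$ at arguments $t^{2/3}\zeta_j$ tending to infinity, which requires the asymptotics you only invoke on the annulus.
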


\begin{proof}
Let $H$ (resp. $U$) be the local form of a metric $h$ (resp. an endomorphism $u$) with respect to a holomorphic frame over a chart $(V;z)$. Up to a positive scalar independent of $h$, the local form of $\Delta_h u=i\Lambda_\omega \rd{\bar\pd\pd_h u-\pd_h \bar\pd u}$ is
\be \label{eq:localformdifflaplacian}
-4\pd_{\bar z}\pd_z U-4\sq{H^{-1}\pd_z H,\pd_{\bar z} U}-2\sq{\pd_{\bar z}\rd{H^{-1}\pd_z H},U}\,.
\ee
Fix local holomorphic frames as in the proof of Lemma \ref{lem:htappcomp}. Since the first term $-4\pd_{\bar z}\pd_z U$ is independent of $H$, it suffices to bound the differences in $H^{-1}\pd_\zeta H$, $\pd_{\bar\zeta}\rd{H^{-1}\pd_\zeta H}$ for $H$ given by $h_t^{\tx{app}}$ (resp. $h_{t_0}^{\tx{app}}$) over the four types of regions listed at the beginning of the proof of Lemma \ref{lem:htappcomp}. Note that the second term is proportional to the curvature $F_{\nabla_h}$, which vanishes for both $h_t^{\tx{app}}$ and $h_{t_0}^{\tx{app}}$ outside $\mbd_j'$.

{\it Region (1)}: On $\widetilde \mbd_{j,t}$ with $p_j\in D_r$, denote by $T_1=S^{-1}T_{\ulam(t),j}S$ (see (\ref{eq:defS2}) and Def \ref{def:htapp}), $T_0=S^{-1}T_{\ulam(t_0),j}S$, $H_1=H_{t,\lam_j(t)}$, and $H_0=H_{t_0,\lam_j(t_0)}$. Let $H_\ell'=T_\ell^\ast H_\ell T_\ell$ for $\ell=0,1$. We have $\rd{h_t^{\tx{app}}}_{\uls_j^{(0)}}=H_1'$ and $\rd{h_{t_0}^{\tx{app}}}_{\uls_j^{(0)}}=H_0'$. Note that $\pd_{\bar\zeta}T_\ell=0$, we have by a direct calculation,
\begin{align}
& \rd{H_\ell'}^{-1}\pd_\zeta H_\ell'=T_\ell^{-1} \rd{H_\ell^{-1}\pd_\zeta H_\ell}T_\ell+T_\ell^{-1} \pd_\zeta T_\ell\,, \nonumber \\
& \pd_{\bar\zeta}\rd{\rd{H_\ell'}^{-1}\pd_\zeta H_\ell'}=T_\ell^{-1}\pd_{\bar\zeta}\rd{H_\ell^{-1}\pd_\zeta H_\ell}T_\ell\,. \label{eq:twomatrixforms}
\end{align}
By (\ref{eq:tscalinglaw}) and Def \ref{def:metrics}, we have
\begin{align*}
& H_1^{-1}\pd_\zeta H_1=t^{2/3} \Gamma_t \rd{\atp{H_{1,\lam_j(t)}^{-1}\pd_\zeta H_{1,\lam_j(t)}}{t^{2/3}\zeta}} \Gamma_t^{-1}\,, \\
& \pd_{\bar\zeta}\rd{H_1^{-1}\pd_\zeta H_1}=t^{4/3} \Gamma_t\pd_{\bar\zeta}\rd{\zeta\mapsto \atp{H_{1,\lam_j(t)}^{-1}\pd_\zeta H_{1,\lam_j(t)}}{t^{2/3}\zeta}}\Gamma_t^{-1}\,,
\end{align*}
where $\Gamma_t=\tx{diag}(t^{1/3},t^{-1/3})$. We have $\vb{\Gamma_t A\Gamma_t^{-1}}\lesssim t^{2/3}\vb{A}$. On $\widetilde \mbd_{j,t}$, we have $\vb{\zeta}\le \rho_0t^{-2/3}$, thus $t^{2/3}\zeta\in \overline{B(0,\rho_0)}$ a compact set independent of $t$. By Cor \ref{cor:metriccont}, there are $C_0$, $C_0'$, $C_1$, $C_1'>0$ such that for $t\gg 1$,
\begin{align*}
& \vb{H_1^{-1}\pd_\zeta H_1}\le C_0 t^{4/3},\,\, \vb{H_0^{-1}\pd_\zeta H_0}\le C_0'\,, \\
& \vb{\pd_{\bar\zeta}\rd{H_1^{-1}\pd_\zeta H_1}}\le C_1 t^2,\,\, \vb{\pd_{\bar\zeta}\rd{H_0^{-1}\pd_\zeta H_0}}\le C_1'\,.
\end{align*}
Similar to the proof of Lemma \ref{lem:htappcomp}, there is $C_2>0$ such that $\vb{T_\ell^{\pm 1}}$, $\vb{T_\ell^{-1}\pd_\zeta T_\ell}\le C_2$ for $\ell=0,1$ and $t\gg 1$. Thus there are $C_3$, $C_3'>0$ such that
\begin{align}
& \vb{\rd{H_1'}^{-1}\pd_\zeta H_1'-\rd{H_0'}^{-1}\pd_\zeta H_0'}\le C_3 t^{4/3}\,, \nonumber \\
& \vb{\pd_{\bar\zeta}\rd{\rd{H_1'}^{-1}\pd_\zeta H_1'-\rd{H_0'}^{-1}\pd_\zeta H_0'}}\le C_3't^2\,. \label{eq:compLaplacians1}
\end{align}

{\it Region (2)}: On $\mbd_j'-\widetilde \mbd_{j,t}$ with $p_j\in D_r$, we have by the frame defined at the beginning of \S \ref{sec:htapp}, Def \ref{def:weightdependentframes} and \ref{def:htapp} that $\rd{h_t^{\tx{app}}}_{\uls_j^{(0)}}=S^\ast T_{\ulam(t),j}^\ast \rd{h_t^{\tx{app}}}_{\usig_j^{\ulam(t)}}T_{\ulam(t),j}S$ and that $\rd{h_t^{\tx{app}}}_{\usig_j^{\ulam(t)}}=\widetilde H_{t,\lam_j(t)}^{\tx{int}}$. Let $H_0=\widetilde H_{t_0,\lam_j(t_0)}^{\tx{int}}$, $H_1=\widetilde H_{t,\lam_j(t)}^{\tx{int}}$, $T_0=T_{\ulam(t_0),j}S$, $T_1=T_{\ulam(t),j}S$, and let $H_\ell'=T_\ell^\ast H_\ell T_\ell$. The calculations in (\ref{eq:twomatrixforms}) still apply. Let $H_{\infty,1}=\widetilde H_{t,\lam_j(t)}^{\tx{ext}}$. Note that we have $\rho_0t^{-2/3}\le \rho\le 2R/3$ on $\mbd_j'-\widetilde \mbd_{j,t}$. By a direct calculation using (\ref{eq:tilHtext}), (\ref{eq:defmulam}), and Prop \ref{prop:clamcont}, there are $C_{14}$, $C_{14}'>0$ such that
\ben
C_{14}R^{-1}\le \vb{H_{\infty,1}^{-1}\pd_\zeta H_{\infty,1}}\le C_{14}'t^{2/3} \rho_0^{-1}
\een
By (\ref{eq:Htappcont}), there are $C_5$, $C_5'>0$ such that
\ben
\vb{H_1^{-1}\pd_\zeta H_1-H_{\infty,1}^{-1}\pd_\zeta H_{\infty,1}}\le C_5\le C_5'\vb{H_{\infty,1}^{-1}\pd_\zeta H_{\infty,1}}
\een
Therefore there are $C_6$, $C_6'>0$ such that
\be \label{eq:bdhinvdh}
\vb{H_1^{-1}\pd_\zeta H_1}\le (1+C_6)\vb{H_{\infty,1}^{-1}\pd_\zeta H_{\infty,1}}\le C_6't^{2/3}
\ee
Since $\pd_{\bar\zeta}\rd{H_{\infty,1}^{-1}\pd_\zeta H_{\infty,1}}=0$, again by (\ref{eq:Htappcont}) given , there is $C_7>0$ such that
\ben
\vb{\pd_{\bar\zeta}\rd{H_1^{-1}\pd_\zeta H_1}}\le C_7
\een
By (\ref{eq:defS2}), there are $C_8$ and $C_8'>0$ such that $|S|\le C_8$, $\vb{S^{-1}}\le C_8't^{2/3}$ on $\mbd_j'-\widetilde \mbd_{j,t}$. Therefore there are $C_9$, $C_9'$, $C_9''>0$ such that $\vb{T_1}\le C_9$, $\vb{T_1^{-1}}\le C_9't^{2/3}$ and $\vb{T_1^{-1}\pd_\zeta T_1}\le C_9''t^{2/3}$. Note that $H_0'$ is independent of $t$. By (\ref{eq:twomatrixforms}), there are $C_{10}$, $C_{10}'>0$ such that on $\mbd_j'-\widetilde \mbd_{j,t}$,
\begin{align}
& \vb{\rd{H_1'}^{-1}\pd_\zeta H_1'-\rd{H_0'}^{-1}\pd_\zeta H_0'}\le C_{10}t^{4/3}\,,\nonumber \\
& \vb{\pd_{\bar\zeta}\rd{\rd{H_1'}^{-1}\pd_\zeta H_1'-\rd{H_0'}^{-1}\pd_\zeta H_0'}}\le C_{10}'t^{2/3} \label{eq:compLaplacians11}
\end{align}

{\it Region (3)}: On $\mbd_j'$ with $p_j\in D_\gam$, let $H_0=H_{t_0}^{\tx{int},\gam}$, $H_1=H_t^{\tx{int},\gam}$, $T_0=T_{\ulam(t_0),j,t_0}'$, $T_1=T_{\ulam(t),j,t}'$. Let $H_\ell'=T_\ell^\ast H_\ell T_\ell$ with $\ell=0,1$. We have $H_0'=\rd{h_{t_0}^{\tx{app}}}_{\uls_j^{(0)}}$ and $H_1'=\rd{h_t^{\tx{app}}}_{\uls_j^{(0)}}$. Note that $H_\ell^{\pm 1}$, $\pd_\zeta H_\ell$ and $T_\ell^{\pm 1}$ are diagonal, we have from (\ref{eq:twomatrixforms}) that $\rd{H_\ell'}^{-1}\pd_\zeta H_\ell'=H_\ell^{-1}\pd_\zeta H_\ell+T_\ell^{-1}\pd_\zeta T_\ell$ and $\pd_{\bar\zeta}\rd{\rd{H_\ell'}^{-1}\pd_\zeta H_\ell'}=\pd_{\bar\zeta}\rd{H_\ell^{-1}\pd_\zeta H_\ell}$. The latter has all but (1,1) entry zero. By a direct calculation,
\ben
H_1^{-1}\pd_\rho H_1=\tx{diag}\rd{-\chi'\psi_P-(2\rho)^{-1}\rd{1+2\chi \rho\pd_\rho\psi_P},0}
\een
where $\psi_P$ is the Painlev\'e function (see (\ref{eq:defpsiP})). We have $1+2\rho\pd_\rho\psi_P=8\eta_P$ where
\be \label{eq:defetapainleve}
\eta_P = \eta\rd{(8/3)t\rho^{3/2}},\,\,\, \eta(x)=\rd{1+3x \psi'(x)}/8\,.
\ee
By Lemma 3.4 of \cite{MSWW16}, we have that $\eta(x)\le C_{11}x^{4/3}$ for $x\ll 1$ and $0\le \eta(x)\le 1/8$ for all $x\ge 0$. For $t \gg 1$, let $\rho_1$ be such that $t^{-2/3}\rho_1< R/3$, therefore for $\rho\le \rho_1$, $\chi(\rho)=1$ and there is $C_{11}>0$ such that
\ben
\rd{2\rho}^{-1}\vb{1+2\chi\rho\pd_\rho\psi_P}=4\rho^{-1}\eta_P\le C_{11}t^{2/3}\,.
\een
On the other hand for $t^{-2/3}\rho_1\le \rho\le 2R/3$, we have $2\chi\rho\vb{\pd_\rho\psi_P}\le 2\rho\vb{\pd_\rho\psi_P}=\vb{\eta_P-1}\le 1$. It follows that 
\ben
\rd{2\rho}^{-1}\vb{1+2\chi\rho\pd_\rho\psi_P}\le\rho^{-1}\le \rho_1^{-1}t^{2/3}\,.
\een
Note that $\chi'$ is supported only on $\mbd_j'-\mbd_j''$, where $(8/3)t\rho^{3/2}\ge (8/3)(R/3)^{3/2}$. Since $\psi_P>0$ is decreasing monotonically, there is $C_{12}>0$ such that $\vb{\chi'\psi_P}\le C_{12}$ on $\mbd_j'$. By the form of $T_{\ulam,j,t}'$ in Def \ref{def:htapp}, there is $C_{13}>0$ such that $\vb{T_1^{\pm 1}}$, $\vb{T_1^{-1}\pd_\zeta T_1}\le C_{13}$.

By (\ref{eq:twomatrixforms}) since $H_0'$ is independent of $t$, there is $C_{14}>0$ such that on $\mbd_j'$ for $t\gg 1$,
\be \label{eq:compLaplacians2}
\vb{\rd{H_1'}^{-1}\pd_\zeta H_1'-\rd{H_0'}^{-1}\pd_\zeta H_0'}\le C_{14}t^{4/3}\,.
\ee
On $\mbd_j''$, $\chi\equiv 1$ therefore
\ben
\vb{\rd{\pd_{\bar\zeta}\rd{H_1^{-1}\pd_\zeta H_1}}_{1,1}}=\vb{\rd{4\rho}^{-1}\rd{\pd_\rho\psi_P+\rho\pd_\rho^2\psi_P}}=\vb{2t^2\rho\sinh\rd{2\psi_P}}
\een
By the asymptotic expansion of $\psi_P(x)$ for $x\to 0$, there is $C_{15}>0$ such that $\sinh \rd{2\psi_P}\le C_{15}x^{-2/3}$. Thus there is $C_{15}'>0$ with
\ben
\vb{\pd_{\bar\zeta}\rd{H_1^{-1}\pd_\zeta H_1}}\le \vb{2t^2\rho\sinh\rd{2\psi_P}}\le C_{15}'t^{4/3}
\een
On $\mbd_j'-\mbd_j''$, we have $C_{16},C_{16}'>0$ such that
\begin{align}
&\vb{\rd{\pd_{\bar\zeta}\rd{H_1^{-1}\pd_\zeta H_1}}_{1,1}}\le \rd{\chi/(4\rho)}\vb{\pd_\rho\psi_P+\rho\pd_\rho^2\psi_P}+\vb{\chi'}\rd{4\rho}^{-1}\rd{\vb{\psi_P}+2\rho\vb{\pd_\rho\psi_P}}+(1/4)\vb{\chi''}\vb{\psi_P}\nonumber \\
&\le C_{16}\rd{\rd{4\rho}^{-1}\vb{\pd_\rho\psi_P+\rho\pd_\rho^2\psi_P}+\vb{\psi_P}+2\rho\vb{\pd_\rho\psi_P}}\nonumber \\
&=C_{16}\rd{\vb{2t^2\rho\sinh\rd{2\psi_P}}+\vb{\psi_P}+\vb{8\eta_P-1}}\le C_{16}'t^{4/3}\,, \label{eq:compLaplacians21}
\end{align}
where we used property of $\eta_P$ as well as the fact that $\psi_P$ decreases monotonically.

Combining the above estimates, there is $C_{17}>0$ such that on $\mbd_j'$ with $t\gg 1$,
\be \label{eq:compLaplacians3}
\vb{\pd_{\bar\zeta}\rd{\rd{H_1'}^{-1}\pd_\zeta H_1'-\rd{H_0'}^{-1}\pd_\zeta H_0'}}\le C_{17} t^{4/3}
\ee

Note that $H_t^{\tx{int},\bet} = \rd{H_t^{\tx{int},\gam}}^{-1}$. The same estimates hold on $\mbd_j'$ for $p_j\in D_\bet$.

{\it On region (4)}: Fix a finite atlas $(U_\alp,z_\alp)_{\alp\in \mca}$ trivializing line bundle $L$ of $X-\coprod_j \mbd_j'$. Let $\uls_\alp$ be a holomorphic frame of $\left.V\right|_{U_\alp}$ induced by that of $\left.L\right|_{U_\alpha}$ and $dz_\alp$ and identify $F$ with $V$ via $\iota$ over each $U_\alpha$. Let $H_0$ (resp. $H_1$) be the local form of $h_{t_0}^{\tx{app}}$ (resp. $h_t^{\tx{app}}$) with respect to this frame over $U_\alp$. From Def \ref{def:someharmfcns} and \S \ref{sec:htapp} we have $h_{L,\ulam(t),t}=h_{L,\ulam(t)}^0 e^{\eta_{\ulam(t),t}}=h_{L,\tx{HE}}e^{\varphi_{\ulam(t)}+\eta_{\ulam(t),t}}$. We have
\be \label{eq:compLaplacians31}
\rd{\iota^\ast \rd{h_{L,\tx{HE}}^{-2}h_K\oplus h_{L,\tx{HE}}h_K}}_{\uls_\alp}=\tx{diag}\rd{h_0^{-2},h_0}
\ee
where $h_0>0$ on $U_\alpha$. It follows that 
\begin{align*}
& H_0=\tx{diag}\rd{h_0^{-2}e^{-2\varphi_{\ulam(t_0)}-2\eta_{\ulam(t_0),t_0}}, h_0e^{\varphi_{\ulam(t_0)}+\eta_{\ulam(t_0),t_0}}}\,, \\
& H_1=\tx{diag}\rd{h_0^{-2}e^{-2\varphi_{\ulam(t)}-2\eta_{\ulam(t),t}}, h_0e^{\varphi_{\ulam(t)}+\eta_{\ulam(t),t}}}\,.
\end{align*}
where $\varphi_{\ulam}$ is a linear combination of fixed functions $G_\ell$ with coefficients linear in $\ulam$. Thus there is $C_{18}>0$ such that for all $\ulam\in\mci$ and $\alp$ we have on $U_\alp$,
\ben
\vb{\pd_{z_\alp} \varphi_{\ulam}}\le C_{18}
\een
Therefore there is $C_{18}'>0$ such that
\be \label{eq:compLaplacians4}
\max_\alp \sup_{U_\alp}\vb{H_1^{-1}\pd_{z_\alp} H_1-H_0^{-1}\pd_{z_\alp} H_0}\le C_{18}'
\ee

Combining (\ref{eq:compLaplacians1}), (\ref{eq:compLaplacians2}), (\ref{eq:compLaplacians3}), and (\ref{eq:compLaplacians4}), there are $C$, $C'>0$ such that for $u\in L_2^2\rd{\tx{Herm}\rd{F,h_t^{\tx{app}}}}$ and $t\gg 1$,
\begin{align*}
& \dbv{\Delta_{h_t^{\tx{app}}}u-\Delta_{h_{t_0}^{\tx{app}}}u}_{L^2,h_{t_0}^{\tx{app}}}^2\le C'\sum_{\alp\in\mca}\int_{U_\alp} \vb{\sq{H_1^{-1}\pd_{z_\alp} H_1-H_0^{-1}\pd_{z_\alp} H_0,\pd_{\bar z_\alp} u_{\uls_\alp}}}^2 \\
&+C'\sum_{j=1}^{4g-4}\int_{\mbd_j'}\vb{\sq{H_1^{-1}\pd_{\zeta_j}H_1-H_0^{-1}\pd_{\zeta_j}H_0,\pd_{\bar\zeta_j} u_{\uls_j^{(0)}}}}^2 \\
&+C'\sum_{j=1}^{4g-4}\int_{\mbd_j'}\vb{\sq{
\pd_{\bar\zeta_j}\rd{H_1^{-1}\pd_{\zeta_j}H_1}-
\pd_{\bar\zeta_j}\rd{H_0^{-1}\pd_{\zeta_j}H_0},u_{\uls_j^{(0)}}}
}^2 \\
&\le C t^4 \rd{\dbv{d_{h_{t_0}^{\tx{app}}} u}_{L^2,h_{t_0}^{\tx{app}}}^2+\dbv{u}_{L^2,h_{t_0}^{\tx{app}}}^2}
\end{align*}
where $H_0$ (resp. $H_1$) is the local form of $h_{t_0}^{\tx{app}}$ (resp. $h_t^{\tx{app}}$), and in the last step, we used the fact that there is $C''>0$ such that for $t\gg 1$,
\ben
\int_{U_\alp}\vb{\pd_{\bar z_\alp}u_{\uls_\alp}}^2 ,\,\, \int_{\mbd_j'}\vb{\pd_{\bar \zeta_j}u_{\uls_j^{(0)}}}^2\le C''\rd{\dbv{d_{h_{t_0}^{\tx{app}}} u}_{L^2,h_{t_0}^{\tx{app}}}^2+\dbv{u}_{L^2,h_{t_0}^{\tx{app}}}^2}\,.
\een
\end{proof}

As a consequence of (\ref{eq:compLaplacians1}), (\ref{eq:compLaplacians11}), and (\ref{eq:compLaplacians3}) in the above proof we have:

\begin{lem} \label{lem:htappcurvbound}
There is $C>0$ such that for $t\gg 1$,
\ben
\vb{F_{\nabla}}_{h_{t_0}^{\tx{app}}}^2\le C t^4
\een
where $\nabla=\nabla_{h_t^{\tx{app}}}$.
\end{lem}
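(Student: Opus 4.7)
The plan is to apply the same regional decomposition of $X$ used in the proof of Lemma \ref{lem:compLaplacians}. In each region I fix the same local holomorphic frame chosen there, write $H_1$ (resp.\ $H_0$) for the local matrix form of $h_t^{\tx{app}}$ (resp.\ $h_{t_0}^{\tx{app}}$), and recall that the Chern curvature of a metric $h$ has local matrix form $\pd_{\bar\zeta}\rd{H^{-1}\pd_\zeta H}\,d\bar\zeta\wedge d\zeta$. By the triangle inequality,
\ben
\vb{\pd_{\bar\zeta}\rd{H_1^{-1}\pd_\zeta H_1}}\le \vb{\pd_{\bar\zeta}\rd{H_1^{-1}\pd_\zeta H_1}-\pd_{\bar\zeta}\rd{H_0^{-1}\pd_\zeta H_0}}+\vb{\pd_{\bar\zeta}\rd{H_0^{-1}\pd_\zeta H_0}}\,.
\een
The second term on the right is bounded by a $t$-independent constant because $h_{t_0}^{\tx{app}}$ is a fixed $C^2$ metric by Prop \ref{prop:htapp}, while the first term is precisely the quantity estimated region-by-region in the proof of Lemma \ref{lem:compLaplacians}.

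Assembling those bounds: (\ref{eq:compLaplacians1}) yields $O(t^2)$ on region (1), namely $\widetilde\mbd_{j,t}$ for $p_j\in D_r$; (\ref{eq:compLaplacians11}) yields $O(t^{2/3})$ on region (2), namely $\mbd_j'-\widetilde\mbd_{j,t}$ for $p_j\in D_r$; (\ref{eq:compLaplacians3}) yields $O(t^{4/3})$ on region (3), namely $\mbd_j'$ for $p_j\in D_\bet\cup D_\gam$; and on region (4), namely $X-\coprod_j\mbd_j'$, the curvature vanishes identically, since there $h_t^{\tx{app}}$ coincides with $\iota^\ast\rd{h_{L,\ulam(t),t}^{-2}h_K\oplus h_{L,\ulam(t),t}h_K}$, the pullback along the holomorphic isomorphism $\iota$ of a direct sum of metrics whose Chern connections are flat on $X-D$. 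The dominant bound is therefore the $O(t^2)$ coming from the small disk region (1) near points of $D_r$.

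To translate this pointwise matrix-norm bound into a pointwise $h_{t_0}^{\tx{app}}$-norm bound, I would use that in each of the finitely many reference frames employed above ($\uls_j^{(0)}$ on the disks and $\uls_\alp$ on the exterior atlas) the smooth matrix $H_0$ has uniformly bounded entries and determinant bounded away from zero, so for any $2\times 2$ matrix $\phi$ one has $\vb{\phi}_{H_0}^2=\tr(\phi H_0^{-1}\phi^\ast H_0)\le K\vb{\phi}^2$ with $K>0$ independent of $t$ and of the base point. Squaring the $O(t^2)$ matrix-norm bound then yields $\vb{F_{\nabla_{h_t^{\tx{app}}}}}_{h_{t_0}^{\tx{app}}}^2\le C t^4$ pointwise, as claimed. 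I do not anticipate a serious obstacle here: the statement is essentially a repackaging of the differential estimates already derived in the proof of Lemma \ref{lem:compLaplacians}, combined with the observation that the gluing construction was arranged precisely so the exterior region inherits a flat Chern connection and so the only $t$-growth of the curvature occurs on the shrinking disks $\widetilde\mbd_{j,t}$.
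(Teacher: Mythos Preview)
Your proposal is correct and is essentially the paper's own argument: the lemma is stated immediately after Lemma \ref{lem:compLaplacians} as a direct consequence of the curvature-difference bounds (\ref{eq:compLaplacians1}), (\ref{eq:compLaplacians11}), and (\ref{eq:compLaplacians3}) established there, together with the vanishing of the curvature on region (4). Your only addition is to make the triangle-inequality step explicit, separating the $t$-dependent difference from the $t$-independent baseline $h_{t_0}^{\tx{app}}$; this is harmless and in fact clarifying.
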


\begin{lem} \label{lem:psibetagammaest}
There is $C>0$ such that $u\in L_2^2\rd{\tx{Herm}\rd{F,h_t^{\tx{app}}}}$ and $t$ large enough,
\ben
\dbv{\cl{\psi,\hat u}}_{L^2,h_{t_0}^{\tx{app}}}\le C t^{11/3} \dbv{u}_{L^2,h_{t_0}^{\tx{app}}}
\een
where $\psi=\psi_{\bet,\gam,h_t^{\tx{app}}}$ is defined in Prop \ref{prop:Ltform}.
\end{lem}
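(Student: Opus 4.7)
The strategy is to reduce the $L^2$-estimate to a pointwise $L^\infty$-bound on $\psi$, then compute $\psi$ locally in each of the four types of regions covering $X$. Since $\rk F=2$, the pointwise inequalities $|\hat u|_h\le 3|u|_h$ and $|\{\psi,\hat u\}|_h\le 2|\psi|_h|\hat u|_h$ give, with $h=h_{t_0}^{\tx{app}}$, after integrating over $X$,
\[
\dbv{\cl{\psi,\hat u}}_{L^2,h_{t_0}^{\tx{app}}}\le 6\bigl(\sup_X|\psi|_{h_{t_0}^{\tx{app}}}\bigr)\dbv{u}_{L^2,h_{t_0}^{\tx{app}}},
\]
reducing the lemma to the pointwise bound $\sup_X|\psi|_{h_{t_0}^{\tx{app}}}\le Ct^{11/3}$ for $t\gg 1$.

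I would prove this $L^\infty$-bound region by region, using the decomposition of $X$ from the proof of Lemma~\ref{lem:htappcomp} and the explicit local forms of $h_t^{\tx{app}},\bet,\gam$ tabulated at the end of Section~\ref{sec:htapp}. In any local frame with $\bet=\bet_0\,d\zeta$, $\gam=\gam_0\,d\zeta$ and $H$ the local form of $h_t^{\tx{app}}$, the $h$-adjoint formulas (with $h_L=(\det H)^{-1}$) and $i\Lambda_\omega$ for $\omega=\tfrac{i}{2}d\zeta\wedge d\bar\zeta$ give a direct matrix expression for the local form $\Psi$ of $\psi$, schematically
\[
\Psi\;\sim\;\det(H)\,\gam_0\gam_0^{\dagger}H\ +\ (\det H)^{-1}H^{-1}\bet_0^{\dagger}\bet_0,
\]
parallel to the Higgs term in (\ref{eq:defnmcht}); the sought quantity $|\Psi|^2_{H_0}=\tr(\Psi H_0^{-1}\Psi^\ast H_0)$ is then estimated using the $C^2$ boundedness of $H_0^{\pm 1}$ from Prop.~\ref{prop:htapp}. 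In region~(4) ($X\setminus\coprod_j\mbd_j'$) the decoupled form of $h_{\infty,t}=h_t^{\tx{app}}$ and the standard form of $\bet,\gam$ under $\iota$ give entries of $\Psi$ independent of $t$. In region~(3), near $D_\bet\cup D_\gam$, Lemma~\ref{lem:painleveasymp} together with the explicit $H_t^{\tx{int},\bet/\gam}$ of (\ref{eq:HtintDbetgam}) similarly yields $t$-independent bounds. In region~(1), $\widetilde\mbd_{j,t}$ with $p_j\in D_r$, the scaling (\ref{eq:tscalinglaw}) reduces the computation to uniform boundedness on the compact set $\{|\zeta'|\le\rho_0\}$ via Cor.~\ref{cor:metriccont}, and the bounded change of basis $S\cdot T_{\ulam(t),j}$ from Def.~\ref{def:htapp} transfers the bound to the smooth frame $\uls_j^{(0)}$. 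The dominant contribution comes from region~(2), where Prop.~\ref{prop:asympsum} and Prop.~\ref{prop:clamcont} give entries of $\widetilde H_{t,\lam_j(t)}^{\tx{int}}$ of the form $t^{\pm4\lam_j(t)/3}|\zeta|^{\pm(2\lam_j(t)\pm1/2)}$ with $|\lam_j(t)|<1/4$, and the $|\zeta|^{-1}\lesssim t^{2/3}$ from the singularity of $S^{-1}$ at $p_j$ accounts for the remaining $t$-growth.

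\textbf{Main obstacle.} The subtle point is to extract the sharp exponent $11/3$ rather than merely some polynomial bound. A naive argument—combining a uniform $|\psi|_{h_t^{\tx{app}}}$-bound with the crude metric comparison $|\psi|^2_{h_{t_0}^{\tx{app}}}\le Ct^{13}|\psi|^2_{h_t^{\tx{app}}}$ from Lemma~\ref{lem:htappcomp}—would yield only $t^{13/2}$. The improvement to $t^{11/3}$ requires an entry-by-entry analysis of $\Psi$ exploiting the cancellation between its two summands (which carry $H$ and $H^{-1}$ with opposite $t$-scaling) together with the $\zeta$-factors built into $\bet_0,\gam_0$ by Theorem~\ref{thm:hecke}. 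This is in the same spirit as the $\lam$-uniform cancellations used in \S\ref{sec:L2lowerbound} to produce the lower bound $\nu_\lam n_\lam(n_{11,\lam}+n_{22,\lam})\ge C>0$ used in the proof of Prop.~\ref{prop:LtL2lowbd}.
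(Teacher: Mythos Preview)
Your overall strategy---reduce to a pointwise $L^\infty$-bound on $\psi$ and analyze region by region---is exactly what the paper does. However, several details in your region-by-region claims are inaccurate, and your ``main obstacle'' paragraph misdiagnoses where the exponent $11/3$ comes from.

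First, regions (3) and (4) are \emph{not} $t$-independent. On $X\setminus\coprod_j\mbd_j'$ the decoupled metric $h_{\infty,t}$ carries the factor $e^{\eta_{\ulam(t),t}}$ with $\eta_{\ulam(t),t}\sim\tfrac{4}{3}\lam_{j_0}(t)\log t$ (see (\ref{eq:culamt})), so in the smooth frame $\uls_\alp$ one has $|(H\det H)^{\pm 1}|\lesssim t^{4|\lam_{j_0}(t)|}\le t$. On $\mbd_j'$ for $p_j\in D_\bet\cup D_\gam$, the frame change $T_{\ulam(t),j,t}'$ from (\ref{eq:defTulamjtprime}) again involves $\tau_{\ulam(t),t}\sim t^{-4\lam_{j_0}(t)/3}$, yielding $|H\det H|\lesssim t^{5/3}$. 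Similarly, in region (1) the $\Gamma_t$-conjugation from (\ref{eq:tscalinglaw}) contributes a factor $t^{2/3}$ that your ``uniform boundedness on the compact set'' misses. None of these is dominant, so your final bound would survive, but the claims as stated are wrong.

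Second, and more importantly, no cancellation between the two summands of $\Psi$ is needed. In the smooth frame $\uls_j^{(0)}$ over $\mbd_j'$, $\bet_0=\tfrac{1}{\sqrt2}(\zeta_j,1)$ and $\gam_0=\tfrac{1}{\sqrt2}(1,\zeta_j)^T$ are bounded, and $h_{t_0}^{\tx{app}}$ is a fixed $C^2$ metric, so $|\Psi|_{h_{t_0}^{\tx{app}}}\lesssim|H\det H|+|(H\det H)^{-1}|$ with $|\cdot|$ the entrywise norm. The paper simply bounds each piece: on region (2), writing $H'=(T_{\ulam(t),j}S)^\ast\widetilde H_{t,\lam_j(t)}^{\tx{int}}(T_{\ulam(t),j}S)$, one has $|(\widetilde H^{\tx{int}}\det\widetilde H^{\tx{int}})^{\pm1}|\lesssim t$ from $|\lam_j(t)|<1/4$, while the frame-change contributes $|S^{-1}T^{-1}|^2|\det(S^{-1}T^{-1})|^2\lesssim t^{4/3}\cdot t^{4/3}=t^{8/3}$, giving $|(H'\det H')^{-1}|\lesssim t^{11/3}$. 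That is the whole story---the exponent $11/3$ is the direct product of these factors, not the result of any entry-by-entry cancellation analogous to \S\ref{sec:L2lowerbound}.
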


\begin{proof}
As in the proof of Lemmas \ref{lem:htappcomp} we use the decomposition of $X$ into four types of regions, use holomorphic frames $\uls_j^{(0)}$ in \S \ref{sec:htapp} over $\mbd_j'-\widetilde \mbd_{j,t}$, $\widetilde \mbd_{j,t}$ and $\uls_\alp$ over $U_\alp$ induced by that of $L$ and $dz_\alp$ of $K$. Let $H$ (resp. $U$) be the local form of $h_t^{\tx{app}}$ (resp. $u$) with respect to respective holomorphic frames and let $\hat U=U+\rd{\tr\,U}\tx{Id}$. The local form of $\cl{\psi,\hat u}$ is given by
\ben
\cl{\gam_0\gam_0^\ast H \det H+\rd{\det H}^{-1}H^{-1}\bet_0^\ast \bet_0, \hat U}
\een

{\it Region (1)}: By (\ref{eq:tscalinglaw}) and Def \ref{def:metrics}, we have that $H_{t,\lam}(\zeta)=\rd{\Gamma_t^\ast}^{-1}H_{1,\lam}\rd{t^{2/3}\zeta}\Gamma_t^{-1}$ with \break $\Gamma_t=\tx{diag}\rd{t^{1/3},t^{-1/3}}$. For $t\gg 1$, $H_{t,\lam}$ on $\widetilde \mbd_{j,t}$ is determined by values of $H_{1,\lam}$ on the fixed compact set $\overline{B(0,\rho_0)}$. We have $\vb{\Gamma_t A\Gamma_t^{-1}}\lesssim t^{2/3}\vb{A}$ for $A$ a $2\times 2$ matrix. Thus there is $C_0>0$ such that for $t\gg 1$
\ben
\vb{H_{t,\lam_j(t)}^{\pm 1}}\le C_0 t^{2/3},\,\, \vb{\det H_{t,\lam_j(t)}^{\pm 1}}\le C_0\,.
\een
Furthermore, there is $C_1>0$ such that $\vb{H_{t,\lam_j(t)}^{\pm 1}\det H_{t,\lam_j(t)}^{\pm 1}}\le C_1 t^{2/3}$. Let $H=\rd{h_t^{\tx{app}}}_{\uls_j^{(0)}}$ we have $H=\rd{S^{-1}T_{\ulam(t),j}S}^\ast H_{t,\lam_j(t)}\rd{S^{-1}T_{\ulam(t),j}S}$ where $S$ is given in (\ref{eq:defS2}). By (\ref{eq:bdSinvTS}), there is $C_2>0$ such that
\ben
\vb{H\det H},\,\, \vb{H^{-1}\det H^{-1}}\le C_2 t^{2/3}
\een

{\it Region (2)}: On $\mbd_j'-\widetilde \mbd_{j,t}$ let $H'=\rd{h_t^{\tx{app}}}_{\uls_j^{(0)}}$ we have 
\ben
H'=\rd{T_{\ulam(t),j}S}^{\ast}\widetilde H_{t,\lam_j(t)}^{\tx{int}}\rd{T_{\ulam(t),j}S}
\een
By arguments below (\ref{eq:defS2}) and (\ref{eq:matrixSinvTS}) there is $C_4>0$ such that for $t\gg 1$, $\vb{T_{\ulam(t),j}S}$, $\vb{\det\rd{T_{\ulam(t),j}S}}\le C_4$, $\vb{S^{-1}T_{\ulam(t),j}^{-1}}$, $\vb{\det\rd{S^{-1}T_{\ulam(t),j}}^{-1}}\le C_4 t^{2/3}$. By arguments in the proof of Lemma \ref{lem:appsol1} and applying Lemma \ref{lem:matrest} and (\ref{eq:Htappbd}), there are $C_5$, $c_5>0$ such that on $\mbd_j'-\widetilde \mbd_{j,t}$ for $t\gg 1$,
\ben
\vb{\rd{\widetilde H_{t,\lam_j(t)}^{\tx{int}}\det \widetilde H_{t,\lam_j(t)}^{\tx{int}}}^{\pm 1}-\rd{\widetilde H_{t,\lam_j(t)}^{\tx{ext}}\det \widetilde H_{t,\lam_j(t)}^{\tx{ext}}}^{\pm 1}}\le C_5e^{-c_5 t^{2/3}}\,.
\een
By a direct calculation and Prop \ref{prop:clamcont}, there is $C_6>0$ such that
\ben
\vb{\rd{\widetilde H_{t,\lam_j(t)}^{\tx{ext}}\det \widetilde H_{t,\lam_j(t)}^{\tx{ext}}}^{\pm 1}}\le C_6 t^{4\vb{\lam_j(t)}}\le C_6 t
\een
Therefore there is $C_6'>0$ such that $\vb{\rd{\widetilde H_{t,\lam_j(t)}^{\tx{int}}\det \widetilde H_{t,\lam_j(t)}^{\tx{int}}}^{\pm 1}}\le C_6' t$. Combining the above, we have that there are $C_7$, $C_7'>0$ such that on $\mbd_j'-\widetilde \mbd_{j,t}$,
\ben
\vb{H'\det H'}\le C_7 t,\,\, \vb{\rd{H'}^{-1}\det \rd{H'}^{-1}}\le C_7' t^{11/3}\,.
\een

{\it Region (3)}: On $\mbd_j'=\cl{\rho\le 2R/3}$ for $p_j\in D_\bet$ or $D_\gam$, we have by direct calculation,
\ben
\rd{H_t^{\tx{int},\gam}}\det H_t^{\tx{int},\gam}=\tx{diag}\rd{\rho^{-1}e^{-2\chi \psi_P},\rho^{-1/2}e^{-\chi\psi_P}}=\rd{H_t^{\tx{int},\bet}}^{-1}\rd{\det H_t^{\tx{int},\bet}}^{-1}\,.
\een
On $\widetilde \mbd_{j,t}$, we have $\rho^{-1}e^{-2\psi_P}=t^{2/3}f(t^{2/3}\rho)$ where $f(x)=x^{-1}e^{-2\psi\rd{\frac{8}{3}x^{3/2}}}$ and $\psi_P$, $\psi$ are as in (\ref{eq:defpsiP}). By the properties of $\psi_P$ in Prop \ref{prop:locmodpainleve}, there is $C_8>0$ such that $\vb{f}$, $\vb{f^{-1}}\le C_8$ for $0\le x\le \rho_0$. Therefore there is $C_8'>0$ such that on $\widetilde \mbd_{j,t}$ we have $\vb{\rho^{-1}e^{-2\chi\psi_P}}\le C_8't^{2/3}$, $\vb{\rho e^{2\chi\psi_P}}\le C_8'$.

On $\mbd_j'-\widetilde \mbd_{j,t}$ for $p_j\in D_\bet$ since $\chi\psi_P>0$, we have $\vb{\rho^{-1}e^{-2\chi\psi_P}}\le \vb{\rho^{-1}}\lesssim t^{2/3}$. On the other hand, since $\psi_P$ is monotonically decreasing, so is $\chi\psi_P$. Therefore there is $C_9>0$ such that $\vb{\rho e^{2\chi\psi_P}}\le C_9$. 

Therefore on $\mbd_j'$ for $p_j\in D_\bet$ or $D_\gam$, we have that there is $C_{10}>0$,
\begin{align*}
& \vb{H_t^{\tx{int},\gam}\det H_t^{\tx{int},\gam}}, \,\, \vb{\rd{H_t^{\tx{int},\bet}\det H_t^{\tx{int},\bet}}^{-1}}\le C_{10} t^{2/3}\,, \\
& \vb{\rd{H_t^{\tx{int},\gam}\det H_t^{\tx{int},\gam}}^{-1}},\,\, \vb{H_t^{\tx{int},\bet}\det H_t^{\tx{int},\bet}}\le C_{10}\,.
\end{align*}

By a direct calculation, there is $C_{11}>0$ such that $\vb{T_{\ulam(t),j,t}'}\le C_{11}t^{2a_{\lam_{j_0}(t)}'/3}$, $\vb{\rd{T_{\ulam(t),j,t}'}^{-1}}\le C_{11}t^{2a_{\lam_{j_0}(t)}/3}$, $\vb{\det T_{\ulam(t),j,t}'}\le C_{11}t^{-2\lam_{j_0}(t)/3}$, $\vb{\rd{\det T_{\ulam(t),j,t}'}^{-1}}\le C_{11}t^{-2\lam_{j_0}(t)/3}$ where $a_\lam=\max\rd{2\lam,-\lam}$, $a_\lam'=\max\rd{-2\lam,\lam}$. Let $H=\rd{h_t^{\tx{app}}}_{\uls_j^{(0)}}$ we have by Def \ref{def:htapp} 
\ben
H=\begin{cases}
\rd{T_{\ulam(t),j,t}'}^\ast H_t^{\tx{int},\bet} T_{\ulam(t),j,t}' & p_j \in D_\bet \\
\rd{T_{\ulam(t),j,t}'}^\ast H_t^{\tx{int},\gam} T_{\ulam(t),j,t}' & p_j \in D_\gam
\end{cases}
\een
Combining the above estimates, there is $C_{12}>0$ such that for $p_j\in D_\gam$,
\ben
\vb{H\det H}\le C_{12}t^{5/3},\,\, \vb{H^{-1}\det H^{-1}}\le C_{12}t
\een
and for $p_j\in D_\bet$,
\ben
\vb{H\det H}\le C_{12}t,\,\, \vb{H^{-1}\det H^{-1}}\le C_{12}t^{5/3}
\een

{\it Region (4)}: On $X-\coprod_j \mbd_j'$, let $H$ be the local form of $h_t^{\tx{app}}$. We have for $s=\pm 1$,
\ben
\rd{H\det H}^s=\tx{diag}\rd{h_0^{3 s}e^{s\rd{ 3\varphi_{\ulam(t)}+ 3\eta_{\ulam(t),t}}},1}\,.
\een
By Prop \ref{prop:clamcont}, there is $C_{13}>0$ such that $\vb{\eta_{\ulam(t),t}-\frac{4}{3}\lam_{j_0}(t)\log t}\le C_{13}$ for $t\gg 1$. It follows that there is $C_{14}>0$ such that for all $\alp$ we have on $U_\alp$
\ben
\vb{\rd{H\det H}^{\pm 1}}\le C_{14}t
\een
The conclusion follows from combining all the above estimates.
\end{proof}

Now we are ready to prove Prop \ref{prop:L22toL2lowerbound}

\begin{proof}
Denote by $f\lesssim g$ if there is constant $C>0$ independent of $t$ such that $f\le C g$ for $t\gg 1$. By Prop \ref{prop:Ltform} and Lemmas \ref{lem:compLaplacians}, and \ref{lem:psibetagammaest}, we have for $u\in L_2^2\rd{\tx{Herm}\rd{F,h_t^{\tx{app}}}}$, $\epsilon>0$, such that for $t\gg 1$,
\begin{align*}
&\dbv{\Delta_{h_{t_0}^{\tx{app}}}u}^2\le \dbv{L_t u}^2+\dbv{\Delta_{h_t^{\tx{app}}}u -\Delta_{h_{t_0}^{\tx{app}}}u}^2+\dbv{t^2\cl{\psi_{\bet,\gam,h_t^{\tx{app}}},\hat u}}^2 \\
&\lesssim \dbv{L_t u}^2+t^4 \dbv{d_{h_{t_0}^{\tx{app}}}u}^2+t^{22/3+4}\dbv{u}^2\le \dbv{L_t u}^2+t^4 \rd{\Delta_{h_{t_0}^{\tx{app}}}u,u}+t^{11}\dbv{u}^2 \\
&\lesssim \dbv{L_t u}^2+\frac{\epsilon t^4}{2}\dbv{\Delta_{h_{t_0}^{\tx{app}}}u}^2+\frac{t^4}{2\epsilon}\dbv{u}^2+t^{11}\dbv{u}^2
\end{align*}
where $\dbv{\cdot}=\dbv{\cdot}_{L^2,h_{t_0}^{\tx{app}}}$, $\rd{\cdot,\cdot}=\rd{\cdot,\cdot}_{L^2,h_{t_0}^{\tx{app}}}$, and we will denote below $\dbv{\cdot}_t=\dbv{\cdot}_{L^2,h_t^{\tx{app}}}$. By taking $\epsilon=t^{-4}$, we get that
\ben
\dbv{\Delta_{h_{t_0}^{\tx{app}}}u}^2 \lesssim \dbv{L_t u}^2+t^{13}\dbv{u}^2
\een
By the elliptic estimate of $\Delta_{h_{t_0}^{\tx{app}}}$ on sections of $\tx{End}(F)$, we have 
\begin{align*}
&\dbv{u}_{L_2^2,h_{t_0}^{\tx{app}}}^2\lesssim \dbv{\Delta_{h_{t_0}^{\tx{app}}} u}^2+\dbv{u}^2\lesssim \dbv{L_t u}^2+t^{11}\dbv{u}^2\lesssim \dbv{L_t u}^2+t^{11+13}\dbv{u}_t^2 \\
&\lesssim \dbv{L_t u}^2+t^{24}\rd{\log t}^2\dbv{L_t u}_t^2\lesssim \rd{1+t^{24+13}\rd{\log t}^2}\dbv{L_t u}^2\lesssim t^{38}\dbv{L_t u}^2\,,
\end{align*}
where we applied Lemma \ref{lem:htappcomp} to compare $\dbv{\cdot}_t^2$ and $\dbv{\cdot}^2$ as well as Cor \ref{cor:mainL2est}, the conclusion follows.
\end{proof}

\subsection{Estimates for the remainder term}

In this section, we prove an upper bound on the remainder term defined in (\ref{eq:defnRtu}),
\begin{align*}
& R_t(u)=2i\Lambda e^{u/2}\mch_{t,h} e^{-u/2}-2i\Lambda \mch_{t,h}-L_t(u)  \\
& =2i\Lambda e^{u/2}F_{\nabla_{h\cdot e^u}}e^{-u/2} \\
&+2i t^2\Lambda e^{u/2}\rd{\bet\wedge \bet^{\ast_{h\cdot e^u}}} e^{-u/2}-2it\Lambda \rd{\bet\wedge\bet_h} \\
&-\Delta_h u-t^2\cl{\psi_{\bet,\gam,h},\hat u}-2i\Lambda F_{\nabla_h}  \\
&+2it^2\Lambda e^{u/2}\rd{\gam^{\ast_{h\cdot e^u}} \wedge\gam}e^{-u/2}-2it^2\Lambda \rd{\gam^{\ast_{h\cdot e^u}}\wedge \gam}\,,
\end{align*}
where $h=h_t^{\tx{app}}$. For a metric $h$ and $g\in \tx{Aut}(F)$ we have
\begin{align*}
&\bet\wedge \bet^{\ast_{h\cdot g}}=\rd{\bet\wedge\bet^{\ast_h}} \circ\widetilde g \\
&\gam^{\ast_{h\cdot g}}\wedge\gam=\rd{\widetilde g}^{-1}\circ\rd{\gam^{\ast_h} \wedge \gam}
\end{align*}
where $\widetilde g=\rd{\det g}g$. We have
\be \label{eq:decompRt}
R_t(u)=R_t^{(0)}(u)+R_t^{(1)}(u)+R_t^{(2)}(u)
\ee
with
\begin{align}
& R_t^{(0)}(u)=i\Lambda S_t(u) \nonumber \\
& S_t(u)=2 e^{u/2}F_{\nabla_h}e^{-u/2}-2F_{\nabla_h}+2e^{u/2}\bar\pd\rd{e^{-u}\pd_h e^u}e^{u/2}-\bar\pd \pd_h u+\pd_h\bar\pd u \nonumber \\
& R_t^{(1)}(u)=2t^2 e^{\hat u/2}B_t e^{\hat u/2}-2t^2 B_t-t^2\cl{B_t,\hat u}\nonumber \\
& R_t^{(2)}(u)=2t^2 e^{-\hat u/2}C_t e^{-\hat u/2}-2t^2 C_t+t^2\cl{C_t,\hat u} \label{eq:defRt0StRt1Rt2}
\end{align}
and
\ben
B_t=i\Lambda \bet\wedge \bet^{\ast_h},\,\, C_t=i\Lambda \gam^{\ast_h}\wedge \gam
\een

Given $u\in \tx{End}(F)$, let
\be \label{eq:defnfj}
f_j(u)=\sum_{k=j}^\infty \ov{k!}u^k
\ee
The lemma below follows easily from the Sobolev inequalities.

\begin{lem} \label{lem:bddifffj}
Given $0<r<1$, and $u_0,u_1\in B(0,r)\subset L_2^2\rd{\tx{Herm}\rd{F,h_t^{\tx{app}}}}$, for each $j\ge 1$, there is $C_j>0$ such that
\ben
\dbv{f_j(u_0)-f_j(u_1)}_{L_2^2,h_{t_0}^{\tx{app}}}\le C_j r^{j-1}\dbv{u_0-u_1}_{L_2^2,h_{t_0}^{\tx{app}}}
\een
\end{lem}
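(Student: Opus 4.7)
The plan is to exploit that on the real two-dimensional compact manifold $X$ the space $L_2^2$ is a Banach algebra, and then expand $f_j(u_0)-f_j(u_1)$ as a telescoping series. Concretely, the Sobolev embeddings (with respect to the fixed background metrics $g_X$ and $h_{t_0}^{\mathrm{app}}$) give $L_2^2(X,\mathrm{End}(F))\hookrightarrow C^0$ and $L_1^2(X,\mathrm{End}(F))\hookrightarrow L^p$ for every $p<\infty$. Writing out $\nabla^\ell(uv)$ via the Leibniz rule for $\ell=0,1,2$ and bounding each product of the form $\|\nabla^a u\|_{L^{p_a}}\|\nabla^b v\|_{L^{p_b}}$ with $a+b\le 2$ by using these embeddings, I obtain
\ben
\dbv{uv}_{L_2^2,h_{t_0}^{\tx{app}}}\le C_0\, \dbv{u}_{L_2^2,h_{t_0}^{\tx{app}}}\, \dbv{v}_{L_2^2,h_{t_0}^{\tx{app}}}\,,
\een
with $C_0>0$ depending only on the fixed $h_{t_0}^{\mathrm{app}}$ and $g_X$ (and crucially independent of $t$). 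Iterating this Banach algebra inequality, $\dbv{u^k}_{L_2^2,h_{t_0}^{\tx{app}}}\le C_0^{k-1}\dbv{u}_{L_2^2,h_{t_0}^{\tx{app}}}^k$.

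Next I use the identity
\ben
u_0^k-u_1^k=\sum_{\ell=0}^{k-1}u_0^\ell\,(u_0-u_1)\,u_1^{k-1-\ell}
\een
and apply the Banach algebra bound term by term. With $\dbv{u_0}_{L_2^2},\dbv{u_1}_{L_2^2}\le r<1$, every product $u_0^\ell(u_0-u_1)u_1^{k-1-\ell}$ is controlled by $C_0^{k-1}r^{k-1}\dbv{u_0-u_1}_{L_2^2}$, so that
\ben
\dbv{u_0^k-u_1^k}_{L_2^2,h_{t_0}^{\tx{app}}}\le k\,C_0^{k-1}\,r^{k-1}\,\dbv{u_0-u_1}_{L_2^2,h_{t_0}^{\tx{app}}}\,.
\een

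Finally, inserting this into the series defining $f_j(u_0)-f_j(u_1)=\sum_{k\ge j}\frac{1}{k!}(u_0^k-u_1^k)$, pulling out the factor $r^{j-1}$, and using $r<1$ to bound $r^{k-j}\le 1$:
\ben
\dbv{f_j(u_0)-f_j(u_1)}_{L_2^2,h_{t_0}^{\tx{app}}}\le r^{j-1}\dbv{u_0-u_1}_{L_2^2,h_{t_0}^{\tx{app}}}\sum_{k=j}^\infty\frac{k\,C_0^{k-1}}{k!}\,.
\een
The series $\sum_{k\ge j}C_0^{k-1}/(k-1)!\le e^{C_0}$ converges, which defines the required constant $C_j=C_j(C_0,j)$.

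There is no real obstacle here; the statement is essentially a soft consequence of the Banach algebra structure of $L_2^2$ in two real dimensions. The only point to monitor carefully is that all constants remain independent of $t$: this holds because the norm is taken with respect to the fixed reference metric $h_{t_0}^{\mathrm{app}}$, and the Sobolev and Banach algebra constants depend only on that metric and on the fixed Riemannian geometry of $(X,g_X)$.
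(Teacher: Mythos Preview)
Your proof is correct and takes essentially the same approach as the paper, which simply records that the lemma ``follows easily from the Sobolev inequalities'' without further detail. You have supplied exactly the standard expansion of that phrase: the Banach algebra property of $L_2^2$ in real dimension two together with the telescoping identity for $u_0^k-u_1^k$, and your observation that the constants are $t$-independent because the norm is taken with respect to the fixed $h_{t_0}^{\mathrm{app}}$ is the right point to emphasize.
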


\begin{lem} \label{lem:delhest}
There is $C>0$ such that for $u\in L_2^2\rd{\tx{Herm}\rd{F,h_t^{\tx{app}}}}$, $k=1,2$ and $t\gg 1$
\ben
\dbv{\pd_{h_t^{\tx{app}}} u}_{L_{k-1}^2,h_{t_0}^{\tx{app}}}^2\le C t^4 \dbv{u}_{L_k^2,h_{t_0}^{\tx{app}}}^2
\een
\end{lem}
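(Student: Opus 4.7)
The plan is to reduce the estimate to pointwise bounds on the Chern connection form in suitable local frames. I would work in the four types of local regions introduced in the proofs of Lemmas \ref{lem:htappcomp} and \ref{lem:compLaplacians}, using the holomorphic frames $\uls_j^{(0)}$ over the disks $\mbd_j$ and holomorphic frames over a finite atlas of $X-\coprod_j \mbd_j'$. Writing $H_t$ for the local matrix of $h_t^{\tx{app}}$ and $U$ for that of $u$, the Chern connection satisfies
$$(\partial_{h_t^{\tx{app}}}u)_{\tx{loc}} = (\partial_\zeta U + [\theta_t,U])\,d\zeta, \qquad \theta_t := H_t^{-1}\partial_\zeta H_t.$$
Since $h_{t_0}^{\tx{app}}$ is $t$-independent, the norm in (\ref{eq:defLk2norm1}) based on it is equivalent uniformly in $t$ to the metric-free norm (\ref{eq:defLk2norm2}), so it is enough to work with the latter. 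Pointwise one then has
$$|\partial_{h_t^{\tx{app}}}u|^2 \lesssim |\partial U|^2 + |\theta_t|^2|U|^2, \qquad |\nabla(\partial_{h_t^{\tx{app}}}u)|^2 \lesssim |\nabla^2 U|^2 + |\theta_t|^2|\nabla U|^2 + |\nabla\theta_t|^2|U|^2,$$
where $\lesssim$ denotes $t$-independent constants.

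The two required ingredients are sup-norm bounds $\|\theta_t\|_{L^\infty}^2 \lesssim t^{8/3}$ and $\|\nabla\theta_t\|_{L^\infty}^2 \lesssim t^4$, uniform across charts. The first is essentially already in the proof of Lemma \ref{lem:compLaplacians}: there one controls $|H_t^{-1}\partial H_t - H_0^{-1}\partial H_0|$ by $O(t^{4/3})$ on each region, and since $H_0^{-1}\partial H_0$ is $t$-independent this transfers to $|\theta_t|\lesssim t^{4/3}$. For the $\bar\partial$-derivative, $\partial_{\bar\zeta}\theta_t\,d\bar\zeta\wedge d\zeta$ is the local form of $F_{\nabla_{h_t^{\tx{app}}}}$, so Lemma \ref{lem:htappcurvbound} gives $|\partial_{\bar\zeta}\theta_t|\lesssim t^2$. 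For the holomorphic derivative I would use $\partial_\zeta\theta_t = -\theta_t^2 + H_t^{-1}\partial_\zeta^2 H_t$: on region (1), the scaling $H_t(\zeta) = \Gamma_t^{-1}H_{1,\lam}(t^{2/3}\zeta)\Gamma_t^{-1}$ with $\Gamma_t=\tx{diag}(t^{1/3},t^{-1/3})$ yields
$$\theta_t^2 = t^{4/3}\Gamma_t(H_{1,\lam}^{-1}\partial_\zeta H_{1,\lam})^2\Gamma_t^{-1}, \quad H_t^{-1}\partial_\zeta^2 H_t = t^{4/3}\Gamma_t(H_{1,\lam}^{-1}\partial_\zeta^2 H_{1,\lam})\Gamma_t^{-1},$$
and Cor \ref{cor:metriccont} together with $|\Gamma_t A\Gamma_t^{-1}|\lesssim t^{2/3}|A|$ gives each summand $\lesssim t^2$; on the remaining regions the analogous bound is easier from the explicit local forms in \S \ref{sec:glueondisk} and \S \ref{sec:htapp} (after taking into account the uniformly bounded gauge transformations $T_{\ulam(t),j}$ and $T_{\ulam(t),j,t}'$).

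Combining these ingredients, for $k=1$ one obtains
$$\|\partial_{h_t^{\tx{app}}}u\|_{L^2}^2 \lesssim \|u\|_{L_1^2}^2 + t^{8/3}\|u\|_{L^2}^2 \le t^4\|u\|_{L_1^2}^2,$$
and for $k=2$
$$\|\partial_{h_t^{\tx{app}}}u\|_{L_1^2}^2 \lesssim \|u\|_{L_2^2}^2 + t^{8/3}\|u\|_{L_1^2}^2 + t^4\|u\|_{L^2}^2 \lesssim t^4\|u\|_{L_2^2}^2,$$
as required. The main delicacy is the bound on $\partial_\zeta\theta_t$ on region (1): na\"ively $|\theta_t|^2$ could be of order $t^{8/3}$, which would give an unacceptable $t^{16/3}$ contribution to $|\nabla\theta_t|^2$; it is the two-sided conjugation structure by $\Gamma_t,\Gamma_t^{-1}$ that saves one factor of $t^{2/3}$ and recovers the curvature-type order $t^2$. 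This is the same structural feature that underlies the curvature bound of Lemma \ref{lem:htappcurvbound}, so the hard part is really a careful bookkeeping of that cancellation rather than a genuinely new estimate.
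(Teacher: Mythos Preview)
Your proposal is correct and follows essentially the same approach as the paper: both reduce the estimate to sup-norm bounds on $\theta_t=H^{-1}\partial_\zeta H$, $\partial_{\bar\zeta}\theta_t$, and $\partial_\zeta\theta_t$ over the same four regions, recycling the bounds on the first two from Lemma~\ref{lem:compLaplacians} and Lemma~\ref{lem:htappcurvbound}, and isolating $\partial_\zeta\theta_t$ as the only new quantity. One small point: on region~(1) you should also carry the gauge transformation $\widetilde T=S^{-1}T_{\ulam(t),j}S$ to pass to the fixed frame $\uls_j^{(0)}$ (not only on ``the remaining regions''); since $\widetilde T^{\pm1}$ and its derivatives are uniformly bounded (cf.\ (\ref{eq:bdSinvTS})), this introduces only cross terms of order $t^{4/3}$, and your $\Gamma_t$-conjugation observation for the main term $\widetilde T^{-1}\partial_\zeta(H_{t,\lam}^{-1}\partial_\zeta H_{t,\lam})\widetilde T$ still gives $O(t^2)$ exactly as the paper obtains.
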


\begin{proof}
We prove the estimate as in the proof of Lemmas \ref{lem:htappcomp}, \ref{lem:compLaplacians}, \ref{lem:psibetagammaest} on four types of regions decomposing $X$. By the equivalent definitions of $L_k^2$ norm in (\ref{eq:defLk2norm1}) and (\ref{eq:defLk2norm2}), as well as the local form of $\pd_h u$ with respect to a local holomorphic frame, the conclusion will follow from bounds on
\be \label{eq:delhest3quantities}
H^{-1}\pd_z H, \,\, \,\pd_{\bar z}\rd{H^{-1}\pd_z H},\,\, \pd_z\rd{H^{-1}\pd_z H}\,,
\ee
where $H$ is the local form of $h_t^{\tx{app}}$. Note that (\ref{eq:compLaplacians1}), (\ref{eq:compLaplacians11}), (\ref{eq:compLaplacians2}), and (\ref{eq:compLaplacians4}) in the proof of Lemmas \ref{lem:compLaplacians}, \ref{lem:htappcurvbound} already provide appropriate bounds on the first two.

{\it Region (1)}: On $\widetilde \mbd_{j,t}$ with $p_j\in D_r$, let $H=\rd{h_t^{\tx{app}}}_{\uls_j^{(0)}}$ we have $H=\rd{\widetilde T}^\ast H_{t,\lam_j(t)} \widetilde T$ where $\widetilde T=S^{-1}T_{\ulam(t),j}S$. We have
\begin{align}
& H^{-1}\pd_{\zeta} H=\widetilde T^{-1}\rd{H_{t,\lam_j(t)}^{-1}\pd_{\zeta} H_{t,\lam_j(t)}}\widetilde T+\widetilde T^{-1}\pd_\zeta \widetilde T \nonumber \\
& \pd_\zeta \rd{H^{-1}\pd_\zeta H}=\widetilde T^{-1}\pd_\zeta \rd{H_{t,\lam_j(t)}^{-1}\pd_\zeta H_{t,\lam_j(t)}} \widetilde T+\pd_\zeta\rd{\widetilde T^{-1}}\rd{H_{t,\lam_j(t)}^{-1}\pd_\zeta H_{t,\lam_j(t)}}\widetilde T\,,\nonumber \\
& +\widetilde T^{-1}\rd{H_{t,\lam_j(t)}^{-1}\pd_\zeta H_{t,\lam_j(t)}}\pd_\zeta\rd{\widetilde T}+\pd_\zeta\rd{\widetilde T^{-1}\pd_\zeta \widetilde T}\,, \label{eq:dhinvdh} 
\end{align}
where $\zeta=\zeta_j$, and we used $\pd_{\bar \zeta}\widetilde T=0$. By (\ref{eq:tscalinglaw}) and Def \ref{def:metrics}, $\pd_\zeta\rd{H_{t,\lam}^{-1}\pd_\zeta H_{t,\lam}}=t^{4/3}\Gamma_t \pd_\zeta\rd{\zeta\mapsto \atp{H_{1,\lam}^{-1}\pd_\zeta H_{1,\lam}}{t^{2/3}\zeta}}\Gamma_t$ on $\widetilde \mbd_{j,t}$, where $\Gamma_t=\tx{diag}\rd{t^{1/3},t^{-1/3}}$. The same arguments in the proof of Lemma \ref{lem:compLaplacians} applies. We have $\vb{\Gamma_t A\Gamma_t^{-1}}\lesssim t^{2/3}\vb{A}$ for $A$ a $2\times 2$ matrix. By Cor \ref{cor:metriccont} and (\ref{eq:bdSinvTS}), there is $C_1>0$ independent in $\lam\in I$ such that for $t\gg 1$, $\vb{\pd_\zeta \rd{H_{t,\lam}^{-1}\pd_\zeta H_{t,\lam}}}\le C_1 t^2$, $\vb{H_{t,\lam}^{-1}\pd_\zeta H_{t,\lam}}\le C_1 t^{4/3}$. On the other hand, the arguments in the proof of Lemma \ref{lem:compLaplacians} implies that there is $C_2>0$ such that for $t\gg 1$
\ben
\vb{\widetilde T^{\pm 1}},\, \vb{\pd_\zeta \rd{\widetilde T^{\pm 1}}},\, \vb{\pd_\zeta \rd{\widetilde T^{-1}\pd_\zeta \widetilde T}}\le C_2
\een
Combining these, there is $C_3>0$ such that
\be \label{eq:delhestproof1}
\vb{\pd_\zeta\rd{H^{-1}\pd_\zeta H}}\le C_3t^2
\ee

{\it Region (2)}: On $\mbd_j'-\widetilde \mbd_{j,t}$, let $T'=T_{\ulam(t),j}S$, $H=\rd{h_t^{\tx{app}}}_{\uls_j^{(0)}}$ we have $H=\rd{T'}^\ast \widetilde H_{t,\lam_j(t)}^{\tx{int}} T'$. As in the proof of Lemma \ref{lem:compLaplacians} using Prop \ref{prop:clamcont}, there are $C_4$ and $C_4'>0$ such that for $t\gg 1$
\ben
\vb{\pd_\zeta \rd{\rd{\widetilde H_{t,\lam_j(t)}^{\tx{ext}}}^{-1}\pd_\zeta\rd{\widetilde H_{t,\lam_j(t)}^{\tx{ext}}}}}\le C_4't^{4/3}
\een
By (\ref{eq:Htappbd}) and the arguments in the proof of Lemma \ref{lem:appsol1}, there are $C_5$, $c_5>0$ such that on $\mbd_j'-\widetilde \mbd_{j,t}$, 
\ben
\vb{\pd_\zeta \rd{\rd{\widetilde H_{t,\lam_j(t)}^{\tx{int}}}^{-1}\pd_\zeta\rd{\widetilde H_{t,\lam_j(t)}^{\tx{int}}}}-\pd_\zeta \rd{\rd{\widetilde H_{t,\lam_j(t)}^{\tx{ext}}}^{-1}\pd_\zeta\rd{\widetilde H_{t,\lam_j(t)}^{\tx{ext}}}}}\le C_5 e^{-c_5t^{2/3}}
\een
It follows that there is $C_6>0$ such that
\ben
\vb{\pd_\zeta \rd{\rd{\widetilde H_{t,\lam_j(t)}^{\tx{int}}}^{-1}\pd_\zeta\rd{\widetilde H_{t,\lam_j(t)}^{\tx{int}}}}}\le C_6t^{4/3}
\een
By arguments in the proof of Lemma \ref{lem:compLaplacians} there is $C_8>0$ such that $\vb{T'}$, $\vb{\pd_\zeta T'}\le C_8$, $\vb{\rd{T'}^{-1}}$, $\vb{\rd{T'}^{-1}\pd_\zeta \rd{T'}}\le C_8t^{2/3}$ and $\vb{\pd_\zeta\rd{\rd{T'}^{-1}}}$, $\vb{\pd_\zeta\rd{\rd{T'}^{-1}\pd_\zeta T'}}\le C_8 t^{4/3}$. Combining these estimates and (\ref{eq:bdhinvdh}) where $H_1=\widetilde H_{t,\lam_j(t)}^{\tx{int}}$, there is $C_9>0$ such that for $t\gg 1$
\be \label{eq:delhestproof2}
\vb{\pd_\zeta\rd{H^{-1}\pd_\zeta H}}\le C_9 t^2\,.
\ee

{\it Region (3)}: For $p_j\in D_\gam$, recall that $H_t^{\tx{int},\gam}=\tx{diag}\rd{\rho^{-1/2}e^{-\chi\psi_P},1}$ where $\psi_P$ is the Painlev\'e function (see (\ref{eq:defpsiP})). On $\mbd_j''$, $\chi\equiv 1$ and
\ben
\vb{\pd_\zeta \rd{\rd{H_t^{\tx{int},\gam}}^{-1}\pd_\zeta H_t^{\tx{int},\gam} }}\le (\rho^{-2}/4)\vb{1+\rho\pd_\rho\psi_P-\rho^2\pd_\rho^2\psi_P}=2\vb{-t^2\rho\sinh \rd{2\psi_P}+\rho^{-2}\eta_P}
\een
where $\eta_P=(1+2\rho\pd_\rho \psi_P)/8$ as defined in the proof of Lemma \ref{lem:compLaplacians}. Recall that $\rho^{-2}\eta_P\lesssim t^{4/3}$ and we have that $t^2\rho \sinh(2\psi_P)\lesssim t^{4/3}$. Therefore there is $C_{10}>0$ such that on $\mbd_j''$,
\ben
\vb{\pd_\zeta \rd{\rd{H_t^{\tx{int}}}^{-1}\pd_\zeta H_t^{\tx{int}}}}_{H_1^{\tx{int}}}\le C_{10} t^{4/3}
\een
On $\mbd_j'-\mbd_j''$ by (\ref{eq:compLaplacians21}), there are $C_{11}$ and $C_{11}'>0$ such that for $t\gg 1$
\begin{align*}
& \vb{\pd_\zeta \rd{\rd{H_t^{\tx{int}}}^{-1}\pd_\zeta H_t^{\tx{int}}}}_{H_1^{\tx{int}}}\le \rd{\chi/(4\rho^2)}\vb{1+\rho\pd_\rho\psi_P-\rho^2\pd_\rho^2\psi_P}+\rd{\chi'/(4\rho)}\rd{\vb{\psi_P}+2\rho\vb{\pd_\rho\psi_P}}\\
& +\rd{\chi''/4}\vb{\psi_P} \le C_{11}\rd{\vb{-2t^2\rho\sinh(2\psi_P)+2\rho^{-2}\eta_P}+\vb{\psi_P}+\vb{8\eta_P-1}}\le C_{11}'t^{4/3}\,.
\end{align*}
By (\ref{eq:defTulamjtprime}), there is $C_{12}>0$ such that 
\ben
\vb{\rd{T_{\ulam(t),j,t}'}^{-1}\pd_\zeta T_{\ulam(t),j,t}'},\,\, \vb{\pd_\zeta \rd{\rd{T_{\ulam(t),j,t}'}^{-1}\pd_\zeta T_{\ulam(t),j,t}'}}\le C_{12}\,.
\een
Let $H=\rd{h_t^{\tx{app}}}_{\uls_j^{(0)}}$ we have $H=\rd{T_{\ulam(t),j,t}'}^\ast H_t^{\tx{int},\gam} T_{\ulam(t),j,t}'$. By calculation in (\ref{eq:dhinvdh}) and the estimate of $\vb{\rd{H_t^{\tx{int},\gam}}^{-1}\pd_\zeta H_t^{\tx{int},\gam}}$ in (\ref{eq:compLaplacians2}) there is $C_{13}>0$ such that for $t\gg 1$
\be \label{eq:delhestproof3}
\vb{\pd_\zeta\rd{H^{-1}\pd_\zeta H}}\le C_{13}t^{4/3}\,.
\ee
Since $H_t^{\tx{int},\bet}=H^{-1}$, we have the same bound of the last quantity in (\ref{eq:delhest3quantities}) for $\mbd_j'$ with $p_j\in D_\bet$.

{\it Region (4)}: On $X-\coprod_j \mbd_j'$, notation as in the proof of Lemma \ref{lem:compLaplacians}, we have
\ben
\pd_{z_\alp} \rd{H^{-1}\pd_{z_\alp}H}=\tx{diag}\rd{-2\pd_{z_\alp}^2 \log h_0-2\pd_{z_\alp}^2 \varphi_{\ulam(t)}, \pd_{z_\alp}^2 \log h_0+\pd_{z_\alp}^2 \varphi_{\ulam(t)}}
\een
where $h_0$ is given in (\ref{eq:compLaplacians31}). $\varphi_{\ulam}$ is a linear combination with coefficients linear in $\ulam$, there is $C_{14}>0$ such that 
\be \label{eq:delhestproof4}
\max_\alp \sup_{U_\alp}\vb{\pd_{z_\alp} \rd{H^{-1}\pd_{z_\alp}H}}\le C_{14}
\ee
The global bound on the last quantity in (\ref{eq:delhest3quantities}) now follows from (\ref{eq:delhestproof1}), (\ref{eq:delhestproof2}), (\ref{eq:delhestproof3}), and (\ref{eq:delhestproof4}).
\end{proof}

\begin{prop} \label{prop:Rtupperbound}
There is $C>0$, such that for $0<r<1$, $u_0,u_1\subset B(0,r)\in L_2^2\rd{\tx{Herm}(F,h_t^{\tx{app}})}$ and $t\gg 1$
\ben
\dbv{R_t(u_0)-R_t(u_1)}_{L^2,h_{t_0}^{\tx{app}}}\le C r t^{17/3} \dbv{u_0-u_1}_{L_2^2,h_{t_0}^{\tx{app}}}\,.
\een
\end{prop}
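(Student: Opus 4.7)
The strategy is to split $R_t = R_t^{(0)} + R_t^{(1)} + R_t^{(2)}$ according to (\ref{eq:decompRt}) and bound each summand separately, exploiting the fact that each $R_t^{(\ell)}$ is the Taylor remainder at $u = 0$ of an analytic function of $\hat u$ that vanishes to second order, so that every term in a suitable expansion of $R_t^{(\ell)}(u_0) - R_t^{(\ell)}(u_1)$ carries one factor of $u_0 - u_1$ and at least one additional factor of $u_i$. The two-dimensional Sobolev embedding $L_2^2 \hookrightarrow L^\infty$ on $X$ together with Lemma \ref{lem:bddifffj} supplies the factor $r\dbv{u_0-u_1}_{L_2^2}$; the dependence on $t$ comes either from the prefactor $t^2$ combined with a pointwise bound on $B_t, C_t$ (for $\ell = 1, 2$), or from the curvature and derivative estimates of Lemmas \ref{lem:htappcurvbound} and \ref{lem:delhest} (for $\ell = 0$).

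For the Higgs-field pieces, expanding $e^{\hat u/2} = 1 + f_1(\hat u/2)$ and using $f_1(x) - x = f_2(x)$ yields
\ben
R_t^{(1)}(u) = 2t^2\sq{B_t\, f_2(\hat u/2) + f_2(\hat u/2)\, B_t + f_1(\hat u/2)\, B_t\, f_1(\hat u/2)},
\een
and analogously for $R_t^{(2)}(u)$, with $B_t$ replaced by $C_t$ and $\hat u$ by $-\hat u$. Taking the difference, Lemma \ref{lem:bddifffj} and $L_2^2 \hookrightarrow L^\infty$ give $\dbv{f_2(\hat u_0/2) - f_2(\hat u_1/2)}_{L^\infty} \lesssim r\dbv{u_0-u_1}_{L_2^2}$, $\dbv{f_1(\hat u_0/2) - f_1(\hat u_1/2)}_{L^\infty} \lesssim \dbv{u_0-u_1}_{L_2^2}$ and $\dbv{f_1(\hat u_i/2)}_{L^\infty} \lesssim r$. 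What remains is to establish a pointwise bound $\vb{B_t}_{h_{t_0}^{\tx{app}}}, \vb{C_t}_{h_{t_0}^{\tx{app}}} \lesssim t^{11/3}$, which is obtained by rerunning the four-region case analysis of Lemma \ref{lem:psibetagammaest} applied directly to the local matrix expressions $(\det H)H$ and $(\det H)^{-1}H^{-1}$; indeed, the $t^{11/3}$ factor arose in that proof precisely as the $L^\infty$-size of $\vb{(\det H)^{-1}H^{-1}\bet_0^\ast\bet_0}$ on region (2). Multiplying the three ingredients together then gives $\dbv{R_t^{(\ell)}(u_0) - R_t^{(\ell)}(u_1)}_{L^2, h_{t_0}^{\tx{app}}} \lesssim t^{17/3} r \dbv{u_0 - u_1}_{L_2^2}$ for $\ell = 1, 2$.

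For the curvature and derivative piece, I will expand $S_t$ using the integral identities
\ben
e^{u/2}F_h e^{-u/2} - F_h = \int_0^1 e^{su/2}\sq{u/2,\, F_h}e^{-su/2}\,ds, \qquad e^{-u}\pd_h e^u = \int_0^1 e^{-su}\rd{\pd_h u}e^{su}\,ds,
\een
subtract off the linear-in-$u$ contribution (which is already incorporated in $L_t$), and organize the remainder as a finite sum of expressions bilinear in the pair $(u, F_h)$, $(u, \pd_h u)$ or $(u, \bar\pd u)$, each carrying an additional power-series factor $f_1$ or $f_2$ evaluated at $u/2$. Applying Lemma \ref{lem:htappcurvbound}, which gives $\vb{F_{\nabla_{h_t^{\tx{app}}}}}_{h_{t_0}^{\tx{app}}} \lesssim t^2$, and Lemma \ref{lem:delhest}, which gives $\dbv{\pd_{h_t^{\tx{app}}} u}_{L_1^2, h_{t_0}^{\tx{app}}} \lesssim t^2 \dbv{u}_{L_2^2, h_{t_0}^{\tx{app}}}$, together with Sobolev bounds on the remaining pointwise factors of $u$, yields $\dbv{R_t^{(0)}(u_0) - R_t^{(0)}(u_1)}_{L^2, h_{t_0}^{\tx{app}}} \lesssim t^2\, r\, \dbv{u_0-u_1}_{L_2^2}$, which is absorbed into $t^{17/3}r$ for $t \gg 1$.

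The main obstacle is the organization of $R_t^{(0)}$: the non-commutativity of $u$ with $F_h$, $\pd_h u$ and $\bar\pd u$ produces a proliferation of terms once a single factor of $u_0 - u_1$ is to be extracted. The integral representations above, together with the mean-value identity $A(u_0) - A(u_1) = \int_0^1 (d/ds)\, A((1-s)u_1 + s u_0)\,ds$ applied to each multilinear summand, provide the cleanest way to rewrite $R_t^{(0)}(u_0) - R_t^{(0)}(u_1)$ as a bilinear form in $u_0 - u_1$ and an interpolating $u_s \in B(0,r)$ at which the pointwise and Sobolev bounds can be applied uniformly in $t$. A secondary subtlety is the $L^\infty$-bound on $B_t$ and $C_t$: in region (2) the factor $\vb{\bet_0^\ast\bet_0}$ has a pole $\vb{\zeta_j}^{-2}$ that is tamed only after pairing with the decaying entries of $(\det H)^{-1}H^{-1}$, and this cancellation has to be isolated cleanly in order to conclude the desired $t^{11/3}$ pointwise bound.
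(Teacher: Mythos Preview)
Your proposal is correct and essentially matches the paper's proof: the same splitting $R_t = R_t^{(0)} + R_t^{(1)} + R_t^{(2)}$, the same expansion of $R_t^{(1)}, R_t^{(2)}$ via $f_1, f_2$, the same $L^\infty$-bound $|B_t|, |C_t| \lesssim t^{11/3}$ extracted from the case analysis of Lemma~\ref{lem:psibetagammaest}, and the same invocation of Lemmas~\ref{lem:htappcurvbound} and~\ref{lem:delhest} for the $t^2$ growth in $R_t^{(0)}$.

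The only organizational difference is in $R_t^{(0)}$: the paper writes out a fully explicit thirteen-term expansion of $S_t(u)$ in powers of $f_1(\pm u/2), f_2(\pm u/2)$ and then estimates term by term, using the Sobolev product inequality $L_1^2 \times L_1^2 \subset L^4 \times L^4 \subset L^2$ for the terms quadratic in first derivatives. Your integral-identity/mean-value organization is an equivalent repackaging of the same algebra; it trades the bookkeeping of thirteen explicit terms for the bookkeeping of parameter integrals, but the estimates invoked are identical.

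One minor correction: your ``secondary subtlety'' about a pole $|\zeta_j|^{-2}$ in $\bet_0^\ast\bet_0$ on region~(2) is a frame confusion. The proof of Lemma~\ref{lem:psibetagammaest} works in the frame $\uls_j^{(0)}$, under which $\bet_0 = (1/\sqrt{2})(\zeta_j,\,1)$ and $\gam_0 = (1/\sqrt{2})(1,\,\zeta_j)^T$ are polynomial and uniformly bounded on $\mbd_j$; the $t^{11/3}$ arises purely from the bound on $|(H\det H)^{-1}|$ itself, with no cancellation against $\bet_0$ needed. The pole you have in mind appears only in the $\usig_j$ frame (equation~(\ref{eq:stdform4})), which is not the one used here.
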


\begin{proof}
We have
\ben
R_t^{(1)}(u)=2t^2\cl{B_t,f_2\rd{\hat u/2}}+2t^2f_1\rd{\hat u/2} B_t f_1\rd{\hat u/2}\,,
\een
where $R_t^{(1)}$ is defined in (\ref{eq:defRt0StRt1Rt2}) and we used $e^{\hat u/2}=1+f_1(\hat u/2)=1+\hat u/2+f_2(\hat u/2)$ where $f_j$ is defined in (\ref{eq:defnfj}). By the Sobolev embedding theorems there is $C_j>0$ such that $\sup_X \vb{f_j(u_0)}, \vb{f_j(u_1)}\le C_j r^j$ for $j\ge 1$. As a consequence of the proof of Lemma \ref{lem:psibetagammaest}, $\sup_X \vb{B_t}$, $\sup_X \vb{C_t}\lesssim t^{11/3}$. Therefore by Lemma \ref{lem:bddifffj}, there is $C_0>0$ such that for $t\gg 1$ 
\be \label{eq:Rt1est}
\dbv{R_t^{(1)}(u_0)-R_t^{(1)}(u_1)}_{L^2}\le C_0 t^{17/3} r \dbv{u_0-u_1}_{L_2^2}
\ee
Similarly, there is $C_0'>0$ such that for $t\gg 1$
\be \label{eq:Rt2est}
\dbv{R_t^{(2)}(u_0)-R_t^{(2)}(u_1)}_{L^2}\le C_0't^{17/3} r \dbv{u_0-u_1}_{L_2^2}\,,
\ee
where $R_t^{(2)}$ is given in (\ref{eq:defRt0StRt1Rt2}). The conclusion follows once we obtain similar bounds for $S_t(u_0)-S_t(u_1)$. After canceling the linear terms we have
\begin{align*}
& S_t(u)=2F_{\nabla_h}f_2\rd{-\frac{u}{2}}+2f_2\rd{-\frac{u}{2}}F_{\nabla_h}+2f_1\rd{\frac{u}{2}}F_{\nabla_h}f_1\rd{-\frac{u}{2}}+2\bar\pd_h f_2(u)\\
&+2\bar\pd\rd{f_1(-u)\pd_h f_1(u)}+[u,\bar\pd\pd_h f_1(u)]+[u,\bar\pd\rd{f_1(-u)\pd_h f_1(u)}]+2\rd{\bar\pd\pd_h f_1(u)}f_2\rd{-\frac{u}{2}}\\
&+2\bar\pd\rd{f_1(-u)\pd_h f_1(u)}f_2\rd{-\frac{u}{2}}+2f_2\rd{\frac{u}{2}}\rd{\bar\pd\pd_h f_1(u)}+2f_2\rd{\frac{u}{2}}\bar\pd\rd{f_1(-u)\pd_h f_1(u)}\\
&+2f_1\rd{\frac{u}{2}}\rd{\bar\pd\pd_h f_1(u)}f_1\rd{-\frac{u}{2}}+2f_1\rd{\frac{u}{2}}\bar\pd\rd{f_1(-u)\pd_hf_1(u)}f_1\rd{-\frac{u}{2}}
\end{align*}
where $h=h_t^{\tx{app}}$. By Lemma \ref{lem:htappcurvbound}, $\vb{F_{\nabla_h}}\lesssim t^2$. The relevant terms can be expanded further using $\bar\pd\rd{A\pd_h B}=\rd{\bar\pd A}\wedge \rd{\pd_h B}+A\bar\pd\pd_h B$ where $A,B\in \Omega^0\rd{\tx{End}\rd{F}}$. For the terms containing products of $\bar\pd\rd{\cdot}$ or $\pd_h\rd{\cdot}$, we use the bounded inclusions $L_1^2\times L_1^2\subset L^4\times L^4\subset L^2$, for all other terms use bounded inclusion $L_2^2\subset C^0$ and apply Lemmas \ref{lem:delhest}, \ref{lem:bddifffj}. We have for instance for the following term of the first kind,
\begin{align*}
& \dbv{\bar\pd\rd{f_1\rd{-u_0}-f_1\rd{-u_1}}\wedge \pd_h f_1(u_1)}_{L^2} \le \dbv{\bar\pd\rd{f_1\rd{-u_0}-f_1\rd{-u_1}}}_{L^4}\dbv{\pd_h f_1(u_1)}_{L^4} \\
& \lesssim \dbv{\bar\pd\rd{f_1\rd{-u_0}-f_1\rd{-u_1}}}_{L_1^2}\dbv{\pd_h f_1(u_1)}_{L_1^2}\lesssim t^2\dbv{f_1(-u_0)-f_1(-u_1)}_{L_1^2}\dbv{f_1(u_1)}_{L_1^2} \\
&\lesssim t^2r\dbv{u_0-u_1}_{L_2^2}
\end{align*}
It follows from these term-by-term estimates that there is $C_1>0$ such that for $t\gg 1$,
\be \label{eq:Stest}
\dbv{S_t(u_0)-S_t(u_1)}_{L^2}\le C_1t^2r\dbv{u_0-u_1}_{L_2^2}
\ee
The conclusion follows from (\ref{eq:Rt1est}), (\ref{eq:Rt2est}), and (\ref{eq:Stest}) in view of the decomposition (\ref{eq:decompRt}).
\end{proof}

\subsection{Proof of Theorem \ref{thm:main}}
\label{sec:proofmainthm}

On $L_2^2\rd{End(F)}$ define
\be \label{eq:defmcft}
\mcf_t: u\lmapsto -L_t^{-1}\rd{2i\Lambda e^{u/2}\mch_{t,h_{t,\ulam(t)}^{\tx{app}}}(0)e^{-u/2}+R_t(u)}
\ee
For $t\gg 1$, with the help of Props \ref{prop:Rtupperbound}, \ref{prop:L22toL2lowerbound}, and \ref{prop:htapp}, we use the contraction mapping principle to find a fixed point.

\begin{thm} \label{thm:contraction}
There are $C_1$, $C_2>0$ such that for $t\gg 1$,
\ben
\dbv{g_t-\tx{Id}}_{L_2^2}\le C_1e^{-C_2t^{2/3}}
\een
where $h_t=h_t^{\tx{app}}\cdot g_t$ is the unique solution of the SU(1,2) Hitchin equation associated to $(F,t\bet,t\gam)$.
\end{thm}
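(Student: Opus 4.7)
The plan is to apply the Banach contraction principle to the map $\mcf_t$ from (\ref{eq:defmcft}) on a small ball in $L_2^2\rd{\tx{Herm}\rd{F,h_t^{\tx{app}}}}$, producing $u_t$ with $\mcf_t(u_t)=u_t$, and then setting $g_t=e^{u_t}$. I would first verify that any fixed point of $\mcf_t$ solves the Hitchin equation: by (\ref{eq:defnRtu}), $u=\mcf_t(u)$ is equivalent to $2i\Lambda e^{u/2}\mch_{t,h_t^{\tx{app}}}(u)e^{-u/2}=0$, and since $e^{u/2}$ is invertible in $L_2^2$ and $i\Lambda$ is an isomorphism from $\Omega^{1,1}\rd{\tx{End}(F)}$ to $\Omega^0\rd{\tx{End}(F)}$, this forces $\mch_{t,h_t^{\tx{app}}}(u)=0$. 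Uniqueness of the Hermitian--Yang--Mills metric for the stable Higgs bundle $(F,t\bet,t\gam)$ will then identify $h_t^{\tx{app}}\cdot e^{u_t}$ with $h_t$. Note that $R_t(0)=0$ since $L_t(0)=0$ by linearity in (\ref{eq:defnRtu}), so $\mcf_t(0)=-L_t^{-1}\rd{2i\Lambda\mch_{t,h_t^{\tx{app}}}(0)}$.

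Next I would choose the radius of the ball. Fix $0<C_2<c$ with $c$ as in Prop \ref{prop:htapp} and set $r_t:=e^{-C_2t^{2/3}}$; all norms below are measured in the fixed reference metric $h_{t_0}^{\tx{app}}$. By Prop \ref{prop:L22toL2lowerbound}, $L_t$ is invertible with $\dbv{L_t^{-1}}\lesssim t^{19}$ as an operator $L^2\to L_2^2$. For $u_0,u_1\in B(0,r_t)\subset L_2^2$, the Sobolev embedding $L_2^2\subset C^0$ gives uniform bounds on $\sup_X|e^{\pm u_i/2}|$ together with the Lipschitz estimate $\sup_X|e^{u_0/2}-e^{u_1/2}|\lesssim \dbv{u_0-u_1}_{L_2^2}$, so combined with the pointwise bound $|\mch_{t,h_t^{\tx{app}}}(0)|\le Ce^{-ct^{2/3}}$ from Prop \ref{prop:htapp} one obtains
\ben
\dbv{2i\Lambda\rd{e^{u_0/2}\mch_{t,h_t^{\tx{app}}}(0)e^{-u_0/2}-e^{u_1/2}\mch_{t,h_t^{\tx{app}}}(0)e^{-u_1/2}}}_{L^2}\lesssim e^{-ct^{2/3}}\dbv{u_0-u_1}_{L_2^2}\,.
\een
Combining this with Prop \ref{prop:Rtupperbound}, the Lipschitz constant of $\mcf_t$ on $B(0,r_t)$ is
\ben
\kappa_t\lesssim t^{19}\rd{e^{-ct^{2/3}}+r_t\,t^{17/3}}\,,
\een
which tends to $0$ as $t\to\infty$ because $C_2<c$; in particular $\kappa_t<1/2$ for $t\gg 1$. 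Moreover $\dbv{\mcf_t(0)}_{L_2^2}\lesssim t^{19}e^{-ct^{2/3}}\le r_t/2$ for $t\gg 1$, so $\mcf_t$ maps $B(0,r_t)$ into itself and is a contraction there.

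The Banach fixed-point theorem then produces a unique $u_t\in B(0,r_t)$ with $\mcf_t(u_t)=u_t$, and the standard a priori estimate gives
\ben
\dbv{u_t}_{L_2^2}\le (1-\kappa_t)^{-1}\dbv{\mcf_t(0)}_{L_2^2}\lesssim t^{19}e^{-ct^{2/3}}\,.
\een
Writing $g_t-\tx{Id}=u_t+f_2(u_t)$ with $f_2$ as in (\ref{eq:defnfj}) and applying Lemma \ref{lem:bddifffj} with $u_1=0$ will give $\dbv{g_t-\tx{Id}}_{L_2^2}\le (1+C'r_t)\dbv{u_t}_{L_2^2}\le C_1e^{-C_2t^{2/3}}$ for $t\gg 1$, after absorbing constants.

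The hard part is the bookkeeping of $t$-dependent polynomial factors: $L_t^{-1}$ loses $t^{19}$ via Prop \ref{prop:L22toL2lowerbound}, $R_t$ loses $t^{17/3}$ via Prop \ref{prop:Rtupperbound}, and the $L_2^2\subset C^0$ comparison of norms between $h_t^{\tx{app}}$ and $h_{t_0}^{\tx{app}}$ from Lemma \ref{lem:htappcomp} is itself $t$-dependent. All of these polynomial losses must be dominated by the single exponential gain $e^{-ct^{2/3}}$ coming from Prop \ref{prop:htapp}; this is what forces the strict inequality $C_2<c$ and prevents the present method from improving the rate $t^{2/3}$ in the exponent.
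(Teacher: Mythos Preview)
Your proposal is correct and follows essentially the same contraction-mapping argument as the paper, invoking Prop \ref{prop:L22toL2lowerbound}, Prop \ref{prop:Rtupperbound}, Prop \ref{prop:htapp}, and Lemma \ref{lem:bddifffj} in the same roles. The only differences are cosmetic: the paper chooses $r$ in an interval $\rd{2C_6t^{19}e^{-C_7t^{2/3}},\,(2C_4)^{-1}t^{-19-17/3}}$ rather than fixing $r_t=e^{-C_2t^{2/3}}$ in advance, and you are slightly more explicit in bounding the Lipschitz contribution of the conjugated error term $e^{u/2}\mch_{t,h_t^{\tx{app}}}(0)e^{-u/2}$, which the paper tacitly absorbs into the $R_t$ estimate.
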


\begin{proof}
By Props \ref{prop:Rtupperbound}, \ref{prop:L22toL2lowerbound}, there are $C_3, C_4>0$ such that for $0<r<1$ and $u_0,u_1\in B(0,r)$, $t\gg 1$
\begin{align*}
& \dbv{\mcf_t(u_0)-\mcf_t(u_1)}_{L_2^2,h_{t_0}^{\tx{app}}}\le C_3 t^{19}\dbv{R_t(u_0)-R_t(u_1)}_{L^2,h_{t_0}^{\tx{app}}} \\
& \le C_4 t^{19+17/3} r \dbv{u_0-u_1}_{L_2^2,h_{1,\ulamz}^{\tx{app}}}\,.
\end{align*}
By Prop \ref{prop:htapp}, there are $C_5$, $C_6$, $C_7>0$ such that for $t\gg 1$,
\begin{align*}
& \dbv{\mcf_t\rd{0}}_{L_2^2,h_{t_0}^{\tx{app}}}=\dbv{L_t^{-1}\rd{2i\Lambda \mch_{t,h_t^{\tx{app}}}(0)}}_{L_2^2,h_{t_0}^{\tx{app}}} \\
&\le C_5 t^{19}\dbv{\mch_{t,h_t^{\tx{app}}}(0)}_{L^2,h_{t_0}^{\tx{app}}}\le C_6 t^{19}e^{-C_7 t^{2/3}}
\end{align*}

Take $t_1\gg 1$ that the above estimates hold and that 
\ben
2C_6t^{19}e^{-C_7 t^{2/3}}<(2C_4)^{-1}t^{-19-17/3}<1
\een
for $t\ge t_1$ and we assume below $t\ge t_1$. Take $r$ with $2C_6t^{19}e^{-C_7 t^{2/3}}<r<C_4^{-1}t^{-19-17/3}/2$. For $u_0,u_1\in B(0,r)\subset L_2^2\rd{\tx{Herm}\rd{F,h_t^{\tx{app}}}}$, we have $\dbv{\mcf_t(u_0)-\mcf_t(u_1)}\le (1/2)\dbv{u_0-u_1}$. On the other hand, $B(0,r')\subset B(0,r)$ where $r'=(1-C_4t^{19+17/3})^{-1}\dbv{\mcf_t(0)}$. For any $n\ge 0$,
\ben
\dbv{\mcf_t^n(0)}=\dbv{\mcf_t^n(0)-0}\le \sum_{k=0}^{n-1}\dbv{\mcf_t^{k+1}(0)-\mcf_t^k(0)}\le \rd{1-2^{-n}}r'\le r'
\een
The sequence $\cl{\mcf_t^n(0)}_n$ converges to $u_{\infty,t}\in B(0,r')$ satisfying $\mcf_t(u_{\infty,t})=u_{\infty,t}$. By the definition of $\mcf_t$ in (\ref{eq:defmcft}) as well as (\ref{eq:defnRtu}) and (\ref{eq:defnmchth}), we see that $h_t=h_t^{\tx{app}}\cdot g_t$ where $g_t=e^{u_{\infty,t}}$ is a solution to the SU(1,2) Hitchin equation associated to $(F,t\bet,t\gam)$. By Lemma \ref{lem:bddifffj}, there are $C_1$, $C_2>0$ such that for $t\ge t_1$,
\ben
\dbv{g_t-\tx{Id}}_{L_2^2,h_{t_0}^{\tx{app}}}\le r'\le C_6t^{19}e^{-C_7 t^{2/3}} \le C_1e^{-C_2t^{2/3}}
\een
\end{proof}

\begin{cor} \label{cor:htclosetoext}
On any compact set $X_0\Subset X-D$ for $k\ge 0$, there are $t_{1,k}$, $C_k$, $c_k>0$ such that for all $t\ge t_{1,k}$,
\ben
\dbv{g_t-\tx{Id}}_{C^k(X_0)}\le C_ke^{-c_kt^{2/3}}
\een
where $h_t=h_{\infty,t}\cdot g_t$, $h_{\infty,t}=\iota^\ast \rd{h_{L,\ulam(t),t}^{-2}h_K\oplus h_{L,\ulam(t),t}h_K}$ and $h_t$ is the unique solution of the SU(1,2) Hitchin equation associated to $(F,t\bet,t\gam)$.
\end{cor}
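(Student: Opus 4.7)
The plan is to combine Theorem \ref{thm:contraction} with Proposition \ref{prop:htappclosetoext} to obtain an exponentially small $L_2^2$ bound for $\log(h_{\infty,t}^{-1}h_t)$ on $X_0$, and then bootstrap via interior elliptic regularity using the semilinear elliptic PDE satisfied by this log-difference on $X_0 \Subset X - D$.

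First I would combine the two existing $L_2^2$ estimates. By Theorem \ref{thm:contraction} we have $h_t = h_t^{\tx{app}}\cdot e^{w_t}$ with $\dbv{w_t}_{L_2^2,h_{t_0}^{\tx{app}}} \le C e^{-ct^{2/3}}$, and Proposition \ref{prop:htappclosetoext} gives $h_t^{\tx{app}} = h_{\infty,t}\cdot k_t$ on $X_0$ with $\dbv{k_t - \tx{Id}}_{L_2^2(X_0),h_{t_0}^{\tx{app}}} \le C e^{-ct^{2/3}}$. Since $L_2^2(X_0)\hookrightarrow C^0(X_0)$ and $L_2^2(X_0)$ is a Banach algebra, the product $g_t := k_t\, e^{w_t}$ satisfies an $L_2^2(X_0)$ estimate of the same form. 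Because $h_{t_0}^{\tx{app}}$ restricts to a fixed smooth metric on the $t$-independent compact set $X_0$, this is equivalent to the $L_2^2$ norm taken with any fixed background, so $u_t := \log g_t$ is well-defined for $t \gg 1$ and satisfies $\dbv{u_t}_{L_2^2(X_0)} \le C e^{-ct^{2/3}}$.

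Next I would pick nested compacta $X_0 = X_0^{(0)} \Subset X_0^{(1)} \Subset \cdots \Subset X_0^{(m)} \Subset X - D$, with $m$ depending on $k$. On $X_0^{(m)}$, subtracting the decoupled equations for $h_{\infty,t}$ from the Hitchin equation for $h_t$ and applying $2i\Lambda$ produces a semilinear elliptic PDE for $u_t$ of the form
\ben
\Delta_{h_{\infty,t}}u_t + t^2\cl{\psi_{\bet,\gam,h_{\infty,t}},\hat u_t} + R_t^{(\infty)}(u_t) = 0\,,
\een
where $R_t^{(\infty)}$ is a superlinear remainder of the same shape as $R_t$ in (\ref{eq:defnRtu}) but with the reference metric $h_{\infty,t}$ in place of $h_t^{\tx{app}}$. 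On $X_0^{(m)}$ the metric $h_{\infty,t}$ is smooth and all coefficients of this PDE (relative to a fixed background frame) grow at most polynomially in $t$: the worst growth comes from the explicit $t^2$ prefactor and from $\eta_{\ulam(t),t}\sim(4/3)\lam_{j_0}(t)\log t$, while $\ulam(t)$ remains in the bounded set $\mci$ by Proposition \ref{prop:tcompfamily}.

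Finally I would bootstrap from $L_2^2(X_0^{(m)})$ up to $C^k(X_0)$ via interior $L^2$-elliptic regularity for $\Delta_{h_{\infty,t}}$ applied on the nested sequence $X_0^{(j+1)} \Subset X_0^{(j)}$, combined with multiplicative estimates in the spirit of Lemma \ref{lem:bddifffj} to control $R_t^{(\infty)}(u_t)$. Each step raises Sobolev order by one at the cost of a polynomial-in-$t$ factor, so after finitely many iterations and the Sobolev embedding $L_{k+2}^2 \subset C^k$, the initial exponential bound survives (with a slightly smaller rate $c_k < c$) and yields the desired $\dbv{g_t-\tx{Id}}_{C^k(X_0)} \le C_k e^{-c_k t^{2/3}}$. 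The main obstacle is verifying that this repeated application of $t$-dependent elliptic estimates does not destroy the exponential decay; this succeeds precisely because every loss factor is polynomial in $t$, while the initial decay is genuinely exponential in $t^{2/3}$ and therefore absorbs any polynomial prefactor after a minor shrinkage of the exponent.
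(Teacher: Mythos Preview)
Your proposal is correct and follows essentially the same line as the paper's own proof: combine Theorem~\ref{thm:contraction} with Proposition~\ref{prop:htappclosetoext} to get an exponentially small $L_2^2$ bound on $X_0$, then bootstrap via interior elliptic regularity, using that both $h_t$ and $h_{\infty,t}$ satisfy the (respectively full and decoupled, hence full) Hitchin equation on $X-D$ and that all $t$-dependent coefficients grow only polynomially and are absorbed by the exponential. The only cosmetic difference is that the paper bootstraps the matrix difference $H_t-H_{t,\infty}$ in local holomorphic frames (so the principal part $\partial_{\bar z}\partial_z$ is $t$-independent), whereas you bootstrap $u_t=\log g_t$ through $\Delta_{h_{\infty,t}}$; since $h_{\infty,t}$ is flat on $X_0$ with connection form uniformly bounded for $\ulam(t)\in\mci$, this choice makes no practical difference.
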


\begin{proof}
By Prop \ref{prop:htappclosetoext} and the above theorem, we have $C_0$, $c_0>0$, and $t_{1,0}$ such that for $t\ge t_{1,0}$,
\be \label{eq:htclosetoextineq1}
\dbv{g_t-\tx{Id}}_{L_2^2(X_0)}\le C_0e^{-c_0t^{2/3}}\,.
\ee
This proves the statement for $k=0$ and the rest of the proof will be a standard bootstrap argument. Let $\cl{(U_{\alp,k};z_\alp)}_{\alp\in \msa,k\ge 0}$ be a sequence of atlas with $U_{\alp,\ell}\subseteq U_{\alp,k}$ for $\ell\ge k$ and such that $F$ is trivialized on $U_{\alp,0}$. Choose local frames over each $U_{\alp,0}$ and let $H_t^{(\alp)}$ (resp. $H_{t,\infty}^{(\alp)}$, $\varphi^{(\alp)}dz_\alp$) be local forms of $h_t$ (resp. $h_{t,\infty}$, $\Phi$) over $U_{0,\alp}$ with respect to corresponding frames. We have that $H=H_t^{(\alp)}$ or $H_{t,\infty}^{(\alp)}$ and $\varphi=\varphi^{(\alp)}$ satisfies
\ben
\pd_{\bar z}\pd_z H=\rd{\pd_{\bar z}H}H^{-1}\rd{\pd_z H}+t^2 H\sq{\varphi,H^{-1}\varphi^\ast H}
\een
where $z=z_\alp$. Suppose for $0\le \ell\le k+1$ there are $C_{\ell}'$, $c_\ell'>0$, and $t_{1,\ell+1}\ge t_{1,0}$ such that for all $\alp\in\msa$, $t\ge t_{1,\ell+1}$, $H_t=H_t^{(\alp)}$ and $H_{t,\infty}=H_{t,\infty}^{(\alp)}$ satisfies
\be \label{eq:Lell2inductionstep}
\dbv{H_t-H_{t,\infty}}_{L_\ell^2\rd{U_{\alp,\ell}}}\le C_\ell' e^{-c_\ell' t^{2/3}}\,.
\ee
Note the case $k=0$ follows from (\ref{eq:htclosetoextineq1}). By the local form of the Hitchin equation, we have with $z=z_\alp$,
\begin{align}
& \pd_{\bar z}\pd_z\rd{H_t-H_{t,\infty}}=\pd_{\bar z}\rd{H_t-H_{t,\infty}}H_t^{-1}\rd{\pd_z H_t} + \rd{\pd_{\bar z} H_{t,\infty}}\rd{H_t^{-1}-H_{t,\infty}^{-1}}\rd{\pd_z H_t} \nonumber \\
&+\rd{\pd_{\bar z}H_{t,\infty}} H_{t,\infty}^{-1}\pd_z \rd{H_t-H_{t,\infty}}\nonumber \\
&+t^2\rd{H_t-H_{t,\infty}}\sq{\varphi,H_t^{-1}\varphi^\ast H_t}+t^2H_{t,\infty}\sq{\varphi,\rd{H_t^{-1}-H_{t,\infty}^{-1}}\varphi^\ast H_t}\nonumber \\
& +t^2H_{t,\infty}\sq{\varphi,H_{t,\infty}^{-1}\varphi^\ast\rd{H_t-H_{t,\infty}}}\,. \label{eq:htclosetoext1}
\end{align}
By the interior elliptic regularity estimate, there is $C>0$ such that for all $\alp$,
\ben
\dbv{H_t-H_{t,\infty}}_{L_{k+2}^2\rd{U_{\alp,k+2}}}\le C\rd{
\dbv{\pd_{\bar z}\pd_z \rd{H_t-H_{t,\infty}}}_{L_k^2\rd{U_{\alp,k+1}}}+\dbv{H_t-H_{t,\infty}}_{L^2\rd{U_{\alp,k+2}}}}\,.
\een
In (\ref{eq:htclosetoext1}), we expand higher derivatives of each term to get a sum of terms of the form
\ben
\nabla^{m}\rd{\pd_{\bar z}\rd{H_t-H_{t,\infty}}H_t^{-1}\rd{\pd_z H_t}},\,\ldots\,, \nabla^{m}\rd{t^2H_{t,\infty}\sq{\varphi,H_{t,\infty}^{-1}\varphi^\ast\rd{H_t-H_{t,\infty}}}}\,,
\een
with $0\le m\le k$. Following inductive arguments as in the proof of Prop \ref{prop:locmodsym} and note that $t^2e^{-ct^{2/3}}\to 0$ as $t\to 0$ for $c>0$, we have (\ref{eq:Lell2inductionstep}) for all $\ell\ge 0$. The conclusion follows from the bounded inclusion $L_{\ell+2}^2\subset C^{\ell'}$ for each $\ell'\ge 0$.

\end{proof}

Theorem \ref{thm:main} will now follow by combining all the above analysis. We restate it in a more detailed form.

\begin{thm} \label{thm:main2}
Fix $x_0\in X-D$, $v_0\in \atp{L}{x_0}$ and $X_0\subset X-D$ a compact set. Let $h_t$ be Hermitian-Yang-Mills metric of $(F,t\beta,t\gamma)$ and $\iota: F\to V$ the Hecke modification corresponding to $(F,\beta,\gamma)$ as in Theorem \ref{thm:hecke}. For $a\neq 0$ let $S_a=\iota^{-1}\circ \tx{diag}(a^2,a^{-1})\circ\iota$, an endomorphism of $\atp{F}{X-D}$. Let $\widetilde h_t=S_{\vb{v_0}_{k_t}}^\ast h_t$ where $k_t$ is metric on $\atp{L}{X-D}=\atp{\det F^\ast}{X-D}$ induced from $h_t$. Let $X_0\Subset X-D$ be a compact set. Let $h_\infty=\iota^\ast \rd{h_{L,\infty}^{-2}h_K\oplus h_{L,\infty}h_K}$ where $h_{L,\infty}$ is a harmonic metric adapted to the filtered bundle $(L,\ulami)$ over $(X,D)$ such that $\vb{v_0}_{h_{L,\infty}}=1$ and 
\be \label{eq:barycenter}
\lam_{\infty,j}=\begin{cases}
1/4 & p_j\in D_\bet \\
-1/4 & p_j\in D_\gam \\
-d_r^{-1}\rd{\deg L+\ov{4}\rd{d_\bet-d_\gam}} & p_j\in D_r
\end{cases}\,.
\ee
Let $\psi_t$ be given by $\widetilde h_t=h_\infty\cdot \psi_t$. Then for each $k\ge 0$, there is $C_k$ such
\ben
\dbv{\psi_t-\tx{Id}}_{C^k\rd{X_0}}\le \frac{C_k}{\log t}\,.
\een
\end{thm}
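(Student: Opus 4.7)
\medskip

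\noindent\textbf{Proof plan for Theorem \ref{thm:main2}.}

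The plan is to chain together two ingredients already in hand: the exponential convergence of $h_t$ to the weight-dependent decoupled model $h_{\infty,t}$ on compact sets of $X-D$ (Corollary \ref{cor:htclosetoext}), and the slow, rate-$1/\log t$ convergence of the $t$-compatible tuple $\ulam(t)$ to the barycenter $\ulami$ (Proposition \ref{prop:tcompfamily}). The only new content is to verify that the prescribed normalization $\widetilde h_t=S_{|v_0|_{k_t}}^\ast h_t$ kills the divergent overall scale $\eta_{\ulam(t),t}$ and turns Corollary \ref{cor:htclosetoext} into a comparison of \emph{normalized} harmonic metrics on $L$ adapted to $(L,\ulam(t))$ and $(L,\ulami)$ respectively, after which the dominant error is $\|\varphi_{\ulam(t)}-\varphi_{\ulami}\|_{C^k(X_0)}$.

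First I would package the rescaling invariance. For any $a>0$, under the Hecke identification $\iota$ on $X-D$ the pullback $S_a^\ast(\iota^\ast(h_L^{-2}h_K\oplus h_L h_K))$ equals $\iota^\ast(\tilde h_L^{-2}h_K\oplus \tilde h_L h_K)$ with $\tilde h_L=h_L/a^2$. Applying this to $h_{\infty,t}$ and setting $b_t=|v_0|_{k_{\infty,t}}$ (the induced metric on $L$ from $h_{\infty,t}$ at $x_0$) gives a normalized model $S_{b_t}^\ast h_{\infty,t}=\iota^\ast(\tilde h_{L,\infty,t}^{-2}h_K\oplus \tilde h_{L,\infty,t} h_K)$ where $\tilde h_{L,\infty,t}=h_{L,\ulam(t),t}/b_t^{2}$ satisfies $|v_0|_{\tilde h_{L,\infty,t}}=1$. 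The constant $\eta_{\ulam(t),t}$ present in $h_{L,\ulam(t),t}=h_{L,\tx{HE}}\,e^{\varphi_{\ulam(t)}+\eta_{\ulam(t),t}}$ cancels between $h_{L,\ulam(t),t}$ and $b_t^2$, leaving
\ben
\tilde h_{L,\infty,t}=\frac{h_{L,\tx{HE}}\,e^{\varphi_{\ulam(t)}}}{|v_0|_{h_{L,\tx{HE}}}^2\,e^{\varphi_{\ulam(t)}(x_0)}}\,,
\een
an expression free of the divergent factor $\eta_{\ulam(t),t}\sim (4/3)\lam_{j_0}(t)\log t$.

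Next, because $x_0\in X-D$ I may take $X_0$ to contain $x_0$; then Corollary \ref{cor:htclosetoext} evaluated at $x_0$ gives $|v_0|_{k_t}/|v_0|_{k_{\infty,t}}=1+O(e^{-c_0 t^{2/3}})$, hence $|a_t/b_t-1|$ decays exponentially, and therefore the comparison $\widetilde h_t$ with $S_{b_t}^\ast h_{\infty,t}$ differs on $X_0$ by a $C^k$-exponentially small endomorphism (by Corollary \ref{cor:htclosetoext} and smoothness of the map $a\mapsto S_a$ on $X_0$). It remains to compare $S_{b_t}^\ast h_{\infty,t}$ with $h_\infty$; by the formula displayed above for $\tilde h_{L,\infty,t}$ and the analogous formula $h_{L,\infty}=h_{L,\tx{HE}}\,e^{\varphi_{\ulami}}/(|v_0|_{h_{L,\tx{HE}}}^2\,e^{\varphi_{\ulami}(x_0)})$ for the limit, we obtain
\ben
\log\!\rd{\tilde h_{L,\infty,t}/h_{L,\infty}}=\rd{\varphi_{\ulam(t)}-\varphi_{\ulami}}-\rd{\varphi_{\ulam(t)}(x_0)-\varphi_{\ulami}(x_0)}\,.
\een
Since $\varphi_{\ulam}=\sum_j(\sum_{\ell\le j}\lam_\ell)G_j$ depends linearly on $\ulam$ through fixed functions $G_j$ that are smooth on $X_0\Subset X-D$, the $C^k(X_0)$-norm of this difference is bounded by a constant multiple of $|\ulam(t)-\ulami|\le C/\log t$, by Proposition \ref{prop:tcompfamily}.

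Combining the two error contributions, the exponentially small one from Corollary \ref{cor:htclosetoext} and the normalization step, and the $O(1/\log t)$ one from the weight mismatch, yields the stated $C^k(X_0)$ bound on $\psi_t-\tx{Id}$. The main obstacle is purely the rate of the weight convergence: Proposition \ref{prop:tcompfamily} gives only $O(1/\log t)$ (owing to the factor $(\log t)^{-1}$ in the definition of $F_t$ in its proof via the Brouwer fixed point), and this is what caps the final rate. Everything else — exponential convergence of $h_t\to h_{\infty,t}$, and exponential matching of the normalization constants $a_t,b_t$ at a chosen $x_0\in X_0$ — is strictly faster and can be absorbed into the $1/\log t$ term.
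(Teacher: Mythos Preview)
Your proposal is correct and follows essentially the same route as the paper's proof: factor the comparison through the normalized decoupled model, use Corollary \ref{cor:htclosetoext} for the exponentially small step, and use Proposition \ref{prop:tcompfamily} together with the linearity of $\varphi_{\ulam}$ in $\ulam$ for the dominant $1/\log t$ step. The one detail you should make explicit is a two-sided polynomial-in-$t$ bound on $a_t=|v_0|_{k_t}$ (the paper shows $a_t,a_t^{-1}\lesssim t^{1/6}$ from the bound $|\eta_{\ulam(t),t}-\tfrac{4}{3}\lam_{j_0}\log t|\le C$), so that conjugation by $S_{a_t}$ costs only a polynomial factor and does not spoil the exponential decay of $g_t-\mathrm{Id}$ from Corollary \ref{cor:htclosetoext}.
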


\begin{proof}
Let $h_{\infty,t}=\iota^\ast \rd{h_{L,\ulam(t),t}^{-2}h_K\oplus h_{L,\ulam(t),t}h_K}$ and $g_t$ be as in Cor \ref{cor:htclosetoext}. We will first define several auxiliary metrics on $\atp{F}{X_0}$ and automorphisms relating them. 

Let $k_t$ (resp. $k_{\infty,t}$) be the metric on $\atp{L}{X-D}$ induced from $h_t$ (resp. $h_{\infty,t}$). Let $a_t=\vb{v_0}_{k_t}$ and 
\be \label{eq:mainthmat}
a_{\infty,t}=\vb{v_0}_{k_\infty,t}=\vb{v_0}_{h_{L,\ulam(t),t}}
\ee

Let $\widetilde h_t=S_{a_t}^\ast h_t$, $\widetilde h_{\infty,t}=S_{a_t}^\ast h_{\infty,t}$, and $h_{\infty,t}'=S_{a_{\infty,t}}^\ast h_{\infty,t}$. Let $g_t'$ (resp. $\widetilde g_t$) be defined by $\widetilde h_t=\widetilde h_{\infty,t}\cdot g_t'$ (resp. $\widetilde h_{\infty,t}=h_{\infty,t}'\cdot\widetilde g_t$). Let $g_t''$ be defined by $h_{\infty,t}'=h_\infty\cdot g_t''$. The metrics and automorphism relating them are summarized in the following diagram. In particular, $\psi_t=g_t''\cdot \widetilde g_t\cdot g_t'$.

\begin{center}
\begin{tikzcd}
\widetilde h_t  & \widetilde h_{\infty,t} \arrow[l,"g_t'"] & h_{\infty,t}' \arrow[l,"\widetilde g_t"] & h_\infty \arrow[l,"g_t''"] \\
h_t \arrow[u,"S_{a_t}^\ast"] & h_{\infty,t} \arrow[u,"S_{a_t}^\ast"] \arrow[ur, bend right,"S_{a_{\infty,t}}^\ast"] \arrow[l,"g_t"] & &
\end{tikzcd}
\end{center}

Recall again by Prop \ref{prop:su12stab2} $d_r>0$. Let $j_0$ be a fixed index such that $p_{j_0}\in D_r$. By (\ref{eq:mainthmat}) and (\ref{eq:defhLulamt}), we have that 
\ben
a_{\infty,t}=\vb{v_0}_{h_{L,\tx{HE}}}e^{(\varphi_{\ulam(t)}+\eta_{\ulam(t),t})/2}\,.
\een
Without loss of generality we may assume $\vb{v_0}_{h_{L,\tx{HE}}}=1$. By Props \ref{prop:clamcont}, \ref{prop:tcompfamily}, and (\ref{eq:culamt}) with $j=j_0$, there is $C_0>0$ such that for $t\gg 1$
\ben
\vb{\eta_{\ulam(t),t}-\frac{4}{3}\lam_{j_0}\log t}\le C_0\,.
\een
We have from Def \ref{def:someharmfcns} that $\varphi_{\ulam}$ is a linear combination of fixed smooth functions $G_j$ on $X_0$ with coefficients linear in $\ulam(t)$. Note also that $\vb{\lam_{j_0}(t)}<1/4$ for all $t\ge 1$. Therefore there is $C_1>1$ such that for $t\gg 1$, $C_1^{-1}t^{-1/3}\le e^{\varphi_{\ulam(t)}+\eta_{\ulam(t),t}}\le C_1 t^{1/3}$. Therefore there is $C_2>0$ such that for $t\gg 1$, $a_{\infty,t}\le C_2 t^{1/6}$. We have $a_t=a_{\infty,t}\rd{\det g_t(p_0)}^{-1/2}$. By Cor \ref{cor:htclosetoext} there is $C_2'>0$ such that for $t\gg 1$, $a_t\le C_2'a_{\infty,t}\le C_2'C_2t^{1/6}$. Note that $g_t'=S_{a_t}^{-1}\circ g_t\circ S_{a_t}$. It follows from Cor \ref{cor:htclosetoext} that for $k\ge 0$, there are $C_{3,k}$, $c_{3,k}>0$ such that for $t\gg 1$
\be \label{eq:mainthm1}
\dbv{g_t'-\tx{Id}}_{C^k\rd{X_0}}\le C_{3,k}e^{-c_{3,k}t^{2/3}}\,.
\ee

We have
\ben
\widetilde g_t=\iota^{-1}\circ\tx{diag}\rd{\det g_t\rd{p_0}^{-1},\rd{\det g_t\rd{p_0}}^{1/2}}\circ\iota\,.
\een
Note that for any two choices of frame in (\ref{eq:Cknorm}) over $X_0$, the resulting norms are equivalent. Under a frame compatible with decomposition $V=L^{-2}K\oplus LK$, the local forms of $\widetilde g_t$ are constant. Therefore by Cor \ref{cor:htclosetoext}, there are $C_4$, $c_4>0$ such that for $t\gg 1$,
\be \label{eq:mainthm2}
\dbv{\widetilde g_t-\tx{Id}}_{C^k(X_0)}=\dbv{\widetilde g_t-\tx{Id}}_{C^0(X_0)}\le C_4 e^{-c_4 t^{2/3}}\,.
\ee

Let $h_{L,\infty,t}$ be the metric on $L$ with $h_{\infty,t}'=\iota^\ast \rd{h_{L,\infty,t}^{-2}h_K\oplus h_{L,\infty,t}h_K}$. We have that 
\begin{align*}
& h_{L,\infty}=h_{L,\infty,t}\exp\rd{ \varphi_{\ulami}-\varphi_{\ulam(t)}+\varphi_{\ulami}\rd{p_0}-\varphi_{\ulam(t)}\rd{p_0}
}  \\
&=h_{L,\infty,t}\exp\rd{
\sum_{j=1}^{4g-4}\sum_{\ell=1}^j\rd{\lam_{\infty,\ell}-\lam_\ell(t)}\rd{G_j-G_j\rd{p_0}}
}\,.
\end{align*}
For $k\ge 0$, there is $C_{5,k}>0$ such that $\dbv{G_j-G_j\rd{p_0}}_{C^k(X_0)}\le C_{5,k}$. By Prop \ref{prop:tcompfamily} and the above, for $k\ge 0$, there is $C_{6,k}>0$ such that for $t$ large enough
\be \label{eq:mainthm3}
\dbv{g_t''-\tx{Id}}_{C^k\rd{X_0}}\le \frac{C_{6,k}}{\log t}\,.
\ee
The conclusion follows from (\ref{eq:mainthm1}), (\ref{eq:mainthm2}) and (\ref{eq:mainthm3}).

\end{proof}

Lastly, we comment on a special case of Thm \ref{thm:main} where the convergence rate is improved. For the SU(1,2) Higgs bundle $(F=K^{-1}\oplus K,t\bet,t\gam)$ in (\ref{eq:specialSU12}), we have $\ulb=\rd{1,\ldots,1}$. By the discussion in Prop \ref{prop:tcompfamily}, $\ulam=\rd{0,\ldots,0}$ is $t$-compatible with $\ulb$ for all $t\ge 1$, and $\eta_{\ulam(t),t}$ (defined in (\ref{eq:defhLulamt})) is independent of $t$. It follows from the proof that $a_t$ is uniformly bounded in $t$ and that the convergence in $C^k(X_0)$ may be improved to be exponential in $t$.

In fact, this also follows directly from the SL$(2,\mbc)$-Hitchin equation case. The associated Hitchin equation reduces to a scalar PDE, and the solution is given by a metric $h_t^{-1}\oplus h_t$ on $F$ with $h_t^{1/2}$ the unique solution to the Hitchin's equation associated to SL$(2,\mbc)$ Higgs bundle
\ben
\rd{E=K_X^{1/2}\oplus K_X^{-1/2}, \Phi=t \pmt{ & 1 \\ q & }}\,.
\een
In this manner, the exponential in $t$ convergence $h_t\to h_K$ ($h_K$ defined in Lemma \ref{lem:decoupledsoln}) is a direct consequence of the result in \cite{MSWW16}.

\end{document}